\documentclass[11pt,reqno]{amsart}
\usepackage[utf8]{inputenc}
\usepackage{amsthm,amssymb,esint,bm,mathtools}
\usepackage{dsfont}
\usepackage[margin=2cm]{geometry}
\usepackage[dvipsnames]{xcolor}
\usepackage{hyperref,hypcap}
\hypersetup{colorlinks=true,linkcolor=blue,citecolor=blue,bookmarksdepth=3}
\usepackage{enumitem}
\usepackage{comment}
\usepackage{cancel}
\usepackage{mathtools}
\usepackage{longfbox}
\usepackage{mathrsfs}
\usepackage{pgf, tikz}
\usepackage{mathtools}
\usepackage{graphicx}
\usepackage{bigints}

\hypersetup{
    colorlinks,
    citecolor=MidnightBlue,
    filecolor=black,
    linkcolor=MidnightBlue,
    urlcolor=black,
    linktoc=page
}

\newtagform{blue}[\textcolor{MidnightBlue}]{\color{black}(}{)}
\mathtoolsset{showonlyrefs}

\newtheorem{thm}{Theorem}[section]
\newtheorem{cor}[thm]{Corollary}
\newtheorem{lem}[thm]{Lemma}
\newtheorem{prop}[thm]{Proposition}

\theoremstyle{definition}

\newtheorem{rem}[thm]{Remark}

\numberwithin{equation}{section}

\newtheoremstyle{myrunin}
  {3pt}   
  {3pt}   
  {}      
  {}      
  {}      
  {}      
  { }     
  {\underline{#3.}} 

\theoremstyle{myrunin}

\newcounter{pstep}
\newenvironment{pstep}[1]
{%
  \refstepcounter{pstep}%
  \noindent
  \hspace*{\parindent}%
  \textbf{#1.}
  \ignorespaces
}
{%
  \par
}

\DeclareMathOperator{\supp}{supp}

\newcommand{\R}{\mathbb{R}}
\newcommand{\Z}{\mathbb{Z}}

\newcommand{\N}{\mathbb{N}}
\newcommand{\C}{\mathbb{C}}

\newcommand{\eps}{\varepsilon}

\newcommand{\m}{\mathfrak{m}}

\newcommand{\br}[1]{\langle#1\rangle}
\newcommand{\norm}[1]{\left\lVert#1\right\rVert}
\newcommand{\abs}[1]{\left\lvert#1\right\rvert}
\newcommand{\zz}{\mathcal{Z}}

\newcommand{\vertiii}[1]{{\left\vert\kern-0.25ex\left\vert\kern-0.25ex\left\vert #1 
		\right\vert\kern-0.25ex\right\vert\kern-0.25ex\right\vert}}

\makeatletter

\renewcommand{\tocsection}[3]{%
  \indentlabel{\@ifnotempty{#2}{\bfseries\ignorespaces#1 #2\quad}}\bfseries#3}
\renewcommand{\tocsubsection}[3]{%
  \indentlabel{\@ifnotempty{#2}{\ignorespaces#1 #2\quad}}#3}
\renewcommand{\tocsubsubsection}[3]{%
  \indentlabel{\@ifnotempty{#2}{\ignorespaces#1 #2\quad}}#3}

\newcommand\@dotsep{4.5}
\def\@tocline#1#2#3#4#5#6#7{\relax
  \ifnum #1>\c@tocdepth 
  \else
    \par \addpenalty\@secpenalty\addvspace{#2}%
    \begingroup \hyphenpenalty\@M
    \@ifempty{#4}{%
      \@tempdima\csname r@tocindent\number#1\endcsname\relax
    }{%
      \@tempdima#4\relax
    }%
    \parindent\z@ \leftskip#3\relax \advance\leftskip\@tempdima\relax
    \rightskip\@pnumwidth plus1em \parfillskip-\@pnumwidth
    #5\leavevmode\hskip-\@tempdima{#6}\nobreak
    \leaders\hbox{$\m@th\mkern \@dotsep mu\hbox{.}\mkern \@dotsep mu$}\hfill
    \nobreak
    \hbox to\@pnumwidth{\@tocpagenum{\ifnum#1=1\bfseries\fi#7}}\par
    \nobreak
    \endgroup
  \fi}
\AtBeginDocument{%
\expandafter\renewcommand\csname r@tocindent0\endcsname{0pt}
}
\def\l@subsection{\@tocline{2}{0pt}{2.5pc}{5pc}{}}
\def\l@subsubsection{\@tocline{3}{0pt}{4pc}{5pc}{}}
\makeatother

\usepackage{cancel}

\addtocontents{toc}{\protect\setcounter{tocdepth}{2}}

\newcommand{\nocontentsline}[3]{}
\newcommand{\tocless}[2]{\bgroup\let\addcontentsline=\nocontentsline#1{#2}\egroup}

\author[L. Ertzbischoff]{Lucas Ertzbischoff}
\address{CEREMADE, CNRS, Université Paris-Dauphine, PSL Research University, 75016 Paris, France}
\email{ertzbischoff@ceremade.dauphine.fr}

\author[C. Jurja]{Catalina Jurja}
\address{Institute of Mathematics, University of Zurich, 8057 Zurich, Switzerland}
\email{catalina.jurja@math.uzh.ch}

\author[K. Widmayer]{Klaus Widmayer}
\address{Faculty of Mathematics, University of Vienna, 1090 Vienna, Austria \newline \hspace*{12pt} \& Institute of Mathematics, University of Zurich, 8057 Zurich, Switzerland}
\email{klaus.widmayer@univie.ac.at \& klaus.widmayer@math.uzh.ch}

\title[Global dynamics in the 3d pressureless Euler-Poisson system for ions]{Global dynamics in the 3d pressureless Euler-Poisson system for ions}

\let\origmaketitle\maketitle
\def\maketitle{
  \begingroup
  \origmaketitle
  \endgroup
}

\begin{document}
\usetagform{blue}

\begin{abstract} 
In this work we consider the pressureless Euler-Poisson system on $\R^3$ describing ion dynamics, which is a common model in the study of cold plasma. We prove global regularity and scattering for small, irrotational velocity fields around a constant density profile. At the heart of the result is a dispersive mechanism leading to amplitude decay at the full rate in $3$D. This fast decay feature and a favourable structure of the nonlinearity allow to control quadratic  interactions globally.
\end{abstract}

\maketitle

\tableofcontents

 \section{Introduction}

\subsection{Motivation and Main Result}\label{sec:Motivation-Main-Result}
We consider the following pressureless Euler-Poisson system set on  $\R^3$:
\begin{equation}\label{eq:EP}
\left\{
      \begin{aligned}
      \partial_t \varrho + \mathrm{div}(\varrho v) &=0, \\[1mm]
\partial_t v +(v \cdot \nabla )v &=-\nabla \Phi,  \\[1mm]
e^{\Phi}-\Delta \Phi&= \varrho,
\end{aligned}
    \right. 
\end{equation}
where $\varrho:\R_+\times\R^3\to \R$ is the plasma density, $v:\R_+\times \R^3\to \R^3$ is the plasma velocity, and $\Phi:\R_+\times\R^3 \to \R$ is the electric potential. The differential operators without subscript refer to the ones related to the space variable $x \in \R^3$. In this system, where all physical constants have been normalized, the two first equations describe the balance of mass and momentum, while the third one is an elliptic equation giving the value of the potential $\Phi$ at each time $t{\geq}0$.
In this work, we are interested in studying global dynamics of \eqref{eq:EP} arising as initially small perturbations of a certain simple steady state. 
The Euler-Poisson system is a standard model for describing the dynamics of a plasma at the fluid level:  here, we have considered the model for ions in a fixed background of electrons (assuming the latter have already reached their thermodynamic equilibrium, as they are significantly lighter and faster than ions), the density of which satisfies the standard Maxwell-Boltzmann law, proportional to $e^\Phi$ (see \cite{BardosGolseNguyenSentis, GrenierGuoPausaderSuzuki} for some derivations on the kinetic and fluid side).

  A key feature of \eqref{eq:EP} is that there is no pressure gradient in the momentum equation, a manifestation of what is typically coined the cold plasma ansatz \cite[\textcolor{MidnightBlue}{Ch.\ 4}]{chen2015introduction} -- see also the discussion in (6) below for its connection with the kinetic description of plasmas.

\medskip
In this article, we are concerned with the asymptotic stability of the following constant steady state of equation \eqref{eq:EP}: 
\begin{align}\label{eqn:steady-state}
    (\varrho, v, \Phi)=(1, 0, 0).
\end{align}
Setting $\varrho=1+\rho$ in \eqref{eq:EP}, the system on the perturbation reads
\begin{equation}\label{eq:EP-perturbed-full}
\left\{
      \begin{aligned}
      \partial_t \rho + \mathrm{div}(v)+\mathrm{div}(\rho v) &=0, \\[1mm]
\partial_t v +(v \cdot \nabla )v &=-\nabla \Phi,  \\[1mm]
e^{\Phi}-\Delta \Phi&= 1+\rho.
\end{aligned}
    \right.
\end{equation} 
The main result of the paper yields the global stability of the steady state \eqref{eqn:steady-state} for irrotational velocity fields through a dispersive mechanism that allows to control the nonlinear interactions. 

\begin{thm}\label{main-thm}
 There exist $N_0\in\N$, $\eps_0>0$ and a norm $X$ such that
if $v_0$ is irrotational, i.e.\
$\nabla \times v_0=0$,   and if for some $0<\eps\leq \eps_0$ there holds
\begin{align}\label{eq:assump-datasmallORIGINAL}
    \norm{\rho_0}_{H^{N_0-1}\cap \dot{H}^{-1}}+\norm{v_0}_{H^{N_0} {\cap \dot H^{-1}}}
    \leq \eps,\qquad 
\norm{\abs{\nabla}^{-1}\rho_0 \pm 
{i\br{\nabla}\abs{\nabla}^{-2}\mathrm{div}(v_0)}}_{X}\leq \eps,
\end{align}
 then the system \eqref{eq:EP-perturbed-full} with initial data $(\rho_0,v_0)$ has a unique global solution $(\rho,v)\in  \mathscr{C}\left(\R_+; H^{N_0-1}\times H^{N_0}\right) $ with $\nabla\times v=0$. Moreover, for all $t>0$ the solution satisfies:
 \begin{align}
      \label{csqTHM-bound}\norm{\rho(t)}_{H^{N_0-1}\cap \dot{H}^{-1}}+\norm{v(t)}_{H^{N_0} {\cap \dot H^{-1}}}&\lesssim \eps, \\
       \label{csqTHM-decay}\norm{\rho(t)}_{L^\infty}+\norm{v(t)}_{L^\infty} &\lesssim (1+ t )^{-(1+\beta)} \eps,
 \end{align}
 for some $\beta \in ]0, \frac{1}{5}[$, and scatters linearly in $L^2$.
\end{thm}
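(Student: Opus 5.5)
Since $v$ stays irrotational, I write $v=\nabla\abs{\nabla}^{-1}\cdot(\abs{\nabla}^{-1}\mathrm{div}\,v)$. The system then reduces to a scalar quasilinear dispersive equation. Linearizing the elliptic equation gives $\Phi=(1-\Delta)^{-1}\rho+O(\rho^2,\Phi^2)$. The linear part is therefore $\partial_t\rho+\mathrm{div}\,v=0$, $\partial_t v=-\nabla(1-\Delta)^{-1}\rho$, so $\partial_t^2\rho=\Delta\br{\nabla}^{-2}\rho$. This is an ion-acoustic, Klein--Gordon-like dispersion with
\begin{equation}
\Lambda(\xi)=\abs{\xi}\br{\xi}^{-1}.
\end{equation}
I introduce the complex unknown $U=\abs{\nabla}^{-1}\rho+i\br{\nabla}\abs{\nabla}^{-2}\mathrm{div}(v)$, matching the second condition in \eqref{eq:assump-datasmallORIGINAL}. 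It satisfies
\begin{equation}
\partial_t U+i\Lambda(\nabla)U=\mathcal{Q}(U,\bar U)+\mathcal{C},
\end{equation}
where $\mathcal Q$ is quadratic with explicit symbols and $\mathcal C$ collects cubic and higher terms, including those coming from $e^\Phi$. I then run a bootstrap on $[0,T]$ with three components. The first is energy bounds in $H^{N_0}$, also controlling $\dot H^{-1}$. The second is a weighted/profile norm $X$ on $f=e^{it\Lambda}U$, built from $L^\infty_\xi$-type control of $\hat f$ together with localized bounds on $\partial_\xi\hat f$ (a Kato--Ponce / vector-field type norm). The third is the decay $\|U(t)\|_{L^\infty}\lesssim \eps(1+t)^{-1-\beta}$.

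\textbf{Step 1 (linear decay).} I prove a dispersive estimate $\|e^{-it\Lambda}f\|_{L^\infty}\lesssim (1+t)^{-3/2+}\|f\|_X$, using stationary phase for $\Lambda$. The delicate points are the following. $\Lambda$ is degenerate at $\xi=0$, where $\Lambda\sim\abs{\xi}$ is wave-like, and at $\infty$, where $\Lambda\to 1$ and the curvature degenerates. Moreover $\Lambda''$ vanishes on a sphere $\abs{\xi}=r_0$ (an inflection point of $r\mapsto r/\br{r}$), which loses decay there. I would split dyadically in frequency and handle the sphere separately. That region yields only a $t^{-4/3}$-type rate, which is why I aim merely for $t^{-1-\beta}$ with small $\beta<1/5$, an integrable rate. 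The high-frequency loss is absorbed by the regularity $N_0$.

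\textbf{Step 2 (energy).} Energy estimates follow from the symmetrizable hyperbolic structure of the Euler part, $\rho\ge -1/2$. These give $\frac{d}{dt}E\lesssim(\|\nabla v\|_{L^\infty}+\|\rho\|_{W^{1,\infty}})E$. Integrability of $(1+t)^{-1-\beta}$ then yields uniform bounds without any normal forms at energy level, which is the main gain of the fast decay. The $\dot H^{-1}$ bounds come from the divergence form $\partial_t\rho=-\mathrm{div}((1+\rho)v)$ and the irrotational structure. The elliptic equation is solved by a fixed point for small $\rho$, which gives the bound $\|\Phi\|_{H^{N_0+1}}\lesssim\|\rho\|_{H^{N_0-1}}$.

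\textbf{Step 3 (main obstacle: the $X$-norm).} The hardest step is propagating the $X$-norm of the profile. Here I write Duhamel as a bilinear oscillatory integral with phases $\Phi_{\pm\pm}=\Lambda(\xi)\mp\Lambda(\xi-\eta)\mp\Lambda(\eta)$, and apply $\partial_\xi$ to it. The $\partial_\xi$ produces a factor $t\nabla\Lambda$ that must be recovered. My plan is a space-time resonance analysis. Where the phase is nonvanishing, I integrate by parts in time (a normal form). Where $\nabla_\eta\Phi\neq0$, I integrate by parts in $\eta$. On the resonant set I use the favourable structure of $\mathcal Q$: the symbols inherit a null-type vanishing from the divergence form of the nonlinearity, vanishing as a frequency tends to $0$. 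This is the step where I expect to need case-by-case arguments, notably for the $(+,-)$ interactions with small output frequency, and I would exploit the $\dot H^{-1}$ control there.

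\textbf{Step 4 (conclusion).} With the bootstrap closed, the result follows. Standard local well-posedness and continuation give the global solution with bounds \eqref{csqTHM-bound} and \eqref{csqTHM-decay}. Irrotationality is propagated since $\partial_t\omega=\nabla\times(v\times\omega)$. Scattering in $L^2$ follows because $\partial_t f$ is integrable in $L^2$, by the decay of one factor in each nonlinear term.
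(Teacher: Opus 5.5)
Your outline has the right overall architecture: a bootstrap on energy, a weighted profile norm and integrable $t^{-1-\beta}$ decay, normal forms, and high-frequency losses paid for by a large $N_0$. But Step 3, which carries the whole difficulty of the theorem, is only announced, and the one concrete mechanism you offer for it fails exactly in the critical region.

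For the $(+,-)$ interactions, the phase vanishes identically on $\{\xi=0\}$, and so does $\nabla_\eta\Phi$. The symbols do not vanish there. The only null structure available is vanishing in the input frequencies, $\norm{\m\chi}_{S^\infty}\lesssim 2^{k_1+k_2}$. You can see this directly from the equations. In the screened model, $\partial_t Z_\pm=\pm i\Lambda Z_\pm-\abs{\nabla}^{-1}\mathrm{div}(\rho v)\pm\frac{i}{2}\br{\nabla}\abs{v}^2$. Hence, as $\xi\to0$ along a fixed direction, $\partial_t\widehat{\zz_\pm}(t,\xi)\to -i\frac{\xi}{\abs{\xi}}\cdot\int\rho v\,dx\pm\frac{i}{2}\norm{v(t)}_{L^2}^2$. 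So $\widehat{\zz_\pm}(t,\xi)$ grows linearly in $t$ on $\abs{\xi}\lesssim t^{-1}$, because the last term has positive time average, whether or not the data is neutral. Consequently, a norm built on $L^\infty_\xi$ control of $\widehat f$ cannot be propagated unless it carries a low-frequency weight of order at least $\abs{\xi}$. The profile develops an $\abs{\xi}^{-1}$-type singularity at the origin, mirroring the loss $\abs{\m\Phi^{-1}}\lesssim\abs{\xi}^{-1}$ in the normal form when the output frequency is the smallest. Appealing to null structure or to the $\dot H^{-1}$ bound does not address this.

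The paper's argument is built around exactly this point.
\begin{itemize}
\item Its norm $\sup_{(k,j)}2^{8k^+}2^{(1+\beta)(k+j)}2^{\gamma k^-}\norm{Q_{jk}f}_{L^2}$, with $\gamma=-\frac16-\frac53\beta\in\,]-\frac12,0[$, tolerates that singularity while still giving $t^{-1-\beta}$ decay. It comes with a refined decay bound for the localized pieces $e^{it\Lambda}Q_{jk}f$.
\item Outputs with $2^j\gg t$ are handled by finite speed of propagation, via integration by parts in $\xi$.
\item For $2^j\lesssim t$, the paper performs a normal form everywhere. This is possible because $\abs{\Phi}\gtrsim 2^{k_{\min}^-+2k_{\max}^-}$ on every frequency support, so no space-resonance analysis or integration by parts in $\eta$ is needed.
\item The resulting cubic terms are controlled by $\norm{\partial_t\zz_\pm}_{L^2}\lesssim t^{-3/2}$.
\item The boundary term with $k=k_{\min}$ is closed by balancing set-size and refined decay estimates. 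This balance, not curvature, is the source of the restriction $\beta<\frac15$.
\end{itemize}

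Two further statements in your proposal are wrong, though not fatal.
\begin{itemize}
\item $\Lambda$ has no inflection sphere. Writing $q(r)=r\br{r}^{-1}$, you have $q''(r)=-3r\br{r}^{-5}<0$ and $\det\mathrm{Hess}\,\Lambda(\xi)=-3\abs{\xi}^{-1}\br{\xi}^{-11}\neq0$ for $\xi\neq0$. So the semigroup decays at the full rate $t^{-3/2}$; a degenerate sphere occurs for the ion system with pressure, not here.
\item The pressureless system is not symmetrizable hyperbolic in $(\rho,v)$. The energy estimate closes for the shifted energy $\frac12\norm{\nabla\psi}_{H^s}^2+\frac12\norm{\rho}_{H^{s-1}}^2$, which your choice $U\in H^{N_0}$ encodes. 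It works because $\nabla\Phi$ is one derivative smoother than $\rho$.
\end{itemize}
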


Independently and concurrently with the present work, a global existence result similar to Theorem \ref{main-thm} has been proved in \cite{SongiEP}. This and the present work appear to be the first to establish examples of global dynamics for the $3$D pressureless Euler-Poisson system \eqref{eq:EP} for ions. We refer to Section \ref{sec:Main-Result-Dispersive} for a more precise statement of Theorem \ref{main-thm} in the dispersive framework, as well as its proof. We now make several comments concerning the main result and more generally the system \eqref{eq:EP-perturbed-full} and related models. 

\begin{pstep}{(1) Irrotationality and dispersion}
Irrotational flows are a natural class of solutions to \eqref{eq:EP-perturbed-full}, since this condition is propagated over time.\footnote{Indeed, taking a curl in the momentum balance in \eqref{eq:EP-perturbed-full} yields
\begin{align*}
        \partial_t (\nabla\times v) +(v \cdot \nabla )(\nabla\times v) &=((\nabla\times v) \cdot \nabla )v \implies    \frac{\mathrm{d}}{\mathrm{d}t} \norm{\nabla\times v }_{L^2}^2 \lesssim \norm{\nabla v }_{L^\infty} \norm{\nabla \times v }_{L^2}^2.
    \end{align*}}
Irrotationality reduces the complexity of the dynamics: solutions can be described by two scalar unknowns (classical choices are the density and a velocity potential). At the linearized level, these exhibit purely oscillatory behavior with a dispersion relation given by $\pm i\Lambda(\xi)=\pm i\frac{\abs{\xi}}{\br{\xi}}$. The associated semigroup leads to amplitude decay in $L^\infty$ at the full, non-degenerate rate $t^{-\frac{3}{2}}$ (see Proposition \ref{prop:lin-decay}). This dispersive effect is at the heart of the global existence of solutions in Theorem \ref{main-thm}. The main challenge we address is to establish such a decay estimate for the solution to the nonlinear problem -- see Section \ref{sec:outline-of-proof} below for more details on this.
\end{pstep}

\begin{pstep}{(2) 
Functional framework} Aside from small energy for the initial data, Theorem \ref{main-thm} requires smallness in the $X$-norm, as defined in \eqref{def:X-norm}. Roughly speaking, up to regularity assumptions, it satisfies $\norm{f}_X\sim \norm{|x|^{1+\beta}f}_{L^2}$ for a small $\beta>0$. In order to enable Fourier and oscillatory integral techniques, it is implemented via a suitably weighted $L^2$-norm across frequency and space localized pieces of the solution. This strikes a balance that allows it to stay bounded for the nonlinear solution (essentially via the use of normal forms, see Section \ref{sec:outline-of-proof} for more details), while also guaranteeing decay of the nonlinear solution at the time integrable rate $t^{-(1+\beta)}$, see e.g.\ \eqref{csqTHM-decay} and Proposition \ref{prop:lin-decay}. As a consequence, by standard energy estimates, initially bounded $L^2$ Sobolev norms of the solution stay bounded. These arguments are combined in a continuity argument for appropriately chosen dispersive unknowns, see Proposition \ref{prop:bootstrap}.

Let us also remark that neutrality (i.e.\ $\int\rho_0(x)dx=0$) is not required in our result. The numerology of the parameters involved in our norm ($\beta<\frac12$ and $\gamma>-\frac12$) suggests that maybe a faster decay rate can only be propagated for neutral initial data, but this remains to be understood.
\end{pstep}

    \begin{pstep}{(3) Related works for pressureless systems}  As pointed out above, {a central} feature of \eqref{eq:EP} and \eqref{eq:EP-perturbed-full} is the absence of a pressure gradient in the momentum equation, which contrasts with the usual compressible Euler equations. While the existence of global solutions is still largely open in the pressure and pressureless cases (see also paragraph (4) below), there is scarcely any literature in the pressureless case.
    To that respect, it is even expected that, at least in the one-dimensional case, pressureless Euler systems are roughly speaking relevant to describe the dynamics of so-called ``sticky particles'', where mass concentration phenomena (including the formation of measure-valued solutions) naturally arise -- see for instance \cite{breniergrenier1998sticky,nguyen2008pressureless,brenierandco2013sticky}.

    In the realm of classical solutions, 
    a main line of research was actually devoted to 
     the pressureless Euler--Poisson system for electron dynamics in a fixed  background of ions.
     In one space dimension, this model exhibits a sharp critical-threshold phenomenon: depending on pointwise conditions on the initial data, smooth solutions either exist globally in time or break down in finite time. This behaviour was first identified in \cite{EngelbergLiuTadmor}, and further refined in subsequent works (extensions to non-constant background states \cite{choi2025critical}, or  higher dimension under radial symmetry assumptions  \cite{WeiTadmorBae-critical, carrillo2023existence}).
    For the ionic system (with the Maxwell-Boltzmann relation for the electrons), loss of regularity and singularity formation  with precise blow-up profiles has been  intensively studied \cite{choi2025critical,BaeChoiKwon-singularity, BaeKimKwon-singularity} recently in the one-dimensional case, showing in particular mass concentration. We also refer to \cite{haragus2002linear, bae2025emergence} about the stability of travelling-wave solutions, in connection with the emergence of singularities.  In higher dimensions, and in particular in three space dimension, the situation seems to be much less understood in the ionic case, especially for the full Maxwell-Boltzmann law. 
    Let us finally also mention the works \cite{LannesLinaresSaut, Pu} on the Euler-Poisson system for ions (with or without pressure) in the long-wave and small-amplitude regime: they provide a derivation of the Zakharov-Kuznetsov equation in dimension three if a constant magnetic field is added in the momentum equation, as well as a a derivation of the 2d KP-II equation.
    \end{pstep}

     \begin{pstep}{(4) Related models in the presence of pressure}
In contrast to our setting, the compressible Euler system \textit{with pressure} has been more extensively studied in the literature (in what follows all the discussed models have a pressure gradient).
 It is well-known (see \cite{Sideris,christodoulou2007formation}) that for the purely neutral compressible Euler equations, even smooth irrotational small initial data can lead to the formation of shock waves over large times. For this system, irrotational perturbations of a constant background only satisfy a quasilinear wave equation without null-structure, for which a global result is not expected. It turns out that coupling the dynamics of such a compressible fluid with a self-consistent electric field can have a stabilizing effect, at least for small, irrotational perturbations of a constant density state. This remarkable fact was first understood for the Euler-Poisson system for electrons in work of Guo \cite{Guo-electrons3d}, where dispersion of Klein-Gordon type leads to global smooth solutions in dimension three for neutral perturbations
(a condition that was subsequently removed in \cite{GermainMasmoudiPausader-elec}). The case of ions was then studied by Guo and Pausader in \cite{Guo-Pausader-ions3d}, where a more subtle (almost wave-like) but still non-degenerate dispersive effect yields global solutions for suitable small, irrotational perturbations. These results were then extended to dimension $2$ in \cite{Ionescu-Pausader-electrons2d, pressureEP-2d-elec1,pressureEP-2d-elec2} for the electron case under a neutrality condition, and recently in \cite{ions2d} for the ion case. For the $1$D case, we refer to \cite{Guo1d}, where electrons dynamics are investigated. The analysis of these kind of stabilizing effects has culminated in the construction of global solutions to the full Euler-Maxwell system, where the electromagnetic coupling is taken into account \cite{GermainMasmoudi-EulerMaxwell, GermainMasmoudiPausader-elec,IP14-KG-3D, GuoIonescuPausader-certainplasma, GuoIonescuPausader-EM3d, DIP17-Euler-Maxwell-2D}. There has also been a growing interest in the existence and stability of other physically relevant equilibria for the Euler-Poisson system, that are not constant, such as periodic travelling
waves or solitary waves, see \cite{CordierDegondMarkoSchmeiser, BaeKwon-existence, BaeKwon-LINSTAB, RoussetSun, NobleRodriguesSun,GuelmameHmidiHouamedRousset} and references therein.
     \end{pstep}

\begin{pstep}{(5) Remark on the model for electrons}
 In the pressureless Euler-Poisson equation describing the dynamics of electrons (in a fixed background of ions), the elliptic equation in \eqref{eq:EP} is replaced by $-\Delta \Phi=\varrho-1$. Contrary to the case of ions considered in this work, the dispersion relation in this case is a constant. In particular, this oscillatory behavior does not feature spatially spreading waves, and no amplitude decay should be expected at the linearized level. This preliminary analysis indicates that the global dynamics of electrons may be substantially harder to understand, in sharp contrast to the case with pressure \cite{Guo-electrons3d}.
\end{pstep}

    \begin{pstep}{(6) Link with Vlasov-Poisson equation}
Interestingly, the pressureless nature of  \eqref{eq:EP} allows to connect it to the classical Vlasov-Poisson system
\begin{equation}\label{eq:VP}
\left\{
      \begin{aligned}
\partial_t v +v \cdot \nabla_x f -\nabla_x \Phi \cdot \nabla_v f&=0,  \\[1mm]
e^{\Phi}-\Delta_x \Phi&= \varrho-1,
\end{aligned}
    \right.  
\end{equation}
a kinetic equation (i.e.\ transport in the phase space) on the distribution function $f: \R_+\times \R^3\times \R^3 \to \R_+$ for the ions, that is also widely used in plasma physics. We refer for instance to \cite{Bouchut, DanielMikaela1D, MeganMikaelaR3} as well as to \cite{HKR,BedrossianMasmoudiMouhot, HKNR-screen, IRW, Wei} for the often commonly used screened Coulomb case with a linearized exponential in the elliptic equation. The formal link between \eqref{eq:VP} and \eqref{eq:EP} is the following: if $(\rho, v)$ is a smooth solution to  \eqref{eq:EP}, then $f(t,x,v)=\rho(t,x)\delta_{v=v(t,x)}$ is a solution in the sense of distributions to \eqref{eq:VP}. Solutions to the pressureless Euler-Poisson system therefore yield monokinetic solutions (hence singular, because rough in the velocity variable) to the Vlasov-Poisson equation. Note that Dirac masses in velocity formally correspond to Gaussian profiles in velocity in the zero temperature regime, justifying the cold plasma denomination.
\par This link\footnote{In the case where the pressure gradient is considered, one should start from the collisional Vlasov-Poisson-Boltzmann equation and perform a suitable hydrodynamic limit to derive the compressible Euler-Poisson system with pressure, hence including some thermodynamics in the system (see e.g.\ \cite{Guo-derivEP, deriveEP-ions} and references therein).} between general pressureless Euler systems and Vlasov equations is well-known in the fluids community (see for instance the pioneering works of Zakharov \cite{Zakharov}, Brenier \cite{Brenier89, Br97} and Grenier \cite{Grenier96}) and has recently regained some attention from the mathematical point of view (see e.g.\ \cite{Baradat, gagnebin2025relativistic}). In \cite{BEHK} a general superposition of monokinetic solutions is considered in the so-called multiphasic framework, and allows for a rigorous derivation of \eqref{eq:EP} from \eqref{eq:VP} for well-prepared data. Let us also mention that, at least for the unscreened Coulomb interaction case, the pressureless Euler-Poisson system can be derived by a mean-field limit from Newton's second law \cite{Serfaty}, or by a semiclassical/mean-field limit from the $N$-body Schrödinger/von Neumann equation \cite{GolsePaul}.

Aside from providing an example of global dynamics for \eqref{eq:EP}, Theorem \ref{main-thm} can thus be interpreted as a stability result in the monokinetic regime of the Vlasov--Poisson equation \eqref{eq:VP}, since the steady state \eqref{eqn:steady-state} corresponds to the monokinetic profile $\delta_{v=0}$, which arises as a limiting case of a Maxwellian in velocity (at zero temperature). The latter is one of the main prototypes of homogeneous profiles (including more general decreasing solutions in velocity) for which nonlinear asymptotic stability of \eqref{eq:VP} with screened interactions is known on the whole space, in the context of Landau damping \cite{BedrossianMasmoudiMouhot, HKNR-screen, Screened-Trinh, Screened-improve, Screened-improv2}. In contrast, for the case of electrons the stability of homogeneous equilibria on $\R^3$ is still largely ill understood \cite{BMM-wholespaceElec, HKR-wholespaceelec, IPWW24-VP, NWZ-NonlinwholespaceElec-new}. We highlight that the case of electrons thus seems to pose more difficulties also in the pressureless fluid model, see point (5) above.
\end{pstep}

\subsection{Outline of the Proof}\label{sec:outline-of-proof}
The proof of Theorem \ref{main-thm} follows the fruitful approach in obtaining long-time dynamics in quasilinear dispersive equations developed and extended in e.g.\ \cite{GP-DefocusingNLS-GNT, Guo-Pausader-ions3d, GMS12-GWW3D, PS13-STR-Waves, Ionescu-Pausader-electrons2d, GermainMasmoudi-EulerMaxwell, IP14-KG-3D, IP15-WW, DIPP17, PW18, GPW23-EC, JW24} and references therein. It is conducted in Section \ref{sec:Main-Result-Dispersive} and is based on a standard bootstrap argument. 
\smallskip

{The main difficulty of the paper is treating the quadratic interactions of the nonlinearity. Thus in what follows and throughout Sections \ref{sec:set-up}--\ref{sec:X-norm} we treat the perturbed system with the linearized Maxwell-Boltzmann law, that is the system with the screened elliptic equation on the electric potential:
\begin{equation}\label{eq:EP-perturbed}
\left\{
      \begin{aligned}
      \partial_t \rho + \mathrm{div}(v)+\mathrm{div}(\rho v) &=0, \\[1mm]
\partial_t v +(v \cdot \nabla )v &=-\nabla \Phi,  \\[1mm]
\Phi-\Delta \Phi&= \rho.
\end{aligned}
    \right.
\end{equation} 
The additional arguments to treat the  nonlinear Poisson equation \eqref{eq:EP-perturbed-full}, mainly new cubic contributions, are presented in  Appendix \ref{appendix}. We remark that the cubic remainder is not handled via a dispersive analysis as presented below for the quadratic terms, but via suitable energy estimates.}
\smallskip

 \begin{pstep}{Dispersive formulation} As mentioned in the first comment following the theorem, the main observation is that the system \eqref{eq:EP-perturbed} exhibits oscillations. 
  Thus, the first step is to find favourable oscillating unknowns. After diagonalizing the linearized system in Lemma \ref{lem:linearization}, one equivalently (granted the irrotationality condition) rewrites the perturbed system \eqref{eq:EP-perturbed} (see Proposition \ref{prop-Duhamel}) as
\begin{align}
    \eqref{eq:EP-perturbed}\quad\Longleftrightarrow \quad\partial_tZ_\pm + NL_\pm(Z_+,Z_-)=\pm i\Lambda Z_\pm, \qquad Z_\pm=\abs{\nabla}^{-1}\rho\pm i \Lambda^{-1}\abs{\nabla}^{-1}\mathrm{div}v,\quad \Lambda=\frac{\abs{\nabla}}{\br{\nabla}}.
\end{align}
Here $NL_\pm$ is the quasilinear, quadratic nonlinearity in \eqref{eq:EP-perturbed} written in terms of $Z_\pm$. We note here that 
 the pressureless nature of the system imposes a 
 {hierarchy} of derivatives between $\rho,\, v$ and $Z_\pm$.
The choice of baseline regularity for $Z_\pm$ is governed by the lowest regularity for which energy estimates for the nonlinear system are still available. This choice is then reflected in the structure of the nonlinearity, as discussed below. 
\end{pstep}
\begin{pstep}{Decay}
The central feature of this system is its dispersive nature through the operator $\Lambda$ and notably the amplitude decay it generates. As a matter of fact, $\Lambda$ (or its symbol in the frequency space) is non-degenerate in the sense of the stationary phase lemma:
\begin{align}
    \det\mathrm{Hess}\,\Lambda(\xi)=-\frac{3}{|\xi|(1+|\xi|^2)^{11/2}}\neq 0 \quad \forall\xi\in \R^3 {\setminus \lbrace 0 \rbrace}.
\end{align}
A direct stationary phase lemma argument yields $L^\infty$ decay at the rate $t^{-\frac{3}{2}}$ for the associated linear semigroup, at the cost of $\dot{B}_{1,1}^\frac{1}{2}\cap\dot{B}_{1,1}^6$ regularity, see Remark \ref{remark:lin-decay}. In our setting however, we only propagate an integrable decay rate for the nonlinear solution, and not the full one, see Proposition \ref{prop:lin-decay}. This suffices for the energy to remain bounded as shown in Corollary \ref{cor:blow-up-criterion-decay}. Moreover, we note that in certain space and frequency regions a refined decay estimate \eqref{eqn:linear-decay-j<m} is available which is crucial in propagating nonlinear control as discussed below.
\end{pstep} 

\begin{pstep}{Nonlinear analysis set-up and the $X$-norm} The proof of Theorem \ref{main-thm} follows via a bootstrap argument (see Proposition \ref{prop:bootstrap}) involving the propagation of the $X$-norm, see the definition below. To that end, we define the profiles of the solutions $\zz_\pm=e^{\mp it\Lambda}Z_\pm$, and, as a consequence of the Duhamel formulation (see Proposition \ref{prop-Duhamel}), we reduce the problem to bounding quadratic terms of the form
\begin{align}
    \mathcal{F}\left(\mathcal{B}_\m(f,g)\right)(t,\xi)=\int_0^t\int_{\R^3} e^{is\Phi(\xi,\eta)}\m(\xi,\eta)\widehat{f}(\xi-\eta)\widehat{g}(\eta)d\eta ds
\end{align}
in the $X$-norm.
Here $\m$ is a multiplier encoding the structure of the nonlinearity and $f,g\in \{\zz_\pm\}$ and the integral oscillates with a phase $\Phi(\xi,\eta)=\Phi^{\mu\nu}(\xi,\eta)= \Lambda(\xi)+\mu\Lambda(\xi-\eta)+\nu\Lambda(\eta)$, $\mu,\nu\in\{+,-\}$.   Controlling these terms relies on an intricate analysis of nonlinear interactions. In addition to standard Littlewood-Paley localizations of the frequency sizes, we simultaneously localize the size of the space variable on dyadic scales, decomposing $f=\sum_{j+k\geq 0}Q_{jk}f$, where (informally speaking) $Q_{jk}f$ is the projection of $f(x)$ (resp.\ $\widehat{f}(\xi)$) to scales $|x|\sim 2^{j}$ and $|\xi|\sim 2^{k}$, and $j+k\geq 0$ formalizes an uncertainty principle -- see Section \ref{sec:localizations} for detailed definition and a thorough discussion. The main norm incorporates these localizations and is defined in \eqref{def:X-norm} as follows (for fixed $\beta,\; \gamma$)
\begin{align}
    {\norm{f}_X=\sup_{k+j\geq 0}2^{8k^+}2^{(1+\beta)(k+j)}2^{\gamma k^-}\norm{Q_{jk}f}_{L^2}, \quad \gamma+\beta=-\frac{1}{6}-\frac{2}{3}\beta}, \ \ \beta \in \Big] 0, \frac{1}{5} \Big[.
\end{align}
The choice of this norm  {serves two main purposes}. On the one hand it incorporates enough regularity $2^{8k^+}$ and spatial weight $2^{(1+\beta)j}$ to obtain the decay rate $t^{-(1+\beta)}$ ($\beta<\frac{1}{5}$) for the semigroup of the problem, see Proposition \ref{prop:lin-decay}. This decay suffices for the bootstrap argument as the energy remains bounded for all times, see Proposition \ref{prop:Sobolev-estimate}, Corollary \ref{cor:blow-up-criterion-decay}. Moreover, we note that this choice of norm and in particular, the additional $2^{\gamma k^-}$ allows for faster decay rates $t^{-\theta}$, $1+\beta<\theta<\frac{3}{2}$ in certain regions of the space-frequency space, see \eqref{eqn:linear-decay-j<m}. On the other hand, the nonlinearity can be controlled in this norm, also due to this refined decay estimate. We detail the analysis of interactions in the following paragraphs.

\par  Since the $X$-norm involves spatial and frequency localizations, the bilinear terms above behave differently depending on the localized region. Following a standard approach in treating such terms, we may significantly reduce the space and frequency sizes using a balance of energy and set-size estimates, see  Step 1 in proof of Propositions \ref{prop:finite-speed-of propagation}, \ref{prop:X-norm-j<m}. In terms of the spatial localizations, there are two main regions where the interactions stay bounded for different reasons.
 \end{pstep}
\begin{pstep}{Finite speed of propagation} We formalize the ``finite speed of propagation" feature of our dispersive problem in Proposition \ref{prop:finite-speed-of propagation}. Waves {essentially} travel at finite speed and so can only affect the solution in certain spatial regions. In particular, bilinear terms where the spatial localizations {of the output satisfies} $|x|\gtrsim |t|$ 
{are acceptably small}. 
{In practice, in Proposition \ref{prop:finite-speed-of propagation} this is established by} taking advantage of oscillations of the phase in the {output} frequency $\xi$ via an iterative integration by parts argument, see e.g.\ \eqref{fin-speed-ibp}.
\end{pstep}
\begin{pstep}{Structure of the nonlinearity and normal forms} For the remaining bilinear terms (where $|x|\lesssim |t|$), the key step is to take advantage of oscillations of the phase via a normal form \cite{Shatah1985NormalFA} (see Proposition \ref{prop:X-norm-j<m}): Integrating by parts in time in the integral above, we obtain one boundary term and two cubic {order} terms:
\begin{equation}
\mathcal{F}\left(\mathcal{B}_\m(f,g)\right)(t,\xi)= \int_{\R^3} e^{it\Phi(\xi,\eta)}\m\Phi^{-1}\widehat{f}(\xi-\eta)\widehat{g}(\eta)d\eta + \mathcal{F}\left(\mathcal{B}_{\m\Phi^{-1}}(\partial_tf,g)\right)(t,\xi)+  \mathcal{F}\left(\mathcal{B}_{\m\Phi^{-1}}(f,\partial_tg)\right)(t,\xi).
\end{equation}
Whenever $\m\Phi^{-1}$ satisfies suitable bounds, this is a powerful argument since in practice the time derivatives of $f,g$ decay comparatively rapidly in $L^2$. More precisely, in Lemma \ref{lem:time-derivative-L2} we show that $\norm{\partial_t\mathcal{Z}_\pm}_{L^2}\lesssim t^{-\frac{3}{2}}$, which improves over trivial rate $t^{-(1+\beta)}$. At the core of these arguments thus are the bounds on $\m\Phi^{-1}$, which we quantify in terms of the sizes of the involved frequencies in Lemmas \ref{LM:bound-from-below-PHASE}, \ref{lem:multiplier-bounds}. As it turns out, while the multiplier does not exhibit a full null-structure in the classical sense of e.g.\ \cite{K86-Null-Condition, PS13-STR-Waves}, it does have favourable cancellations that can offset some of the losses\footnote{in fact, $\Phi$ can vanish on a relatively large set of ``time resonances'' given by
\begin{align}\label{time-resonance-set}
   \mathcal{T}^{\mu \nu} \vcentcolon=\lbrace (\xi, \eta) \in \R^3 \times \R^3 \mid \Phi^{\mu\nu}(\xi,\eta)=0 \rbrace = \left\{ \begin{array}{ll}
       \lbrace (0,0) \rbrace  & \text{if}  \ \ (\mu, \nu)=(+,+),  \\
         \lbrace \xi=0\rbrace \cup \lbrace \xi=\eta \rbrace  & \text{if} \ \ (\mu, \nu)=(+,-), \\
         \lbrace \xi=0\rbrace \cup \lbrace \eta =0 \rbrace  & \text{if} \ \  (\mu, \nu)=(-,+), \\
         \lbrace \eta=0\rbrace \cup \lbrace \xi =\eta \rbrace  & \text{if}  \ \ (\mu, \nu)=(-,-).
    \end{array}
    \right\}.
\end{align}
}  of $\Phi^{-1}$, see also Remark \ref{rem:null-structure}. A remaining difficulty is a loss of regularity $|\m\Phi^{-1}|\lesssim |\xi|^{-1}$
when the output frequency $\xi$ is the smallest and satisfies $|\xi|\ll 1$. To overcome this, one takes advantage of the refined decay bound \eqref{eqn:linear-decay-j<m}, as well as the frequency weights embedded in the $X$-norm. We highlight that normal forms are a key tool also in the Euler-Poisson system with pressure, both for electrons \cite{Guo-electrons3d} and ions \cite{Guo-Pausader-ions3d}. The latter case shares many parallels with the present setting, in particular in terms of the form of the dispersion relation at low frequencies.
\end{pstep}

\subsection{Plan of the Article} In \textbf{Section \ref{sec:set-up}} we detail the dispersive formulation and the localization projections. \textbf{Section \ref{sec:Main-Result-Dispersive}} contains a rigorous statement and proof of Theorem \ref{main-thm} in the dispersive framework, including the main bootstrap argument Proposition \ref{prop:bootstrap}. \textbf{Section \ref{sec:linear-decay}} is devoted to the linear decay estimates for the semigroup of the problem, see Proposition \ref{prop:lin-decay}. The energy estimates are performed in \textbf{Section \ref{sec:energy-estimates}}. The main dispersive analysis is conducted in \textbf{Section \ref{sec:X-norm}}, containing the main Propositions \ref{prop:finite-speed-of propagation}, \ref{prop:X-norm-j<m} as well as the decay estimates for the time-derivative of profiles in $L^2$, see Lemma \ref{lem:time-derivative-L2}. \textbf{Section \ref{sec:auxiliary-results}} contains the main phase and multiplier bounds and some useful tools. Finally, \textbf{Appendix} \ref{appendix} presents the arguments to treat the full exponential in the Poisson equation.
\section{Set-up and Main Result}\label{sec:set-up}

\subsection{Dispersive Formulation} \label{sec:dispersive-formulation}
In this section we want to establish the dispersive structure and the ``good unknowns" described in Section \ref{sec:outline-of-proof}. 
One of the main assumptions in Theorem \ref{main-thm} is the irrotationality of the velocity field. This has been discussed in the comments following the theorem. Here we make use of this by rewriting the system \eqref{eq:EP-perturbed} on $(v,\rho)$ in terms of $\rho$ and the divergence part of $v$. If $\nabla\times v=0$, then there exists formally a stream function $\psi(t,x)$ such that $v=\nabla \psi$. The perturbed system \eqref{eq:EP-perturbed} can now be rewritten as
\begin{equation}\label{eq:EP-perturbed-vorticity}
\left\{
      \begin{aligned}
      \partial_t \rho + \Delta \psi+\mathrm{div}(\rho \nabla \psi) &=0, \\[1mm]
\partial_t  \psi +\frac{1}{2} \vert \nabla \psi \vert^2 &=- \Phi,  \\[1mm]
\Phi-\Delta \Phi&= \rho.
\end{aligned}
    \right.
\end{equation}
In the above we have used the general formula $(U \cdot \nabla )U=\frac{1}{2}\nabla \vert U \vert^2-U \times (\nabla\times U)$ that holds for any regular vector field $U(x)$, yielding that $(v \cdot \nabla )v=\frac{1}{2}\nabla \vert \nabla \psi \vert^2$. 
\par We establish the dispersive structure of \eqref{eq:EP-perturbed-vorticity} (and thus of \eqref{eq:EP-perturbed}) by studying its linearization. 
\begin{lem}\label{lem:linearization}
The linearization of \eqref{eq:EP-perturbed-vorticity} (and thus also  \eqref{eq:EP-perturbed}) is equivalent to 
\begin{align}
\partial_tZ_{\pm}=\pm i\Lambda(\abs{\nabla}) Z_\pm,
\end{align}
where the oscillating variables are given by
\begin{equation}\label{eq:def-Unknown}
\begin{split} Z_\pm&\coloneqq\abs{\nabla}^{-1}\rho\pm i \Lambda^{-1}\abs{\nabla}^{-1}\mathrm{div}(v)\\
 &=\abs{\nabla}^{-1}\rho\mp i \br{\nabla}\psi,
 \end{split}
 \end{equation}
and the dispersive operator $\Lambda$ has the symbol given by:
\begin{align}\label{def:dispersionrel}
    \Lambda(\vert \xi \vert )\coloneqq \frac{\vert \xi \vert}{\sqrt{1+ \vert \xi \vert^2}}=\frac{\abs{\xi}}{\langle \xi\rangle}.
\end{align}
\end{lem}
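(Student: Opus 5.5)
Drop the quadratic terms in \eqref{eq:EP-perturbed-vorticity}. The linearized system is
\begin{align}
\partial_t\rho+\Delta\psi=0,\qquad \partial_t\psi=-\Phi=-\br{\nabla}^{-2}\rho,
\end{align}
where we have solved the screened Poisson equation as $\Phi=(1-\Delta)^{-1}\rho=\br{\nabla}^{-2}\rho$. All operators are Fourier multipliers, so we pass to the Fourier side. There the system reads
\begin{align}
\partial_t\widehat\rho=\abs{\xi}^2\widehat\psi,\qquad \partial_t\widehat\psi=-\br{\xi}^{-2}\widehat\rho,
\end{align}
which for each fixed $\xi\neq0$ is a $2\times 2$ linear ODE system. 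Its matrix $\begin{pmatrix}0&\abs{\xi}^2\\-\br{\xi}^{-2}&0\end{pmatrix}$ has eigenvalues $\pm i\abs{\xi}/\br{\xi}=\pm i\Lambda(\xi)$. The plan is to diagonalize this matrix explicitly and read off $Z_\pm$ as the eigen-coordinates.

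We check the claimed combinations directly. Set $Z_\pm=\abs{\nabla}^{-1}\rho\mp i\br{\nabla}\psi$ and compute
\begin{align}
\partial_t Z_\pm=\abs{\nabla}^{-1}(-\Delta\psi)\mp i\br{\nabla}\big(-\br{\nabla}^{-2}\rho\big)=\abs{\nabla}\psi\pm i\br{\nabla}^{-1}\rho .
\end{align}
On the other hand,
\begin{align}
\pm i\Lambda Z_\pm=\pm i\frac{\abs{\nabla}}{\br{\nabla}}\abs{\nabla}^{-1}\rho+\frac{\abs{\nabla}}{\br{\nabla}}\br{\nabla}\psi=\pm i\br{\nabla}^{-1}\rho+\abs{\nabla}\psi,
\end{align}
so the two sides agree. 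Conversely, the map $(\rho,\psi)\mapsto(Z_+,Z_-)$ is invertible on the relevant spaces:
\begin{align}
\rho=\tfrac12\abs{\nabla}(Z_++Z_-),\qquad \psi=\tfrac{i}{2}\br{\nabla}^{-1}(Z_+-Z_-).
\end{align}
Hence the linearized system is \emph{equivalent} to the decoupled equations $\partial_t Z_\pm=\pm i\Lambda Z_\pm$.

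It remains to identify the two expressions in \eqref{eq:def-Unknown}. Since $v=\nabla\psi$, we have $\mathrm{div}\,v=\Delta\psi=-\abs{\nabla}^2\psi$. Therefore
\begin{align}
\pm i\Lambda^{-1}\abs{\nabla}^{-1}\mathrm{div}\,v=\mp i\frac{\br{\nabla}}{\abs{\nabla}}\abs{\nabla}^{-1}\abs{\nabla}^2\psi=\mp i\br{\nabla}\psi .
\end{align}
For equivalence with \eqref{eq:EP-perturbed} itself, note that the linearization of \eqref{eq:EP-perturbed} is $\partial_t\rho+\mathrm{div}\,v=0$, $\partial_t v=-\nabla\Phi$, and that irrotationality is preserved by this linear flow. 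We can thus write $v=\nabla\psi$ with $\psi=-\abs{\nabla}^{-2}\mathrm{div}\,v$, which recovers the system above.

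The only real care point is functional-analytic: giving meaning to the singular multipliers $\abs{\nabla}^{-1}$ and $\Lambda^{-1}$ at $\xi=0$. This is why $\dot H^{-1}$ control appears in Theorem \ref{main-thm}. Everything else is an algebraic identity of Fourier symbols, and I expect no genuine obstacle.
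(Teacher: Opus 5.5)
Your proof is correct and follows the same route as the paper: you pass to the Fourier side, obtain the $2\times 2$ system $\partial_t\widehat\rho=\abs{\xi}^2\widehat\psi$, $\partial_t\widehat\psi=-\br{\xi}^{-2}\widehat\rho$, and check that $\widehat{Z_\pm}$ diagonalizes it with eigenvalues $\pm i\Lambda(\xi)$. The paper states this diagonalization in one line, whereas you also write out the symbol computation, the inverse map $(Z_+,Z_-)\mapsto(\rho,\psi)$, and the identity between the two expressions for $Z_\pm$.
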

\begin{proof} The linearization of the perturbed system \eqref{eq:EP-perturbed-vorticity} reads:
\begin{equation}\label{eq:EP-linearized}
\left\{
      \begin{aligned}
      &\partial_t \rho + \Delta\psi=0, \\[1mm]
&\partial_t \psi =- \Phi,  \\[1mm]
&\Phi-\Delta \Phi= \rho,
\end{aligned}
    \right. \ \ \ \ 
\end{equation}
In particular, in the frequency space we obtain
\begin{equation}\label{eq:EP-linearized-Fourier}
\left\{
      \begin{aligned}
      &\partial_t \widehat{\rho}(\xi) = \abs{\xi}^2\widehat{\psi}(\xi), \\[1mm]
&\partial_t \widehat{\psi}(\xi) =-(1+\abs{\xi}^2)^{-1}\widehat{\rho}(\xi). \\[1mm]
\end{aligned}
    \right. \ \ \ \
\end{equation}
By multiplying the first equation with $\abs{\xi}^{-1}$ and the second one with $i\br{\xi}$, we can see that $\widehat{Z_\pm}$ in fact diagonalizes this system with dispersion relation given by $\Lambda(\xi)$, which proves the lemma.\end{proof}

\begin{rem}\label{rem:dispersive-unknowns}
\begin{enumerate}
\item One readily observes that {at high frequency} the unknowns $Z_\pm$ carry one more derivative on the velocity $v$ (two on the stream function $\psi)$ compared to the density $\rho$. As a matter of fact, in the absence of pressure term, we cannot treat the equation as an hyperbolic system in $(\rho, v)$, hence enforcing a shift of regularity between the two. 
\item Since $v$ is irrotational, we can express the original unknowns $(v,\psi,\rho)$ in terms of $ Z_\pm$ as follows 
\begin{align}\label{eqn:rho,psi-in-Z}
\rho=\abs{\nabla}\left(\frac{Z_++Z_-}{2}\right), \quad v=i\nabla\langle \nabla \rangle^{-1}\left(\frac{Z_+-Z_-}{2}\right), \quad \Delta \psi=i\br{\nabla}^{-1}\Delta\left(\frac{Z_+-Z_-}{2}\right).
\end{align}
\item We note that $Z_\pm$ are not unique in diagonalizing the linearized system of \eqref{eq:EP-perturbed-vorticity}, but this choice of unknowns allows for favourable $H^s$ estimates, see Section \ref{sec:energy-estimates}. In particular, good commutator estimates, which are crucial in obtaining standard bounds on the energy, are available.
\end{enumerate}
\end{rem}

Since the unknowns $Z_\pm$ are the ones oscillating on the linear level, it is natural to rewrite the nonlinear system using them. This allows us to define in \eqref{def:bilinear-expressions} the main bilinear objects on which the dispersive analysis will be conducted in later sections.
\begin{prop}\label{prop-Duhamel}
    Let the dispersive unknowns $Z_\pm$ and the dispersion relation $\Lambda$ be defined as in \eqref{eq:def-Unknown}-\eqref{def:dispersionrel}.  Let the profiles of the unknowns be given by:
    \begin{align}
        \widehat{\mathcal{Z}_\pm}(\xi)\vcentcolon= e^{\mp it\Lambda(\xi)}\widehat{Z_\pm}.
    \end{align}
Then the following statements hold true:
    \begin{enumerate}
        \item  The system \eqref{eq:EP-perturbed-vorticity} is equivalent to
       \begin{equation}\label{eq:Syst-dispersive}
   \begin{split}
       \partial_tZ_\pm&+\frac{i}{4}\abs{\nabla}^{-1}\mathrm{div}\Big(\abs{\nabla}(Z_++Z_-)\nabla\langle\nabla\rangle^{-1}(Z_+-Z_-)\Big)\\
       &\hspace{7cm}{\pm} \frac{i}{8}\langle \nabla\rangle\abs{\nabla\langle\nabla\rangle^{-1}(Z_+-Z_-)}^2=\pm i\Lambda Z_\pm.
   \end{split} 
\end{equation}
\item The profiles $\zz_\pm$ satisfy 
\begin{align}
    &\zz_{\pm}(t)=\zz_\pm(0)+\sum_{\mu\in\{+,-\}}\mathcal{B}_{\m_\pm^{\mu\mu}}(\zz_\mu,\zz_\mu)(t)+\mathcal{B}_{\m_\pm^{+-}}(\zz_+,\zz_-), \label{eqn:Duhamel-profiles}
\end{align}
where the quadratic nonlinearity is given for $\mu,\nu\in\{+,-\}$ with $\mu\geq\nu$, by
\begin{equation}\label{def:bilinear-expressions}
\begin{split}
    \mathcal{B}_{\m_\pm^{\mu\nu}}(\zz_\mu,\zz_\nu)(t)&\vcentcolon =\int_0^t \mathcal{Q}_{\m_\pm^{\mu\nu}}(\zz_\mu,\zz_\nu)(s)ds,\\
    \mathcal{F}(\mathcal{Q}_{\m_\pm^{\mu\nu}}(\zz_\mu,\zz_\nu)(s,\xi)&\vcentcolon =\int_{\R^3}e^{is\Phi_\pm^{\mu\nu}(\xi,\eta)}\m_\pm^{\mu\nu}(\xi,\eta)\widehat{\zz_\mu}(s,\xi-\eta)\widehat{\zz_\nu}(s,\eta)d\eta.
\end{split}
\end{equation}
The phases read
\begin{align}\label{def:phases}
    \Phi_\pm^{\mu\nu}(\xi,\eta)\vcentcolon=\mp\Lambda(\xi)+\mu\Lambda(\xi-\eta)+\nu\Lambda(\eta),
\end{align}
and the multipliers are given by
\begin{align}\label{def:multipliers}
\begin{split}
        \m_\pm^{\mu\mu}(\xi,\eta)&\coloneqq {\mu\frac{i}{8}}\left(\frac{\xi\cdot\eta}{\abs{\xi}\langle\eta\rangle}\abs{\xi-\eta}+\frac{\xi\cdot(\xi-\eta)}{\abs{\xi}\langle\xi-\eta\rangle}\abs{\eta}\right){\mp} \frac{i}{8}\langle\xi\rangle\frac{(\xi-\eta)\cdot\eta}{\langle{\xi-\eta}\rangle\langle{\eta}\rangle},\\
    \m_\pm^{+-}(\xi,\eta)&\coloneqq-{\frac{i}{4}}\left(\frac{\xi\cdot\eta}{\abs{\xi}\langle\eta\rangle}\abs{\xi-\eta}-\frac{\xi\cdot(\xi-\eta)}{\abs{\xi}\langle\xi-\eta\rangle}\abs{\eta}\right)\pm \frac{i}{4}\langle\xi\rangle\frac{(\xi-\eta)\cdot\eta}{\langle{\xi-\eta}\rangle\langle{\eta}\rangle}.
    \end{split}
\end{align}
\end{enumerate}
\end{prop}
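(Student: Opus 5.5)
The proof is a direct computation in two stages. First we rewrite the system \eqref{eq:EP-perturbed-vorticity} in terms of $Z_\pm$. Then we pass to profiles and take Fourier transforms.

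For part (1), we start from the definition \eqref{eq:def-Unknown}, $Z_\pm=\abs{\nabla}^{-1}\rho\mp i\br{\nabla}\psi$. We differentiate in time and substitute the equations for $\partial_t\rho$ and $\partial_t\psi$. Using $\Phi=\br{\nabla}^{-2}\rho$, Lemma \ref{lem:linearization} already shows that the linear parts combine into $\pm i\Lambda Z_\pm$. We therefore only need to track the two quadratic terms.

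\begin{itemize}
\item \emph{Continuity equation.} This contributes $-\abs{\nabla}^{-1}\mathrm{div}(\rho\nabla\psi)$ to $\partial_tZ_\pm$. By \eqref{eqn:rho,psi-in-Z}, $\rho=\abs{\nabla}\tfrac{Z_++Z_-}{2}$ and $\nabla\psi=v=i\nabla\br{\nabla}^{-1}\tfrac{Z_+-Z_-}{2}$. This gives $-\tfrac{i}{4}\abs{\nabla}^{-1}\mathrm{div}\big(\abs{\nabla}(Z_++Z_-)\nabla\br{\nabla}^{-1}(Z_+-Z_-)\big)$.
\item \emph{Bernoulli equation.} This contributes $\mp i\br{\nabla}\big(-\tfrac12\abs{\nabla\psi}^2\big)=\pm\tfrac{i}{2}\br{\nabla}\abs{v}^2$. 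Since $v\cdot v=-\tfrac14\big(\nabla\br{\nabla}^{-1}(Z_+-Z_-)\big)^2$, the quadratic term is $\mp\tfrac{i}{8}\br{\nabla}\big(\nabla\br{\nabla}^{-1}(Z_+-Z_-)\big)^{\cdot 2}$.
\end{itemize}

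Moving both terms to the left yields \eqref{eq:Syst-dispersive}. Here $\abs{\cdot}^2$ is read as the bilinear square $w\cdot w$ of a (formally imaginary-valued) expression, not a modulus. The equivalence of the two systems follows because $(\rho,\psi)\mapsto Z_\pm$ is invertible via \eqref{eqn:rho,psi-in-Z}. This uses irrotationality, and $\psi$ is determined up to constants by the $\dot H^{-1}$ control.

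For part (2), we set $\widehat{\zz_\pm}=e^{\mp it\Lambda}\widehat{Z_\pm}$, so that $\partial_t\widehat{\zz_\pm}=-e^{\mp it\Lambda(\xi)}\widehat{NL_\pm}$. We write each quadratic term in Fourier as a convolution in $\eta$ and substitute $\widehat{Z_\mu}(\xi-\eta)=e^{i\mu t\Lambda(\xi-\eta)}\widehat{\zz_\mu}(\xi-\eta)$, and similarly for $\eta$. This produces exactly the phases $\Phi_\pm^{\mu\nu}=\mp\Lambda(\xi)+\mu\Lambda(\xi-\eta)+\nu\Lambda(\eta)$.

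The Fourier symbols are the following.
\begin{itemize}
\item \emph{First term.} With $\widehat{Z_\mu}$ placed at frequency $\xi-\eta$ under $\abs{\nabla}$ and at $\eta$ under $\nabla\br{\nabla}^{-1}$, the symbol is $\tfrac{i}{4}\cdot\tfrac{i\xi}{\abs{\xi}}\cdot\abs{\xi-\eta}\tfrac{i\eta}{\br{\eta}}$, which equals $-\tfrac{i}{4}\tfrac{\xi\cdot\eta}{\abs{\xi}\br{\eta}}\abs{\xi-\eta}$ up to the signs coming from $(Z_++Z_-)(Z_+-Z_-)$.
\item \emph{Second term.} The symbol is $\pm\tfrac{i}{8}\br{\xi}\tfrac{(\xi-\eta)\cdot\eta}{\br{\xi-\eta}\br{\eta}}$, again with the appropriate signs. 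Note that $(i\,\cdot)(i\,\cdot)$ gives a minus sign.
\end{itemize}

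We then group the resulting terms by the pair $(\mu,\nu)$. The $(+,-)$ and $(-,+)$ contributions are merged into a single $\m^{+-}_\pm$ term by the change of variables $\eta\mapsto\xi-\eta$, which swaps the roles of the two inputs. The diagonal terms $\mu=\nu$ are symmetrized in the same way, producing the two-term sum in \eqref{def:multipliers} with prefactor $\tfrac18$. Integrating in time from $0$ gives \eqref{eqn:Duhamel-profiles}.

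The main obstacle is purely bookkeeping: getting every sign and factor right. This includes the $\pm$ in front of each term, the factors of $i$ from the Fourier symbols of $\nabla$ and $\mathrm{div}$, the cross-term signs in $(Z_++Z_-)(Z_+-Z_-)$ and $(Z_+-Z_-)^2$, and the symmetrization factors $\tfrac12$ versus $1$ for diagonal versus off-diagonal pairs. To control this, I would first verify the sign of the whole computation against the linear part, which must reproduce Lemma \ref{lem:linearization}. I would then check each multiplier against the symmetry $\eta\leftrightarrow\xi-\eta$.
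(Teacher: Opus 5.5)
Your proposal is the same argument as the paper's. You substitute $\rho=\frac12\abs{\nabla}(Z_++Z_-)$ and $\nabla\psi=\frac{i}{2}\nabla\br{\nabla}^{-1}(Z_+-Z_-)$ into $\partial_t Z_\pm$, pass to profiles in Fourier, read off the phases, and merge the $(+,-)$ and $(-,+)$ terms via $\eta\mapsto\xi-\eta$. Your explicit symmetrization of the diagonal terms is what turns $\mu\frac{i}{4}\frac{\xi\cdot\eta}{\abs{\xi}\br{\eta}}\abs{\xi-\eta}$ into the two-term $\frac{i}{8}$ expression; the paper leaves that step implicit.

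There is one caveat, on the sign bookkeeping you left as ``with the appropriate signs''. Since $\widehat{\nabla\psi}(\zeta)=-\frac12\frac{\zeta}{\br{\zeta}}\widehat{(Z_+-Z_-)}(\zeta)$, the term $\pm\frac{i}{2}\br{\nabla}(\nabla\psi\cdot\nabla\psi)$ in $\partial_t Z_\pm$ has symbol $\pm\frac{i}{8}\br{\xi}\frac{(\xi-\eta)\cdot\eta}{\br{\xi-\eta}\br{\eta}}$ against $\widehat{(Z_+-Z_-)}(\xi-\eta)\widehat{(Z_+-Z_-)}(\eta)$. This term passes to $\partial_t\zz_\pm$ unchanged apart from the factor $e^{\mp it\Lambda(\xi)}$. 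In that product, the $(\mu,\mu)$ coefficients are $+1$ and the $(\mu,-\mu)$ coefficients are $-1$.

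So, carried out correctly, your computation gives the $\br{\xi}$-term as $\pm\frac{i}{8}$ in $\m^{\mu\mu}_\pm$ and $\mp\frac{i}{4}$ in $\m^{+-}_\pm$. That is the opposite sign to the printed \eqref{def:multipliers}. The paper reaches the printed sign only through a slip in \eqref{eqn:profiles-eqn-rough}, which has $\mp\frac{i}{8}$ where \eqref{eq:Syst-dispersive} has $\pm\frac{i}{8}$. The $\frac{i}{4}$-parts of both multipliers do match your computation.

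The discrepancy does not matter for the rest of the paper, which only uses the $S^\infty$ bounds and the vanishing at $\eta=0$ and $\xi=\eta$. Still, you should not try to force your bookkeeping to agree with the printed sign.
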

\begin{proof} The system \eqref{eq:Syst-dispersive} follows by a direct computation from \eqref{eq:EP-perturbed-vorticity}. Recall 
\begin{align}
    &Z_\pm=\abs{\nabla}^{-1}\rho\pm i\Lambda^{-1}\abs{\nabla}^{-1}\Delta \psi=\abs{\nabla}^{-1}\rho\mp i\br{\nabla}\psi, \quad \rho=\frac{\abs{\nabla}}{2}(Z_++Z_-),\\
    &\psi=\frac{i}{2}\br{\nabla}^{-1}(Z_+-Z_-), \quad \Delta\psi=-\frac{i}{2}\Lambda\abs{\nabla}(Z_+-Z_-).
\end{align}
Now we compute
\begin{align}
    \partial_t\abs{\nabla}^{-1}\rho&=-\abs{\nabla}^{-1}\mathrm{div}(\rho\nabla\psi)-\abs{\nabla}^{-1}\Delta\psi\\
    &=-\frac{i}{4}\abs{\nabla}^{-1}\mathrm{div}\Big(\abs{\nabla}(Z_++Z_-)\nabla\br{\nabla}^{-1}(Z_+-Z_-)\Big)+\frac{i}{2}\abs{\nabla}\br{\nabla}^{-1}(Z_+-Z_-).
\end{align}
Similarly, we have
\begin{align}
    \partial_t i\br{\nabla}\psi&=-\frac{i}{2}\br{\nabla}\abs{\nabla\psi}^2-i\br{\nabla}^{-1}\rho
    \\&
    =-\frac{i}{8}\br{\nabla}\abs{i\nabla\br{\nabla}^{-1}(Z_+-Z_-) }^2-\frac{i}{2}\br{\nabla}^{-2}\br{\nabla}\abs{\nabla}(Z_++Z_-)\\
    &=\frac{i}{8}\br{\nabla}\abs{\nabla\br{\nabla}^{-1}(Z_+-Z_-) }^2-\frac{i}{2}\Lambda(Z_++Z_-).
\end{align}
We prove the second statement.
    Using the definition of the profiles $\zz_\pm=e^{\mp it\Lambda}Z_\pm$ and the equations \eqref{eq:Syst-dispersive} on the dispersive unknowns we may write in the frequency space
    \begin{equation}\label{eqn:profiles-eqn-rough}
    \begin{split}
        \partial_t\widehat{\zz_\pm}+&\frac{i}{4}i\xi\cdot\abs{\xi}^{-1}\int_{\R^3}e^{\mp it\Lambda(\xi)}\abs{\xi-\eta}\left(e^{it\Lambda(\xi-\eta)}\widehat{\zz_+}(\xi-\eta)+e^{-it\Lambda(\xi-\eta)}\widehat{\zz_-}(\xi-\eta)\right)\\
        &\qquad\qquad\qquad \cdot i\eta\langle\eta\rangle^{-1}\left(e^{it\Lambda(\eta)}\widehat{\zz_+}(\eta)-e^{-it\Lambda(\eta)}\widehat{\zz_-}(\eta)\right)d\eta\\
        &\mp\frac{i}{8}\langle\xi\rangle\int_{\R^3}e^{\mp it\Lambda(\xi)}i(\xi-\eta)\langle\xi-\eta\rangle^{-1}\left(e^{it\Lambda(\xi-\eta)}\widehat{\zz_+}(\xi-\eta)-e^{{-}it\Lambda(\xi-\eta)}\widehat{\zz_-}(\xi-\eta)\right)\\
        &\qquad\qquad \qquad \cdot i\eta\langle\eta\rangle^{-1}\left(e^{it\Lambda(\eta)}\widehat{\zz_+}(\eta)-e^{{-}it\Lambda(\eta)}\widehat{\zz_-}(\eta)\right)d\eta=0.
        \end{split}
    \end{equation}
    In particular, we can directly read the multipliers $\m_\pm^{\mu\mu}$ for $\mu\in\{+,-\}$. To see the multipliers in case of the mixed convolutions, it suffices to do a change of variables $\xi-\eta \leftrightarrow\eta$ in the integrals containing $\widehat{\zz_-}(\xi-\eta)\widehat{\zz_+}(\eta)$ and observe that the phase stays the same due to the change of sign in the semigroup. Indeed, by a change of variable in the second integral below and the fact that $\Phi^{+-}_\pm(\xi,\eta)=\Phi^{-+}_\pm(\xi,\xi-\eta)$ we obtain for the first integral in \eqref{eqn:profiles-eqn-rough}:
    \begin{align}
    &-\int_{\R^3}e^{\mp it\Lambda(\xi)+it\Lambda(\xi-\eta)-it\Lambda(\eta)} \abs{\xi-\eta}\eta\br{\eta}^{-1}\widehat{\zz_+}(\xi-\eta)\widehat{\zz_-}(\eta)d\eta\\
       &\qquad \qquad\qquad + \int_{\R^3}e^{\mp it\Lambda(\xi)-it\Lambda(\xi-\eta)+it\Lambda(\eta)} \abs{\xi-\eta}\eta\br{\eta}^{-1}\widehat{\zz_-}(\xi-\eta)\widehat{\zz_+}(\eta)d\eta\\
       &=-\int_{\R^3}e^{it\Phi_\pm^{+-}(\xi,\eta)}\eta\frac{\abs{\xi-\eta}}{\br{\eta}}\widehat{\zz_+}(\xi-\eta)\widehat{\zz_-}(\eta)d\eta+ \int_{\R^3}e^{it\Phi_\pm^{-+}(\xi,\xi-\eta)} (\xi-\eta)\frac{\abs{\eta}}{\br{\xi-\eta}}\widehat{\zz_-}(\eta)\widehat{\zz_+}(\xi-\eta)d\eta\\
       &=\int_{\R^3}e^{it\Phi_\pm^{+-}(\xi,\eta)}\left(-\eta\frac{\abs{\xi-\eta}}{\br{\eta}}+(\xi-\eta)\frac{\abs{\eta}}{\br{\xi-\eta}}\right)\widehat{\zz_+}(\xi-\eta)\widehat{\zz_-}(\eta)d\eta.
    \end{align}The remaining integrals in \eqref{eqn:profiles-eqn-rough} containing mixed terms follow analogously and this finishes the proof.
\end{proof}

\begin{rem}\label{rem:null-structure}
In view of the time resonances \eqref{time-resonance-set}, we see that the multipliers in \eqref{def:multipliers} vanish at first order when $\xi=\eta$ or $\eta=0$. This is employed in obtaining a faster decay of the time derivative {of the dispersive profiles in $L^2$} in Lemma \ref{lem:time-derivative-L2} and in the normal form transformation in Proposition \ref{prop:X-norm-j<m}.
\end{rem}

\subsection{Localizations}\label{sec:localizations}
 Let $\psi: \R \to [0,1]$ be an even, smooth function with $\supp\psi\subset [-8/5,8/5]$ and $\psi\vert_{[-5/4,5/4]}\equiv1$.
With a slight abuse of notation we also let $\psi$ be the corresponding radial function on $\R^3$.
For $k\in\Z$ we define $\varphi_k(x) \vcentcolon= \psi(2^{-k}|x|) - \psi(2^{-k+1}|x|)$, so that the family $(\varphi_k)_{k\in\Z}$
forms a partition of unity:
\begin{equation*}
 \sum_{k\in\Z}\varphi_k(\xi)=1, \quad \xi \neq 0.
\end{equation*}
We also let
\begin{align*}
\varphi_{I}(x) \vcentcolon= \sum_{k \in I \cap \Z}\varphi_k, \quad \text{for any} \quad I \subset \R, \qquad
\varphi_{\leq a}(x) \vcentcolon= \varphi_{(-\infty,a]}(x), \qquad \varphi_{> a}(x) \vcentcolon= \varphi_{(a,\infty]}(x),
\end{align*}
with similar definitions for $\varphi_{< a},\varphi_{\geq a}$.
Using these, we may define frequency projections $P_k$ through
\begin{equation*}
 P_k g\vcentcolon=\mathcal{F}^{-1}\left(\varphi_k(\xi)\widehat{g}(\xi)\right)
\end{equation*}
and similarly $P_{I}g\vcentcolon=\mathcal{F}^{-1}\left(\varphi_{I}(\xi)\widehat{g}(\xi)\right)$,
$P_{\leq k}g\vcentcolon=\mathcal{F}^{-1}\left(\varphi_{\leq k}(\xi) \widehat{g}(\xi)\right)$, $k\in\Z$ etc.
To simultaneously localize in space, for $(k,j)\in\mathcal{J}\vcentcolon=\{(k,j)\in \Z \times \Z: \; k+j\geq 0,\;j\geq 0\}$ define
\begin{equation}
\varphi_j^{(k)}(x)\vcentcolon=
\begin{cases}
 \varphi_j(x), \; & j \geq -k+1, \text{ or }j\geq 1,
 \\ \varphi_{\leq 0}(x), \; & j = 0, \quad (k \geq 0)
 \\ \varphi_{\leq -k}(x), \; & j = -k, \quad (k \leq 0).
\end{cases}
\end{equation}
Notice that for any $k\in\Z$ we have $\sum_{j \geq -\min\{0,k\}} \varphi_j^{(k)}(x) = 1$. Then we have the following decomposition
\begin{equation}\label{Q_jk-def}
f = \sum_{(k,j)\in\mathcal{J}} Q_{jk} f, \qquad Q_{jk} f \vcentcolon= P_{[k-2,k+2]}\varphi_j^{(k)} P_k f.
\end{equation}
In later sections, we will use the localization projections simultaneously for the variables $\xi, \, \xi-\eta$ and $\eta$, and thus introduce the  short-hand notation and fix the convention for $k,k_1,k_2\in \Z$
\begin{align}
&\chi(\xi,\eta)=\varphi_{k}(\xi)\varphi_{k_1}(\xi-\eta)\varphi_{k_2}(\eta) \label{eqn: chi localizations},\\
    &\abs{\xi}\sim 2^k,\quad \abs{\xi-\eta}\sim 2^{k_1}, \quad \abs{\eta}\sim 2^{k_2}. \label{def:frequencies-k}
\end{align}

\subsection{\texorpdfstring{$X$}{X}-norm and Main Result}\label{sec:Main-Result-Dispersive} The main norm in our result is defined using the atomic decomposition introduced above. For $k\in\Z$, let
   $k^+\vcentcolon=\max\{0,k\} ,\,  k^-\vcentcolon=\min\{0,k\}$, then we define the $X$-norm by
\begin{align}\label{def:X-norm}
    &\norm{f}_X\vcentcolon=\sup_{(k,j)\in \mathcal{J}}2^{8k^+}2^{(1+\beta)(k+j)}2^{\gamma k^-}\norm{Q_{jk}f}_{L^2}.
\end{align}
 This norm guarantees the decay of profiles in $L^\infty$, see Proposition \ref{prop:lin-decay}. Throughout the paper, we fix parameters $\beta$ and $\gamma$ satisfying (see also Remark \ref{rem:parameters})
\begin{align}\label{eqn:gamma-beta-relation}
  \beta \in \left]0, \frac{1}{5}\right[,\qquad  \gamma=-\frac{1}{6}-\frac{5}{3}\beta, \qquad \gamma+\beta=-\frac{1}{6}-\frac{2}{3}\beta, \qquad \gamma\in \left]-\frac{1}{2},0\right[.
\end{align}

Coming back to the original formulation of the system, Theorem \ref{main-thm} will essentially be a consequence of the following theorem on the dispersive unknowns from Proposition \ref{prop-Duhamel}.
\begin{thm}\label{main-thm-dispersive}
 Let $Z_\pm$ solve \eqref{eq:Syst-dispersive} with initial data $Z_{\pm,0}$. Then there exist $N_0>0$, $\eps_0>0$ such that for any $\eps \in ]0, \eps_0[$, if $Z_{\pm,0}$ satisfies
   \begin{align}\label{assumption-smallnessdata}
\norm{Z_{\pm,0}}_{H^{N_0}}+\norm{Z_{\pm,0}}_{X} < \eps,
   \end{align}
   then there exists a unique global in time solution $Z_\pm \in \mathscr{C}\left(\R; H^{N_0}\right)$ to the system \eqref{eq:Syst-dispersive} with initial data $Z_{\pm,0}$. Furthermore, there holds 
   \begin{align}
       \label{csqTHM-boundZ}\norm{Z_\pm(t)}_{H^{N_0}} + \norm{ e^{\mp it \Lambda}Z_\pm(t)}_{X} &\lesssim \eps_0, \ \ t>0, \\
       \label{csqTHM-decayZ}\norm{\vert \nabla \vert Z_\pm(t)}_{L^\infty}+\norm{ \nabla  \langle \nabla \rangle^{-1}Z_\pm(t)}_{L^\infty} &\lesssim (1+t )^{-(1+\beta)}\eps_0, \ \ t>0.
   \end{align}
  
   Moreover, we have linear scattering: there exists $\zz_\pm^{\infty} \in \R$ satisfying  $\norm{\zz_\pm^{\infty}}_{L^2 \cap X}<\infty$ such that 
   \begin{align*}
       \norm{ e^{\mp it \Lambda}Z_\pm(t)-\zz_\pm^{\infty}}_{L^2 \cap X} \underset{t \rightarrow +\infty}{\longrightarrow} 0.
   \end{align*}
\end{thm}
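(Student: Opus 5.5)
The proof will be a continuity (bootstrap) argument built on local well-posedness. First I will record a local existence and uniqueness statement for \eqref{eq:Syst-dispersive} in $H^{N_0}$. This follows from energy estimates for the original system \eqref{eq:EP-perturbed-vorticity}, using the regularity hierarchy of Remark \ref{rem:dispersive-unknowns}. It comes with a blow-up criterion: the solution persists as long as $\int_0^t \norm{\nabla v}_{L^\infty}+\norm{\nabla\rho}_{L^\infty}\,ds$ stays finite. Both quantities are controlled by $\norm{\abs{\nabla}Z_\pm}_{L^\infty}$ and $\norm{\nabla\br{\nabla}^{-1}Z_\pm}_{L^\infty}$, up to a few extra derivatives absorbed by interpolation with $H^{N_0}$.

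Next I fix $T>0$ and assume the bootstrap hypotheses on $[0,T]$:
\[
\norm{Z_\pm(t)}_{H^{N_0}}\le C\eps, \qquad \norm{\zz_\pm(t)}_X\le C\eps .
\]
The goal is to improve these to $C\eps/2$.

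\textbf{Step 1 (decay).} By the linear decay estimate Proposition \ref{prop:lin-decay}, applied to $Z_\pm=e^{\pm it\Lambda}\zz_\pm$, the $X$-bound gives
\[
\norm{\abs{\nabla}Z_\pm}_{L^\infty}+\norm{\nabla\br{\nabla}^{-1}Z_\pm}_{L^\infty}\lesssim C\eps(1+t)^{-(1+\beta)} .
\]
This already yields \eqref{csqTHM-decayZ}.

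\textbf{Step 2 (energy).} The energy estimates of Proposition \ref{prop:Sobolev-estimate} have the form
\[
\frac{d}{dt}\norm{Z}_{H^{N_0}}^2\lesssim \bigl(\norm{\nabla v}_{L^\infty}+\dots\bigr)\norm{Z}_{H^{N_0}}^2 .
\]
Since the decay rate $(1+t)^{-(1+\beta)}$ is time-integrable, Gronwall gives $\norm{Z(t)}_{H^{N_0}}\le \eps\exp(CC\eps)\le C\eps/2$ for $\eps_0$ small. This is Corollary \ref{cor:blow-up-criterion-decay}.

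\textbf{Step 3 (the $X$-norm).} This is the main obstacle. Using the Duhamel formula \eqref{eqn:Duhamel-profiles}, it suffices to show
\[
\norm{\mathcal{B}_{\m}(\zz_\mu,\zz_\nu)(t)}_X\lesssim (C\eps)^2
\]
uniformly in $t$, for each localized piece $Q_{jk}$. I split according to the spatial scale $2^j$ of the output relative to the time $s\sim 2^m$.
\begin{itemize}
\item When $2^j\gtrsim 2^m$, I use the finite speed of propagation bound, Proposition \ref{prop:finite-speed-of propagation}, which rests on repeated integration by parts in $\xi$.
\item When $2^j\ll 2^m$, I use the normal form Proposition \ref{prop:X-norm-j<m}. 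Integrating by parts in time produces a boundary term with multiplier $\m\Phi^{-1}$, bounded via Lemmas \ref{LM:bound-from-below-PHASE}, \ref{lem:multiplier-bounds}. It also produces cubic terms containing $\partial_t\zz$, which are controlled by $\norm{\partial_t\zz_\pm}_{L^2}\lesssim t^{-3/2}(C\eps)^2$ from Lemma \ref{lem:time-derivative-L2}.
\end{itemize}
The delicate part is the low output frequency loss $\abs{\m\Phi^{-1}}\lesssim\abs{\xi}^{-1}$. There I plan to use the refined decay \eqref{eqn:linear-decay-j<m} together with the weight $2^{\gamma k^-}$, with $\gamma$ chosen as in \eqref{eqn:gamma-beta-relation}. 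Very large or very small frequencies and large $j$ are discarded first using energy and set-size bounds. The result is $\norm{\zz_\pm(t)}_X\le \eps+C'(C\eps)^2\le C\eps/2$, which closes the bootstrap. Global existence then follows from the blow-up criterion, and \eqref{csqTHM-boundZ} holds.

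\textbf{Step 4 (scattering).} The same bilinear estimates applied on $[t_1,t_2]$ give decay of the integrand. For $L^2$, the bound $\norm{\partial_t\zz_\pm}_{L^2}\lesssim t^{-3/2}$ shows $\zz_\pm(t)$ is Cauchy in $L^2$. For $X$, I will redo Step 3 restricted to $s\in[t_1,t_2]$ and track a small power gain $2^{-\delta m}$ in every case. Both together give a limit $\zz_\pm^\infty$ in $L^2\cap X$.
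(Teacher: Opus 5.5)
Your proposal is correct and follows the paper's route. It combines local well-posedness with a continuity argument, takes decay and energy bounds from Proposition \ref{prop:lin-decay} and Corollary \ref{cor:blow-up-criterion-decay}, and improves the $X$-norm through the Duhamel formula, split into finite speed of propagation ($j\geq m+A$, Proposition \ref{prop:finite-speed-of propagation}) and normal forms ($j<m+A$, Proposition \ref{prop:X-norm-j<m}); scattering then follows from Lemma \ref{lem:time-derivative-L2} and the same bilinear bounds. The differences are minor: the paper bootstraps only the $X$-norm and derives the energy bound from it, and the summable $2^{-\delta m/2}$ gain per dyadic time block is already needed to sum over $m$ when closing the bootstrap, not only for scattering.
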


The proof of Theorem \ref{main-thm-dispersive} will be achieved thanks to a standard bootstrap argument, performed on the Duhamel formulation from Proposition \ref{prop-Duhamel}.

\begin{prop}\label{prop:bootstrap}
    Let $Z_\pm \in \mathscr{C}\left([0,T],H^{N_0} \right)$ be solutions to \eqref{eq:Syst-dispersive} for some time $T>0$ with initial data satisfying \eqref{assumption-smallnessdata} and $\mathcal{Z}_\pm\vcentcolon=e^{\mp it\Lambda}Z_\pm$ the corresponding profiles. Assume that for $t\in [0,T]$ there holds:
    \begin{align}
\label{eq:bootstrap-assumptionX}\norm{ \zz_+(t)}_X +\norm{ \zz_-(t)}_{X} &\leq 100\eps, 
\end{align}
then for all $t \in [0,T]$ the following statements hold.
\begin{enumerate}
    \item As a direct consequence, the following decay estimate holds
    \begin{align}
          \label{eqn:bootstrap-decayLinfty} \norm{\vert \nabla \vert Z_\pm(t)}_{L^\infty}+\norm{ \nabla  \langle \nabla \rangle^{-1}Z_\pm(t)}_{L^\infty}&\lesssim \br{t}^{-(1+\beta)}\eps,
    \end{align}
    and moreover the energy remains bounded
    \begin{align}
          \label{eqn:boostrap-energy-bounded}
\norm{{Z}_\pm(t)}_{H^{N_0}}=\left\Vert\mathcal{Z}_\pm (t)\right\Vert_{H^{N_0}}&\lesssim \eps.
    \end{align}
    \item The improved bound holds:
    \begin{align}
\label{eqn:bootstrap-improved-bound-X}
   \left\Vert \zz_+(t) \right\Vert_{X} +\left\Vert \zz_-(t) \right\Vert_{X} &\leq 10\eps.
\end{align}
\end{enumerate}

\end{prop}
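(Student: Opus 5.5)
The proof follows the three ingredients announced in Section \ref{sec:outline-of-proof}: linear decay from the $X$-norm, energy estimates driven by integrable decay, and improved control of the Duhamel bilinear terms in $X$.

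\textbf{Decay and energy.} For part (1), write $Z_\pm=e^{\pm it\Lambda}\zz_\pm$ and apply the linear decay estimate of Proposition \ref{prop:lin-decay} to each atom $Q_{jk}\zz_\pm$. The operators $\abs{\nabla}$ and $\nabla\br{\nabla}^{-1}$ each supply a factor $2^{k}$ at low frequency, which absorbs the weight $2^{\gamma k^-}$ since $\gamma>-\frac12$. At high frequency the $2^{8k^+}$ regularity makes the sum over $k$ converge. Summing over $(k,j)\in\mathcal J$ with the weight $2^{(1+\beta)(k+j)}$ then gives \eqref{eqn:bootstrap-decayLinfty} under \eqref{eq:bootstrap-assumptionX}. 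For \eqref{eqn:boostrap-energy-bounded}, use the $H^{N_0}$ energy estimate for \eqref{eq:Syst-dispersive} from Section \ref{sec:energy-estimates} (Proposition \ref{prop:Sobolev-estimate}), which is built on the commutator-friendly choice of unknowns in Remark \ref{rem:dispersive-unknowns}. It has the form
\[
\frac{d}{dt}\norm{Z}_{H^{N_0}}^2\lesssim\big(\norm{\abs\nabla Z}_{L^\infty}+\norm{\nabla\br\nabla^{-1}Z}_{L^\infty}\big)\norm{Z}_{H^{N_0}}^2 .
\]
Inserting the decay bound gives an integrating factor $\exp(C\eps\int_0^\infty\br{s}^{-1-\beta}ds)\le 2$, as in Corollary \ref{cor:blow-up-criterion-decay}. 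Since $e^{\mp it\Lambda}$ is unitary on $H^{N_0}$, the same bound holds for $\zz_\pm$.

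\textbf{Improved $X$ bound.} From \eqref{eqn:Duhamel-profiles}, $\norm{\zz_\pm(t)}_X\le\eps+\sum\norm{\mathcal B_{\m}(\zz_\mu,\zz_\nu)(t)}_X$, so it suffices to show each bilinear term is $\lesssim\eps^2$. Localize the output to $Q_{jk}$, the inputs to frequencies $2^{k_1},2^{k_2}$, and time to dyadic intervals $s\sim2^m$.
\begin{itemize}
\item Extreme frequencies are discarded using energy and set-size bounds. Very large frequencies are controlled by $H^{N_0}$; very small ones by volume factors such as $2^{3k/2}$.
\item When $2^j\gtrsim 2^m$ (output at $|x|\gtrsim t$), use Proposition \ref{prop:finite-speed-of propagation}: integrate by parts repeatedly in $\xi$. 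Since $\abs{\nabla\Lambda}\le1$, each integration gains $2^{-j}s$ times a power of $2^{-k}$ and spends input $X$ weights. This yields a bound $\eps^22^{-m\delta}$, which is summable over $m$.
\item When $2^j\ll 2^m$, apply the normal form of Proposition \ref{prop:X-norm-j<m}: integrate by parts in $s$. The boundary term is estimated with the multiplier bounds on $\m\Phi^{-1}$ (Lemmas \ref{LM:bound-from-below-PHASE}, \ref{lem:multiplier-bounds}) in $L^2\times L^\infty$ form, using the decay from part (1). The cubic terms use $\norm{\partial_t\zz_\pm}_{L^2}\lesssim\eps^2\br{t}^{-3/2}$ (Lemma \ref{lem:time-derivative-L2}) together with $L^\infty$ decay of the other factor. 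After multiplying by $2^{(1+\beta)(j+k)}\le 2^{(1+\beta)(m+k)}$, the time integral still converges.
\end{itemize}

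\textbf{Main obstacle.} The hardest step is the region where the output frequency is the smallest, $2^k\ll 2^{k_1}\sim2^{k_2}$. There $\abs{\m\Phi^{-1}}\lesssim 2^{-k}$ loses a factor, and the weight $2^{(1+\beta)k}2^{\gamma k}$ together with the $2^{(1+\beta)m}$ spatial weight must be compensated. The plan is to pay for the loss with three sources:
\begin{itemize}
\item the vanishing of $\m$ at $\eta=0$ and $\xi=\eta$ (Remark \ref{rem:null-structure});
\item the refined decay \eqref{eqn:linear-decay-j<m}, which gives rate $t^{-\theta}$ with $\theta>1+\beta$ for the input atoms with $j_i<m$;
\item the set-size factor $2^{3k/2}$ from the small output ball.
\end{itemize}
The relation $\gamma+\beta=-\frac16-\frac23\beta$ should be exactly what balances these exponents for $\beta<\frac15$. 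I would first check this numerology case by case, separately for $j_i\ge m$ and $j_i<m$ in the inputs. Summing all pieces gives $\lesssim C\eps^2\le\eps$ for $\eps_0$ small, hence \eqref{eqn:bootstrap-improved-bound-X}.
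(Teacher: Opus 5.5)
Your proposal follows essentially the same route as the paper. Part (1) comes from the $X$-norm decay bound \eqref{eqn:decay-Pk-profiles} combined with Proposition \ref{prop:Sobolev-estimate}, as in Corollary \ref{cor:blow-up-criterion-decay}; part (2) comes from the Duhamel formula with dyadic time localization, finite speed of propagation for $j\geq m+A$ and a normal form for $j<m+A$, summing the resulting $2^{-\frac{\delta}{2}m}\eps^2$ bounds over $m$.

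One small correction to your energy step: the inequality must involve the $W^{1,\infty}$ norms appearing in $\mathrm{A}(\tau)$, i.e.\ control of $\nabla\rho$ and $\nabla v$ in $L^\infty$, not only the $L^\infty$ norms you wrote. The same summation argument still gives these, thanks to the factor $2^{-(\frac92+2\beta)k^+}$.
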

\begin{proof}
    The proof follows as a consequence of the arguments presented throughout the paper, and that we invoke here. 
    The decay estimate \eqref{eqn:bootstrap-decayLinfty} and the uniform bounds \eqref{eqn:boostrap-energy-bounded} on the energy follow directly from Corollary \ref{cor:blow-up-criterion-decay}.

    The main difficulty of this work is showing the improved estimate \eqref{eqn:bootstrap-improved-bound-X}. To that end, we consider the Duhamel formulation for the profiles $\zz_\pm$ \eqref{eqn:Duhamel-profiles} and obtain
    \begin{equation}\label{bootstrap-z-in-proof}
    \begin{split}
         \norm{\mathcal{Z}_+(t)}_{X}+ \norm{\mathcal{Z}_-(t)}_{X}&\leq\norm{\mathcal{Z}_+(0)}_X+\sum_{\mu\in\{+,-\}}\norm{\mathcal{B}_{\m_+^{\mu\mu}}(\zz_\mu,\zz_\mu)(t)}_X+\norm{\mathcal{B}_{\m_+^{+-}}(\zz_+,\zz_-)(t)}_X\\
        &\quad+\norm{\mathcal{Z}_-(0)}_X+\sum_{\mu\in\{+,-\}}\norm{\mathcal{B}_{\m_-^{\mu\mu}}(\zz_\mu,\zz_\mu)(t)}_X+\norm{\mathcal{B}_{\m_-^{+-}}(\zz_+,\zz_-)(t)}_X.
    \end{split}
    \end{equation}
    Therefore, since the initial data satisfies \eqref{assumption-smallnessdata}, it suffices to prove that for $\m\in \{\m_\pm^{\mu\nu} \; \vert\; \mu,\nu\in \{+,-\}\}$ and the corresponding profiles the bilinear terms satisfy:
    \begin{align}\label{eqn:bootstrap-bilin<3eps}
       \norm{\mathcal{B}_{\m}(\zz_\mu,\zz_\nu)(t)}_X\leq 3\eps.
    \end{align}
    To that end, we localize the time variable  $t\in[0,T]$ as follows: we decompose the indicator function  $\mathds{1}_{[0,t]}$ in functions $\tau_0,...\tau_{L+1}:\R\to [0,1]$ with $\abs{L-\log_2(2+t)}\leq 2$ such that
    \begin{align*}
      &  \supp\tau_0\subset [0,2],\hspace{0.5cm} \supp \tau_m \subset [2^{m-1},2^{m+1}], \; m\in \{1,..., L\}, \hspace{0.5cm} \supp \tau_{L+1}\subset [t-2,t],\\
     &   \sum_{m=0}^{L+1}\tau_m(s)=\mathds{1}_{[0,t]}, \hspace{0.5cm}\tau_m(s)\in \mathscr{C}^1(\R), \hspace{0.5cm}\int_0^t\abs{\tau_m'(s)}ds\lesssim 1, \; m\in \{1,..., L\}.
    \end{align*}
    Then for a bilinear expression as in \eqref{def:bilinear-expressions} there holds
    \begin{equation}\label{eqn: bootstrap B_m time decomposition}
    \begin{split}
        \mathcal{B}_{\m}(f,g)(t)&=\int_0^t\mathcal{Q}_{\m}(f,g)(s)ds=\sum_{m\geq 1}\int_0^t\tau_m(s)\mathcal{Q}_{\m}(f,g)(s)ds=\sum_{m\geq 1}\mathcal{B}_{\m}^m(f,g)(t),\\
\mathcal{B}_{\m}^m(f,g)(t)&\vcentcolon=\int_0^t\tau_m(s)\mathcal{Q}_{\m}(f,g)(s)ds.
    \end{split}
    \end{equation}
    In the main dispersive analysis Proposition \ref{prop:X-norm-bounds} we prove
    \begin{align}
         \norm{\mathcal{B}_{\m}(\zz_\mu,\zz_\nu)(t)}_X\leq \sum_{m\geq 1}\norm{\mathcal{B}_{\m}^m(\zz_\mu,\zz_\nu)}_{X}\lesssim \sum_{m\geq 1}2^{-\frac{\delta}{2}m}\eps^2\leq C_1\eps^2,
    \end{align}
    where $\delta>0$ as in Remark \ref{rem:parameters}. Finally, we obtain \eqref{eqn:bootstrap-bilin<3eps} and hence \eqref{eqn:bootstrap-improved-bound-X} by taking $\eps\leq 3C_1^{-1}$. This finishes the proof of the bootstrap argument.
\end{proof}
\begin{cor}[Scattering]\label{coro-scattering}
    Under the same assumption as in Proposition \ref{prop:bootstrap}, the following holds: there exists $\zz_\pm^{\infty} \in X \cap L^2$ such that 
   \begin{align*}
       \norm{ \zz_\pm(t)-\zz_\pm^{\infty}}_{L^2 \cap X} \underset{t \rightarrow +\infty}{\longrightarrow} 0.
   \end{align*}
\end{cor}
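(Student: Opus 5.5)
The plan is to show that $t\mapsto \zz_\pm(t)$ is Cauchy in $L^2\cap X$ as $t\to+\infty$. Since the bootstrap closes, the solution is global and \eqref{eq:bootstrap-assumptionX} holds on $[0,\infty)$. We then take $\zz_\pm^\infty$ to be the limit.

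By the Duhamel formula \eqref{eqn:Duhamel-profiles}, for $t_2>t_1$ the difference $\zz_\pm(t_2)-\zz_\pm(t_1)$ is a finite sum of terms $\int_{t_1}^{t_2}\mathcal{Q}_\m(\zz_\mu,\zz_\nu)(s)\,ds$. We decompose the time integral with the same dyadic partition $\tau_m$ as in the proof of Proposition \ref{prop:bootstrap}, now adapted to $\mathds{1}_{[t_1,t_2]}$. Only pieces with $2^{m+1}\geq t_1$ contribute, together with at most two boundary pieces supported near $t_1$ and $t_2$.

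\textbf{$X$-norm.} The estimate of Proposition \ref{prop:X-norm-bounds}, $\norm{\mathcal{B}^m_\m(\zz_\mu,\zz_\nu)}_X\lesssim 2^{-\frac{\delta}{2}m}\eps^2$, is uniform in the cutoff. Its proof only uses $\tau_m\in\mathscr{C}^1$, $\supp\tau_m\subset[2^{m-1},2^{m+1}]$ and $\int\abs{\tau_m'}\lesssim 1$, including in the normal form boundary terms. It therefore applies verbatim to the truncated cutoffs $\tau_m\mathds{1}_{[t_1,t_2]}$, suitably smoothed. Summing gives
\[
\norm{\zz_\pm(t_2)-\zz_\pm(t_1)}_X\lesssim \sum_{2^{m+1}\geq t_1}2^{-\frac{\delta}{2}m}\eps^2\lesssim \br{t_1}^{-\delta/2}\eps^2\to 0 .
\]

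\textbf{$L^2$ norm.} This is easier. Lemma \ref{lem:time-derivative-L2} gives $\norm{\partial_t\zz_\pm(s)}_{L^2}\lesssim \br{s}^{-3/2}\eps^2$, hence
\[
\norm{\zz_\pm(t_2)-\zz_\pm(t_1)}_{L^2}\leq\int_{t_1}^{t_2}\br{s}^{-3/2}\eps^2\,ds\lesssim \br{t_1}^{-1/2}\eps^2 .
\]
Should only the weaker rate $\br{s}^{-(1+\beta)}$ be available, integrability still holds. Thus $\zz_\pm(t)$ is Cauchy in $L^2\cap X$.

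\textbf{Completeness.} For $L^2$ this is standard. For $X$, each $Q_{jk}$ is bounded on $L^2$, so the $L^2$ limit $\zz_\pm^\infty$ satisfies $Q_{jk}\zz_\pm(t)\to Q_{jk}\zz_\pm^\infty$ for every $(k,j)$. Taking suprema, the uniform Cauchy bound passes to the limit and gives $\norm{\zz_\pm(t)-\zz^\infty_\pm}_X\lesssim \br{t}^{-\delta/2}\eps^2$. In particular $\zz_\pm^\infty\in X$ with norm $\lesssim\eps$.

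\textbf{Main obstacle.} The delicate step is justifying that the $X$-bounds of Proposition \ref{prop:X-norm-bounds} hold for time cutoffs truncated at arbitrary $t_1,t_2$. The concern is the boundary terms from integration by parts in time (normal forms), evaluated at $s=t_1,t_2$. I would handle these exactly as the term $\tau_{L+1}$ near the endpoint $t$ is handled there: bound the boundary term $\int e^{is\Phi}\m\Phi^{-1}\widehat{f}\,\widehat{g}\,d\eta$ at a single time $s\sim 2^m$ by the same $2^{-\frac{\delta}{2}m}\eps^2$ estimate.
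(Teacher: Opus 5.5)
Your proposal is correct and takes the paper's route. The paper also gets $L^2$-Cauchy from the integrable decay of $\norm{\partial_t\zz_\pm}_{L^2}$ via Lemma \ref{lem:time-derivative-L2}, which strictly speaking is a per-$P_k$ bound whose summed form is the subsequent remark with an arbitrarily small loss, still integrable. It gets $X$-Cauchy from the Duhamel formula \eqref{eqn:Duhamel-profiles} together with Proposition \ref{prop:X-norm-bounds}. Two of your steps only make explicit what the paper leaves implicit: the treatment of cutoffs truncated at $t_1,t_2$, whose extra single-time normal-form boundary terms obey the same bounds, and the identification of the $X$-limit with the $L^2$-limit through the $L^2$-boundedness of each $Q_{jk}$.
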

\begin{proof}
The argument is standard but we briefly detail it for the sake of completeness. First, let us show that $\zz_\pm$ is Cauchy (in the time variable) in the space $L^2$. This comes from the fact that the time derivative $\partial_t \zz_\pm$ decays fast enough: indeed thanks to Lemma \ref{lem:time-derivative-L2} we know that $\norm{\partial_t \zz_\pm(t)}_{L^2} \lesssim \langle t \rangle^{-3/2}$ for $t$ large enough, which directly implies that  $\zz_\pm(t)$ converges to some $\zz_\pm^{\infty}$ in $L^2$.

Similarly, by the Duhamel formula \eqref{prop-Duhamel} and Proposition \ref{prop:X-norm-bounds} $\zz_\pm(t)$ is Cauchy in the space $X$ and so converges to $\zz_\pm^\infty\in X$.
\end{proof}
\begin{rem}
The statement of Corollary \ref{coro-scattering} can actually be strengthened by obtaining scattering in Sobolev spaces $H^s$ for $0<s<9$. This follows from the estimate \eqref{eq:estimSob-to-X} in Lemma \ref{Lem:compX-weightSob}.
\end{rem}
Let us now show how Proposition \ref{prop:bootstrap} entails Theorem \ref{main-thm-dispersive}.
\begin{proof}[Proof of Theorem \ref{main-thm-dispersive}]
    Let $T_0$ be the maximal time of existence of the unique solution $Z_\pm$ to \eqref{eq:Syst-dispersive} with value in $H^{N_0}$. Such a time is ensured to exist by a standard local well-posedness procedure that we omit here (see the result from \cite{LannesLinaresSaut} on the Euler-Poisson system \eqref{eq:EP-perturbed} and Remark \ref{rem-LWP-disp}) and we furthermore have  $ \mathcal{Z}_\pm \in \mathscr{C}\left([0,1]; H^{N_0} \cap X \right)$ thanks to the initial smallness assumption \eqref{assumption-smallnessdata} on the data.

Let $T^\star \in (0, T_0)$ be the maximal time such that $\mathcal{Z}_\pm \in \mathscr{C}\left([0,1]; H^{N_0} \cap X \right)$  satisfies the bootstrap assumption \eqref{eq:bootstrap-assumptionX}.  By the former continuity in time, we know that $T^\star>0$. Thanks to Proposition \ref{prop:bootstrap}, the bound \eqref{eqn:boostrap-energy-bounded} entails that the $H^{N_0}$-norm of $\mathcal{Z}_\pm$ stays bounded on $[0,T^\star]$ and the control 
\eqref{eqn:bootstrap-improved-bound-X} ensures that we have improved the bootstrap assumption on $[0,T^\star]$. By continuity, this contradicts the maximality of $T^\star$, hence $T^\star=T_0$. Since $\norm{Z_\pm}_{H^{N_0}}=\norm{\mathcal{Z}_\pm}_{H^{N_0}}$ does not blow up on $[0,T_0]$, it ensures that $T_0=+\infty$ and therefore the solution is global in time. 

The bounds and the decay estimates are then a consequence of \eqref{eqn:bootstrap-improved-bound-X}--\eqref{eqn:boostrap-energy-bounded}--\eqref{eqn:bootstrap-decayLinfty}. Finally, the linear scattering in inferred directly from Corollary \ref{coro-scattering}, which ends the proof.
\end{proof}
We now turn to the proof of the main Theorem \ref{main-thm}, as a consequence of Theorem \ref{main-thm-dispersive} on the dispersive unknowns. The subtlety lies in switching between $(v,\rho)$ and the dispersive formulation above. In terms of regularity, the link between the original unknowns $(\rho, v, \psi)$ and the dispersive unknowns $(Z_+, Z_-)$ is the following.  
\begin{lem}\label{LEM-link-unkwowns}
    Let $s>0$. The following identities hold:
    \begin{align*}
\norm{Z_+}_{H^{s+1}}^2+\norm{Z_{-}}_{H^{s+1}}^2&=\frac{1}{2}\norm{{Z_++Z_-}}_{H^{s+1}}^2+\frac{1}{2}\norm{{Z_+-Z_-}}_{H^{s+1}}^2, \\
\norm{\rho}_{H^s\cap\dot H^{-1} }^2 + \norm{v}_{H^{s+1}{\cap \dot H^{-1}}}^2 &\lesssim \norm{Z_+}_{H^{s+1}}^2+ \norm{Z_-}_{H^{s+1}}^2, \\
\norm{Z_+}_{H^{s+1}}^2+ \norm{Z_-}_{H^{s+1}}^2 &\lesssim \norm{\rho}_{H^s\cap\dot H^{-1} }^2 + \norm{v}_{H^{s+1}{\cap \dot H^{-1}}}^2 
    \end{align*}
\end{lem}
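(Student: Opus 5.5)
The three statements are all Fourier-multiplier identities, so the plan is to pass to frequency space, apply Plancherel, and compare the resulting weights pointwise in $\xi$. Throughout I use the convention $\norm{f}_{H^{\sigma}\cap\dot H^{-1}}^2=\norm{f}_{H^\sigma}^2+\norm{f}_{\dot H^{-1}}^2$, i.e.\ the integral of $\bigl(\br{\xi}^{2\sigma}+\abs{\xi}^{-2}\bigr)\abs{\widehat f(\xi)}^2$. I also use the irrotationality of $v$, so that $v$ is entirely determined by $\mathrm{div}(v)$ and the representation \eqref{eqn:rho,psi-in-Z} of Remark \ref{rem:dispersive-unknowns} holds.

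\textbf{First identity.} This is the parallelogram law in the Hilbert space $H^{s+1}$. It follows from the pointwise identity $\abs{a}^2+\abs{b}^2=\frac12\abs{a+b}^2+\frac12\abs{a-b}^2$ for complex numbers $a=\widehat{Z_+}(\xi)$ and $b=\widehat{Z_-}(\xi)$, integrated against $\br{\xi}^{2(s+1)}\,d\xi$. Because of this identity, it suffices to prove the two remaining inequalities with $\norm{Z_+}_{H^{s+1}}^2+\norm{Z_-}_{H^{s+1}}^2$ replaced by $\norm{Z_++Z_-}_{H^{s+1}}^2+\norm{Z_+-Z_-}_{H^{s+1}}^2$.

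\textbf{The density.} Write $A:=\widehat{Z_+}+\widehat{Z_-}$ and $B:=\widehat{Z_+}-\widehat{Z_-}$. By \eqref{eqn:rho,psi-in-Z},
\[
\widehat\rho=\tfrac12\abs{\xi}A, \qquad \abs{\widehat v}^2=\tfrac14\,\abs{\xi}^2\br{\xi}^{-2}\abs{B}^2 .
\]
This gives
\[
\norm{\rho}_{H^s\cap\dot H^{-1}}^2=\frac14\int\bigl(\abs{\xi}^2\br{\xi}^{2s}+1\bigr)\abs{A}^2\,d\xi .
\]
The weight satisfies $\abs{\xi}^2\br{\xi}^{2s}+1\simeq\br{\xi}^{2s+2}$ uniformly in $\xi$. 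The upper bound is immediate from $\abs{\xi}\le\br{\xi}$ and $1\le\br{\xi}^{2s+2}$. For the lower bound I split into $\abs{\xi}\le1$, where the constant $1$ dominates, and $\abs{\xi}\ge1$, where $\abs{\xi}^2\ge\frac12\br{\xi}^2$. Hence $\norm{\rho}_{H^s\cap\dot H^{-1}}\simeq\norm{Z_++Z_-}_{H^{s+1}}$.

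\textbf{The velocity.} Here
\[
\norm{v}_{H^{s+1}\cap\dot H^{-1}}^2=\frac14\int\bigl(\abs{\xi}^2\br{\xi}^{2s}+\br{\xi}^{-2}\bigr)\abs{B}^2\,d\xi .
\]
The same two-region analysis gives $\abs{\xi}^2\br{\xi}^{2s}+\br{\xi}^{-2}\simeq\br{\xi}^{2s+2}$. At low frequency the term $\br{\xi}^{-2}\in[\frac12,1]$ dominates; at high frequency the first term dominates and $\br{\xi}^{-2}\le\br{\xi}^{2s+2}$. Thus $\norm{v}_{H^{s+1}\cap\dot H^{-1}}\simeq\norm{Z_+-Z_-}_{H^{s+1}}$.

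\textbf{Conclusion.} Adding the density and velocity equivalences and invoking the first identity yields both the second and the third statements, with constants depending only on $s$.

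I do not expect a genuine obstacle; the only point needing care is the bookkeeping of the low-frequency weights. The $\dot H^{-1}$ parts of the norms on $(\rho,v)$ are exactly what compensate the vanishing of the symbols $\abs{\xi}$ and $\abs{\xi}\br{\xi}^{-1}$ at $\xi=0$, and this is what makes the reverse inequality true. This compensation explains why the $\dot H^{-1}$ assumptions appear in \eqref{eq:assump-datasmallORIGINAL}.
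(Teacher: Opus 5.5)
Your proof is correct and follows essentially the paper's route: Plancherel and a pointwise comparison of Fourier weights, using the representation of $\rho$ and $v$ in terms of $Z_+\pm Z_-$ (which relies on irrotationality). The only difference is cosmetic. For the reverse inequality the paper passes through the stream function, using $\frac12(Z_+-Z_-)=-i\br{\nabla}\psi$ and $\norm{\psi}_{L^2}=\norm{v}_{\dot H^{-1}}$, whereas you obtain the two-sided equivalences $\norm{\rho}_{H^s\cap\dot H^{-1}}\simeq\norm{Z_++Z_-}_{H^{s+1}}$ and $\norm{v}_{H^{s+1}\cap\dot H^{-1}}\simeq\norm{Z_+-Z_-}_{H^{s+1}}$ directly, which settles both directions at once.
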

\begin{proof}
    The proof of the first identity is straightforward. For the second, it comes the first and second equality in \eqref{eqn:rho,psi-in-Z} that yields the following estimates 
    \begin{align*}
        \norm{\rho}_{H^s\cap\dot H^{-1} }^2 + \norm{v}_{H^{s+1}{\cap \dot H^{-1}}}^2 &=\int  \left(\vert \xi \vert^{-2}+\langle \xi \rangle^{2s}\right)\vert \widehat{\rho}(\xi) \vert^2 \, \mathrm{d}\xi + \int  ({\vert \xi \vert^{-2}}+\langle \xi \rangle^{2(s+1)})\vert \widehat{v}(\xi) \vert^2 \, \mathrm{d}\xi \\
        &\lesssim \int  \frac{\langle \xi \rangle^{2s+2}}{\vert \xi \vert^2} \vert \widehat{\rho}(\xi) \vert^2\mathrm{d}\xi + \int  (\vert {\xi \vert^{-2}}+\langle \xi \rangle^{2(s+1)}) \frac{\vert \xi \vert^2}{\langle \xi \rangle^2}\left(\vert \widehat{Z_+}(\xi) \vert^2+\vert \widehat{Z_-}(\xi) \vert^2 \right)\mathrm{d}\xi 
 \\
 &\lesssim \int  \langle \xi \rangle^{2(s+1)} \left(\vert \widehat{Z_+}(\xi) \vert^2+\vert \widehat{Z_-}(\xi) \vert^2 \right)\mathrm{d}\xi \\
 &=\norm{Z_+}_{H^{s+1}}^2+ \norm{Z_-}_{H^{s+1}}^2.
    \end{align*}
    For the third estimate, by the same bound as before for the density, and because $(Z_+-Z_-)/2 =-i \langle \nabla \rangle \psi$, we get
    \begin{align*}
        \frac{1}{4}\norm{{Z_++Z_-}}_{H^{s+1}}^2+\frac{1}{4}\norm{{Z_+-Z_-}}_{H^{s+1}}^2 \lesssim \norm{\rho}_{H^s\cap\dot H^{-1} }^2 + \norm{\psi}_{H^{s+2}}^2.
    \end{align*}
Since $\nabla \psi=v$, we also have $\Delta \psi=\mathrm{div}(v)$ hence the bound
\begin{align*}
    \norm{\psi}_{H^{s+2}}^2=\left( \int_{\vert \xi \vert \leq 1}+\int_{\vert \xi \vert \geq 1} \right)  \langle \xi \rangle^{2(s+2)}\vert \widehat{\psi}(\xi) \vert^2 \, \mathrm{d}\xi &\leq \int_{\vert \xi \vert \leq 1}  \vert \widehat{\psi}(\xi) \vert^2 \, \mathrm{d}\xi
    +\int_{\vert \xi \vert \geq 1}  \langle \xi \rangle^{2(s+2)} \frac{\vert \xi \vert^2}{\vert \xi \vert^4}\vert \widehat{v}(\xi) \vert^2 \, \mathrm{d}\xi \\
    & \lesssim  \int \vert \widehat{\psi}(\xi) \vert^2 \, \mathrm{d}\xi
    +\int  \langle \xi \rangle^{2(s+1)}\vert \widehat{v}(\xi) \vert^2 \, \mathrm{d}\xi \\
    &=\norm{v}_{H^{s+1}{\cap \dot H^{-1}}}^2.
\end{align*}
This yields the claimed estimate, and concludes the proof.
\end{proof}
We can now proceed with the proof of Theorem \ref{main-thm}.
\begin{proof}[Proof of Theorem \ref{main-thm}]
In what follows, we consider $s=N_0-1$, where $N_0$ is the regularity index given by Theorem \ref{main-thm-dispersive}. Since $(\rho_0, v_0) \in H^{N_0-1}\cap \dot{H}^{-1} \times  H^{N_0}$, with $v_0=\nabla \psi_0$ where $\psi_0 \in H^{N_0+1}$, we deduce from Lemma \ref{LEM-link-unkwowns} that $Z_\pm \in H^{N_0}$. Furthermore, as a consequence of the smallness  assumption \eqref{eq:assump-datasmallORIGINAL} on $(\rho_0, v_0, \psi_0)$, the smallness assumption \eqref{assumption-smallnessdata} on $Z_\pm$ is satisfied at $t=0$.

We can therefore apply Theorem \ref{main-thm-dispersive}, which entails that the solution $Z_\pm$ is global in time, satisfy \eqref{csqTHM-boundZ}--\eqref{csqTHM-decayZ}, and furthermore scatters linearly in the sense of Corollary \ref{coro-scattering}. From this, again appealing to Lemma \ref{LEM-link-unkwowns}, we obtain that $(\rho(t), v(t))$ is global in time, and that the desired bound and estimates \eqref{csqTHM-bound}--\eqref{csqTHM-decay} hold.

It remains to prove the claim on the linear scattering on $(\rho(t), v(t))$ from Theorem \ref{main-thm}, which will be a consequence of Corollary \ref{coro-scattering}. We want to prove that there exists $(\rho_0^\infty, v_0^\infty) \in \dot H^{-1} \times L^2$ such that if $(\rho_{\mathrm{lin}}, v_{\mathrm{lin}})$ is the solution to the linearized system
 \begin{equation*}
\left\{
      \begin{aligned}
      \partial_t \rho_{\mathrm{lin}} + \mathrm{div}(v_{\mathrm{lin}}) &=0,\\
\partial_t v_{\mathrm{lin}} &=-\nabla (\mathrm{I}-\Delta)^{-1} \rho_{\mathrm{lin}},\\
(\rho_{\mathrm{lin}}(0), v_{\mathrm{lin}}(0))&=(\rho_0^\infty, v_0^\infty)
\end{aligned}
    \right.
\end{equation*}
then one has
\begin{align}\label{eq:CLAIMscattering}
\norm{\rho(t)-\rho_{\mathrm{lin}}(t)}_{\dot H^{-1}}+ \norm{v(t)-v_{\mathrm{lin}}(t)}_{L^2} \underset{t \rightarrow + \infty}{\longrightarrow} 0.
\end{align}
To do so, let us recall from definition \eqref{eqn:rho,psi-in-Z} that
\begin{align*}
\rho=\abs{\nabla}\left(\frac{Z_++Z_-}{2}\right), \quad v=i\nabla\langle \nabla \rangle^{-1}\left(\frac{Z_+-Z_-}{2}\right),\end{align*}
and that in view of Corollary \eqref{coro-scattering}, 
there exists $\zz_\pm^{\infty} \in X \cap L^2$ such that 
   \begin{align*}
       \norm{ \zz_\pm(t)-\zz_\pm^{\infty}}_{L^2} \underset{t \rightarrow +\infty}{\longrightarrow} 0.
   \end{align*}
This directly implies the convergence
\begin{align*}
    \Vert Z_\pm -e^{ \pm it \Lambda} \zz^\infty_\pm \Vert_{L^2} \rightarrow 0.
\end{align*}
The corresponding linear dynamics are then defined as
\begin{align*}
    \rho_{\mathrm{lin}}(t)\vcentcolon=\abs{\nabla}\left(\frac{e^{ it \Lambda} \zz^\infty_+ + e^{ - it \Lambda} \zz^\infty_-}{2}\right), \quad 
    v_{\mathrm{in}}(t)\vcentcolon=i\nabla\langle \nabla \rangle^{-1}\left(\frac{e^{ it \Lambda} \zz^\infty_+ - e^{ - it \Lambda} \zz^\infty_-}{2}\right).
\end{align*}
By writing the difference, we therefore obtain the claimed convergence \eqref{eq:CLAIMscattering}, and this concludes the proof.
\end{proof}

\begin{rem}\label{rem-LWP-disp}
    Note that at the beginning of the bootstrap Proposition \ref{prop:bootstrap} (needed to prove that Theorem \ref{main-thm-dispersive}) we have implicitly used  that we have a suitable local well-posedness theory for the dispersive unknowns $Z_\pm$ in $H^{N_0}$. This is a consequence of Lemma \ref{LEM-link-unkwowns} and of the standard local well-posedness theory on the original unknowns $(\rho, v) \in \mathscr{C}([0,T]; H^{N_0-1}) \times \mathscr{C}([0,T]; H^{N_0})$ (see e.g.\ \cite{LannesLinaresSaut}).  The propagation of the $\dot H^{-1}$ norm of $\rho$ in \eqref{eq:EP-perturbed} is indeed standard, since it solves a continuity equation. In order to estimate the $L^2$ norm of $\psi$, we can use on the equation that is solves, namely
\begin{align*}
    \partial_t  \psi +\frac{1}{2} v\cdot \nabla \psi  =- \Phi, \quad (\Phi-\Delta)\Phi&= \rho,
\end{align*}
and perform  a direct $L^2$ estimate to obtain
\begin{align*}
    \frac{\mathrm{d}}{\mathrm{d}t} \norm{\psi}_{L^2} \lesssim \norm{\nabla v}_{L^\infty} \norm{\psi}_{L^2}+\norm{\Phi}_{L^2} \lesssim \norm{v}_{H^{N_0}} \norm{\psi}_{L^2}+\norm{\rho}_{H^{N_0-1}},
\end{align*}
 thanks to  Sobolev embedding. Hence, by Grönwall lemma,  we obtain that $\Vert \psi \Vert_{L^\infty(0,T; L^2)}< \infty$ provided that $\psi_0 \in L^2$. 
\end{rem}

\section{Linear Decay}\label{sec:linear-decay}

We prove the available linear decay estimates arising from the dispersive nature of the system. Proposition \ref{prop:lin-decay} gathers the main estimates that will be used in the proof of Proposition \ref{prop:bootstrap} (Proposition \ref{prop:X-norm-bounds} respectively). In particular it shows that the $X$-norm allows us to propagate $|t|^{-(1+\beta)}$ decay, while actually the semigroup enjoys the full $|t|^{-\frac32}$ decay rate according to stationary phase lemma (see Section \ref{sec:outline-of-proof}). 
\begin{prop}\label{prop:lin-decay}
    Let {$f:\R^3\to\R$.}
    \begin{enumerate}
      \item On the dyadic frequency scale $k\in \Z$ there holds for all $t \in \R$: 
      \begin{align}\label{eqn:decay-Pk-profiles}
    \norm{e^{it\Lambda}P_kf}_{L^\infty}&\lesssim |t|^{-(1+\beta)}2^{-(\frac{9}{2}+2\beta)k^+}\norm{f}_X.
       \end{align}
         \item For the space-and-frequency localized pieces \eqref{Q_jk-def} with $(j,k) \in \mathcal{J}$, there holds for all $t \in \R$:
       \begin{align}\label{eqn:linear-decay-j<m}
         \norm{e^{it\Lambda}{Q_{jk}}f}_{L^\infty} &\lesssim |t|^{-\frac{3}{2}}2^{-2k^+}\min\left\{2^{-(1+\beta)k^+}2^{(\frac{1}{2}-\beta)j}2^{(-\frac{1}{3}+\frac{2}{3}\beta)k^-}, 2^{\frac{3}{2}j}2^{(\frac{2}{3}+\frac{5}{3}\beta)k^-}\right\}\norm{f}_{X}.
       \end{align}
    \end{enumerate}   
   
\end{prop}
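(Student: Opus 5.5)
I would prove part (2) first and then deduce part (1) from it by summing over the spatial scales $j$. Throughout, $k$ is fixed.

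\textbf{Step 1: the two elementary bounds for a single piece.} Fix $(k,j)\in\mathcal{J}$ and write $\widehat{Q_{jk}f}$ supported in $|\xi|\sim 2^k$. Two bounds are available.

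The first is the trivial (Bernstein-type) bound
\begin{align*}
\norm{e^{it\Lambda}Q_{jk}f}_{L^\infty}\lesssim 2^{3k/2}\norm{Q_{jk}f}_{L^2}.
\end{align*}

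The second is a dispersive bound obtained by stationary phase. Write
\begin{align*}
e^{it\Lambda}Q_{jk}f(x)=\int K_{t,k}(x-y)\,\varphi_j^{(k)}(y)(P_kf)(y)\,dy,
\end{align*}
where $K_{t,k}$ is the kernel of $e^{it\Lambda}P_{[k-2,k+2]}$. I would prove the kernel bound
\begin{align*}
\norm{K_{t,k}}_{L^\infty}\lesssim |t|^{-3/2}\,2^{3k}\,\big|\det \mathrm{Hess}\,\Lambda(2^k)\big|^{-1/2}.
\end{align*}
Using the determinant formula from Section~\ref{sec:outline-of-proof}, this equals $|t|^{-3/2}2^{3k}2^{k/2}2^{11k^+/2}$ up to constants; for $k\le0$ this is $|t|^{-3/2}2^{7k/2}$.

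To prove the kernel bound, I would rescale $\xi=2^k\zeta$. The Hessian has one radial eigenvalue $\Lambda''\sim -2^{k^-}2^{-4k^+}$ and two angular eigenvalues $\Lambda'/|\xi|\sim 2^{-k}2^{-2k^+}$. The three-dimensional van der Corput/stationary phase estimate then gives the bound with the product of the eigenvalues. This uniform-in-$k$ stationary phase is the main technical obstacle: the eigenvalues scale differently, so I would rescale each direction separately, or simply use $L^\infty$-bounds on the amplitude with enough derivatives, possibly at the cost of harmless $2^{Ck^+}$ factors absorbed by $2^{8k^+}$.

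Combining the kernel bound with $\norm{\varphi_j^{(k)}P_kf}_{L^1}\lesssim 2^{3j/2}\norm{Q_{jk}f}_{L^2}$ (modulo replacing $P_k$ by the fattened projection, handled via the definition of $Q_{jk}$) gives
\begin{align*}
\norm{e^{it\Lambda}Q_{jk}f}_{L^\infty}\lesssim |t|^{-3/2}\,2^{3k/2+k^-}\,2^{3j/2}\,\big(\text{powers of }2^{k^+}\big)\norm{Q_{jk}f}_{L^2}.
\end{align*}

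\textbf{Step 2: insert the $X$-norm and derive \eqref{eqn:linear-decay-j<m}.} Substitute
\begin{align*}
\norm{Q_{jk}f}_{L^2}\le 2^{-8k^+}2^{-(1+\beta)(k+j)}2^{-\gamma k^-}\norm{f}_X.
\end{align*}

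For the second entry of the minimum, with $k\le0$: the dispersive bound gives $2^{3j/2}2^{7k/2}2^{-(1+\beta)(k+j)}2^{-\gamma k}$. Keeping only $2^{(1+\beta)(k+j)}\ge1$ loosely, or more precisely combining the powers and using $\gamma=-\tfrac16-\tfrac53\beta$, the exponents should match $2^{\frac32 j}2^{(\frac23+\frac53\beta)k}$. I would then check this bookkeeping.

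For the first entry: I keep the weight in $j$. This gives $2^{(\frac12-\beta)j}$ and the $k$-power $\tfrac72-(1+\beta)-\gamma$ for the $L^1$-route. If this does not match $-\tfrac13+\tfrac23\beta$ directly, I would interpolate between the trivial bound $2^{3k/2}$ and the dispersive bound. The condition $j+k\ge0$ lets one trade $2^{j}$ for $2^{-k}$, which is how the mismatched $k^-$ exponent arises.

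For $k>0$, the $2^{8k^+}$ weight absorbs the Hessian loss $2^{11k/4+\dots}$ and leaves the stated $2^{-2k^+}$ and $2^{-(3+\beta)k^+}$ factors.

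\textbf{Step 3: deduce \eqref{eqn:decay-Pk-profiles}.} Write $P_kf=\sum_j Q_{jk}f$. Split the sum at $2^j\approx |t|^{a}$, with $a$ chosen to balance the two regimes.

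For small $j$, use the $|t|^{-3/2}2^{(\frac12-\beta)j}$ entry; the sum is dominated by the largest $j$ and gives $|t|^{-3/2}|t|^{a(\frac12-\beta)}$.

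For large $j$, use the trivial bound: $2^{3k/2}2^{-(1+\beta)(k+j)}$ summed over $j\ge a\log_2|t|$ gives $|t|^{-a(1+\beta)}$, times $k$-factors.

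Choosing $a=1$ yields $|t|^{-(1+\beta)}$ in both regimes. The $k$-powers then have to be checked to be bounded for $k\le0$: the trivial bound carries $2^{k(\frac12-\beta-\gamma)}$, which is fine since $\gamma<\tfrac12-\beta$. If some low-frequency power is negative, I would instead split the sum at $2^{j}\approx |t|\,2^{-ck}$. For $k>0$, the factors give $2^{-(\frac92+2\beta)k}$.
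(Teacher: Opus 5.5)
Your architecture matches the paper's: a Bernstein bound plus $\|e^{it\Lambda}Q_{jk}f\|_{L^\infty}\le\|K_{t,k}\|_{L^\infty}\|\varphi_j^{(k)}P_kf\|_{L^1}$ for each piece, then summation in $j$ with a $k$-dependent threshold. However, the one genuinely analytic input, the kernel bound, is misstated and left unproved.

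\textbf{The kernel bound.} Your bound $|t|^{-3/2}2^{3k}|\det\mathrm{Hess}\,\Lambda(2^k)|^{-1/2}$ contains a spurious factor $2^{3k}$. After $\xi=2^k\zeta$, the Jacobian $2^{3k}$ is cancelled by the Hessian of the rescaled phase $t\Lambda(2^k\zeta)$, whose determinant is $t^3 2^{6k}\det\mathrm{Hess}\,\Lambda(2^k\zeta)$. The correct bound is $\|K_{t,k}\|_{L^\infty}\lesssim|t|^{-3/2}|\det\mathrm{Hess}\,\Lambda(2^k)|^{-1/2}\sim|t|^{-3/2}2^{6k^+}2^{k^-/2}$. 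Your version fails in both regimes:
\begin{itemize}
\item For $k<0$ it is false: at $|t|\sim1$ the kernel has size $\sim2^{3k}\gg2^{7k/2}$.
\item For $k>0$ it loses $2^{9k}$, which $2^{8k^+}$ cannot absorb, so it cannot produce $2^{-2k^+}$ or $2^{-(3+\beta)k^+}$.
\end{itemize}
This spurious factor is exactly the ``mismatch'' you meet in Step 2, and no interpolation is needed. With the correct kernel and $-\gamma=\frac16+\frac53\beta$, for $k\le0$ one gets $2^{\frac k2}2^{-\gamma k}2^{\frac32 j}2^{-(1+\beta)(j+k)}=2^{(\frac12-\beta)j}2^{(-\frac13+\frac23\beta)k}$. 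Dropping $2^{-(1+\beta)(j+k)}\le1$ then gives $2^{\frac32j}2^{(\frac23+\frac53\beta)k}$. For $k>0$, $2^{6k}2^{-8k}=2^{-2k}$ yields both entries. There is no slack at high frequency, so extra $2^{Ck^+}$ losses are not harmless.

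To obtain the kernel bound uniformly in $k$ despite the anisotropic curvature you flag, the paper avoids 3D stationary phase. It writes the rescaled kernel as $\frac{1}{2\pi^2}\int_0^\infty e^{it\Lambda_k(r)}\frac{\sin(|x|r)}{|x|}\,r\varphi(r)\,dr$ with $\Lambda_k(r)=r(2^{-2k}+r^2)^{-1/2}$.
\begin{itemize}
\item Away from stationary points ($|x|\le1$, the half carrying $e^{+i|x|r}$, or $|x|\not\sim t\Lambda_k'(r)$), it integrates by parts twice and interpolates the resulting $t^{-1}$ and $t^{-2}$ bounds.
\item Near $|x|\sim t\Lambda_k'(r)$ it uses van der Corput with $|t\Lambda_k''|\gtrsim t2^{-2k^+}2^{3k^-}$. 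The amplitude factor $|x|^{-1}\lesssim t^{-1}2^{2k^+}2^{-k^-}$ supplies the remaining $t^{-1}$.
\end{itemize}
In this way the angular curvature is encoded in the $|x|^{-1}$ decay of spherical averages.

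\textbf{The $L^1$ step.} The bound $\|\varphi_j^{(k)}P_kf\|_{L^1}\lesssim2^{3j/2}\|Q_{jk}f\|_{L^2}$ is not available. Cauchy--Schwarz only gives $2^{3j/2}\|\varphi_j^{(k)}P_kf\|_{L^2}$, and the outer projection in $Q_{jk}$ can only decrease norms. So $\|\varphi_j^{(k)}P_kf\|_{L^2}$ must be bounded by decomposing $P_kf=\sum_{(k',j')}P_kQ_{j'k'}f$ and using off-diagonal kernel decay with Schur's test. This is how the paper obtains \eqref{eqn:tech-localization}.

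\textbf{The summation in Step 3.} The $k$-dependent split is forced. At $2^j\approx|t|$ the low-$j$ sum carries $2^{(-\frac13+\frac23\beta)k}$, which is unbounded as $k\to-\infty$. The threshold $2^j\approx|t|2^{\frac23k^-}2^{-3k^+}$, where the two bounds cross, gives exactly $|t|^{-(1+\beta)}$ for $k\le0$.

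For $k>0$, the same split gives $|t|^{-(1+\beta)}2^{-(\frac92-2\beta)k}$, not $2^{-(\frac92+2\beta)k}$ as you assert. Carrying out the paper's own computation \eqref{eqn:computation-lin-decay-X-norm} also yields $\frac92-2\beta$. In fact $\frac92+2\beta$ fails for $\beta>\frac1{10}$. Take $f=2^{-(9+\beta)k}\cos(2^kx_1)\chi(x)$ with $\chi$ real and $\widehat\chi\in C_c^\infty(B(0,1))$. Then $\|f\|_X\lesssim1$, but $\|e^{it\Lambda}P_kf\|_{L^\infty}\gtrsim2^{-(9+\beta)k}$ at $t=c2^{4k}$, because $\mathrm{Hess}\,\Lambda=O(2^{-4k})$ near $\pm2^ke_1$. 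The weaker exponent is harmless for the later uses of \eqref{eqn:decay-Pk-profiles}.
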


\begin{proof} We prove the Proposition via a stationary phase argument closely following the approach in \cite[\textcolor{MidnightBlue}{Section 2}]{GPW08}.
    Recalling the notation from Section \ref{sec:localizations}, we obtain using the Young inequality
    \begin{equation}\label{eqn:decay-young}
    \begin{split}    
         \norm{e^{it \Lambda(\vert \nabla \vert)} P_kf}_{L^\infty}
         &=\norm{\mathcal{F}^{-1}\left(e^{i\Lambda(\xi)t}\varphi(2^{-k}\abs{\xi})\hat{f}(\xi)\right)}_{L^\infty}\\
         &=\norm{\mathcal{F}^{-1}\left(e^{i\Lambda(\xi)t}\varphi(2^{-k}\abs{\xi})\right) \ast \mathcal{F}^{-1}\left(\widetilde{\varphi}(2^{-k}\abs{\xi})\hat{f}(\xi)\right)}_{L^\infty}\\
         &\leq  \norm{e^{it\Lambda(\abs{\nabla})}\varphi(2^{-k}\cdot)}_{L^\infty}\norm{\widetilde{P_k}f}_{L^1},
          \end{split}
    \end{equation}
    where $\widetilde{P_k}$ is the projector associated to a bump function $\widetilde{\varphi}$ with $\widetilde{\varphi}\equiv1$ on $\supp(\varphi)$. Thus, we need to control the first term on the right-hand side above. Observe that with a change of variables $\xi \mapsto 2^k\eta$
    \begin{align}\label{eqn:decay-rescaling}
        \int_{\R^3}e^{ix\cdot\xi+i\Lambda(\xi)t}\varphi(2^{-k}\abs{\xi})d\xi= 2^{3k}\int_{\R^3}e^{i2^kx\cdot\eta+i\Lambda_k(\eta)t}\varphi(\abs{\eta})d\eta=2^{3k}(e^{it\Lambda_k(\abs{\nabla})}\mathcal{F}^{-1}(\varphi))(2^kx),
    \end{align}
    where 
    \begin{align}
        \Lambda_k(r)= \frac{r}{\sqrt{2^{-2k}+r^2}}.
    \end{align}is the rescaled radial dispersion relation. 
    We now use the representation of the inverse Fourier transform for radial functions (both $\xi \mapsto \varphi(\xi)$ and $\xi \mapsto \Lambda_k(\vert \xi \vert)$ are) using Bessel functions \cite[\textcolor{MidnightBlue}{Ch. 8.1, eq. (4)}]{Stein-Shakarchi-FuncAna}
  which in $3$D with $\abs{\xi}=r$ yields:
    \begin{align*}
(e^{it\Lambda_k(\abs{\nabla})t}\mathcal{F}^{-1}(\varphi))(x)=
        \frac{1}{2\pi^2}\int_0^\infty e^{it\Lambda_k(r)}\sin(\abs{x}r)\abs{x}^{-1}r \varphi(r) \, \mathrm{d}r=\vcentcolon I(x).
    \end{align*}
    In view of \eqref{eqn:decay-rescaling}, we will show
    \begin{align}\label{lin-decay-main-claim}
        \norm{I}_{L^{\infty}}\lesssim t^{-\frac{3}{2}} 2^{3k^+}2^{-\frac{5}{2}k^-}.
    \end{align}
    This yields
    \begin{align}\label{eqn:decay-rescaling-proof}
\norm{e^{it\Lambda(\abs{\nabla})}\varphi(2^{-k}\cdot)}_{L^\infty} \lesssim t^{-\frac{3}{2}}2^{6k^+}2^{\frac{k^-}{2}}.
    \end{align}
    \par \textbf{Proof of \eqref{lin-decay-main-claim}:} We split the analysis in two cases depending on the size of $\abs{x}$.
    \par\textbf{Case A: $\abs{x}\leq 1$.} In this case, we can integrate by parts once (twice respectively) in $\partial_r$ using
    \begin{align*}
        e^{it\Lambda_k(r)}=\frac{1}{it\Lambda_k'(r)}\partial_r\left( e^{it\Lambda_k(r)}\right).
    \end{align*}
    Before proceeding with the integration by parts we record a few useful bounds used throughout the proof: we have 
    \begin{align}
      &  \frac{1}{\Lambda_k'(r)}=2^{2k} (2^{-2k}+r^2)^{\frac{3}{2}}, \quad \partial_r\left( \frac{1}{\Lambda_k'(r)}\right)=3 \cdot 2^{2k} r (2^{-2k}+r^2)^{\frac{1}{2}}, \quad \partial_r^2\left( \frac{1}{\Lambda_k'(r)}\right)=3 \cdot 2^{2k} \frac{2r^2+2^{-2k}}{(r^2+2^{-2k})^{\frac{1}{2}}},
    \end{align}
    and therefore on $\mathrm{supp}(\varphi)$ where $r \sim 1$, there holds
    \begin{align}  \label{eqn:lin-decay-bounds-ibp}
\abs{ \frac{1}{\Lambda_k'(r)}}\sim 2^{2k^+}2^{-k^-}, 
      \;
      \abs{ \partial_r\left( \frac{1}{\Lambda_k'(r)}\right)}\lesssim  2^{2k^+}2^{k^-},\;
     \; \abs{\partial_r^2\left( \frac{1}{\Lambda_k'(r)}\right)}\lesssim 2^{2k^+}
    \end{align}
    We integrate by parts once in $I$ and obtain:
    \begin{equation}\label{eqn:lin-decay-Case-A-ibp-1}
       \begin{split}
           I=I^{(1)}&=\frac{1}{it}\int_0^\infty e^{it\Lambda_k(r)}\partial_r\left(\frac{1}{\Lambda_k'(r)}\sin(\abs{x}r)\abs{x}^{-1}r \varphi(r)\right) \, \mathrm{d}r\\
        &=-\frac{1}{it}\int_0^\infty e^{it\Lambda_k(r)}  \Bigg(\partial_r\left( \frac{1}{\Lambda_k'(r)}\right) \frac{\sin(\abs{x}r)}{\abs{x}r}r^2 \varphi(r)\\
        &\qquad \qquad +\frac{1}{\Lambda_k'(r)}\cos(\abs{x}r)r\varphi(r)+\frac{1}{\Lambda_k'(r)}\frac{\sin(\abs{x}r)}{\abs{x}r}(r\varphi(r)+r^2\varphi'(r))\Bigg) \, \mathrm{d}r.
       \end{split} 
    \end{equation}
    Since $\varphi$ is a smooth bump function so are $r^2\varphi'(r)$, $r\varphi(r)$ with a slightly modified compact support and using the bounds \eqref{eqn:lin-decay-bounds-ibp} we obtain:
\begin{align}\label{eqn:lin-decay-CaseA-1-ibp}
    \abs{I^{(1)}(x)}\lesssim t^{-1} (2^{2k^+}2^{k^-}+2^{2k^+}2^{-k^-})\lesssim t^{-1} 2^{2k^+}2^{-k^-}.
\end{align}
Integrating by parts once more in  \eqref{eqn:lin-decay-Case-A-ibp-1} yields for $\Tilde{\varphi}(r)=\varphi(r)+r\varphi'(r)$
\begin{equation}\label{eqn:lin-decay-CaseA-ibp2-formula}
    \begin{split}
        I=I^{(2)}&=-\frac{1}{t^2}\int_0^\infty e^{it\Lambda_k(r)}\partial_r\Bigg(\frac{1}{\Lambda_k'(r)}\Bigg[\partial_r\left( \frac{1}{\Lambda_k'(r)}\right) \frac{\sin(\abs{x}r)}{\abs{x}r}r^2 \varphi(r)+\frac{1}{\Lambda_k'(r)}(\cos(\abs{x}r)r\varphi(r)\\
        &\qquad \qquad \qquad \qquad \qquad\qquad \qquad+\frac{1}{\Lambda_k'(r)}\frac{\sin(\abs{x}r)}{\abs{x}}\Tilde{\varphi}(r))\Bigg] \Bigg) \, \mathrm{d}r\\
       &=-\frac{1}{t^2}\int_0^\infty e^{it\Lambda_k(r)}
\Bigg[
\Bigg(
\left(\partial_r\!\left(\frac{1}{\Lambda_k'(r)}\right)\right)^2
+\frac{1}{\Lambda_k'(r)}\partial_r^2\!\left(\frac{1}{\Lambda_k'(r)}\right)
\Bigg)
\frac{\sin(\abs{x}r)}{\abs{x}r}r^2\varphi(r) \\
&\hspace{3.2cm}
+3\,\frac{1}{\Lambda_k'(r)}\partial_r\!\left(\frac{1}{\Lambda_k'(r)}\right)
\Bigg(
\cos(\abs{x}r)\,r\varphi(r)
+\frac{\sin(\abs{x}r)}{\abs{x}}\widetilde{\varphi}(r)
\Bigg) \\
&\hspace{3.2cm}
+\left(\frac{1}{\Lambda_k'(r)}\right)^2
\Bigg(
-\abs{x}\sin(\abs{x}r)\,r\varphi(r)
+2\cos(\abs{x}r)\,\widetilde{\varphi}(r)
+\frac{\sin(\abs{x}r)}{\abs{x}}\widetilde{\varphi}'(r)
\Bigg)
\Bigg]\, \, \mathrm{d}r.
    \end{split}
\end{equation}
Thus, using $\abs{x}\leq 1$ and \eqref{eqn:lin-decay-bounds-ibp}, we obtain
\begin{align}
    \abs{I^{(2)}(x)} &\lesssim t^{-2}(2^{4k^+}2^{2k^-}+ 2^{4k^+}2^{-k^-}+2^{4k^+}+ 2^{4k^+}2^{-2k^-} )
\end{align}
and therefore
\begin{align}\label{eqn:lin-decay-CaseA-2-ibp}    
 \abs{I^{(2)}(x)} \lesssim t^{-2} 2^{4k^+}2^{-2k^-}.
\end{align}
In particular, interpolating between \eqref{eqn:lin-decay-CaseA-1-ibp} and \eqref{eqn:lin-decay-CaseA-2-ibp} we obtain
\begin{align*}
    \abs{I(x)}\lesssim t^{-\frac{3}{2}}2^{3k^+}2^{-\frac{3}{2}k^-},
\end{align*}
which implies the desired claim.
    \par\textbf{Case B: $\abs{x}\geq 1$.}
    In this case, we use $\sin(\vert x \vert r)=\frac{1}{2i}(e^{i\vert x \vert r}-e^{-i\vert x \vert r})$
    on order to split the integral $I$ in two parts as follows:
    \begin{align*}
        I(x)&=\int_0^\infty e^{it\Lambda_k(r)}\sin(\abs{x}r)\abs{x}^{-1}r \varphi(r) \, \mathrm{d}r \\
        &= \int_0^\infty e^{it\Lambda_k(r)+ir\abs{x}} h(x,r) \, \mathrm{d}r-\int_0^\infty e^{it\Lambda_k(r)-ir\abs{x}} h(x,r) \, \mathrm{d}r\\
        &= \int_0^\infty e^{i\phi_+(r)} h(x,r) \, \mathrm{d}r-\int_0^\infty e^{i\phi_-(r)} h(x,r) \, \mathrm{d}r\\
        &=\vcentcolon I_+(x) +I_-(x),
    \end{align*}
    with the respective phases defined by $$\phi_\pm(r)\vcentcolon =t\Lambda_k(r)\pm r\abs{x},$$
    and with the amplitude 
    $$h(x,r)\vcentcolon=\frac{1}{2i}\abs{x}^{-1}r \varphi(r).$$
    This will allow to treat $I_+$ via integration by parts as in \textbf{Case A}, while the remaining case for $I_-$ will follow via the standard Van der Corput lemma. 
    
   First, we deal with $I_+$: by observing that $\phi_+'(r)=t \Lambda_k'(r)+\vert x \vert \geq t \Lambda_k'(r)$, there holds
   \begin{align*}
     \abs{\frac{1}{\phi_+'(r)}} \leq t^{-1} \abs{\frac{1}{\Lambda'_k(r)}}, \ \   \abs{\partial_r\left(\frac{1}{\phi_+'}\right)} \leq t^{-1}  \abs{\frac{\Lambda_k''(r)}{\Lambda_k'(r)^2}}, \ \  \abs{\partial_r^2\left(\frac{1}{\phi_+'}\right)} \leq t^{-1} \abs{\frac{\Lambda_k'''(r)}{\Lambda_k'(r)^2}}+2t^{-1} \abs{\frac{(\Lambda_k''(r))^2}{\Lambda_k'(r)^3}},
   \end{align*}
   so that thanks to \eqref{eqn:lin-decay-bounds-ibp} and to 
\begin{align*}
    \vert \Lambda''_k(r)\vert + \vert \Lambda'''_k(r) \vert \lesssim 2^{-2k}r (2^{-2k}+r^2)^{-\frac{5}{2}}+ 2^{-2k}\vert 4r^2-2^{-2k} \vert (2^{-2k}+r^2)^{-\frac{7}{2}} \lesssim 2^{-2k^+} 2^{3k^-},
\end{align*}
when $r \sim 1$, 
   there holds on the support of $h$:
    \begin{align}\label{eqn:lin-decay-bounds-phi+}
        \abs{\frac{1}{\phi_+'(r)}}\leq t^{-1}2^{2k^+}2^{-k^-}, 
         \quad \abs{\partial_r\left(\frac{1}{\phi_+'}\right)}\lesssim t^{-1}2^{2k^+}2^{k^-},
         \quad \abs{\partial_r^2\left(\frac{1}{\phi_+'}\right)}\lesssim t^{-1}2^{2k^+}2^{k^-}.
    \end{align}
    Integrating by parts once as in \textbf{Case A} yields with \eqref{eqn:lin-decay-bounds-phi+} that
    \begin{align}
        I_+^{(1)}(x)=-\int_0^\infty e^{i\phi_+(r)}\left(\partial_r\left(\frac{1}{\phi_+'(r)}\right) h(r,x)+\frac{1}{\phi_+'(r)}\partial_r h(r,x)\right) \, \mathrm{d}r,
    \end{align}
    and then, using that $\vert x \vert^{-1} \leq 1$, 
    \begin{align}\label{eqn:lin-decay-J+-1ibp}
        \abs{I_+^{(1)}(x)}\lesssim t^{-1} 2^{2k^+}2^{-k^-}.
    \end{align}
    Similarly, a second integration by parts following \eqref{eqn:lin-decay-CaseA-ibp2-formula} and using \eqref{eqn:lin-decay-bounds-phi+} yields 
    \begin{align}\label{eqn:lin-decay-J+-2ibp}
        \abs{I_+^{(2)}}\lesssim t^{-2} 2^{4k^+}2^{-2k^-}.
    \end{align}
    Therefore interpolation yields the claim on the term $I_+$.

    Finally, we handle $I_-$: in that case, we note that the phase $\phi_-(r)$ has a stationary point for $t\Lambda'_k(r)=\abs{x}$. Thus, we split the analysis in the region away and close to the stationary point.
    \par\textbf{Case B.1: $\abs{x}\geq ct\Lambda_k'(r)$ or $\abs{x}\leq c^{-1}t\Lambda_k'(r)$ for some $c>1$.} In this case, the bounds \eqref{eqn:lin-decay-bounds-phi+} hold similarly for $\phi_-(r)$ as we are away from the critical point. As a consequence, \eqref{eqn:lin-decay-J+-1ibp}, \eqref{eqn:lin-decay-J+-2ibp} and therefore the claim hold analogously for $I_-$.
    \par\textbf{Case B.2: $c^{-1}t\Lambda_k'(r)\leq \abs{x}\leq ct\Lambda_k'(r)$.} Observe that on the support of the localization, the second derivative of the phase $\phi_-$ is nondegenerate with \begin{align*}
        \abs{\phi_-''(r)}=\abs{t\Lambda_k''(r)}=t2^{-2k}r (2^{-2k}+r^2)^{-\frac{5}{2}} \gtrsim  t 2^{-2k^+}2^{3k^-}.
    \end{align*}
     Hence, we may appeal to Van der Corput Lemma \cite[\textcolor{MidnightBlue}{Chapter 8}]{Stein-Shakarchi-FuncAna}: thanks to the lower bound above combined with $\abs{x}^{-1} \lesssim t^{-1} (\Lambda_k'(r))^{-1} \leq t^{-1} 2^{2k^+}2^{-k^-}$
    (inferred from \eqref{eqn:lin-decay-bounds-ibp}), we obtain:
    \begin{align*}
        \abs{I_-(x)}&\leq c\abs{\phi_-''(r)}^{-\frac{1}{2}}\int_0^\infty \abs{e^{i\phi_-(r)} \varphi(r)r\abs{x}^{-1}} \, \mathrm{d}r\lesssim (t 2^{-2k^+}2^{3k^-})^{-\frac{1}{2}} \cdot (t^{-1} 2^{2k^+}2^{-k^-}) =
        t^{-3/2}2^{3k^+}2^{-\frac{5}{2}k^-}.
    \end{align*}
    Combining \textbf{Case A} and \textbf{B} together with \eqref{eqn:decay-rescaling} finishes the proof of \eqref{lin-decay-main-claim}.
\par \textbf{Proof of \eqref{eqn:decay-Pk-profiles} and \eqref{eqn:linear-decay-j<m}.}
  From the decay estimate \eqref{eqn:decay-rescaling-proof} and  Bernstein and H\"older inequalities, for the space-frequency localized pieces \eqref{Q_jk-def} we have:
    \begin{align}
       \norm{e^{it\Lambda}Q_{jk}f}_{L^\infty}&\lesssim  \min\left\{\norm{\widehat{Q_{jk}f}}_{L^1}, |t|^{-\frac32} 2^{6k^+}2^{\frac{k^-}{2}}\norm{Q_{jk}f}_{L^1}\right\}\\ 
       &\label{eqn:preludeQ_jk-decay-j-proof}\lesssim \min\left\{
       2^{\frac{3}{2} k}\norm{Q_{jk}f}_{L^2}, |t|^{-\frac{3}{2}}2^{6k^+}2^{\frac{k^-}{2}}2^{\frac{3}{2} j}{\norm{\varphi_j^{(k)} P_k f}_{L^2}} 
       \right\}. 
        \end{align}
  We want to use the $X$-norm in order to bound the right-hand side above. However, note that the second term does not exactly correspond to the $Q_{jk}f$ and thus needs to be treated more carefully. To that end, we further decompose the function $f$ using \eqref{Q_jk-def} 
    \begin{equation}
        \varphi_j^{(k)} P_k f\vcentcolon=\sum_{(k',j')\in\mathcal{J}} \varphi_j^{(k)} P_k Q_{j'k'} f\vcentcolon=\sum_{\abs{k-k'}\leq 4}\sum_{j':(k',j')\in\mathcal{J}} T^{k,k'}_{j,j'}(P_{k'}f),
    \end{equation}
    where
    \begin{equation}
       T^{k,k'}_{j,j'}g(x)=\int K^{k,k'}_{j,j'}(x,y)g(y) dy,\qquad K^{k,k'}_{j,j'}(x,y)= \varphi_j^{(k)}(x)\mathcal{F}^{-1}[\varphi_k\varphi_{k'}](x-y)\varphi_{j'}^{(k')}(y).
    \end{equation}
    For $\abs{j-j'}>4$, there holds
    \begin{equation}
        \int \abs{K^{k,k'}_{j,j'}(x,y)} dx+\int \abs{K^{k,k'}_{j,j'}(x,y)} dy\lesssim \int_{\abs{z}\gtrsim \max\{2^j,2^{j'}\}}\abs{\mathcal{F}^{-1}[\varphi_k\varphi_{k'}](z)}dz\lesssim \max\{2^j,2^{j'}\}^{-10}.
    \end{equation}
    The last inequality holds since by the non-stationary phase, the convolution kernel associated to $\varphi_k\varphi_{k'}$ decays like $2^{3k}(1+2^k\vert x \vert)^{-10}$.
    Hence from Schur's lemma, it follows that
    \begin{equation}
    \begin{split}
       \norm{\varphi_j^{(k)} P_k f}_{L^2}&\lesssim \sum_{\abs{k-k'}\leq 4}\sum_{\abs{j-j'}\leq 4}\norm{Q_{j'k'}f}_{L^2}+2^{-2j}\sum_{ (k',j')\in\mathcal{J}: \abs{k-k'}\leq 4, \abs{j-j'}>4}2^{-8\abs{j-j'}}\norm{P_{k'}f}_{L^2}\\
       &\lesssim \sum_{\abs{k-k'}\leq 4}\sum_{\abs{j-j'}\leq 4}\norm{Q_{j'k'}f}_{L^2}+2^{-2j}\sum_{\abs{k-k'}\leq 4}\norm{P_{k'}f}_{L^2}.
    \end{split}
    \end{equation}
  Note that for the first  contribution, we can write by definition of the $X$-norm that 
    \begin{align}
   \norm{Q_{j'k'}f}_{L^2}  \lesssim  2^{-8k'^+} 2^{-\gamma k'^-} 2^{-(1+\beta)(k'^+ + j')}\norm{f}_X,
    \end{align}
    while for the second one we have the bound 
    \begin{align*}
        2^{-2j}\norm{P_{k'}f}_{L^2}\lesssim  2^{-2j} \sum_{\abs{k''-k'}\leq 4}\sum_{j'': (k'',j'') \in \mathcal{J}}\norm{P_{k'}Q_{j''k''}f}_{L^2} \lesssim 2^{-2j} 2^{-8k'^+} 2^{-\gamma k'^-} 2^{-(1+\beta)k'^+}\norm{f}_X.
    \end{align*}
    All in all, since $1+\beta \leq 2$ and $\abs{k-k'}\leq 4$, we obtain
    \begin{align}\label{eqn:tech-localization}
        \norm{\varphi_j^{(k)} P_k f}_{L^2} \lesssim 2^{-8k^+}2^{-(1+\beta)(j+k)-\gamma k^-}\norm{f}_X.
    \end{align}
    Coming back to \eqref{eqn:preludeQ_jk-decay-j-proof}, there holds
    \begin{align}\label{eqn:Q_jk-decay-j-proof}
  \norm{e^{it\Lambda}Q_{jk}f}_{L^\infty}&\lesssim  \min\left\{
       2^{\frac{3}{2} k}, |t|^{-\frac{3}{2}}2^{6k^+}2^{\frac{k^-}{2}}2^{\frac{3}{2} j}
       \right\}2^{-8k^+}2^{-(1+\beta)(j+k)-\gamma k^-}\norm{f}_X \\
       &\lesssim \min\left\{2^{-(\frac{15}{2}+\beta)k^+}2^{(\frac{2}{3}+\frac{2}{3}\beta)k^-}2^{-(1+\beta)j}, |t|^{-\frac{3}{2}}2^{-(3+\beta)k^+}2^{(\frac{1}{2}-\beta)j}2^{(-\frac{1}{3}+\frac{2}{3}\beta)k^-}\right\}\norm{f}_X.
       \end{align}
    
       Now we minimize the term on the right-hand side. A direct computation shows that the first term is smaller for $2^{j}\geq\abs{t}2^{\frac{2}{3}k^-}2^{-3k^+}$. Thus we can sum in $j\in \Z_+$ by splitting the sum at this threshold and obtain the decay estimate \eqref{eqn:decay-Pk-profiles}:
       \begin{equation}\label{eqn:computation-lin-decay-X-norm}
       \begin{split}\norm{e^{it\Lambda}P_kf}_{L^\infty}
    &\lesssim \sum_{j+k \geq 0, 2^{j}\geq |t|2^{\frac{2}{3}k^-}2^{-3k^+}} 2^{-(\frac{15}{2}+\beta)k^+}2^{(\frac{2}{3}+\frac{2}{3}\beta)k^-}2^{-(1+\beta)j}\norm{f}_X\\
    &\qquad\qquad+\sum_{j+k \geq 0, 2^{j}\leq |t|2^{\frac{2}{3}k^-}2^{-3k^+}}|t|^{-\frac{3}{2}}2^{-(3+\beta)k^+}2^{(\frac{1}{2}-\beta)j}2^{(-\frac{1}{3}+\frac{2}{3}\beta)k^-}\norm{f}_X\\
    & \lesssim |t|^{-(1+\beta)}2^{-(\frac{9}{2}+2\beta)k^+}\norm{f}_X.
    \end{split}
    \end{equation}
       The estimate \eqref{eqn:linear-decay-j<m} follows from \eqref{eqn:Q_jk-decay-j-proof} and \eqref{eqn:gamma-beta-relation}:
    \begin{align}
\norm{e^{it\Lambda}Q_{jk}f}_{L^\infty}&\lesssim |t|^{-\frac{3}{2}}2^{6k^+}2^{\frac{k^-}{2}}2^{\frac{3}{2} j}2^{-8k^+}2^{-(1+\beta)(j+k)-\gamma k^-}\norm{f}_X\\
        &=\vert t \vert^{-\frac{3}{2}} 2^{-2k^+} 2^{\frac{3}{2}j} 2^{-(1+\beta)(j+k)} 2^{(\frac{2}{3}+\frac{5}{3})k^-}{\norm{f}_X}.
        \end{align}
Using on the one hand the fact that $k+j \geq 0$, or on the other hand that $k=k^+ + k^-$, we obtain
        \begin{align}
\norm{e^{it\Lambda}Q_{jk}f}_{L^\infty}\lesssim |t|^{-\frac{3}{2}}\min\left\{2^{-(3+\beta)k^+}2^{(\frac{1}{2}-\beta)j}2^{(-\frac{1}{3}+\frac{2}{3}\beta)k^-},2^{-2k^+}2^{(\frac{2}{3}+\frac{5}{3}\beta)k^-}2^{\frac{3}{2}j}\right\}\norm{f}_X,
    \end{align}
    which concludes the proof of \eqref{eqn:linear-decay-j<m}.

\end{proof}
\begin{rem}\label{remark:lin-decay}
For the semigroup of the problem there holds that
\begin{align}\Vert e^{it \Lambda} f \Vert_{L^\infty} &\lesssim {t^{-\frac{3}{2}}}\Vert f \Vert_{\dot{B}^{\frac12}_{1,1}\cap \dot{B}^{6}_{1,1}}. \end{align} This is a direct consequence of \eqref{eqn:decay-young} and \eqref{eqn:decay-rescaling-proof}.
\end{rem}

\section{Energy Estimates in \texorpdfstring{$H^s$}{Hs}}\label{sec:energy-estimates}
In this Section we prove standard energy estimates for the unknowns $Z_\pm$ solution to \eqref{eq:Syst-dispersive}.

\begin{prop}\label{prop:Sobolev-estimate}
For $s\geq 0$, there exist constants $c_s,C_s>0$ such that for all solutions $Z_\pm\in C_tH^s_x$ to \eqref{eq:Syst-dispersive} there holds that
\begin{align*}
\sum_{\mu\in \lbrace +,-\rbrace }\norm{Z_\mu(t)}_{H^s}^2\leq C_s \exp\left(c_s\int_0^t \mathrm{A}(\tau) \, \mathrm{d}\tau\right)\sum_{\mu\in \lbrace +,-\rbrace }\norm{Z_\mu(0)}_{H^s}^2, 
\end{align*}
where
\begin{align}\label{eqn:blow-up-crit}
    \mathrm{A}(\tau):=\sum_{\mu\in \lbrace +,-\rbrace } \norm{\nabla \br{\nabla}^{-1}Z_\mu(\tau)}_{W^{1,\infty}}+\norm{\abs{\nabla}Z_\mu(\tau)}_{W^{1,\infty}}.
\end{align}
\end{prop}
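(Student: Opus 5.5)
I will derive the energy estimate from the equations for $(\rho,\psi)$ in \eqref{eq:EP-perturbed-vorticity} rather than from \eqref{eq:Syst-dispersive}. By Remark \ref{rem:dispersive-unknowns} and Lemma \ref{LEM-link-unkwowns}, $\sum_\mu\norm{Z_\mu}_{H^s}^2$ is equivalent to $\norm{\abs{\nabla}^{-1}\rho}_{H^s}^2+\norm{\br{\nabla}\psi}_{H^s}^2$. I will therefore use the energy
\begin{align*}
E_s(t)\vcentcolon=\norm{\br{\nabla}^{s}\abs{\nabla}^{-1}\rho}_{L^2}^2+\norm{\br{\nabla}^{s+1}\psi}_{L^2}^2 .
\end{align*}
Equivalently, $E_s=\tfrac12\sum_\mu\norm{Z_\mu}_{H^s}^2$. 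Its linear part is conserved: \eqref{eq:EP-linearized-Fourier} is exactly skew-symmetric in these variables, since the cross terms are $\langle \abs{\nabla}^{-1}\rho,\abs{\nabla}\psi\rangle$ with opposite signs. Hence $\frac{d}{dt}E_s$ contains only the nonlinear contributions
\begin{align*}
\mathcal{N}_1=-2\langle \br{\nabla}^s\abs{\nabla}^{-1}\rho,\br{\nabla}^s\abs{\nabla}^{-1}\mathrm{div}(\rho v)\rangle,\qquad \mathcal{N}_2=-\langle \br{\nabla}^{s+1}\psi,\br{\nabla}^{s+1}\abs{v}^2\rangle .
\end{align*}
The goal is to show $|\mathcal{N}_1|+|\mathcal{N}_2|\lesssim \mathrm{A}\, E_s$ and then apply Grönwall.

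For $\mathcal{N}_2$, I write $\abs{v}^2=v\cdot\nabla\psi$ and split
\begin{align*}
\br{\nabla}^{s+1}(v\cdot\nabla\psi)=v\cdot\nabla\br{\nabla}^{s+1}\psi+[\br{\nabla}^{s+1},v\cdot\nabla]\psi .
\end{align*}
The transport term integrates by parts to $-\frac12\int \mathrm{div}\,v\,\abs{\br{\nabla}^{s+1}\psi}^2$, which is bounded by $\norm{\nabla v}_{L^\infty}E_s$. The commutator is handled by the Kato--Ponce estimate, giving the bound $\norm{\nabla v}_{L^\infty}\norm{\psi}_{H^{s+1}}+\norm{v}_{H^{s+1}}\norm{\nabla\psi}_{L^\infty}$. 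Low frequencies of $\psi$ itself never appear, since only $\nabla\psi=v$ enters. Here $\norm{v}_{H^{s+1}}\lesssim\norm{\psi}_{H^{s+2}}$, which is one derivative too many. I will instead estimate $\abs{v}^2$ crudely, controlling $\norm{\br{\nabla}^{s+1}\abs{v}^2}_{L^2}$ by $\norm{v}_{L^\infty}\norm{v}_{H^{s+1}}$. Since $\norm{v}_{H^{s+1}}\sim\norm{\br{\nabla}\psi}_{H^s}$ at high frequency only when the weight is $\br{\nabla}^{s+2}$, this requires care. The resolution is to notice that $\br{\nabla}\psi$ at $H^s$ regularity means $\psi\in H^{s+1}$, so $v\in H^s$. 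Then $\norm{\br{\nabla}^{s+1}(v\cdot\nabla\psi)}$ is estimated by the transport/commutator splitting above, where the commutator only requires $\norm{v}_{H^{s}}\norm{\nabla v}_{L^\infty}$. Note that $\norm{\nabla v}_{L^\infty}\lesssim \norm{\nabla\br{\nabla}^{-1}Z}_{W^{1,\infty}}$ by \eqref{eqn:rho,psi-in-Z}.

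For $\mathcal{N}_1$, at high frequency $\br{\nabla}^s\abs{\nabla}^{-1}\mathrm{div}(\rho v)$ behaves like a Riesz transform of $\br{\nabla}^s(\rho v)$, and I write
\begin{align*}
\mathrm{div}(\rho v)=v\cdot\nabla\rho+\rho\,\mathrm{div}v .
\end{align*}
The term $v\cdot\nabla\rho$ is a transport term: I commute $\br{\nabla}^{s}\abs{\nabla}^{-1}$ with $v\cdot\nabla$ and integrate by parts. The Riesz/commutator structure $[\abs{\nabla}^{-1}\nabla,v]$ is controlled by $\nabla v$ via Calderón-type commutator estimates, localized to high frequencies with a Littlewood--Paley splitting. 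The term $\rho\,\mathrm{div}v$ is the dangerous one. It requires $\norm{\mathrm{div}v}_{H^{s-1}}$ paired against $\norm{\rho}_{L^\infty}$, which is fine, plus $\norm{\rho}_{H^{s-1}}\norm{\nabla v}_{L^\infty}$. These are controlled by $E_s$ and by $\norm{\abs{\nabla}Z}_{L^\infty}\sim\norm{\rho}_{L^\infty}$, both contained in $\mathrm{A}$. The top-order term, where all derivatives fall on $\mathrm{div}\,v$, has the form $\langle \br{\nabla}^{s-1}\rho,\rho\,\br{\nabla}^{s-1}\Delta\psi\rangle$. At high frequency this is $\langle Z,\rho\, \br{\nabla} Z\rangle$-type, which loses one derivative. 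It must be cancelled against the analogous top-order term in $\mathcal{N}_2$ coming from the linear coupling of the equations, i.e.\ the symmetrizer underlying the choice of unknowns in Remark \ref{rem:dispersive-unknowns}(3).

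I expect this cancellation to be the main obstacle. In the pressureless system the density equation is not hyperbolically coupled to $v$ at top order. The density therefore genuinely loses a derivative unless $\rho$ is taken one derivative below $v$, and this is exactly what $Z_\pm$ encodes. My first attempt is to check that the top-order part of $\mathcal{N}_1$ reduces to a commutator by integration by parts, e.g.\ $\int\rho\,\br{\nabla}^{s-1}\rho\,\br{\nabla}^{s-1}\Delta\psi$. If it does not, I will modify $E_s$ by a cubic correction weighted by $(1+\rho)$, as in symmetrized quasilinear estimates. The lower-order terms are routine.
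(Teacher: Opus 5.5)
\textbf{The gap: a miscounted derivative loss.} Your setup matches the paper's route: the energy $\tfrac12\sum_\mu\|Z_\mu\|_{H^s}^2$ written in $(\rho,\psi)$, the exact cancellation of the linear terms, transport/commutator bounds, and Gr\"onwall. The paper splits this energy into $E_0^2=\frac12\|\br{\nabla}\psi\|_{L^2}^2+\frac12\||\nabla|^{-1}\rho\|_{L^2}^2$ and $E_s^2=\frac12\|\nabla\psi\|_{H^s}^2+\frac12\|\rho\|_{H^{s-1}}^2$, and calls the rest standard. As written, however, your proposal does not close. The step you flag as the main obstacle rests on a miscount, and the remedies you propose for it are not available.

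The term $\langle\br{\nabla}^{s-1}\rho,\rho\,\br{\nabla}^{s-1}\Delta\psi\rangle$ loses nothing. Since $\Delta\psi=\mathrm{div}\,v$ and your energy controls $\|\psi\|_{H^{s+1}}$, i.e.\ $v\in H^s$, it is bounded by $\|\rho\|_{L^\infty}\|\rho\|_{H^{s-1}}\|v\|_{H^s}\lesssim \mathrm{A}\,E_s$. In the $Z$-variables, both $\br{\nabla}^{s-1}\rho$ and $\br{\nabla}^{s-1}\Delta\psi$ have size $\br{\nabla}^sZ$ at high frequency. So the pairing is of type $\langle \br{\nabla}^sZ,\rho\,\br{\nabla}^sZ\rangle$, not $\langle Z,\rho\,\br{\nabla}Z\rangle$.

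The Moser bound $\|\rho\,\mathrm{div}\,v\|_{H^{s-1}}\lesssim \|\rho\|_{L^\infty}\|v\|_{H^s}+\|\rho\|_{H^{s-1}}\|\nabla v\|_{L^\infty}$, which you state one sentence earlier, already disposes of this term. This is exactly what the hierarchy $\rho\in H^{s-1}$, $v\in H^s$ built into $Z_\pm$ is for. The cancellation against $\mathcal{N}_2$ you look for does not exist: $\mathcal{N}_2$ contains no $\rho$, and its top-order part is a pure transport term in $\psi$. No cubic correction of the energy is needed either. Leaving the proof contingent on either is the gap, though you remove it by deleting that paragraph.

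\textbf{A second error in $\mathcal{N}_2$.} The commutator $[\br{\nabla}^{s+1},v\cdot\nabla]\psi$ is \emph{not} bounded by $\|v\|_{H^s}\|\nabla v\|_{L^\infty}$. Its high--low part $(\br{\nabla}^{s+1}v)\cdot\nabla\psi$ has size $\|v\|_{H^{s+1}}\|v\|_{L^\infty}$, which your energy does not control. What rescues it is the symmetry $v=\nabla\psi$: that piece equals $\nabla\psi\cdot\nabla\br{\nabla}^{s+1}\psi$, a second transport term. Hence $\br{\nabla}^{s+1}|\nabla\psi|^2=2\nabla\psi\cdot\nabla\br{\nabla}^{s+1}\psi+O_{L^2}(\|\nabla v\|_{L^\infty}\|v\|_{H^s})$, and after pairing with $\br{\nabla}^{s+1}\psi$ you integrate by parts.

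Equivalently, as the paper does, estimate $v=\nabla\psi$ in $H^s$ from $\partial_tv+(v\cdot\nabla)v=-\nabla\br{\nabla}^{-2}\rho$. There the usual bound $\|[\br{\nabla}^s,v\cdot\nabla]v\|_{L^2}\lesssim\|\nabla v\|_{L^\infty}\|v\|_{H^s}$ applies directly.

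Separating off a low-frequency energy, as the paper does, also spares you the Riesz-transform and Calder\'on commutators that your weight $\br{\xi}^{2s}|\xi|^{-2}$ brings in. At low frequency, $\||\nabla|^{-1}\mathrm{div}(\rho v)\|_{L^2}\le\|\rho\|_{L^\infty}\|v\|_{L^2}$ suffices. At high frequency the weight is essentially $\br{\xi}^{2s-2}$, and the transport term $v\cdot\nabla\rho$ is handled by the standard Kato--Ponce commutator in $H^{s-1}$.
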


\begin{proof}
In the variables $(\rho,\psi)$, the system \eqref{eq:Syst-dispersive} reads (see also \eqref{eq:EP-perturbed-vorticity})
\begin{equation}
  \partial_t \rho + \Delta \psi+\mathrm{div}(\rho \nabla \psi) =0,\qquad   \partial_t  \psi +\frac{1}{2} \vert \nabla \psi \vert^2 =- \br{\nabla}^{-2}\rho.
\end{equation}
Standard energy estimates show that the energies
\begin{equation}
      E_0^2(t):=\frac12 \norm{\br{\nabla}\psi}_{L^2}^2+\frac12 \lVert{\abs{\nabla}^{-1}\rho}\rVert_{L^2}^2,\qquad E_s^2(t):=\frac12 \norm{\nabla\psi}_{H^s}^2+\frac12 \norm{\rho}_{H^{s-1}}^2,\quad s>0,
  \end{equation}
satisfy
\begin{equation}
  \frac{d}{dt}E_s^2(t)\lesssim \left(\norm{\rho(t)}_{W^{1,\infty}}+\norm{\nabla\psi(t)}_{W^{1,\infty}}\right)E_s^2(t),\qquad s\geq 0.
\end{equation}
The claim then follows by observing with Lemma \ref{LEM-link-unkwowns} and \eqref{eqn:rho,psi-in-Z} that
\begin{equation}
  E_0^2(t)+E_s^2(t)\lesssim \sum_{\mu\in \lbrace +,-\rbrace }\norm{Z_\mu(t)}_{H^s}^2 \lesssim  E_0^2(t)+E_s^2(t), \qquad \norm{\rho(t)}_{W^{1,\infty}}+\norm{\nabla\psi(t)}_{W^{1,\infty}}\lesssim\mathrm{A(t)}.
\end{equation}
(All implicit constants may depend on $s\geq 0$.)
\end{proof}

As a consequence, in the bootstrap framework of Proposition \ref{prop:bootstrap}, we obtain uniform energy estimates. 
\begin{cor}\label{cor:blow-up-criterion-decay}
    Let the blow-up criterion $\mathrm{A}(t)$ be defined as in \eqref{eqn:blow-up-crit}. Then under the bootstrap assumption \eqref{eq:bootstrap-assumptionX}, there holds for all $t \geq 0$
    \begin{align}
        \mathrm{A}(t) \lesssim \br{t}^{-(1+\beta)}\eps.
    \end{align}
    As a consequence, under the bootstrap assumption \eqref{eq:bootstrap-assumptionX}, the energy remains uniformly bounded for all $t\geq 0$:
    \begin{align}
        \sum_{\mu\in \{+,-\}}\norm{Z_\mu(t)}_{H^s}\lesssim \eps.
    \end{align}
\end{cor}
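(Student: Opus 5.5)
The plan is to derive the bound on $\mathrm{A}(t)$ from the linear decay estimate \eqref{eqn:decay-Pk-profiles} in Proposition \ref{prop:lin-decay}, and then insert it into the energy inequality of Proposition \ref{prop:Sobolev-estimate}.

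\emph{Large times, $t\geq 1$.} First I rewrite $Z_\mu=e^{\mu it\Lambda}\zz_\mu$ and decompose dyadically, $Z_\mu=\sum_{k\in\Z}e^{\mu it\Lambda}P_k\zz_\mu$. Each term in $\mathrm{A}$ applies to $Z_\mu$ a multiplier of size at most $2^{k}2^{k^+}$ on frequency $2^k$. The multipliers are $\nabla\br{\nabla}^{-1}$, $\abs{\nabla}$, and one extra derivative from the $W^{1,\infty}$ norm. These are symbols of order at most $2$ at high frequency, acting on $P_k$ as bounded convolution operators with norm $\lesssim 2^{k}2^{k^+}$ on $L^\infty$ (apply $\widetilde P_k$ and Young's inequality). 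Hence
\begin{align}
\mathrm{A}(t)\lesssim \sum_{\mu}\sum_{k\in\Z} 2^{k}2^{k^+}\norm{e^{\mu it\Lambda}P_k\zz_\mu}_{L^\infty}\lesssim |t|^{-(1+\beta)}\sum_{k\in\Z}2^{k}2^{k^+}2^{-(\frac92+2\beta)k^+}\norm{\zz_\mu}_X .
\end{align}
This also uses the $e^{-it\Lambda}$ version of \eqref{eqn:decay-Pk-profiles}, which holds by conjugation since $\Lambda$ is real and even. The sum over $k\geq0$ converges thanks to the factor $2^{-(\frac52+2\beta)k}$. The sum over $k<0$ converges geometrically thanks to the factor $2^{k}$. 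The bootstrap assumption \eqref{eq:bootstrap-assumptionX} then gives $\mathrm{A}(t)\lesssim t^{-(1+\beta)}\eps$.

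\emph{Short times, $t\leq1$.} Here decay is not needed, and I only have to check that $\mathrm{A}$ is bounded by $\eps$. Bernstein gives $\norm{P_k g}_{L^\infty}\lesssim 2^{3k/2}\norm{P_kg}_{L^2}$, and $\norm{P_k\zz_\mu}_{L^2}=\norm{P_kZ_\mu}_{L^2}$. Summing over $k\geq 0$ with the $H^{N_0}$ norm ($N_0$ large) and over $k<0$ with $\norm{P_k \zz_\mu}_{L^2}\lesssim 2^{-\gamma k}\norm{\zz_\mu}_X$ (sum over $j$ of $Q_{jk}$ with $j\geq -k$) yields a convergent sum, since $\gamma>-\frac12$ and $1+\frac32-\gamma>0$. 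Alternatively, \eqref{eqn:tech-localization}-type bounds with the first alternative in \eqref{eqn:Q_jk-decay-j-proof} give an $X$-only control. For the high-frequency part at $t\leq 1$ I would, however, rather avoid a circular use of the energy bound. The cleanest route is the first alternative of \eqref{eqn:Q_jk-decay-j-proof}, which bounds $\norm{e^{it\Lambda}Q_{jk}f}_{L^\infty}$ by $2^{-(\frac{15}{2}+\beta)k^+}2^{(\frac23+\frac23\beta)k^-}2^{-(1+\beta)j}\norm{f}_X$ uniformly in $t$. Summed over $j$ and multiplied by $2^{k}2^{k^+}$, this is summable in $k$. Thus $\mathrm{A}(t)\lesssim\eps$ for all $t$, and combining the two regimes gives $\mathrm{A}(t)\lesssim\br{t}^{-(1+\beta)}\eps$.

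\emph{Energy bound.} Since $\int_0^\infty\br{\tau}^{-(1+\beta)}d\tau<\infty$ because $\beta>0$, Proposition \ref{prop:Sobolev-estimate} yields
\begin{align}
\sum_\mu\norm{Z_\mu(t)}_{H^s}^2\leq C_s e^{C\eps}\sum_\mu\norm{Z_\mu(0)}_{H^s}^2\lesssim\eps^2
\end{align}
for $s\leq N_0$, using \eqref{assumption-smallnessdata}. The only delicate point is the bookkeeping of derivative counts against the $2^{-(\frac92+2\beta)k^+}$ gain, where the two derivatives in $W^{1,\infty}$ of $\abs{\nabla}Z$ must be checked to be affordable. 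Since $\frac92+2\beta>2$, there is ample room.
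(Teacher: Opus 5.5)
Your proposal is correct and follows essentially the same route as the paper: for $t\geq 1$ you sum the dyadic linear decay estimate \eqref{eqn:decay-Pk-profiles} against multiplier weights of size $2^{k}2^{k^+}$, and for $t\leq 1$ you use Bernstein together with the $X$-norm alone, which, as in the paper, avoids any circular use of the energy bound. The uniform energy bound then follows from the integrability of $\br{t}^{-(1+\beta)}$ in Proposition \ref{prop:Sobolev-estimate}, exactly as the paper intends.
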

\begin{proof}
   Let $Z\in\{Z_\pm\}$. We compute using \eqref{eqn:decay-Pk-profiles} and the bootstrap assumption \eqref{eq:bootstrap-assumptionX}:
   \begin{align}
       \mathrm{A}(t)&=\sum_{\mu}\norm{\nabla \br{\nabla}^{-1}Z_\mu(t)}_{W^{1,\infty}}+\norm{\abs{\nabla}Z_\mu(t)}_{W^{1,\infty}}\\
       &\lesssim \sum_{\mu}\sum_{k\in \Z}(2^{k}(1+2^{2k})^{-\frac12}+2^k)(1+2^{k})\norm{e^{\mu it\Lambda}\zz_\mu(t)}_{L^{\infty}}\\
       &\lesssim \sum_{\mu}\sum_{k\leq 0} 2^{k}t^{-(1+\beta)}\norm{\zz_\mu(t)}_X+\sum_{\mu}\sum_{k>0}2^{-(\frac{5}{2}+\beta)k}t^{-(1+\beta)}\norm{\zz_\mu(t)}_X
       \lesssim t^{-(1+\beta)}\eps.
   \end{align}
    For $0 \leq t \leq 1$, we bound $\mathrm{A}(t)$ by the $X$-norm using the Bernstein inequality and the fact that the semigroup is unitary
     \begin{align*}
        \mathrm{A}(t) &\lesssim  \sum_{k\in\Z} (2^{k}(1+2^{2k})^{-\frac12}+2^k)(1+2^{k})2^{\frac{3}{2}k}\norm{P_k \zz(t)}_{L^2} \\
        & \lesssim \sum_{k \in \Z}  2^{k^+} 2^{\frac{5}{2}k} 2^{-8k^+}2^{-\gamma k^-} \sum_{j+k \geq 0} 2^{-(1+\beta)(k+j)}\norm{\zz(t)}_X \lesssim \eps.
    \end{align*}

\end{proof}

\section{Weighted Estimates}\label{sec:X-norm}
This section contains the main dispersive analysis guaranteeing Theorems \ref{main-thm}, \ref{main-thm-dispersive}. Recall the definition of the $X$-norm \eqref{def:X-norm}
\begin{align}
    \norm{f}_{X}=\sup_{(k,j)\in J}2^{8k^+}2^{(1+\beta)(k+j)}2^{\gamma k^-}\norm{Q_{jk}f}_{L^2},
\end{align}
and the definition of bilinear terms \eqref{def:bilinear-expressions} with phases and multipliers as in \eqref{def:phases}, \eqref{def:multipliers}:
\begin{align}
    \mathcal{F}(\mathcal{B}_\m(f,g))(\xi,t)=\int_0^t\int_{\R^3} e^{is\Phi(\xi,\eta)}\m(\xi,\eta)\widehat{f}(\xi-\eta)\widehat{g}(\eta)d\eta ds.
\end{align}
Moreover, recall that in the proof of the main bootstrap argument Proposition \ref{prop:bootstrap}, we localize each of these bilinear expressions on dyadic in time intervals, so we in fact deal with expressions $\mathcal{B}_\m^m(f,g)$ as defined in \eqref{eqn: bootstrap B_m time decomposition}. Throughout this section, we drop the superscript $m$ for ease of notation.
\par In this section we use the notation \eqref{eqn: chi localizations} regarding the spatial and frequency localizations we employ and moreover: 
\begin{equation}\label{def:localizations-max-min}
\begin{split}
    &k_{\max}\vcentcolon=\max\{k,k_1,k_2\},\quad k_{\min}\vcentcolon=\min\{k,k_1,k_2\},\quad  j_{\max}\vcentcolon=\max\{j_1,j_2\},\quad j_{\min}\vcentcolon=\min\{j_1,j_2\}.
\end{split}
\end{equation}
Moreover, as a consequence of the triangle inequality we have the following possible settings for the sizes of the frequencies. This will be used throughout the following Sections, in particular Lemma \ref{LM:bound-from-below-PHASE}.
\begin{rem}\label{rem:notation-rel-size-frequencies} For $k,k_1,k_2\in \Z$ as in \eqref{def:frequencies-k}, one of the following scenarios is true:
\begin{enumerate}
    \item  There holds  $k_1\leq k-4$, and as a consequence $\abs{k-k_2}\leq 2$.  In this case we use the notation $k_1\ll k\sim k_2$.
    \item  There holds  $k_2\leq k-4$, and as a consequence $\abs{k-k_1}\leq 2$.  In this case we use the notation $k_2\ll k\sim k_1$.
    \item There holds $\abs{k_1-k_2}<4$. Then either $k\leq \min\{k_1,k_2\}-4$, in which case we use the notation $k\ll k_1\sim k_2$, or $\abs{k-\min\{k_1,k_2\}}<4$ and we use the notation $k\sim k_1\sim k_2$.
\end{enumerate}
\end{rem}
The main result of this section is the following:
\begin{prop}\label{prop:X-norm-bounds} In the setting of Proposition \ref{prop:bootstrap} and under the bootstrap assumption \eqref{eq:bootstrap-assumptionX}, consider bilinear terms \eqref{def:bilinear-expressions} with phases and multipliers as in \eqref{def:phases},\eqref{def:multipliers}. Then for $m\in \N$, $t\in ]2^m,2^{m+1}]\cap[0,T]$ and $F_i \in \{\mathcal{Z}_\pm\}$, there holds:
    \begin{align*}
        \sup_{(k,j)\in \mathcal{J}}2^{8k^+}2^{(1+\beta)(k+j)}2^{\gamma {k^-}}\norm{Q_{jk}\mathcal{B}_{\m}(F_1,F_2)}_{L^2} \lesssim 2^{-\frac{\delta}{2} m}\eps^2.
    \end{align*}
\end{prop}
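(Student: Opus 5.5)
We fix $m$, $t\sim 2^m$ and an output localization $(k,j)\in\mathcal{J}$, and decompose both inputs as $F_i=\sum_{(k_i,j_i)}Q_{j_ik_i}F_i$. Together with the multiplier localization $\chi(\xi,\eta)$ from \eqref{eqn: chi localizations}, this reduces the claim to bounding the pieces $Q_{jk}\mathcal{B}_\m^m(Q_{j_1k_1}F_1,Q_{j_2k_2}F_2)$ with a summable gain $2^{-\delta m}$. The proof then splits into three parts.

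\textbf{Step 1: trivial reductions.} First we discard extreme parameters using energy and set-size bounds. When $k_{\max}$ is large ($2^{k_{\max}}\gtrsim 2^{\delta' m}$), we use the $H^{N_0}$ bound \eqref{eqn:boostrap-energy-bounded}, which gains many powers of $2^{-k_{\max}}$. When $k_{\min}$ is very small, we use the $L^\infty$ Bernstein bound on the smallest frequency, $\|\widehat{f}\|_{L^\infty}$ times the volume $2^{3k_{\min}}$, together with the factor $2^{\gamma k^-}$ in the $X$-norm. When $j$ or $j_{\max}$ is very large ($\gtrsim m$ with a large constant), we use the $X$-norm weights directly, or the trivial bound $\|Q_{jk}h\|_{L^2}\le\|P_kh\|_{L^2}$ combined with the time integration length $2^m$ and the multiplier bounds of Lemma \ref{lem:multiplier-bounds}. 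After these reductions, all frequencies satisfy $|k_i|\lesssim \delta' m$ and $j_1,j_2\lesssim m$.

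\textbf{Step 2: the region $j\geq m+C$ (finite speed of propagation).} Here the output sits at $|x|\gg t$ while the inputs are localized at $|x|\lesssim 2^{j_{\max}}$. If $j\ge \max(m,j_{\max})+C$, we integrate by parts in $\xi$ in the kernel representation: writing $Q_{jk}$ as pairing against $\varphi_j(x)e^{ix\cdot\xi}$, the phase $x\cdot\xi+s\Phi(\xi,\eta)$ has $\xi$-gradient of size $\gtrsim 2^j$, since $|\nabla_\xi\Phi|\lesssim 1$ (because $|\Lambda'|\le 1$) and $\nabla_\xi \widehat{f}(\xi-\eta)$ costs at most $2^{j_1}$. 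Iterating this gains $(2^{\max(m,j_1)-j})^N$, with derivatives landing on $\m$ and $\chi$ costing only $2^{-k_{\min}}$, which is harmless after Step 1. If instead $j_{\max}\gtrsim j$, the input weight $2^{-(1+\beta)j_{\max}}$ already pays for the output weight $2^{(1+\beta)j}$, and we bound the bilinear term with one input in $L^2$ and the other in $L^\infty$ via \eqref{eqn:decay-Pk-profiles}, gaining $2^{m}\cdot 2^{-(1+\beta)m}$ from the time integration.

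\textbf{Step 3: the region $j\le m+C$ (normal forms), the main obstacle.} Here the weight $2^{(1+\beta)j}\lesssim 2^{(1+\beta)m}$ must be beaten by the nonlinearity. We integrate by parts in $s$ as in Section \ref{sec:outline-of-proof}, which produces a boundary term with multiplier $\m\Phi^{-1}$ at $s\sim 2^m$ and two terms containing $\partial_s F_i$. Both are bounded using the phase and multiplier bounds of Lemmas \ref{LM:bound-from-below-PHASE} and \ref{lem:multiplier-bounds}, which give $|\m\Phi^{-1}|\lesssim 2^{-k}$ in the worst case $k\ll k_1\sim k_2$ and are better otherwise thanks to the vanishing of $\m$ at $\eta=0$ and $\xi=\eta$ (Remark \ref{rem:null-structure}). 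We then place one input in $L^2$ and the other in $L^\infty$. For the boundary term, the $L^\infty$ factor uses the refined bound \eqref{eqn:linear-decay-j<m}, which decays like $2^{-3m/2}$ times a power of $2^{j_i}$; this gives roughly $2^{(1+\beta)m}\cdot 2^{-k}\cdot 2^{-3m/2}2^{(1/2-\beta)j_i}\cdot 2^{-(1+\beta)j_{i'}}$. For the time-derivative terms, Lemma \ref{lem:time-derivative-L2} gives $\|\partial_sF\|_{L^2}\lesssim 2^{-3m/2}$; after multiplying by $2^m$ from the time integral and by the decay $2^{-(1+\beta)m}$ of the other input, the result is comfortably small. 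The hard part is the low output frequency case $k\ll k_1\sim k_2$ with $k<0$, where the loss $2^{-k}$ must be absorbed. We plan to combine three ingredients: the restriction $j\ge -k$, the weight $2^{\gamma k^-}$ with the relation \eqref{eqn:gamma-beta-relation} $\gamma+\beta=-\tfrac16-\tfrac23\beta$, and the $k^-$ gains in \eqref{eqn:linear-decay-j<m}. We will also use Bernstein on the output, which gives $\|P_k(\cdot)\|_{L^2}\lesssim 2^{3k/2}\|\cdot\|_{L^1}$ and hence puts both inputs in $L^2$, in the subregime where $2^k$ is very small. The numerology $\beta<\tfrac15$ should leave exactly a gain $2^{-\delta m}$. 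We expect this bookkeeping, interpolating between the two branches of the min in \eqref{eqn:linear-decay-j<m} as $j_i$ varies relative to $m$, to be the delicate heart of the proof.
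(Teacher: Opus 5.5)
Your architecture matches the paper's: finite speed of propagation for $j\geq m+A$, and for $j<m+A$ a normal form built on Lemma \ref{LM:bound-from-below-PHASE}, \eqref{main-multiplier-bound}, Lemma \ref{lem:time-derivative-L2} and \eqref{eqn:linear-decay-j<m}. However, the reductions claimed in Step 1 are not available, and Steps 2 and 3 lean on them.

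\textbf{Step 1.} There is no $L^\infty$ bound on $\widehat F$. The energy gives only $\langle\xi\rangle^{N_0}\widehat F\in L^2$. The $X$-norm does not control $\|F\|_{L^1}$ (because $\beta<\frac12$), and it permits $|\widehat F(\xi)|\sim|\xi|^{-1}$ near $\xi=0$ (because $\gamma>-\frac12$); this is exactly the non-neutral case $Z_\pm=|\nabla|^{-1}\rho+\dots$ that the theorem allows. Also, $2^{\gamma k^-}$ is a loss on the output side, not a gain.

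With the set-size bound of Lemma \ref{Lem-bilinear-estimates} instead, the factors $2^{(1+\beta)m}2^{(1+\beta+\gamma)k}$ and $2^m$ only let you discard $k_{\min}\lesssim-(1-4\delta)m$, or $k_{\min}\lesssim-(1-\delta)j$ when $j\geq m+A$; see \eqref{eqn:k-restrictions-j<m} and \eqref{eqn:parameter-restriction-finite-speed}. So input and output frequencies as small as $2^{-cm}$, with $c$ of order one, remain. Consequently neither the cost $2^{-k_{\min}}$ in Step 2 nor the dependence on $k_1$ in Step 3 is harmless. Likewise, $\|Q_{jk}h\|_{L^2}\leq\|P_kh\|_{L^2}$ cannot remove large $j$, since the weight $2^{(1+\beta)j}$ is unbounded; for $j\geq m+A$, every reduction has to be made in terms of $j$, not $m$.

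\textbf{Step 2.} The gain $(2^{\max(m,j_1)-j})^N$ you claim is only $O(1)$ when $j=m+O(1)$. The losses there are polynomial: $2^{(1+\beta)j}$, $2^m$, and $2^{\frac32 j}$ from passing from pointwise bounds to $L^2(\{|x|\sim 2^j\})$.

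Your case split also leaves a region uncovered. Take $j\gg m^2$ and $j_1=j_2\approx j-\sqrt j$. The integration-by-parts gain $2^{-N(j-j_1)}$ is then too weak. The $L^2\times L^\infty$ bound in (b) gains only $2^{-\beta m}$, which cannot absorb $2^{(1+\beta)(j-j_{\max})}$.

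The paper applies Lemma \ref{lemma-IBPgen} with $K=2^j$ and $\epsilon=2^{\min\{k,k_1\}}$, using $2^{-j}|D^\alpha_\xi(s\Phi)|\lesssim 2^{m-j}\epsilon^{1-|\alpha|}\leq\epsilon^{1-|\alpha|}$. It integrates by parts only against the input with the smaller spatial scale, the other being frozen under the $\eta$-integral. This gives an $m$-independent gain $\big(2^{-j}(2^{j_{\min}}+2^{-k_{\min}})\big)^M$. The correct split is therefore $j_{\min}\leq(1-\delta)j$ versus $j_{\min}\geq(1-\delta)j$. The second case closes by an $L^2\times L^2$ bound, in which both input weights together beat $2^{(2+\beta)j}$.

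\textbf{Step 3.} This remains a plan exactly where the content of the proof lies.

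For $k\ll k_1\sim k_2$, the refined decay alone does not close the boundary term: at $j_1\approx j_2\approx -k_1$ and $k\approx k_1\approx -m$ it leaves $2^{(\frac13-\frac53\beta)m}$. The set-size bound alone also fails at moderate $k$. The paper first uses the $L^2\times L^2$ bound to cap $j_{\min}\leq(1+\frac{\delta}{1+\beta})m+\frac{4-2\beta}{3(1+\beta)}k-\frac{2-7\beta}{3(1+\beta)}k_1$. Inserting this cap into \eqref{eqn:linear-decay-j<m} leaves a factor $2^{ck}$ with $c>0$, precisely because $\beta<\frac15$.

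For $k_1\ll k\sim k_2$, the paper puts $f_1$ in $L^\infty$ via \eqref{eqn:decay-Pk-profiles} to reduce to $j_2<2\delta m-(1-\beta)k$. It then interpolates between \eqref{eqn:linear-decay-j<m} and the set-size bound with weight $\theta=\frac{1+\beta+2\delta}{3/2-\delta}$.

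Without this exponent bookkeeping over the full range $-(1-4\delta)m<k_{\min}<0$, which your Step 1 wrongly discards, the bound $2^{-\frac{\delta}{2}m}\eps^2$ is not established.
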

\begin{rem}\label{rem:parameters}
    We recall the relation between the fixed parameters involved in the $X-$norm \eqref{def:X-norm}:
    \begin{align}
    \gamma=-\frac{1}{6}-\frac{5}{3}\beta, \qquad \gamma+\beta=-\frac{1}{6}-\frac{2}{3}\beta.
\end{align}
There are several more parameters involved in the proof of Proposition \ref{prop:X-norm-bounds} throughout this section, in particular, $\delta>0$ appearing in the statement above. As a result of the proof, they are related as follows:
\begin{align}
    6\delta<\beta<\frac{1}{5}-6\delta,\qquad 0<\delta\leq \frac{9}{40}\beta, \qquad \delta_0\leq 10^{-1}\delta, \qquad N_0\geq 3\delta_0^{-1}+13.
\end{align}
In particular, one can choose $\beta=\frac{1}{5}-10\delta$, such that $\delta=0.01$, $\delta_0=0.001$, $N_0=3020$ and from \eqref{eqn:gamma-beta-relation} $\gamma=-\frac{1}{2}+\frac{50}{3}\delta$.
\end{rem}
\begin{proof}[Proof of Proposition \ref{prop:X-norm-bounds}] The proof is a direct consequence of the two following Propositions \ref{prop:finite-speed-of propagation}, \ref{prop:X-norm-j<m}.  
\end{proof}
We first formalize the fact that waves travel at finite speed and so affect the solution only in the light cone of size $\sim |t|$.
In our oscillatory integral setting, we make this concrete by switching to the frequency space, where there should hold $\abs{x}\lesssim t|\nabla_\xi\Phi|$. Hence, in terms of our parameters, the nonlinear interactions decay fast if $2^j\gtrsim 2^m\abs{\nabla_\xi\Phi}$. For the phases \eqref{def:phases} there holds
\begin{align}\label{eqn:bound-nabla_xiPhi}
    \abs{\nabla_\xi\Phi}=\abs{{\xi}\abs{\xi}^{-1}\langle\xi\rangle^{-3}\pm {(\xi-\eta)}\abs{\xi-\eta}^{-1}\langle\xi-\eta\rangle^{-3} }\leq 3\cdot 2^{-3k_{\min}^+}\leq 3,
\end{align}
which motivates the restriction on $j$ in the proposition below.
\begin{prop}[Finite Speed of Propagation]\label{prop:finite-speed-of propagation}
  Consider the setting of Proposition \ref{prop:bootstrap} and let the bootstrap assumption \eqref{eq:bootstrap-assumptionX} hold true. Moreover, let $\m\in \{\m^{\mu\nu}_\pm, \; \mu,\nu\in \{+,-\}\}$, $t\in [2^{m},2^{m+1}[\cap[0,T]$, $A\geq 3$, $\delta$ as in Remark \eqref{rem:parameters} and $F_i\in \{\mathcal{Z}_\pm\}$ be the profiles of the problem. Then the following estimate holds:
    \begin{align}
       \sup_{\substack{(k,j)\in \mathcal{J}\\ j\geq m+A}} 2^{8k^+}2^{(1+\beta)(j+k)}2^{\gamma k^-}  \|Q_{jk}\mathcal{B}_{\m}(F_1,F_2)\|_{L^2}\lesssim 2^{-\frac{\delta}{2} m}\eps^2.
    \end{align}
\end{prop}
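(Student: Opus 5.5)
We localize each input in space and frequency and split the sum by input location. For $j\geq m+A$ we decompose
\[
F_i=\sum_{(k_i,j_i)\in\mathcal{J}}Q_{j_ik_i}F_i,
\]
and for fixed $(k,j)$ write $Q_{jk}\mathcal{B}_\m(F_1,F_2)$ as a sum over $(k_1,j_1),(k_2,j_2)$ of the localized bilinear pieces $Q_{jk}\mathcal{B}_\m(Q_{j_1k_1}F_1,Q_{j_2k_2}F_2)$, with the cutoff $\chi$ of \eqref{eqn: chi localizations} in the symbol. The sum then splits into two regimes according to $j_{\max}$.

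\textbf{Step 1 (reductions).} First we dispose of extreme frequencies. If $k_{\max}\geq \delta_0 m$ or so (large frequencies), we use the $H^{N_0}$ bound \eqref{eqn:boostrap-energy-bounded} and the multiplier bound $|\m|\lesssim 2^{k_{\max}}$ (crudely, from \eqref{def:multipliers}). Together with Bernstein/decay \eqref{eqn:decay-Pk-profiles} on one factor, this gives $\|Q_{jk}\mathcal{B}\|_{L^2}\lesssim 2^m\cdot 2^{-(N_0-10)k_{\max}}\eps^2$. That beats the weight $2^{(1+\beta)j}$ only if $j$ is not too large. For huge $j$ (say $j\geq 10m$ or $j\geq \delta_0^{-1}k^+$) we instead use the spatial localization together with smoothness of $\widehat{\mathcal{B}}$ in $\xi$, as in the main case below. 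Very small frequencies $k_{\min}\leq -2m$ are handled by set-size estimates: the $L^\infty_\xi$ bound on $\widehat{F_i}$ from the $X$-norm and the volume $2^{3k_{\min}}$ give a gain of a power of $2^{k_{\min}}$.

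\textbf{Step 2 (inputs localized at $j_{\max}\geq j-A/2$).} If an input already lives at spatial scale comparable to $2^j$, we place it in $L^2$ with its $X$-norm weight $2^{-(1+\beta)(j_i+k_i)}2^{-8k_i^+}2^{-\gamma k_i^-}$. The other input goes in $L^\infty$ via the decay \eqref{eqn:decay-Pk-profiles}, $s^{-(1+\beta)}$. Integrating over $s\sim 2^m$ gives roughly
\[
2^{m}\,2^{-m(1+\beta)}\,2^{-(1+\beta)j}\,2^{k_{\max}}\eps^2 .
\]
This is $2^{-\beta m}$ times the needed bound, up to reconciling the low-frequency weights $2^{(1+\beta)k+\gamma k^-}$ against $2^{-(1+\beta)k_i-\gamma k_i^-}$. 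For $k\ll k_1\sim k_2$ we use a Bernstein factor $2^{3k/2}$ on the output, and we use the multiplier vanishing noted in Remark~\ref{rem:null-structure}.

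\textbf{Step 3 (main case, $j_1,j_2\leq j-A/2$).} Here both inputs are concentrated in $|y|\lesssim 2^{j-A/2}$, while the output is projected to $|x|\sim 2^j\gg 2^m$. In physical space the kernel is
\[
\int e^{ix\cdot\xi+is\Phi(\xi,\eta)}\m\,\chi\,\widehat{f}(\xi-\eta)\widehat{g}(\eta)\,d\eta\,d\xi .
\]
We integrate by parts in $\xi$ (with $\eta$ fixed, changing variables so that $\widehat{g}(\eta)$ is untouched). The $\xi$-gradient of the total phase is $x+s\nabla_\xi\Phi$, and $|s\nabla_\xi\Phi|\leq 3\cdot 2^{m+1}\ll 2^{j}$ by \eqref{eqn:bound-nabla_xiPhi} since $A\geq 3$. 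Each integration by parts therefore gains $2^{-j}$. The cost of each derivative is as follows: on $\widehat{f}(\xi-\eta)$ it costs $2^{j_1}$; on $\m\chi$ it costs $2^{-k_{\min}}$; on $s\nabla_\xi\Phi$ (higher derivatives) it costs $2^m2^{-k_{\min}}$, and this is controlled since $j\geq m+A$ and the ratio is raised to high power. The net gain per step is
\[
2^{-j}\bigl(2^{j_1}+2^{m-k_{\min}}\bigr)\lesssim 2^{-A/2}+2^{m-j-k_{\min}} .
\]
Iterating $N\sim\delta_0^{-1}$ times, which is affordable by Step 1's restriction $k_{\min}\geq -\delta_0 m$ roughly, gives $2^{-N\min(A/2,\,j-m-\delta_0 m)}$. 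This kills the weight $2^{(1+\beta)j}$ and the sums over $j_1,j_2$. Practically we would implement this as a nonstationary phase kernel bound $|K(x,y)|\lesssim (2^{j})^{-N}$ in the regime $|x-y|\gtrsim 2^j\gg s|\nabla_\xi\Phi|$.

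\textbf{Main obstacle.} The delicate point is the interplay of low frequencies with the integration by parts. Derivatives of $\Lambda$, $\m$ and the cutoffs lose $2^{-k_{\min}}$, so we need the Step 1 cutoff on $k_{\min}$ chosen compatibly with both the set-size estimate and the factor $2^{m-j-k_{\min}}$. Also, in Step 2, summing the low-frequency weights with $\gamma<0$ must leave a $2^{-\delta m/2}$ margin, which is where the parameter constraints of Remark~\ref{rem:parameters} enter.
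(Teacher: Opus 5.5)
There is a genuine gap, and it sits in your Step 3, which is where the proposition is actually decided. The first problem is the spatial threshold. When the derivative falls on $\widehat{f_1}(\xi-\eta)$, each integration by parts gains $2^{j_1-j}$. With only $j_1,j_2\le j-A/2$, that is the fixed constant $2^{-A/2}$, so $2^{-N\min(A/2,\dots)}$ is $O(1)$ and cannot absorb $2^{(1+\beta)j}$.

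Integration by parts gives decay in $j$ only when $j_{\min}\le(1-\delta)j$. The complementary range $j_{\min}>(1-\delta)j$ needs a different estimate. This includes $(1-\delta)j<j_{\min}\le j_{\max}<j-A/2$, which you send to Step 3. There one uses the $L^2\times L^2$ set-size bound with the $X$-norm on \emph{both} inputs, which gives roughly $2^{(2+\beta)j}2^{-(2+2\beta)(1-\delta)j}$ after using $2^m\le 2^j$. This is the dichotomy the paper uses.

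Conversely, since you integrate by parts in $\xi$ with $\eta$ fixed, only the input with the smaller $j_i$ is ever differentiated. The other input sits in $L^1_\eta$ at any spatial scale, so your Step 2 (the $L^2\times L^\infty$ argument for $j_{\max}\ge j-A/2$) is unnecessary.

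The second problem is that your phase bookkeeping is off by a factor $2^j$. A derivative landing on $\nabla\psi/|\nabla\psi|^2$, with $\psi=x\cdot\xi+s\Phi$, produces $s\,D^2_\xi\Phi/|\nabla\psi|^2$. Relative to $|\nabla\psi|^{-1}\sim 2^{-j}$ this costs $2^{m-j}2^{-k_{\min}}\le 2^{-k_{\min}}$, not $2^m2^{-k_{\min}}$.

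With your per-step factor $2^{m-j-k_{\min}}$ there is no gain at all at $j=m+A$ once $k_{\min}\le -A$. The restriction $k_{\min}\ge-\delta_0 m$ you invoke is not what your Step 1 proves (that gives $k_{\min}>-2m$), and no set-size bound can give it. Even granting it, $2^{m-j+\delta_0 m}\geq 1$ for $j\le(1+\delta_0)m$.

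Done correctly, via Lemma \ref{lemma-IBPgen} with $F=2^{-j}(x\cdot\xi+s\Phi)$, $K=2^j$, $\epsilon=2^{\min\{k,k_1\}}$ and $|D^\alpha F|\lesssim 2^{m-j}2^{(1-|\alpha|)\min\{k,k_1\}}\le\epsilon^{1-|\alpha|}$, the per-step factor is $2^{-j}(2^{-k_{\min}}+2^{j_{\min}})$. So what you actually need is $k_{\min}>-(1-\delta)j$. This follows from the energy/set-size bound
$\lesssim 2^{(2+\beta+\delta)j}2^{-\delta m}2^{(\frac73-\frac23\beta)k_{\min}}2^{-(N_0-12)k_{\max}^+}\eps^2$,
provided you measure the reductions against $j$ rather than $m$. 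That same bound also disposes of $k_{\max}>\delta_0 j$ directly, which removes the need for your unspecified ``huge $j$'' patch in Step 1.

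A smaller point: the $X$-norm, whose weight is $|x|^{1+\beta}$ with $1+\beta<\frac32$, does not control $\|\widehat{F_i}\|_{L^\infty}$. The small-frequency reduction should instead use the energy bound together with the set-size factor $2^{\frac32 k_{\min}}$.
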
 
 
\begin{proof}[Proof of Proposition \ref{prop:finite-speed-of propagation}] The proof follows a standard approach in treating such nonlinearities via dispersive methods. To begin with, one localizes the inputs $F_1,F_2$ in frequency and the available energy estimates \eqref{eqn:boostrap-energy-bounded} together with set-size estimates Lemma \ref{Lem-bilinear-estimates} \eqref{set-size-esitmate} allow to restrict the frequency localization parameters on logarithmic-in-$j$ intervals. Moreover, one additionally localizes the inputs $F_1,F_2$ according to their spatial localizations \eqref{Q_jk-def} and restricts these parameters in terms of the output  parameter $j$. This parameter reduction is done in Step 1 below. This puts us in a favourable setting to make use of the finite speed of propagation argument outlined above. To that end, in Step 2, one employs an iterated integration by parts argument detailed in Section \ref{sec:ibp} to conclude the claim for the remaining range of parameters.
\par \textbf{Step 1: Set size and energy estimates.} We localize the size of the frequencies of the profiles 
\begin{align}\label{eqn:freq-loc-profiles-X-norm-proof}
   F_i=\sum_{k_i\in\Z}P_{k_i}F_i, \quad i=1,2. 
\end{align} 
Applying Lemma \ref{Lem-bilinear-estimates} with an $L^2-L^2$ bound, together with the bound \eqref{main-multiplier-bound} on the multiplier $\norm{\m\chi}_{S^\infty}\lesssim 2^{k_1+k_2}$ and using the energy estimates \eqref{eqn:boostrap-energy-bounded}, we obtain:
\begin{align}
     2^{8k^+}2^{(1+\beta)(j+k)}&2^{\gamma k^-}  \norm{Q_{jk}\mathcal{B}_{\m}(F_1,F_2)}_{L^2}\\
     &\lesssim 2^{8k^+}2^{(1+\beta)(j+k)}2^{\gamma k^-}  2^{m}\sum_{k_1,k_2 \in \Z}2^{\frac{3}{2}k_{\min}}\norm{\m\chi}_{S^\infty}\norm{P_{k_1}F_1}_{L^2}\norm{P_{k_2}F_2}_{L^2}\\
     &\lesssim  \sum_{k_1,k_2 \in \Z}2^{8k^+}2^{(1+\beta)(j+k)}2^{\gamma k^-} 2^m 2^{k_1+k_2}2^{\frac{3}{2}k_{\min}}2^{-N_0k_1^+}2^{-N_0k_2^+}\norm{P_{k_1}F_1}_{H^{N_0}}\norm{P_{k_2}F_2}_{H^{N_0}}\\
     &\lesssim \sum_{k_1,k_2 \in \Z} 2^{(2+\beta+\delta)j}2^{-\delta m}2^{(\frac{7}{3}-\frac{2}{3}\beta)k_{\min}}2^{-N_0\min\{k_1,k_2\}^+}2^{-(N_0-12)k_{\max}^+}\eps^2.
\end{align}
The last inequality above follows since we have used $m\lesssim j$ and the coefficient of $k_{\min}$ comes from the case $k=k_{\min}$ and the coefficients in the $X-$norm.
Hence, if either $(\frac{7}{3}-\frac{2}{3}\beta)k_{\min}<-(2+\beta+2\delta)j$ or $k_{\max}>\delta_0 j$ where $\delta_0=3/(N_0-12)$ (cf.\ Remark \ref{rem:parameters}), then  we obtain  
\begin{align}
     2^{8k^+}2^{(1+\beta)(j+k)}2^{\gamma k^-}  \|Q_{jk}\mathcal{B}_{\m}(F_1,F_2)\|_{L^2}&\lesssim 2^{-\delta (j+m)}\eps^2 \lesssim 2^{-\delta m}\eps^2.
\end{align}
By Remark \ref{rem:parameters}, we may assume:
\begin{align}\label{eqn:parameter-restriction-finite-speed}
   -(1-\delta)j< k,k_1,k_2\leq \delta_0 j.
\end{align}
Moreover, we localize further in $j_1,j_2$ using \eqref{Q_jk-def} with \begin{align}
    P_{k_i}F_i=\sum_{j_i+k_i\geq 0}Q_{k_i,j_i}F_i, \quad f_i=Q_{j_i,k_i}F_i.
\end{align} 
For the following computation assume w.l.o.g.\ that $k_1\leq k_2$ so that $k_{\max}\sim k_2$ and $k\lesssim k_2$. In particular, observe that if $j_1,j_2\geq j_{\min}\geq(1-\delta)j$, by the $L^2-L^2$ bound in Lemma \ref{Lem-bilinear-estimates} we obtain: 
\begin{align}
     2^{8k^+}2^{(1+\beta)(j+k)}2^{\gamma k^-}  &\norm{Q_{jk}\mathcal{B}_{\m}(P_{k_1}F_1,P_{k_2}F_2)}_{L^2}\\
     &\lesssim  2^{8k^+}2^{(1+\beta)(j+k)}2^{\gamma k^-} 2^m\sum_{j_1,j_2}2^{\frac{3}{2}k_{\min}}\norm{\m\chi}_{S^\infty}\norm{f_1}_{L^2}\norm{f_2}_{L^2}\\
     &\lesssim 2^{8k^+}2^{(1+\beta)(j+k)}2^{\gamma k^-} 2^{m}2^{k_1+k_2}2^{\frac{3}{2}k_{\min}}\\
     &\qquad \qquad\qquad\sum_{j_1,j_2}2^{-(1+\beta)(j_1+k_1)}2^{-\gamma k_1^-}2^{-(1+\beta)(j_2+k_2)}2^{-\gamma k_2^-}2^{-8k_1^+-8k_2^+}\norm{f_1}_X\norm{f_2}_X\\
     &\lesssim 2^{(2+\beta+\frac{3}{2}\delta)j}2^{-\frac{3}{2}\delta m}2^{4k_{\max}^+}\sum_{j_1,j_2, j_{\min}\geq (1-\delta)j}2^{-(2+2\beta-\delta^2)j_{\min}}2^{-\delta^2j_{\max}}\eps^2\\
     &\lesssim 2^{(-\beta+\frac{3}{2}\delta +4\delta_0)j}2^{-\frac{3}{2}\delta m}\eps^2\\
     &\lesssim 2^{-\frac{3}{2}\delta(j+m)}\eps^2.
\end{align}
Note that in the last line we have also used  $k,k_1,k_2\lesssim\delta_0 j$ from \eqref{eqn:parameter-restriction-finite-speed} and Remark \ref{rem:parameters}. 
Hence, it remains to prove
\begin{align}\label{eqn:fin-speed-prop-summation-ks-js}
   2^{8k^+}2^{(1+\beta)(j+k)}2^{\gamma k^-}  \sum_{k_1,k_2\in \mathcal{K}}\sum_{j_1,j_2\in \mathcal{L}}\|Q_{jk}\mathcal{B}_{\m}(f_1,f_2)\|_{L^2}\lesssim 2^{-\frac{3}{2}\delta (j+m)}\eps^2,
\end{align}
where 
\begin{equation}\label{eqn:K-L-sets-def}
\begin{split}
      \mathcal{K}&\vcentcolon=\{k_1,k_2\in \Z\;\vert\;   -(1-\delta)j\leq k,k_1,k_2\leq \delta_0 j \},\\
    \mathcal{L}&\vcentcolon=\{j_1,j_2\in \Z^+\;\vert\; j_i+k_i\geq 0,\;  j_{\min}\leq (1-\delta)j \}.
\end{split}  
\end{equation}
Thus, each parameter in the summation in \eqref{eqn:fin-speed-prop-summation-ks-js} ranges over an interval of size $j\leq 2^{\kappa j}$ for all $\kappa> 0$ and so it suffices to prove
\begin{align}\label{eqn:fin-speed-prop-step2-claim}
   2^{8k^+}2^{(1+\beta)(j+k)}2^{\gamma k^-} \norm{Q_{jk}\mathcal{B}_{\m}(f_1,f_2)}_{L^2}\lesssim 2^{-2\delta (j+m)}\eps^2,
\end{align}
for fixed parameters $k,k_1,k_2\in \mathcal{K}$ and $j_1,j_2\in \mathcal{L}.$

\par \textbf{Step 2: Proof of \eqref{eqn:fin-speed-prop-step2-claim}.} For the remaining frequency and space localizations, we want to take advantage of oscillations in $\xi$ and integrate by parts in $\nabla_\xi$ using Lemma \ref{lemma-IBPgen}. 
\par \underline{1. Assume $j_{\min}=j_1$}. Recall from \eqref{Q_jk-def} and \eqref{eqn: bootstrap B_m time decomposition} that
\begin{align}
    Q_{jk}\mathcal{B}_{\m}(f_1,f_2)&=\varphi_j^{(k)}\int_0^t \tau_m(s)\int_{\R^6} e^{ix\cdot \xi+is\Phi}\varphi_k(\xi)\m(\xi,\eta)\widehat{f_1}(\xi-\eta)\widehat{f_2}(\eta)d\eta d\xi ds\\
    &=\varphi_j^{(k)}\int_0^t \tau_m(s)\int_{\eta}\left[\int_{\xi} e^{ix\cdot \xi+is\Phi}\varphi_k(\xi)\m(\xi,\eta)\widehat{f_1}(\xi-\eta) d\xi\right] \widehat{f_2}(\eta)d\eta ds
\end{align}
In view of \eqref{eqn:bound-nabla_xiPhi} and the assumption of the Lemma on the parameter $j\geq m+A$, there holds
\begin{align}
    \abs{\nabla_\xi(x\cdot\xi+s\Phi)}=\abs{x+s\nabla_\xi\Phi}\geq |x|-s|\nabla_\xi\Phi|\gtrsim 2^j-3\cdot 2^m2^{-3k_{\min}^+}\geq 2^{j-1},
\end{align}
since the constant satisfies by assumption $A\geq 3$.
We want to take advantage of the oscillations of the phase and integrate by parts in the expression in square brackets above using the  iterated bounds of Lemma \ref{lemma-IBPgen}. With the notation of Lemma \ref{lemma-IBPgen} and the bounds on higher order derivatives of the phase \eqref{lem:it-phase-bounds} we have 
\begin{align}
    &F(\xi)=2^{-j}(x\cdot \xi+s\Phi), \quad K= 2^j,\quad \abs{\nabla_\xi F}\gtrsim 1, \\
    &|D_\xi^\alpha F(\xi)|\lesssim 2^{-j}2^m|D_\xi^\alpha\Phi|\lesssim 2^{(1-\abs{\alpha})\min\{k,k_1\}}, \text{ with } \vert \alpha \vert \geq 2,\quad  \epsilon= 2^{\min\{k,k_1\}}.
\end{align}
Thus by Lemma \ref{lemma-IBPgen} and Lemma \ref{lem:ibp} with $g(\xi,\eta)=\m(\xi,\eta)\varphi_k(\xi)\widehat{f_1}(\xi-\eta)$ there holds
\begin{align}\label{fin-speed-ibp}
\begin{split}
    &\abs{\int e^{ix\cdot \xi+is\Phi}\varphi_k(\xi)\m(\xi,\eta)\widehat{f_1}(\xi-\eta) d\xi} \\
    &\qquad \lesssim \frac{1}{(2^{\min\{k,k_1\}+j})^M}\sum_{\abs{\alpha}\leq M}2^{\abs{\alpha}\min\{k,k_1\}}\norm{D^{\abs{\alpha}}_\xi g}_{L^1}\\ 
    &\qquad  \lesssim \frac{1}{(2^{\min\{k,k_1\}+j})^M}\sum_{\abs{\alpha}\leq M}2^{\abs{\alpha}\min\{k,k_1\}} 2^{k_{\max}^+}(2^{-\min\{k,k_1\}}+2^{j_1})^{\abs{\alpha}}\norm{\varphi_k}_{L^2}\norm{f_1}_{L^2} \\
    &\qquad \lesssim 2^{-jM}2^{k_{\max}^+}(2^{-k_{\min}}+2^{j_1})^M 2^{\frac32 k}\norm{{f_1}}_{L^2}.
    \end{split}
\end{align}
Hence, by the restriction \eqref{eqn:K-L-sets-def} the claim follows by choosing $M$ such that $M\delta >4$. :
\begin{align*}
    2^{8k^+}&2^{(1+\beta)(j+k)}2^{\gamma k^-}  \norm{Q_{jk}\mathcal{B}_{\m}(f_1,f_2)}_{L^2}\\
     &\lesssim  2^{10 k^+_{\max}}2^{(2+\beta+2\delta)j}2^{-2\delta m}\norm{\widehat{f_2}}_{L^1} 2^{-jM}2^{k_{\max}^+}(2^{-k_{\min}}+2^{j_1})^M 2^{\frac32 k}\norm{{f_1}}_{L^2}\\
     &\lesssim 2^{3j}2^{-2\delta m}2^{-jM}2^{(1-\delta)M j}\eps^2\\
     &\lesssim 2^{-2\delta(j+m)}\eps^2.
\end{align*}
\par\underline{2. $j_{\min}=j_2$.} This case follows similarly by a change of variables $\eta\leftrightarrow\xi-\eta$ and integrating by parts in
\begin{align}
  \int e^{ix\cdot \xi+is\Phi(\xi,\xi-\eta)}\varphi_k(\xi)\m(\xi,\xi-\eta)\widehat{f_2}(\xi-\eta) d\xi.
\end{align}
The corresponding conditions of Lemma \ref{lemma-IBPgen} are satisfied with $F,K$ as above,  $\epsilon=\min\{k,k_2\}$ and $g=\varphi_k(\xi)\m(\xi,\xi-\eta)\widehat{f_2}(\xi-\eta)$.
\end{proof}

Finally, it remains to prove the claim of Proposition \ref{prop:X-norm-bounds} for the remaining spatial localization parameters $j$. We do this in Proposition \ref{prop:X-norm-j<m} below. The main ingredients allowing for this result are favourable lower bounds on the phase $\Phi$, see \eqref{phase-bound-proof} and Lemma \ref{LM:bound-from-below-PHASE}, in conjunction with good cancellations in the multipliers, see Lemma \ref{lem:multiplier-bounds}. As a consequence, after reducing the interacting frequencies via the available energy estimates \eqref{eqn:boostrap-energy-bounded}, we may perform a normal form via \eqref{eqn:normal-form-decomposition} in \textbf{Step 2} below. The restriction on the rate of decay due to $\beta<\frac{1}{5}$ (c.f.\ Remark \ref{rem:parameters}) arises from treating the boundary term in \ref{eqn:normal-form-decomposition} without additional oscillatory arguments. (We recall however, that this allows to avoid a neutrality assumption on the perturbation, see Remark \textbf{(2)} following the main theorem in Section \ref{sec:Motivation-Main-Result}.) Finally, performing a normal form introduces cubic order terms involving the time derivative of the profiles. These contributions are generally easier to control since, as a consequence of the refined decay estimate \eqref{eqn:linear-decay-j<m} and the favourable structure of the multipliers, the time derivative of the profiles in $L^2$ decays at the full rate in $3$D. We prove this in the following Lemma. 

\begin{lem}[Decay in $L^2$ of $\partial_t\mathcal{Z}_\pm$]\label{lem:time-derivative-L2} Let $t\in [2^{m},2^{m+1}[\cap [0,T]$ and $F\in \{\zz_+,\zz_-\}$. Under the bootstrap assumption \eqref{eq:bootstrap-assumptionX}, the time derivative of a profile $F$ satisfies the improved decay estimate:
\begin{align}
    \norm{P_k\partial_tF(t)}_{L^2}\lesssim 2^{-\frac32m}\eps^2.
\end{align}   
\end{lem}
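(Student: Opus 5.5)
We will use the Duhamel formula \eqref{eqn:Duhamel-profiles}. Differentiating it in time gives $\partial_t F=\sum \mathcal{Q}_{\m}(F_1,F_2)(t)$, with $\m\in\{\m_\pm^{\mu\nu}\}$ and $F_i\in\{\zz_\pm\}$. It therefore suffices to bound $\norm{P_k\mathcal{Q}_\m(F_1,F_2)(t)}_{L^2}\lesssim 2^{-\frac32 m}\eps^2$ uniformly in $k$. Undoing the profile transformation, $P_k\mathcal{Q}_\m(F_1,F_2)$ is, up to the unitary factor $e^{\mp it\Lambda}$, the bilinear operator with symbol $\m\,\varphi_k(\xi)$ applied to $e^{\mu it\Lambda}F_1$ and $e^{\nu it\Lambda}F_2$. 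Time-independent bilinear bounds of $L^2\times L^\infty$ type (Lemma \ref{Lem-bilinear-estimates}) therefore apply once the inputs are localized.

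We decompose $F_i=\sum_{k_i}P_{k_i}F_i$ and then $P_{k_i}F_i=\sum_{j_i}Q_{j_ik_i}F_i$, using the notation \eqref{eqn: chi localizations}. The key input is the multiplier structure of Remark \ref{rem:null-structure} and Lemma \ref{lem:multiplier-bounds}: $\m$ vanishes to first order at $\eta=0$ and at $\xi=\eta$. This gives $\norm{\m\chi}_{S^\infty}\lesssim 2^{k_1+k_2}$, and more precisely a gain of the smaller input frequency. Each term is then estimated by
\begin{align*}
\norm{P_k\mathcal{Q}_\m(f_1,f_2)}_{L^2}\lesssim \norm{\m\chi}_{S^\infty}\min\Big\{\norm{e^{it\Lambda}f_{\mathrm{low}}}_{L^\infty}\norm{f_{\mathrm{high}}}_{L^2},\;2^{\frac32 k_{\min}}\norm{f_1}_{L^2}\norm{f_2}_{L^2}\Big\}.
\end{align*}
Here $f_{\mathrm{low}}=Q_{j_ik_i}F_i$ is the piece to which we apply the refined dispersive bound \eqref{eqn:linear-decay-j<m}, which gives the full rate $2^{-\frac32 m}$. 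The $L^2$ factor $\norm{Q_{jk}F}_{L^2}$ is controlled by the $X$-norm, which gives $2^{-(1+\beta)(j+k)}2^{-\gamma k^-}2^{-8k^+}$, or by the energy bound \eqref{eqn:boostrap-energy-bounded} at high frequency.

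We will split the sum into three regimes.
\begin{enumerate}
\item High frequencies $k_{\max}\ge \delta_0 m$ are handled by the energy bound with $N_0$ large.
\item Very low frequencies $k_{\min}\le -m$ are handled by the set-size bound $2^{\frac32 k_{\min}}$ together with the multiplier factor $2^{k_1+k_2}$.
\item Large spatial localizations $j_i\ge m$ of the piece put in $L^\infty$ are handled by using \eqref{eqn:decay-Pk-profiles}, or simply Bernstein with the $X$-norm decay $2^{-(1+\beta)j_i}$, which beats $2^{-\frac32 m}$ once $j_i$ is large enough; alternatively we place that piece in $L^2$ instead.
\end{enumerate}
In the remaining range $j_i\le m$, the first branch of \eqref{eqn:linear-decay-j<m} gives $2^{-\frac32 m}2^{(\frac12-\beta)j_i}2^{(-\frac13+\frac23\beta)k_i^-}$. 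The $j_i$ growth is compensated by the $L^2$ factor of the other input, or by swapping roles and using the second branch $2^{\frac32 j}2^{(\frac23+\frac53\beta)k^-}$.

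The main obstacle will be the summation in $j_1,j_2$ and in low frequencies without losing a power of $m$ or a factor $2^{\delta m}$, since the claim has no room for such a loss. We will resolve it by summing the $Q_{j_ik_i}$ pieces directly: in $\sum_{j}2^{(\frac12-\beta)j}2^{-(1+\beta)(j+k)}$ the $j$-sum converges geometrically. For the low-frequency factor, the multiplier gain $2^{k_{\min}}$ together with $\gamma>-\frac12$ makes the $k$-sums converge, in particular the negative power $2^{(-\frac13+\frac23\beta)k^-}$ against $2^{k_{\min}}2^{-\gamma k^-}$. Logarithmic losses from the frequency sums appear only in the high-frequency regime, where they are absorbed by $N_0$. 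Checking that the exponent relation \eqref{eqn:gamma-beta-relation} makes every such geometric sum convergent is the delicate bookkeeping step.
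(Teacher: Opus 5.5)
Your proposal is correct and is essentially the paper's proof. You differentiate the Duhamel formula, apply the $L^\infty$--$L^2$ bound of Lemma \ref{Lem-bilinear-estimates} with $\norm{\m\chi}_{S^\infty}\lesssim 2^{k_1+k_2}$, put the input with the smaller spatial index $j$ in $L^\infty$ via the first branch of \eqref{eqn:linear-decay-j<m} and the other in $L^2$ via the $X$-norm, and sum the resulting geometric series in $k_1,k_2,j_1,j_2$. Your preliminary splits (high frequencies, $k_{\min}\le -m$, $j_i\ge m$) are harmless but unnecessary, since this single estimate already covers the whole parameter range; the low-frequency sums converge because the combined exponents $-(\beta+\gamma)=\frac16+\frac23\beta$ and $\frac23+\frac23\beta$ are positive, not because of $\gamma>-\frac12$.
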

\begin{proof}  First of all we note that from \eqref{eqn:Duhamel-profiles},  we have 
\[\norm{P_k\partial_tF(t)}_{L^2}\lesssim\sum_{\mu,\nu}\sum_{\substack{k_1,k_2,j_1,j_2 \\ j_i+k_i\in \mathcal{J}}}\norm{P_k\mathcal{Q}_{\m_\pm^{\mu\nu}}(Q_{k_1,j_1}F_1,P_{k_2,j_2}F_2)}_{L^2},\] where $F_i\in \{\zz_{\pm}\}$ correspondingly. We recall the notation for the localization parameters \eqref{eqn: chi localizations}, \eqref{def:localizations-max-min} that will be used throughout the proof. We bound the terms on the right-hand side above with $\m\in\{m_\pm^{\mu\nu}\}$, observing using  \eqref{main-multiplier-bound}.
    To that end, we rely on an $L^\infty-L^2$ bound in Lemma \ref{Lem-bilinear-estimates}, the linear decay estimate \eqref{eqn:linear-decay-j<m} and the bootstrap assumption \eqref{eq:bootstrap-assumptionX}. Using the additional notation $\overline{k}_{\max}=\max\{k_1,k_2\}$ and $\underline{k}_{\min}=\min\{k_1,k_2\}$ we obtain
\begin{align}
        &\norm{P_k\mathcal{Q}_{\m}(Q_{k_1,j_1}F_1,Q_{k_2,j_2}F_2)}_{L^2}\\
        &  \lesssim \sum_{\substack{k_1,k_2,j_1,j_2 \\ j_i+k_i\in \mathcal{J}}} \norm{\m \chi}_{S^\infty}\min\left\{\norm{e^{it\Lambda}Q_{k_1,j_1}F_1}_{L^\infty}\norm{Q_{k_2,j_2}F_2}_{L^2},\norm{e^{it\Lambda}Q_{k_2,j_2}F_2}_{L^\infty}\norm{Q_{k_1,j_1}F_1}_{L^2}\right\} \\
        & \lesssim \sum_{\substack{k_1,k_2,j_1,j_2 \\ j_i+k_i\in \mathcal{J}}}2^{k_1+k_2} 2^{-8\underline{k}_{\min}^+}2^{-(1+\beta)j_{\max}-(1+\beta)\underline{k}_{\min}-\gamma \underline{k}_{\min}^-}2^{-\frac{3}{2}m}2^{(\frac{1}{2}-\beta)j_{\min}-(\frac{1}{3}-\frac{2}{3}\beta)\overline{k}_{\max}^{-}}2^{-(3+\beta)\overline{k}_{\max}^+}\norm{F_1}_X\norm{F_2}_X\\
        &\lesssim 2^{-\frac{3}{2}m}\sum_{\substack{k_1,k_2,j_1,j_2 \\ j_i+k_i\in \mathcal{J}}} 2^{(\frac{1}{6}+\frac{2}{3}\beta)\underline{k}_{\min}^-}2^{(\frac{2}{3}+\frac{2}{3}\beta)\overline{k}_{\max}^-}2^{-8\underline{k}_{\min}^+}2^{-2\overline{k}_{\max}^+} 2^{-\beta j_1} 2^{-(\frac{1}{2}+\beta)j_2}\eps^2,
    \end{align}
    where we have used \eqref{eqn:gamma-beta-relation}. The sum above is convergent, so this concludes the proof.
    \end{proof}
\begin{rem} We make the following remark on the summability of the decay estimate for the time derivative. In fact, for any $0<\kappa<1$ there holds:
    \begin{align}
        \norm{\partial_t F(t)}_{L^2}\lesssim 2^{-(\frac{3}{2}-\kappa)m}\eps^2.
    \end{align}
    This follows by slightly adapting the proof above. In the setting of Lemma \ref{lem:time-derivative-L2} we may
    interpolate between the estimate above and an $L^2-L^2$ set-size estimate using Lemma \ref{Lem-bilinear-estimates}. With $0<\kappa<1$ and since $k\lesssim \overline{k}_{\max}$:
    \begin{align}
         \norm{P_k\partial_tF(t)}_{L^2}&\lesssim \min\Big\{2^{-\frac{3}{2}m}\eps^2,\sum_{k_1,k_2\in \Z} \norm{\m\chi}_{S^\infty} 2^{\frac{3}{2}k}\norm{P_{k_1}F_1}_{L^2}\norm{P_{k_2}F_2}_{L^2}\Big\}\\
         &\lesssim  2^{-(\frac{3}{2}-\kappa)m}2^{\kappa k}2^{-2\kappa k^+} \Big(\sum_{k_1,k_2\in \Z}2^{k_1+k_2}2^{-(N_0-3)k_1^+}2^{-(N_0-3)k_2^+}\Big)^{\frac{2}{3}\kappa}\eps^2.
    \end{align}
    The claim follows then by summation.
\end{rem}
Finally, we complete the proof of Proposition \ref{prop:X-norm-bounds}.
\begin{prop}\label{prop:X-norm-j<m}
 Consider the setting of Proposition \ref{prop:bootstrap} and let the bootstrap assumption \eqref{eq:bootstrap-assumptionX} hold true. Moreover, let $\m\in \{\m^{\mu\nu}_\pm, \; \mu,\nu\in \{+,-\}\}$, $t\in [2^{m},2^{m+1}[\cap[0,T]$, $A\geq 3$, $\delta>0$ as in Remark \ref{rem:parameters} and $F_i\in \{\mathcal{Z}_\pm\}$ be the profiles of the problem. Then the following estimate holds:
     \begin{align}
       \sup_{\substack{k+j \in \mathcal{J} \\ j<m+A}} 2^{8k^+}&2^{(1+\beta)(j+k)}2^{\gamma k^-} \norm{Q_{jk}\mathcal{B}_{\m}(F_1,F_2)}_{L^2}\lesssim 2^{-{\frac{\delta}{2}}m}\eps^2.
    \end{align}
\end{prop}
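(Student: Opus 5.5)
We follow the scheme already used for Proposition \ref{prop:finite-speed-of propagation}: reduce parameters first, then treat the remaining interactions by a normal form in time. Decompose $F_i=\sum_{k_i}P_{k_i}F_i$. Using the $L^2$--$L^2$ set-size estimate of Lemma \ref{Lem-bilinear-estimates}, the multiplier bound $\norm{\m\chi}_{S^\infty}\lesssim 2^{k_1+k_2}$ and the energy bound \eqref{eqn:boostrap-energy-bounded}, we discard the frequency ranges $k_{\max}>\delta_0 m$ and $k_{\min}<-(1+\delta')m$ for a suitable $\delta'$. Since $j<m+A$, all weights $2^{(1+\beta)j}$ now cost at most $2^{(1+\beta)m}$, so the constraints can be phrased in $m$. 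Next we localize $f_i=Q_{j_ik_i}F_i$. If $j_{\max}\geq (1+\delta)m$, an $L^2$--$L^\infty$ bound with the $X$-norm of the spatially spread input and the decay \eqref{eqn:decay-Pk-profiles} of the other input gives the gain $2^{-(1+\beta)j_{\max}}$, which beats the trivial loss $2^m\cdot 2^{(1+\beta)j}$. We are then left with $O(m^4)$ parameter tuples, each of which must be bounded by $2^{-\delta m}\eps^2$.

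For the remaining tuples we integrate by parts in $s$, as in the normal form decomposition sketched in Section \ref{sec:outline-of-proof}. This yields a boundary term at $s\sim 2^m$ (and the $\tau_m'$ term), with symbol $\m\Phi^{-1}$, and two cubic terms $\mathcal{B}_{\m\Phi^{-1}}(\partial_sf_1,f_2)$ and $\mathcal{B}_{\m\Phi^{-1}}(f_1,\partial_sf_2)$. The symbol bounds come from Lemma \ref{LM:bound-from-below-PHASE} and Lemma \ref{lem:multiplier-bounds}. Generically $\norm{\m\Phi^{-1}\chi}_{S^\infty}\lesssim 2^{\overline{k}_{\max}}$ after cancellation, because the vanishing of $\m$ on $\{\eta=0\}\cup\{\xi=\eta\}$ offsets the vanishing of $\Phi$ there. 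The worst case is $k\ll k_1\sim k_2$, where only $|\m\Phi^{-1}|\lesssim 2^{-k}2^{k_1+k_2}$ holds.

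The cubic terms are estimated in $L^2$ by an $L^2$--$L^\infty$ bilinear bound. We put $\partial_sf_i$ in $L^2$ using Lemma \ref{lem:time-derivative-L2}, which gives $2^{-\frac32 m}\eps^2$, and the other factor in $L^\infty$ via \eqref{eqn:decay-Pk-profiles}, which gives $2^{-(1+\beta)m}$. The time integral contributes $2^m$, so the total is $2^{-(\frac32+\beta)m}$. This beats the output weight $2^{(1+\beta)(j+k)}2^{\gamma k^-}\lesssim 2^{(1+\beta)m}2^{(1+\beta+\gamma)k^-}$ even with the $2^{-k}$ loss, because $1+\beta+\gamma>0$ is not needed once $k\ge-(1+\delta')m$ is used carefully. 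If necessary we instead use the set-size factor $2^{\frac32 k}$ at small $k$.

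The boundary term is the main obstacle, since no time integration gain is available. We bound it in $L^2$ by $\norm{\m\Phi^{-1}\chi}_{S^\infty}$ times $\norm{e^{is\Lambda}f_{\min}}_{L^\infty}\norm{f_{\max}}_{L^2}$. For the $L^\infty$ factor we use the refined decay \eqref{eqn:linear-decay-j<m}, with the rate $2^{-\frac32 m}2^{(\frac12-\beta)j_{\min}}$ (or its $2^{\frac32 j}$ alternative), and for the $L^2$ factor we use the $X$-norm decay $2^{-(1+\beta)(j_{\max}+k)}$. In the dangerous regime $k\ll k_1\sim k_2$ the $2^{-k}$ loss and the output weight $2^{(1+\beta)(j+k)+\gamma k}$ must be compensated. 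For this we exploit the relation $\gamma+\beta=-\frac16-\frac23\beta$ together with $j_{\min}\le j_{\max}\le(1+\delta)m$ and $j<m+A$. Optimizing over $j_{\min}$ gives a net exponent of the form $(5\beta-1)m/\text{const}+O(\delta)m$, and this is exactly where $\beta<\frac15-6\delta$ is needed to obtain $2^{-\delta m}$. In the regimes where $k\sim k_{\max}$, the multiplier vanishes to first order, the symbol is bounded, and the same computation has room to spare. Summing the $O(m^4)$ pieces then gives the claimed $2^{-\frac\delta2 m}\eps^2$.
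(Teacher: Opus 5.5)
Your architecture matches the paper's: frequency reduction by energy and set-size bounds, removing large $j_{\max}$ with an $L^2$--$L^\infty$ bound, a normal form, and cubic terms via Lemma \ref{lem:time-derivative-L2}. Your cubic-term estimate survives. The gap is in the symbol bounds you use for the boundary term.

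\textbf{The symbol bounds are wrong at low frequency.} There $\Lambda(r)=r-\frac12 r^3+O(r^5)$ is almost linear. In the resonant collinear configurations the first-order parts of $\Phi$ cancel, so Lemma \ref{LM:bound-from-below-PHASE} only gives $|\Phi|\gtrsim 2^{k_{\min}^-+2k_{\max}^-}$, and this is sharp.
\begin{itemize}
\item Case $k\ll k_1\sim k_2<0$: take $\Phi_-^{+-}=\Lambda(\xi)+\Lambda(\xi-\eta)-\Lambda(\eta)$ with $\xi=a\,\eta/|\eta|$, $|\eta|=b$, $0<a\ll b\ll1$. Then $\Phi=\frac32 ab(b-a)+O(ab^4)\approx\frac32 ab^2$, while $\m_-^{+-}\approx-\frac{i}{4}b^2\neq0$. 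So $|\m\Phi^{-1}|\sim 2^{-k}$, not $2^{-k}2^{k_1+k_2}$.
\item Case $k_1\ll k\sim k_2<0$: take $\xi-\eta=-a\,\xi/|\xi|$. Then $\Phi\approx\frac32 ab^2$ and $\m_-^{+-}\approx-\frac{i}{4}ab$. The null structure removes only the factor $2^{k_1}$, leaving $|\m\Phi^{-1}|\sim 2^{-k}$. This is unbounded, not $\lesssim 2^{\overline{k}_{\max}}$, so there is no ``room to spare''.
\end{itemize}
Your exponent $\frac{5\beta-1}{3}$ comes entirely from the fictitious extra factor $2^{k_1+k_2}$.

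\textbf{With the correct symbol, $L^\infty$--$L^2$ bounds alone do not close the boundary term.} Take $k\approx-m$, which your reduction $k_{\min}\geq-(1+\delta')m$ keeps, together with $k_1\approx k_2\approx-\frac35 m$ and $j_1\approx j_2\approx\frac35 m$. Both alternatives in \eqref{eqn:linear-decay-j<m}, and also \eqref{eqn:decay-Pk-profiles}, leave a net factor $2^{\frac{1-5\beta}{15}m}$, which is a loss.

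\textbf{The missing ingredient} is the $L^2$--$L^2$ set-size bound of Lemma \ref{Lem-bilinear-estimates}. It gains $2^{\frac32 k}$ from the small output frequency and gives decay at the configuration above. The paper uses it first to dispose of
\[
j_{\min}>\Big(1+\tfrac{\delta}{1+\beta}\Big)m+\tfrac{4-2\beta}{3(1+\beta)}k-\tfrac{2-7\beta}{3(1+\beta)}k_1,
\]
and only then applies the refined decay to the remaining range. This balance is where $\beta<\frac15$ genuinely comes from.

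The case $k_1\ll k\sim k_2$ likewise needs two steps, not a computation with room to spare:
\begin{itemize}
\item the basic decay on $f_1$ disposes of $j_2\geq 2\delta m-(1-\beta)k$;
\item for smaller $j_2$ one interpolates the refined decay of $f_2$ with the set-size bound, using the weight $\theta=\frac{1+\beta+2\delta}{3/2-\delta}$.
\end{itemize}
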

\begin{proof}
   
Since $j\leq m+A$, it suffices to prove
\begin{align}
        2^{8k^+}&2^{(1+\beta)(m+k)}2^{\gamma k^-} \norm{Q_{jk}\mathcal{B}_{\m}(F_1,F_2)}_{L^2}\lesssim 2^{-\frac{\delta}{2}m}\eps^2.
    \end{align}
    We again follow the approach of Proposition \ref{prop:finite-speed-of propagation}. We begin by localizing the inputs $F_i$ in frequency on dyadic scales as in \eqref{eqn:freq-loc-profiles-X-norm-proof}. In Step 1, we use the available energy estimates \eqref{eqn:boostrap-energy-bounded} and set-size estimates to reduce the frequency localization parameters \eqref{eqn:j-upperbound-standard}. As outlined in the introduction Section \ref{sec:outline-of-proof}, for the remaining parameters we do a normal form transformation detailed in Step 2 below.

\par\textbf{Step 1: Set-size and energy estimates.}
With the notation \eqref{eqn:freq-loc-profiles-X-norm-proof}, we use the $L^2-L^2$ estimate in  Lemma \ref{Lem-bilinear-estimates}, and the identity \eqref{eqn:gamma-beta-relation} together with the energy estimates \eqref{eqn:boostrap-energy-bounded} to obtain:
\begin{align}
     2^{8k^+}&2^{(1+\beta)(m+k)}2^{\gamma k^-}\norm{Q_{jk}\mathcal{B}_\m(F_1,F_2)}_{L^2}\\
    &\lesssim 2^{8k^+}2^{(2+\beta)m}2^{(1+\beta)k+\gamma k^-}\sum_{k_1,k_2}2^{\frac32 k_{\min}}\norm{\m\chi}_{S^\infty}2^{-N_0k_1^+}\norm{P_{k_1}F_1}_{H^{N_0}}2^{-N_0k_2^+}\norm{P_{k_2}F_2}_{H^{N_0}}\\
    &\lesssim 2^{(9+\beta)k^+}2^{(2+\beta)m}\sum_{k_1,k_2}2^{(1+\beta+\gamma)k^-}2^{\frac{3}{2}k_{\min}^-}2^{\frac{3}{2}k_{\min}^+}2^{k_1^-+k_2^-} 2^{-(N_0-1)k_1^+}2^{-(N_0-1)k_2^+}\\
    &\lesssim 2^{(9+\beta)k^+}2^{(2+\beta)m}\sum_{k_1,k_2}2^{(\frac73-\frac23\beta) k_{\min}^-}2^{(\frac{1}{6}+\frac{2}{3}\beta)(k_1^-+k_2^-)}2^{\frac{3}{2}k_{\min}^+}2^{-(N_0-1)k_1^+}2^{-(N_0-1)k_2^+}\eps^2 .
\end{align}
The claim follows if   $\max\{k_1,k_2\}\geq \delta_0 m$ with $\delta_0=\frac{3}{N_0-13}$. Moreover, the claim follows as long as $(\frac73-\frac23\beta)k <-(2+\beta+\delta)m$ or if $\frac{5}{2}\min\{k_1,k_2\}\leq -(2+\beta+\delta)m$. Thus, with Remark \ref{rem:parameters} on the size of the parameters, we may assume
\begin{align}\label{eqn:k-restrictions-j<m}
    -(1-4\delta)m<k, k_1,k_2\lesssim \delta_0m.
\end{align}
Since the remaining summation is over $O(m^2)$ terms, it suffices to prove
\begin{align}
        2^{8k^+}2^{(1+\beta)k+\gamma k^-}2^{(1+\beta)m}&\norm{Q_{jk}\mathcal{B}_\m(P_{k_1}F_1,P_{k_2}F_2)}_{L^2}\lesssim 2^{-\delta m}\eps^2.
\end{align}
Moreover, we may additionally localize the inputs in their spatial localizations \begin{align}
    P_{k_i}F_i=\sum_{j_i+k_i\geq 0} Q_{j_i,k_i}F_i=\sum_{j_i+k_i\geq 0}f_i.
\end{align} 
We first note that if $j_{\max}\geq (1-\delta)m$, the claim follows using the $L^2-L^\infty$ bound in Lemma \ref{Lem-bilinear-estimates}. Indeed, employing \eqref{main-multiplier-bound}, the decay estimate \eqref{eqn:decay-Pk-profiles} on the input corresponding to $j_{\min}$ and the $X-$norm on the input corresponding to $j_{\max}$ to obtain:
\begin{align}
    2^{8k^+}&2^{(1+\beta)k+\gamma k^-}2^{(1+\beta)m}\norm{Q_{jk}\mathcal{B}_\m(P_{k_1}F_1,P_{k_2}F_2)}_{L^2}\\
    &\lesssim 2^{8k^+}2^{(1+\beta)k+\gamma k^-}2^{(1+\beta)m}2^m\sum_{j_i, j_i+k_i\geq 0} \norm{\m\chi}_{S^\infty}\min\left\{\norm{e^{it\Lambda}f_1}_{L^\infty}\norm{f_2}_{L^2}, \norm{f_1}_{L^2}\norm{e^{it\Lambda}f_2}_{L^\infty} \right\}\\
    &\lesssim 2^{8k^+}2^{(1+\beta)k+\gamma k^-}2^{(2+\beta)m}2^{k_1+k_2}2^{-(1+\beta)m}\\
    &\quad\cdot\sum_{j_i, j_i+k_i\geq 0}2^{-8\max\{k_1,k_2\}^+}2^{-(1+\beta)j_{\max}}2^{-(1+\beta)\max\{k_1,k_2\}-\gamma \max\{k_1,k_2\}^-} 2^{-(\frac{9}{2}+2\beta)\min\{k_1,k_2\}^+}\eps^2\\
    &\lesssim 2^{(1+5\delta_0)m} \sum_ {j_i, j_i+k_i\geq 0}2^{-(1+\beta-\delta^2)j_{\max}}2^{-\delta^2j_{\min}}\eps^2\\
    &\lesssim 2^{-\delta m}\eps^2.
\end{align}
Note the sizes of parameters fixed in Remark \ref{rem:parameters}.
Thus, under the restriction \eqref{eqn:k-restrictions-j<m}, it suffices to prove the following claim:
\begin{align}\label{eqn:j-upperbound-standard}
    2^{8k^+}2^{(1+\beta)k+\gamma k^-}2^{(1+\beta)m}&\norm{Q_{jk}\mathcal{B}_\m(f_1,f_2)}_{L^2} \lesssim 2^{-\frac{3}{2}\delta m}\eps^2, \quad 0\leq j_{\max}<(1-\delta)m.
\end{align}
The summation over $O(m^2)$ terms yields then the desired claim.
\par\textbf{Step 2: Proof of \eqref{eqn:j-upperbound-standard}.}
The main idea is to take advantage of oscillations in time of the phase $\Phi$ in the bilinear terms. Concretely, this amounts to establishing favourable lower bounds on the phase of the form $\abs{\Phi}\gtrsim \lambda(k,k_1,k_2)$, for some $\lambda>0$  adapted to the relevant geometric setting. These are conducted in Lemma \ref{LM:bound-from-below-PHASE}. Once an admissible lower bound is available, one performs a normal form (i.e.\ integrates by parts in time) in the bilinear expressions and obtains:
\begin{align}\label{eqn:normal-form-decomposition}
    \mathcal{B}_{\m}(f_1,f_2)=\mathcal{Q}_{\m\Phi^{-1}}(f_1,f_2) +\mathcal{B}_{\m\Phi^{-1}}(\partial_tf_1,f_2)+\mathcal{B}_{\m\Phi^{-1}}(f_1,\partial_tf_2).
\end{align}
Thus, it remains to prove the claim \eqref{eqn:j-upperbound-standard} for the three terms on the right-hand side above. Note that we have a boundary term $\mathcal{Q}_{\m\Phi^{-1}}(f_1,f_2)$, which has one less time parameter to control, and two cubic terms for which we can use Lemma \ref{lem:time-derivative-L2}. This procedure modifies the multiplier $\m\longrightarrow \m\Phi^{-1}$, bounds on which are obtained using Lemma \ref{lem:multiplier-bounds}. The main point however, is that we can control $\abs{\Phi}^{-1}$ in terms of the available parameters. Indeed, by Lemma \ref{LM:bound-from-below-PHASE} there holds
\begin{align}\label{phase-bound-proof}
    \abs{\Phi}\gtrsim 2^{k_{\min}^- + 2 k_{\max\vphantom{\min}}^-}.
\end{align}
Using this, we can conclude the claim in most of the cases by a combination of set-size (Lemma \ref{Lem-bilinear-estimates}) and linear decay estimates (Proposition \ref{prop:lin-decay}). 
\par We proceed with the cubic terms in \eqref{eqn:normal-form-decomposition}, focusing first on $\mathcal{B}_{\m\Phi^{-1}}(\partial_tf_1,f_2)$.
We may bound it using an $L^2-L^\infty$ estimate in Lemma \ref{Lem-bilinear-estimates}, Lemma \ref{lem:time-derivative-L2} on $\partial_tf_1$, the phase bound \eqref{phase-bound-proof} and the multiplier bound \eqref{main-multiplier-bound}. Altogether we obtain under the bootstrap assumption \eqref{eq:bootstrap-assumptionX}:
\begin{align}
    2^{8k^+}2^{(1+\beta)k+\gamma k^-}2^{(1+\beta)m}&\norm{Q_{jk}\mathcal{B}_{\m\Phi^{-1}}(\partial_tf_1,f_2)}_{L^2}\\
    &\lesssim  2^{8k^+}2^{(1+\beta)k+\gamma k^-}2^{(2+\beta)m}\norm{\m\Phi^{-1}\chi}_{S^\infty}\norm{\partial_tf_1}_{L^2}\norm{e^{it\Lambda}f_2}_{L^\infty}\\ 
    &\lesssim 2^{8k^+}2^{(1+\beta)k+\gamma k^-}2^{(2+\beta)m}2^{k_1+k_2}2^{-k_{\min}^--2k_{\max\vphantom{\min}}^-}2^{-\frac{3}{2}m}\eps^2 2^{-(1+\beta)m}2^{-(\frac{9}{2}+2\beta)k_2^+}\eps\\
    &\lesssim 2^{-\frac{m}{2}}2^{10\delta_0m}2^{-(\frac{1}{6}+\frac{2}{3}\beta)k_{\min}^-}\eps^3.
\end{align}
In the last inequality we have used \eqref{eqn:gamma-beta-relation}. Now by the restriction \eqref{eqn:k-restrictions-j<m}, $k_{\min}\gtrsim -(1-4\delta)m$, which implies $-(\frac{1}{6}+\frac{2}{3}\beta)k_{\min}^-\lesssim (\frac{1}{6}+\frac{2}{3}\beta+\delta)m$. Together with Remark \ref{rem:parameters}, in particular $\delta_0\ll\beta<\frac{1}{5}$, we obtain
\begin{align}
     2^{8k^+}&2^{(1+\beta)k+\gamma k^-}2^{(1+\beta)m}\norm{Q_{jk}\mathcal{B}_{\m\Phi^{-1}}(\partial_tf_1,f_2)}_{L^2}\lesssim 2^{(-\frac{1}{3}+\frac{2}{3}\beta+\delta+10\delta_0)m}\eps^3\lesssim 2^{(-\frac{1}{5}+2\delta) m}\eps^3,
\end{align}
which is more than enough to obtain the claim. The other cubic term $\mathcal{B}_{\m\Phi^{-1}}(f_1,\partial_tf_2)$ in \eqref{eqn:normal-form-decomposition} follows symmetrically using Lemma \ref{lem:time-derivative-L2} on $\partial_tf_2$ instead. 
\par It remains to estimate the boundary term in \eqref{eqn:normal-form-decomposition}. Note that the most involved case is when all frequencies are small (i.e.\ $<1$) since the contribution coming from $\abs{\Phi}^{-1}$ via \eqref{phase-bound-proof} might be very large. Thus we may assume w.l.o.g.\ $k,k_1,k_2<0$ such that $k=k^-,\; k_i=k_i^-$. Moreover, since the estimates are symmetric in $k_1,k_2$, we may assume w.l.o.g.\ that $k_1\leq k_2$. We proceed by splitting the frequency space according to the relative sizes of the input and output frequencies in two main cases using the notation of Remark \ref{rem:notation-rel-size-frequencies}.

\par\textbf{\underline{Case 1: $k=k_{\min}$, $k_1\sim k_2\sim k_{\max}$.}} From \eqref{phase-bound-proof} there holds $\abs{\Phi}\gtrsim 2^{k+2k_1}$.
An $L^2-L^2$ bound using Lemma \ref{Lem-bilinear-estimates} and the bootstrap assumption \eqref{eq:bootstrap-assumptionX} allows to restrict $j_{\min}$ in terms of $m$, while obtaining a gain in the smallest parameter $k$. Indeed
\begin{align}
    2^{(1+\beta)m}2^{(1+\beta+\gamma)k}\norm{Q_{jk}\mathcal{Q}_{\m\Phi^{-1}}(f_1,f_2)}_{L^2}&\lesssim 2^{(1+\beta)m}2^{(1+\beta+\gamma)k} 2^{\frac{3}{2}k}\norm{\m\Phi^{-1}\chi}_{S^\infty}\norm{f_1}_{L^2}\norm{f_2}_{L^2}\\
    &\lesssim 2^{(1+\beta)m}2^{(\beta+\gamma)k}2^{\frac{3}{2}k}2^{-(1+\beta)j_{\min}-(1+\beta+2\gamma)k_1}\eps^2 \\
    & =2^{(1+\beta)m}2^{\frac{4-2\beta}{3}k}2^{-(1+\beta)j_{\min}-(\frac{2-7\beta}{3})k_1}\eps^2,
\end{align}
where we have used the identity \eqref{eqn:gamma-beta-relation}. 
The claim then follows if $-j_{\min}\leq -\left(1+\frac{\delta}{1+\beta}\right)m-\frac{{4}-{2}\beta}{3(1+\beta)}k+\frac{{2}-{7}\beta}{3(1+\beta)}k_1$. Otherwise there holds:
\begin{align}
    j_{\min}\leq \left(1+\frac{\delta}{1+\beta}\right)m+\frac{{4}-{2}\beta}{3(1+\beta)}k-\frac{{2}-{7}\beta}{3(1+\beta)}k_1.
\end{align}
In this configuration, we use an $L^\infty-L^2$ bound in Lemma \ref{Lem-bilinear-estimates} and the linear decay \eqref{eqn:linear-decay-j<m} :  
\begin{align}
    2^{(1+\beta)m}2^{(1+\beta+\gamma)k}\norm{Q_{jk}\mathcal{Q}_{\m\Phi^{-1}}(f_1,f_2)}_{L^2} & \lesssim 2^{(1+\beta)m}2^{(\beta+\gamma)k} 2^{-\frac{3}{2}m}2^{(\frac12-\beta)j_{\min}-(\frac12+\beta+\gamma)k_1}  \\
    & \qquad \qquad \cdot   2^{-\gamma k_1} 2^{-(1+\beta)(j_{\min}+k_1)} 2^{-2\delta(j_{\min}+k_1)} 2^{-(1+\beta-2\delta)(j_{\min}+k_1)}\eps^2\\
    &\lesssim 2^{(1+\beta)m}2^{(\beta+\gamma)k}2^{-\frac{3}{2}m}2^{(\frac12-\beta)j_{\min}-(\frac12+\beta+\gamma)k_1}2^{-2\delta j_{\min}-(\gamma+2\delta)k_1}\eps^2\\
    &\lesssim 2^{\big(-2\delta +\frac{\frac12-\beta-2\delta}{1+\beta}\delta \big)m}2^{\frac{1-5\beta-\frac{8}{3}(2-\beta)\delta}{2(1+\beta)}k}2^{\frac{-1+8\beta-\frac{4}{3}(1+10\beta)\delta}{2(1+\beta)}k_1}\eps^2.
\end{align}
By Remark \ref{rem:parameters}, since $\beta\leq \frac15-6\delta$, the coefficient of $k$ is positive, and because $k\leq k_1\leq 0$ we obtain the claim \eqref{eqn:j-upperbound-standard}:
\begin{align}
    2^{(1+\beta)m}2^{(1+\beta+\gamma)k}\norm{Q_{jk}\mathcal{Q}_{\m\Phi^{-1}}(f_1,f_2)}_{L^2} & \lesssim 2^{-\frac{3}{2}\delta m}2^{\frac{3\beta-\frac{4}{3}(5+8\beta)\delta}{2(1+\beta)}k}\eps^2\lesssim 2^{-\frac{3}{2}\delta m}\eps^2.
\end{align}

\par\textbf{\underline{Case 2: $k_1=k_{\min}$, $k\sim k_2\sim k_{\max}$.}} The bound on the phase  from \eqref{phase-bound-proof} reads:\begin{align}
    \abs{\Phi}\gtrsim 2^{k_1+2k}.
\end{align}
By an $L^\infty-L^2$ bound in Lemma \ref{Lem-bilinear-estimates} using \eqref{eqn:decay-Pk-profiles} on $f_1$ and the bootstrap assumption \eqref{eq:bootstrap-assumptionX} we obtain:
\begin{align}
    2^{(1+\beta)m}2^{(1+\beta+\gamma)k}\norm{Q_{jk}\mathcal{Q}_{\m\Phi^{-1}}(f_1,f_2)}_{L^2}&\lesssim 2^{(1+\beta)m}2^{(1+\beta+\gamma)k}\norm{\m\Phi^{-1}\chi}_{S^\infty}\norm{e^{it\Lambda}f_1}_{L^\infty} \norm{f_2}_{L^2} \\
    &\lesssim 2^{(1+\beta)m}2^{(1+\beta+\gamma)k}  2^{k_1+k}2^{-k_1-2k}2^{-(1+\beta)m}\eps 2^{-j_2-k_2-\gamma k_2}\eps\\
    &\lesssim 2^{-j_2-(1-\beta)k}\eps^2,
\end{align}
so the claim follows if $-j_2-(1-\beta)k\leq -2\delta m$. Otherwise $j_2<2\delta m -(1-\beta)k$ and we may use an appropriate balance of set-size estimates and improved $L^\infty-L^2$ decay (using $L^\infty$ \eqref{eqn:linear-decay-j<m} on $f_2$) to obtain:
\begin{align}
   2^{(1+\beta)m}2^{(1+\beta+\gamma)k}&\norm{Q_{jk}\mathcal{Q}_{\m\Phi^{-1}}(f_1,f_2)}_{L^2}\\&\lesssim 
   2^{(1+\beta)m}2^{(1+\beta+\gamma)k}\norm{\m\Phi^{-1}\chi}_{S^\infty}\norm{f_1}_{L^2}\min\left\{\norm{e^{it\Lambda}f_2}_{L^\infty}, 2^{\frac{3}{2}k}\norm{f_2}_{L^2} \right\} 
   \\&\lesssim 2^{(1+\beta)m}2^{(\beta+\gamma)k} 2^{-\gamma k_1}  \min\left\{2^{-\frac{3}{2}m}2^{(\frac{1}{2}-\beta)j_2}2^{-(\frac{1}{2}+\beta+\gamma)k_2}, 2^{\frac{3}{2}k}2^{-\gamma k}\right\}\eps^2\\
   &\lesssim 2^{(1+\beta)m}2^{\beta k}  \cdot\min\left\{2^{-(\frac{3}{2}-2(\frac{1}{2}-\beta)\delta)m} 2^{(-(\frac{1}{2}-\beta)(1-\beta)-(\frac{1}{3}-\frac{2}{3}\beta))k}, 2^{(\frac{5}{3}+\frac{2}{3}\beta)k} \right\}\eps^2\\
   &\lesssim 2^{(1+\beta)m}2^{\beta k} \cdot\min\left\{2^{-(\frac{3}{2}-{\delta})m} 2^{(-\frac{5}{6} + \frac{13}{6} \beta- \beta^2)k},2^{(\frac{5}{3}+\frac{2}{3}\beta)k}\right\}\eps^2
\end{align}
In particular, the desired decay in $m$ and a positive factor of $k$ is achieved by weighting the first term in the minimum above with $\theta$ and the second one with $1-\theta$ with
\begin{align}
   \theta=\frac{1+\beta+{2}\delta}{\frac{3}{2}-{\delta}}\in\,]0,1[,\qquad  1-\theta=\frac{\frac{3}{2}-{\delta}-1-\beta-{2}\delta}{\frac{3}{2}-{\delta}}=\frac{\frac{1}{2}-\beta-{3\delta}}{\frac{3}{2}-{\delta}},
\end{align} 
as long as $\delta$ as in  Remark \ref{rem:parameters}. We obtain that the boundary term verifies the claim \eqref{eqn:j-upperbound-standard}:
\begin{align}  2^{(1+\beta)m}2^{(1+\beta+\gamma)k}\norm{Q_{jk}\mathcal{Q}_{\m\Phi^{-1}}(f_1,f_2)}_{L^2}\lesssim 2^{-2\delta m}\eps^2.
\end{align}

\par The case $k_2=k_{\min}$ is completely symmetric to the one above with the roles of $k_1\leftrightarrow k_2$ interchanged and $k_2\sim k$. In particular, by Lemma \ref{LM:bound-from-below-PHASE}, $\abs{\Phi}\gtrsim 2^{k_2+2k}$ and the combined dispersive decay \eqref{eqn:linear-decay-j<m} and set-size estimates yield the bound. This finishes the proof of the proposition.
\end{proof}

\section{Phase and Multiplier Bounds}
\label{sec:auxiliary-results}

\subsection{Phase Bounds}
In this section we prove the main bounds on the phases that allow to perform the normal form in Proposition \ref{prop:X-norm-j<m}.
\begin{lem}[Lower bounds on the phase]\label{LM:bound-from-below-PHASE}
Let $\Phi\in\{\Phi_{\mu\nu}^\pm=\pm\Lambda(\xi)+\mu\Lambda(\xi-\eta)+\nu\Lambda(\eta)\}$ be one of the phases of the problems defined in \eqref{def:phases}. For any $(k,k_1,k_2) \in \Z^3$, then on the support of $\chi$ (defined in \eqref{eqn: chi localizations}) there holds that
\begin{align}
    \abs{\Phi}\gtrsim 2^{k_{\min}^- +2k_{\max\vphantom{\min}}^-}.
\end{align}
\end{lem}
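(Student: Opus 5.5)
The plan is to reduce every phase to a one-dimensional concavity defect of the radial profile $\Lambda(r)=r\br{r}^{-1}$. The facts used are
\begin{align}
\Lambda'(r)=\br{r}^{-3}>0, \qquad \Lambda''(r)=-3r\br{r}^{-5}<0 \quad (r>0),
\end{align}
so $\Lambda$ is increasing, strictly concave and strictly subadditive on $[0,\infty)$.

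\textbf{Step 1: sign reduction.} Write $\xi=(\xi-\eta)+\eta$, so the three vectors $\xi,\ \xi-\eta,\ \eta$ satisfy a linear relation. If all three signs in $\Phi$ agree (e.g.\ $-\Phi_+^{--}$ or $\Phi_-^{++}$), then
\begin{align}
\abs{\Phi}\geq \Lambda(2^{k_{\max}}/2)\gtrsim 2^{k_{\max}^-},
\end{align}
which is stronger than the claim. Otherwise, up to an overall sign, $\Phi=\Lambda(\abs{A})+\Lambda(\abs{B})-\Lambda(\abs{C})$, where $\{A,B,C\}$ is a relabelling of $\{\xi,\xi-\eta,\eta\}$ (up to signs) and one vector is the sum of the other two. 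This is consistent with \eqref{time-resonance-set}: the zero set is exactly where one of $A,B$ vanishes. By the triangle inequality, $\abs{C}\leq \abs{A}+\abs{B}$, and since $\Lambda$ is increasing,
\begin{align}
\Phi\geq D(a,b)\vcentcolon=\Lambda(a)+\Lambda(b)-\Lambda(a+b), \qquad a=\abs{A},\ b=\abs{B}.
\end{align}
Assume without loss of generality $a\leq b$. Since $b\leq \abs{C}+a\leq 2\max\{a+b,\dots\}$ and $\abs{C}\leq a+b\leq 2b$, on the support of $\chi$ we get $a\sim 2^{k_{\min}}$ up to a bounded dyadic shift, and $b\sim 2^{k_{\max}}$. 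More precisely, $a$ is at most a constant multiple of each of the other two sizes. Because the claimed bound is monotone in $k_{\min}$ and $k_{\max}$, it therefore suffices to show $D(a,b)\gtrsim \min\{a,1\}\min\{b,1\}^2$.

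\textbf{Step 2: the defect estimate.} Start from
\begin{align}
D(a,b)=\int_0^a\big(\Lambda'(s)-\Lambda'(s+b)\big)\,ds .
\end{align}
\emph{Case $b\leq 1$.} For $s\leq a\leq b\leq 1$, the mean value theorem applied to $r\mapsto\br{r}^{-3}$ gives $\Lambda'(s)-\Lambda'(s+b)\gtrsim b\cdot\min_{[s,s+b]}\abs{\Lambda''}\gtrsim b(s+b/2)$ on the portion where $r\geq b/2$. This yields $D\gtrsim ab^2$.
\emph{Case $a\leq 1\leq b$.} Here $\Lambda'(s)\geq 2^{-3/2}$ while $\Lambda'(s+b)\leq \br{b}^{-3}$, which is smaller by a fixed factor once $b\geq C$. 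The intermediate range $1\leq b\leq C$ is handled as in the first case with constants depending on $C$. Either way $D\gtrsim a$.
\emph{Case $a,b\geq 1$.} Use $D\geq 2\Lambda(1)-1=\sqrt2-1>0$.

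Combining the three cases gives $D\gtrsim 2^{k_{\min}^-+2k_{\max}^-}$.

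\textbf{Main obstacle.} The delicate point is the bookkeeping in Step 1. One has to check, for each of the sign combinations \eqref{def:phases} (e.g.\ $\Phi_+^{+-}=-\Lambda(\xi)+\Lambda(\xi-\eta)-\Lambda(\eta)$, in which $\xi$ plays the role of the sum), that the vector playing the role of $C$ carries the negative sign. One also has to confirm that the smaller of $\abs{A},\abs{B}$ is comparable to $2^{k_{\min}}$ rather than merely bounded by it. If it is only bounded above by $2^{k_{\min}}$, I would use the observation that $a\geq \abs{C}-b$ fails to give a lower bound. In that situation I would instead argue directly that $k_{\min}$ coincides up to $\pm 4$ with $\min\{\log_2 a,\log_2 b\}$, using the scenarios in Remark \ref{rem:notation-rel-size-frequencies} and the fact that $C$ can never be the strictly smallest of the three vectors.
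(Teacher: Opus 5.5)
Your proof is correct, and it takes a different route from the paper.

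\textbf{The paper's route.} It first uses symmetries to reduce to $\Phi_{++}$ and $\Phi_{+-}=\Lambda(\xi)+\Lambda(\xi-\eta)-\Lambda(\eta)$. It then treats the four configurations of Remark \ref{rem:notation-rel-size-frequencies} separately, each with its own inequality:
\begin{enumerate}
\item In case I ($k_2\ll k\sim k_1$) it uses only monotonicity, and gets the stronger bound $\abs{\Phi}\gtrsim 2^{k^-}$.
\item In cases II and III it uses the reverse triangle inequality $\abs{\xi-\eta}\geq \abs{\eta}-\abs{\xi}$ to reduce to $q(a)+q(b-a)-q(b)$. It then splits $b\geq 1$ (mean value theorem) from $b<1$ (double integral of $q''$).
\item In case IV it uses the triangle inequality, a Taylor expansion for small total size, and a monotonicity and compactness argument for large size.
\end{enumerate}

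\textbf{Your route.} You note that for every mixed sign pattern, the negatively signed vector $C$ satisfies $\abs{C}\leq \abs{A}+\abs{B}$. So every phase reduces at once to the single concavity defect $D(a,b)=\Lambda(a)+\Lambda(b)-\Lambda(a+b)$, and one three-case estimate $D(a,b)\gtrsim \min\{a,1\}\min\{b,1\}^2$ closes the argument. This loses nothing against the paper's cases II and III: since $\partial_b D>0$, one has $q(a)+q(b-a)-q(b)=D(a,b-a)\leq D(\abs{\xi},\abs{\xi-\eta})$. Your approach is shorter and independent of the sign pattern. The paper's case split gives sharper bounds in some regions, for example $2^{k^-}$ in case I where yours gives only $2^{3k_{\max}^-}$. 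These sharper bounds are not recorded in the lemma and not used later.

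\textbf{Bookkeeping in Step 1.} Several side claims there, and in your last paragraph, are false:
\begin{itemize}
\item that $a\sim 2^{k_{\min}}$;
\item that $a$ is at most a constant multiple of each of the other two sizes;
\item that $C$ can never be the strictly smallest vector.
\end{itemize}
A counterexample to all three is $\Lambda(\xi)+\Lambda(\xi-\eta)-\Lambda(\eta)$ with $\abs{\eta}\ll \abs{\xi}\sim\abs{\xi-\eta}$, i.e.\ the paper's case I. Here $C=\eta$ is the smallest vector and $a\sim b\sim 2^{k_{\max}}$.

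None of these claims is needed. Your monotonicity reduction uses only two lower bounds, which hold for every labelling:
\begin{itemize}
\item $a\gtrsim 2^{k_{\min}}$, which is trivial since $a$ is one of the three sizes;
\item $b\gtrsim 2^{k_{\max}}$, which follows from $b\geq a$ and $\abs{C}\leq a+b\leq 2b$.
\end{itemize}
In the configuration above you then get $2^{3k_{\max}^-}\geq 2^{k_{\min}^-+2k_{\max}^-}$, so the proof stands as written.

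Likewise, the ``main obstacle'' of checking which vector carries the minus sign is vacuous. Each of $\xi,\ \xi-\eta,\ \eta$ is, up to signs, the sum of the other two, and $\Lambda$ is radial.
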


\begin{proof} We may consider without loss of generality $\Phi_{\mu \nu} \vcentcolon=\Phi^+_{\mu\nu}$ due to the following symmetry:
\begin{align}
    \Phi^-_{++}(\xi,\eta)=-\Phi^+_{--}(\xi,\eta), \quad \Phi^{-}_{--}(\xi,\eta)=-\Phi^+_{++}(\xi,\eta),\quad \Phi^-_{+-}(\xi,\eta)=-\Phi^+_{+-}(\xi,\xi-\eta).
\end{align}
Furthermore, since $\Phi_{--}(\xi,\eta)=-\Phi_{+-}(\eta,\xi)$, it is enough to consider only $\Phi_{++}$ and $\Phi_{+-}$. In the following we will use the fact that the dispersion relation is radial by defining: \begin{align}
    \Lambda(z)= q(\vert z \vert),\quad  \forall z\in \R^3,\qquad q:\R^+\to \R^+, r\mapsto \frac{r}{\br{r}}.
\end{align} We note the following useful computations
\begin{align}
    q'(r)=\frac{1}{\br{r}^3}, \qquad q''(r)=-\frac{3r}{\br{r}^5}.
\end{align}
Moreover, observe that $q$ is strictly increasing, while $q'$ is strictly decreasing on $\R_+$.
\par For the phase $\Phi_{++}$ the computation is direct with a larger lower bound: \begin{align}
    \vert \Phi_{++} \vert &=q(|\xi|)+q(|\xi-\eta|)+q(|\eta|)\geq \max\{ q(|\xi|),q(|\xi-\eta|),q(|\eta|)\}\geq 2^{k_{\max}-k_{\max}^+}=2^{k_{\max}^-}\geq 2^{k_{\min}^-}.
\end{align}
We therefore consider the phase $\Phi_{+-}$ in what follows. We separate cases according to the relative sizes of the frequency localization parameters using the notation in Remark \ref{rem:notation-rel-size-frequencies}.

\medskip

  \par \underline{I. $2^{k_1} \sim 2^k \gg 2^{k_2}$.} Here the computation is straightforward with a larger bound since $q(|\xi-\eta|)>q(|\eta|)$:
  \begin{align}
      \abs{\Phi_{+-}(\xi, \eta)} \geq q(|\xi|)+q(|\xi-\eta|)-q(|\eta|)\geq q(|\xi|)\sim 2^{k^-}\geq 2^{k_{\min}^-}.
  \end{align}

\medskip

\par\underline{II. $2^{k_1}\sim 2^{k_2}\gg 2^{k}$} Since $\vert \xi-\eta \vert \geq \vert \eta \vert -\vert \xi \vert \geq 0$ and since $q$ is increasing there holds
$$\Phi_{+-}(\xi, \eta) \geq q(\vert \xi \vert)+q(\vert \eta \vert -\vert \xi \vert)-q(\vert \eta \vert).$$
Let $a=\vert \xi \vert \sim 2^k$ and $b=\vert \eta \vert \sim 2^{k_2}$. Then, using Remark \ref{rem:notation-rel-size-frequencies} there holds  
$$\Phi_{+-}(\xi, \eta) \geq F_b(a), \qquad F_b(a)\vcentcolon=q(a)+q(b-a)-q(b), \ \ \text{for} \ \ 0<a \leq 2^{-4}b.$$
Since $a<b-a$ and $q'$ is strictly decreasing there holds $F'_b(a)=q'(a)-q'(b-a)>0$. We consider two cases:

\underline{II.1. Let $b \geq 1$.} By mean-value theorem, there exists $\theta \in\, ]b-a, b[\subset \,](1-2^{-4})b, b[$ such that $$F_b(a)=q(a)-aq'(\theta).$$
Since $q'$ is decreasing, there holds in fact
\begin{align*}
    aq'(\theta) \leq a q'((1-2^{-4})b)=\frac{a}{(1+(1-2^{-4})^2b^2)^{3/2}}.
\end{align*}
As a consequence, we have
\begin{align*}
    \frac{aq'(\theta)}{q(a)} \leq \frac{(1+a^2)^{1/2}}{(1+(1-2^{-4})^2b^2)^{3/2}} \leq \frac{(1+2^{-8}b^2)^{1/2}}{(1+(1-2^{-4})^2b^2)^{3/2}}.
\end{align*}
The right-hand side above is a continuous and non-increasing on $[1, \infty[$ function of $b$, reaching the value $0.39$ at $b=1$, hence there exists a universal constant $0<m<1$ such that
$$aq'(\theta) \leq m q(a).$$
Thus we obtain:
$$ \vert \Phi_{+-}(\xi, \eta) \vert \geq F_b(a) \geq (1-m)q(a)=(1-m) \frac{a}{\langle a \rangle} \sim 2^{k-k^+}=2^{k^-}\geq 2^{k^-_{\min}+2k_{\max\vphantom{\min}}^-}.$$ 

\underline{II.2. Let $b < 1$.} We write
$$ F_b(a)=\int_0^a F_b'(s) \, \mathrm{d}s=\int_0^a (q'(s)-q'(b-s)) \, \mathrm{d}s,$$
and we want to bound the integrand from below. For $s \in [0,a]$ (satisfying $s \leq b-s$ since $s \leq 2^{-1}b$), we further write
$$ q'(s)-q'(b-s)=-\int_s^{b-s} q''(u) \, \mathrm{d}u=\int_s^{b-s} \frac{3u}{(1+u^2)^{5/2}} \, \mathrm{d}u. $$
We observe that $s \leq a\leq 2^{-4}b \leq b/4$ and $b/2 \leq b-s$ therefore 
$$ q'(s)-q'(b-s)=\int_s^{b-s} \frac{3u}{(1+u^2)^{5/2}} \, \mathrm{d}u\geq \int_{b/4}^{b/2} \frac{3u}{(1+u^2)^{5/2}}\, \mathrm{d}u \gtrsim \int_{b/4}^{b/2} u\, \mathrm{d}u,$$
since the denominator is bounded as $ (1+u^2)^{5/2} \lesssim (1+b^2/4)^{5/2} \lesssim 1$ for $u \in [b/4, b/2]$ with $b < 1$. All in all, we have obtained
$$q'(s)-q'(b-s) \geq \int_{b/4}^{b/2} u\, \mathrm{d}u \gtrsim b^2,$$
hence the final bound
$$F_b(a) = \int_0^a (q'(s)-q'(b-s)) \, \mathrm{d}s \gtrsim ab^2.$$
Thus, we have proved
$$ \Phi_{+-}(\xi, \eta) \geq 2^k 2^{2k_1}=2^{k^- +2k_1^-}.$$
\medskip
  \par\underline{III. $2^{k_2} \sim 2^k \gg 2^{k_1}$.} 
   We proceed in the same way as in the previous case. Since $\vert \xi \vert \geq  \vert \eta \vert-\vert \xi-\eta \vert \geq 0$ and $q$ is increasing, we have
   $$\Phi_{+-}(\xi, \eta) \geq q(\vert \xi-\eta  \vert)+q(\vert \eta \vert-\vert \xi-\eta \vert)-q(\vert \eta \vert).$$
Now setting $a=\vert \xi-\eta  \vert \sim 2^{k_1}$ and $b=\vert \eta \vert \sim 2^{k}$,  one needs  to bound from below the function
$$F_b(a)\vcentcolon=q(a)+q(b-a)-q(b), \ \ \text{for} \ \ 0<a \leq 2^{-4}b.$$
The bound is obtained similarly to case II above.

   \medskip
   \par\underline{IV. $2^{k}\sim 2^{k_1}\sim 2^{k_2}$}.
Since $q$ is increasing, concave and vanishing at zero, and $\abs{\eta}\leq \abs{\xi}+\abs{\xi-\eta}$ there holds:
\begin{align*}
    \Phi_{+-}(\xi, \eta)= \Lambda(\xi)+\Lambda(\xi-\eta)-\Lambda(\eta)
    &\geq  q( \vert \xi\vert)+q(\vert \xi-\eta \vert)-q(\vert \xi \vert+\vert \xi-\eta \vert) \geq 0.
\end{align*}

Letting $a\sim 2^k$ and $b\sim 2^{k_1}$ (with $\abs{k-k_1}\leq 4$) and $c=a+b$, we have
$$ a=t c, \ \ b=(1-t)c, \ \ t=\frac{a}{a+b} \in ]0,1[.$$
Thus, we have
\begin{align*}
   \Phi_{+-}(\xi,\eta)\geq q_t(c), \qquad q_t(c)\coloneqq q(tc)+q((1-t)c)-q(c), \ \ c>0.
\end{align*}
We again consider two cases:
\par \underline{IV.1. $c \leq 1$.} By Taylor's formula we have that
$$ c-\frac{1}{2}c^3 \leq q(c) \leq c-\frac{1}{2}c^3+\frac{3}{8}c^5,$$
so we obtain
\begin{align*}
    q_t(c) \geq -\frac{1}{2}(t^3+(1-t)^3-1)c^3-\frac{3}{8}c^5=\frac{3}{2}t(1-t)c^3-\frac{3}{8}c^5.
\end{align*}
Note that in this configuration, $t(1-t)$ is uniformly bounded from below. Indeed, since $a$ and $b$ are comparable, we have $2^{-4} \leq \frac{a}{b} \leq 2^4$ therefore $t=\frac{a}{a+b}=\frac{a/b}{1+a/b}$ is bounded as 
\begin{align*}
    0<\frac{1}{1+2^4}\leq t \leq \frac{2^4}{1+2^4}<1, \qquad t(1-t)\geq (1+2^4)^{-2}.
\end{align*}
Thus, we obtain the claim  $\vert \Phi_{+-}(\xi, \eta) \vert \geq q_t(c) \gtrsim  2^{3 k}$,
by taking $c \leq c_0<1$. This leaves the last case.
\par \underline{IV.2. Let $c \geq c_0$.} We observe that since $q'$ is decreasing, we have
\begin{align*}
    q_t'(c)=t q'(tc)+(1-t) q'((1-t)c)-q'(c)  \geq tq'(c)+(1-t) q'(c)-q'(c)=0.
\end{align*}
So $q_t$ is non-decreasing and in particular $q_t(c) \geq p_t(c_0)$ if $c \geq c_0$. By continuity, since $t\in [(1+2^{4})^{-1},2^4(1+2^{4})^{-1}]\subset]0,1[$, there holds $q_t(c) \geq q_{t^*}(c_0)>0$ for some $t^* \in ]0,1[$, which is a uniform bound from below. All in all, we have proven that 
\begin{align*}
    \vert \Phi_{+-}(\xi, \eta) \vert \gtrsim \min\{1,2^{3k}\}=2^{3k^-}.
\end{align*}
This finishes the proof.
\end{proof}

\subsection{Bilinear Estimates and Multiplier Bounds}
Recall that for $t>0$, we consider localized bilinear operators with $\m, \Phi$ defined in \eqref{def:multipliers},\eqref{def:phases}:
\begin{align}\label{bilin-op-appendix}
    \mathcal{F}(P_k\mathcal{Q}_{\m}(P_{k_1}f_1,P_{k_2}f_2))(\xi)=\int_{\R^3} e^{it\Phi(\xi,\eta)}\m(\xi,\eta)\chi(\xi,\eta)\widehat{f_1}(\xi-\eta)\widehat{f_2}(\eta)d\eta,
\end{align}
 In Lemma \ref{Lem-bilinear-estimates} below we prove H\"older-type estimates on the bilinear objects above. For this, we define the following norm for $m\in \mathscr{C}_c(\R^3\times \R^3,\C)$:
\begin{align*}
    \norm{m}_{\mathcal{S}^\infty} \coloneqq\norm{\mathcal{F}^{-1}(m) }_{L^1_{\xi, \eta}}, \qquad S^\infty\hookrightarrow L^\infty.
\end{align*}

\begin{lem}[Bilinear estimates]\label{Lem-bilinear-estimates}
  For a bilinear operator as in \eqref{bilin-op-appendix} there holds:
  \begin{enumerate}
      \item $ \norm{ P_{k}\mathcal{Q}_\m(P_{k_1}f_1,P_{k_1}f_2) }_{L^2}\lesssim  \norm{\m \chi}_{\mathcal{S}^\infty} \min \left\{\norm{P_{k_1}e^{it\Lambda}f_1}_{L^\infty}\norm{P_{k_2}f_2}_{L^2},\norm{P_{k_1}f_1}_{L^2}\norm{P_{k_2}e^{it\Lambda}f_2}_{L^\infty}\right\}$;
      \item \label{set-size-esitmate} $ \norm{ P_{k}\mathcal{Q}_\m(P_{k_1}f_1,P_{k_1}f_2) }_{L^2}\lesssim  \norm{\m \chi}_{\mathcal{S}^\infty}2^{\frac{3}{2}k_{\min} }\norm{P_{k_1}f_1}_{L^2}\norm{P_{k_2}f_2}_{L^2}$.
  \end{enumerate}
\end{lem}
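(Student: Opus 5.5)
The plan is the standard Coifman--Meyer type argument: write the localized bilinear form in physical space using the inverse Fourier transform of the symbol $\m\chi$, and then apply H\"older and Minkowski. Note that the factors $e^{it\Lambda}$ attached to the inputs can be absorbed: $e^{it\Phi(\xi,\eta)}\widehat{f_1}(\xi-\eta)\widehat{f_2}(\eta)$ equals $e^{\mp it\Lambda(\xi)}$ times $\widehat{g_1}(\xi-\eta)\widehat{g_2}(\eta)$ with $g_i=e^{\pm it\Lambda}f_i$ (signs according to $\mu,\nu$). Since $e^{\mp it\Lambda(\xi)}$ is unitary on $L^2$, it suffices to bound $\norm{T_{\m\chi}(g_1,g_2)}_{L^2}$, where
\begin{align*}
\mathcal{F}(T_{M}(g_1,g_2))(\xi)=\int M(\xi,\eta)\widehat{g_1}(\xi-\eta)\widehat{g_2}(\eta)\,d\eta .
\end{align*}
Inserting the Fourier inversion $M(\xi,\eta)=\int K(y,z)e^{-i(y\cdot(\xi-\eta)+z\cdot \eta)}\,dy\,dz$ (after the harmless change of variables $(\xi,\eta)\mapsto(\xi-\eta,\eta)$, which preserves the $L^1$ norm of the kernel up to a linear change of variables with Jacobian $1$), one gets the representation
\begin{align*}
T_M(g_1,g_2)(x)=c\int_{\R^6}K(y,z)\,g_1(x-y)\,g_2(x-z)\,dy\,dz,
\end{align*}
where $\norm{K}_{L^1}\simeq\norm{M}_{\mathcal S^\infty}$.

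For statement (1), Minkowski's inequality in $(y,z)$ followed by H\"older gives
\begin{align*}
\norm{T_M(g_1,g_2)}_{L^2}\le\norm{K}_{L^1}\sup_{y,z}\norm{g_1(\cdot-y)g_2(\cdot-z)}_{L^2}\le \norm{K}_{L^1}\norm{g_1}_{L^\infty}\norm{g_2}_{L^2},
\end{align*}
and symmetrically with the roles of $g_1,g_2$ swapped. Since $\chi$ contains $\varphi_{k_1}(\xi-\eta)\varphi_{k_2}(\eta)$, we may replace $g_i$ by $P_{k_i}g_i$, or by a slightly enlarged projection, which gives (1). Here $\norm{P_{k_1}e^{it\Lambda}f_1}_{L^\infty}$ is exactly $\norm{P_{k_1}g_1}_{L^\infty}$, and the $L^2$ norms are unchanged by the unitary group.

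For statement (2), the set-size estimate, I would use (1) and then Bernstein on the input with smallest frequency when $k_{\min}\in\{k_1,k_2\}$. If, say, $k_1=k_{\min}$, then $\norm{P_{k_1}g_1}_{L^\infty}\lesssim 2^{\frac32k_1}\norm{P_{k_1}f_1}_{L^2}$. When instead the output frequency $k$ is the smallest, I would argue by duality: pair with $h=P_kh\in L^2$, so the trilinear form is bounded by $\norm{K}_{L^1}\norm{g_1}_{L^2}\norm{g_2}_{L^2}\norm{\widetilde P_k h}_{L^\infty}$. Bernstein then gives $\norm{\widetilde P_kh}_{L^\infty}\lesssim 2^{\frac32k}\norm{h}_{L^2}$. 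Alternatively, Cauchy--Schwarz in $\eta$ on the Fourier side directly yields $2^{\frac32 k_{\min}}\norm{\m\chi}_{L^\infty}$, but that only controls the sup norm of the symbol. The $\mathcal S^\infty$ version is therefore obtained through the kernel representation combined with Bernstein as above, using $\norm{\cdot}_{L^\infty}\le\norm{\cdot}_{\mathcal S^\infty}$ where needed.

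The only delicate point is bookkeeping: checking that the localization $\chi$ permits inserting the projections, with their enlarged versions absorbed at no cost, and that the duality step in the case $k=k_{\min}$ is legitimate. Both are routine.
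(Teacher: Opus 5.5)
Your argument is correct. For part (1) it is the same as the paper's proof: both write the localized operator in physical space through the kernel $\mathcal{F}^{-1}(\m\chi)$ and conclude by Minkowski and H\"older. You are slightly more careful here, since you factor out the output phase $e^{\mp it\Lambda(\xi)}$ as a unitary multiplier, which the paper's displayed identity leaves implicit.

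For the set-size estimate (2) your route is different. The paper argues by duality entirely on the Fourier side. It assumes $k=k_{\min}$, reducing the other cases by a change of variables, and applies Cauchy--Schwarz in $(\xi,\eta)$ to $\widehat{g}(\xi)\widehat{f_1}(\xi-\eta)$ against $\tilde\varphi(2^{-k}|\xi|)\widehat{f_2}(\eta)$. This gives the factor $2^{\frac32 k}\norm{\m\chi}_{L^\infty}$, and only then does it use $\norm{\cdot}_{L^\infty}\le\norm{\cdot}_{\mathcal{S}^\infty}$.

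You instead stay with the kernel representation and place Bernstein on whichever of the three functions has the smallest frequency support. When that is an input you go through (1); when it is the output you pair with $\widetilde{P}_k h\in L^\infty$. This is uniform and shows $2^{\frac32 k_{\min}}$ plainly as a Bernstein factor, but it needs the $\mathcal{S}^\infty$ norm throughout. The paper's version proves the stronger statement, involving only the sup norm of the symbol. That is what one wants when (2) is applied to symbols such as $\m\Phi^{-1}\chi$, where pointwise control comes directly from the lower bound on $|\Phi|$.

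Your stated reason for discarding the Fourier-side Cauchy--Schwarz is therefore backwards. A bound by $\norm{\m\chi}_{L^\infty}$ is not weaker than one by $\norm{\m\chi}_{\mathcal{S}^\infty}$; it implies it in one line, and that is exactly how the paper concludes.
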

\begin{proof}
The proof of the first statement follows by standard application of H\"older estimates in the following terms (see e.g.\ \cite[\textcolor{MidnightBlue}{Lemma 5.2}]{IP15-WW}):
\begin{align}
   \mathcal{F}^{-1}\Big(\int_{\R^3}e^{it\Phi}\m(\xi,\eta)\chi(\xi,\eta)\widehat{f_1}(\xi-\eta)&\widehat{f_2}(\eta)d\eta\Big)(x)\\
   &=\int_{\R^3}e^{ix\cdot\xi}\int_{\R^3}e^{it\Phi}\m(\xi,\eta)\chi(\xi,\eta)\widehat{f_1}(\xi-\eta)\widehat{f_2}(\eta)d\eta d\xi\\
   &=\int_{\R^3}\int_{\R^3}\mathcal{F}^{-1}_{\xi,\eta}(\m\chi)(y,z)(e^{it\Lambda}f_1)(x-y)(e^{it\Lambda}f_2)(x-y-z)dy dz.
\end{align}
Now we prove the set-size estimate. Let $g\in L^2$ and assume without loss of generality that $k_{\min}=k$ (otherwise exchange variables $\eta\leftrightarrow\xi-\eta$). Then there holds:
\begin{align}
    \abs{\br{\mathcal{Q}_{\m}(f_1,f_2),g}}&=\abs{\iint e^{it\Phi}\m(\xi,\eta)\chi(\xi,\eta)\widehat{f_1}(\xi-\eta)\widehat{f_2}(\eta)d\eta \; \widehat{g}(\xi)d\xi}\\
    &\lesssim  \norm{\m\chi}_{L^\infty_{\xi,\eta}}\norm{\widehat{g}(\xi)\widehat{f_1}(\xi-\eta)}_{L^2_{\xi,\eta}}\norm{\tilde{\varphi}(2^{-k}\abs{\xi})\tilde{\varphi}(2^{-k_1}\abs{\xi-\eta})\widehat{f_2}(\eta)}_{L^2_{\xi,\eta}}\\
    &\lesssim \norm{\m\chi}_{S^\infty}\norm{g}_{L^2}\norm{f_1}_{L^2}2^{\frac{3}{2}k}\norm{f_2}_{L^2}.   
\end{align}
Note that $\tilde{\varphi}$ is a slightly modified bump function with similar support properties and in the last line we have used $S^\infty\hookrightarrow L^\infty$.
\end{proof}
We prove the multiplier bounds used throughout the paper.
\begin{lem}[Multiplier bounds] \label{lem:multiplier-bounds} We record the following useful symbol bounds.
   
    \begin{enumerate}
        \item For two symbols $m^1,m^2$ there holds : $\norm{m^1m^2}_{S^\infty}\lesssim \norm{m^1}_{S^\infty}\norm{m^2}_{S^\infty}$.
        \item  Let $\m:\R^3\times\R^3\to \C$ be one of the multipliers in \eqref{def:multipliers} and $\chi$ as in \eqref{eqn: chi localizations}. Then there holds: \begin{align}\label{main-multiplier-bound}
        \norm{\m\chi}_{S^\infty}\lesssim 2^{k_1+k_2}. 
    \end{align}
    \end{enumerate}
    
\end{lem}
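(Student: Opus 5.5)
The plan is to prove the two items of Lemma~\ref{lem:multiplier-bounds} separately. Item (1) follows from the definition of $S^\infty$: the inverse Fourier transform of a product in $(\xi,\eta)$ is the convolution on $\R^6$ of the inverse transforms. Young's inequality $L^1*L^1\subset L^1$ then gives $\norm{m^1m^2}_{S^\infty}\le\norm{m^1}_{S^\infty}\norm{m^2}_{S^\infty}$. The useful consequence is that each multiplier in \eqref{def:multipliers} can be split into elementary factors in the separate variables $\xi$, $\xi-\eta$ and $\eta$, and the factors estimated one by one.

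For item (2), I will first record the basic localized bound. Let $a$ be smooth on $\R^3\setminus\{0\}$ and homogeneous-type, meaning $|D^\alpha a(\zeta)|\lesssim A\,2^{-|\alpha| k}$ on $|\zeta|\sim 2^k$. Then $\tilde\varphi_k(\zeta)a(\zeta)$ has $\norm{\mathcal F^{-1}(\tilde\varphi_k a)}_{L^1}\lesssim A$. This is proved by rescaling $\zeta=2^k\zeta'$ and integrating by parts, which gives the kernel bound $2^{3k}(1+2^k|x|)^{-4}$. If $a$ depends only on $\xi$, $\xi-\eta$ or $\eta$, the corresponding two-variable symbol has an $S^\infty$ norm equal to this one-variable $L^1$ norm: it is a pullback under a unimodular linear change of variables, and one may insert fattened cutoffs $\tilde\varphi$ at no cost.

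With this, I will treat the building blocks as follows.
\begin{itemize}
\item The factors $\xi_i/|\xi|$, $|\xi-\eta|$, $|\eta|$, $\langle\eta\rangle^{-1}$, $\langle\xi-\eta\rangle^{-1}$ and $\eta_i$, $(\xi-\eta)_i$, localized by $\varphi_k(\xi)$, $\varphi_{k_1}(\xi-\eta)$ and $\varphi_{k_2}(\eta)$, have $S^\infty$ norms bounded by $1$, $2^{k_1}$, $2^{k_2}$, $2^{-k_2^+}$, $2^{-k_1^+}$, $2^{k_2}$ and $2^{k_1}$ respectively.
\item The factor $\langle\xi\rangle\varphi_k(\xi)$ has $S^\infty$ norm bounded by $2^{k^+}$.
\end{itemize}
I will write each dot product as a sum over components, e.g. $\xi\cdot\eta=\sum_i \xi_i\eta_i$, and use item (1). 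The terms $\frac{\xi\cdot\eta}{|\xi|\langle\eta\rangle}|\xi-\eta|$ become $\sum_i(\xi_i/|\xi|)(\eta_i/\langle\eta\rangle)|\xi-\eta|$, which is bounded by $2^{k_2-k_2^+}2^{k_1}\le 2^{k_1+k_2}$. The symmetric term is handled in the same way.

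The last term is $\langle\xi\rangle\frac{(\xi-\eta)\cdot\eta}{\langle\xi-\eta\rangle\langle\eta\rangle}$, and its crude bound is $2^{k^+}2^{k_1-k_1^+}2^{k_2-k_2^+}$. Since $2^{k^+}\lesssim 2^{k_{\max}^+}\lesssim 2^{k_1^+}2^{k_2^+}$, this gives $2^{k_1+k_2}$ as well.

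The main obstacle is the factor $\xi/|\xi|$ and the homogeneous moduli $|\zeta|$, which are not smooth at the origin. The localizations keep them away from the origin, but one must check that the $S^\infty$ norm is uniform in the dyadic scale. The rescaling argument handles this, since $\zeta/|\zeta|\,\varphi(\zeta)$ is a fixed Schwartz function after scaling. One further point needs care. Because $\chi$ is a product of three cutoffs, each elementary factor is multiplied by its own cutoff, and the remaining cutoffs are absorbed by item (1) using $\norm{\varphi_k(\xi)}_{S^\infty}\lesssim1$ for all three variables. Summing the finitely many terms then yields \eqref{main-multiplier-bound}.
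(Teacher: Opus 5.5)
Your proof is correct, and it reaches \eqref{main-multiplier-bound} by a different route from the paper's.

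For (1), both arguments coincide. The paper just cites the standard algebra property from Deng--Ionescu--Pausader, which is your Young's-inequality argument.

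For (2), the paper does not factorize the symbol. It bounds $\norm{\mathcal{F}^{-1}(\m\chi)}_{L^1}$ for the full two-variable symbol directly, by integrating by parts in $\xi$ and $\eta$ (up to four times, following the cited Lemma 3.3 of Deng--Ionescu--Pausader). This controls the kernel by scale-weighted $L^\infty$ norms of $\partial_\xi^l\m$ and $\partial_\eta^l\m$ on the support of $\chi$, into which derivative bounds on $\m$ are then inserted.

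You instead split each term of \eqref{def:multipliers} into one-variable factors in $\xi$, $\xi-\eta$ and $\eta$. Each factor is estimated at its own dyadic scale by rescaling, and the factors are glued together with (1).

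Your route keeps the three scales $2^k, 2^{k_1}, 2^{k_2}$ from ever interacting. You need no mixed derivatives of the full symbol and no bookkeeping of the anisotropic support of $\chi$. The bound $2^{k^+}2^{k_1^-}2^{k_2^-}\lesssim 2^{k_1+k_2}$ for the $\br{\xi}$ term, via $2^k\lesssim 2^{\max\{k_1,k_2\}}$ on the support, is the right observation.

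The paper's derivative-based criterion, in exchange, does not need any product structure. That is what one needs for symbols such as $\m\Phi^{-1}$ in the normal form of Proposition \ref{prop:X-norm-j<m}, since $\Phi^{-1}$ does not factor.

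One technical point in your write-up. A symbol depending on $\xi$ alone is not in $\mathscr{C}_c(\R^3\times\R^3)$. Its inverse Fourier transform is a measure of the form $\check a\otimes\delta_0$, not an $L^1$ function. You can fix this in either of two ways:
\begin{itemize}
\item extend $\norm{\cdot}_{S^\infty}$ to the total variation of finite measures, for which (1) still holds; or
\item group the factors so that each group is a genuine tensor product with an $L^1$ kernel. For instance, pair the $(\xi-\eta)$-factor with a fattened $\tilde\varphi_{k_2}(\eta)$ and pass to the variables $(\xi-\eta,\eta)$ by a unimodular linear change, which preserves the $S^\infty$ norm.
\end{itemize}
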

\begin{proof}
The algebra property in the first statement is a standard result, see e.g.\  
\cite[\textcolor{MidnightBlue}{Lemma 3.2}]{DIP17-Euler-Maxwell-2D}. The second statement follows as an adaptation of \cite[\textcolor{MidnightBlue}{Lemma 3.3}]{DIP17-Euler-Maxwell-2D} to the $3-$dimensional case and to the localizations $\chi$. In particular, by \eqref{def:multipliers} there holds:
\begin{align}
    \norm{\m\chi}_{S^\infty}=\norm{\mathcal{F}^{-1}(\m\chi)}_{L^1_{x,y}}\lesssim \sum_{l=0}^4\left(2^{l(k+k_1)}\norm{\partial_\xi^l\m}_{L^\infty}+2^{l(k_2+k_1)}\norm{\partial_\eta^l\m}_{L^\infty}\right)\lesssim 2^{k_1+k_2}.
\end{align}
The second inequality above follows by an integration by parts (in $\xi,\eta$) argument, see the proof of \cite[\textcolor{MidnightBlue}{Lemma 3.3}]{DIP17-Euler-Maxwell-2D}.
\end{proof}
\subsection{Integration by Parts in Frequency}\label{sec:ibp}
We present several results allowing to perform integration by parts in bilinear terms of the form \eqref{def:bilinear-expressions} and used in the proof of Proposition \ref{prop:finite-speed-of propagation}. We start by a general integration by parts Lemma, for proof see e.g.\ \cite[\textcolor{MidnightBlue}{Lemma 5.4}]{IP14-KG-3D}.
\begin{lem}\label{lemma-IBPgen}
    Let $\epsilon \in (0,1)$, $K>0$, $M \in \N \setminus \lbrace 0 \rbrace$ such that $\eps K \geq 1$ and $F,g \in C^M(\R^d)$. If $F$ is real-valued and satisfies
    \begin{align*}
        \vert \nabla F \vert \geq \mathbf{1}_{\mathrm{supp}(g)}, \quad \forall 2 \leq \vert \alpha \vert \leq M, \qquad \vert {D}^\alpha F \vert \lesssim_M \epsilon^{1-\vert \alpha \vert}.
    \end{align*}
    Then we have
    \begin{align*}
        \abs{\int_{\R^d} e^{iKF}g  } \lesssim\frac{1}{(\epsilon K)^M} \sum_{\vert \alpha \vert \leq M } \epsilon^{\vert \alpha \vert} \norm{{D}^\alpha g}_{L^1}.
    \end{align*}
\end{lem}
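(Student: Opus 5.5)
The plan is the standard non-stationary phase argument: integrate by parts $M$ times with the vector field
\[
L = \frac{\nabla F\cdot\nabla}{iK\abs{\nabla F}^2},
\]
which satisfies $L e^{iKF}=e^{iKF}$. Writing $\int e^{iKF}g=\int e^{iKF}(L^\ast)^M g$, with the transpose $L^\ast h=-\mathrm{div}\bigl(\frac{\nabla F}{iK\abs{\nabla F}^2}h\bigr)$, the lemma reduces to the pointwise bound
\begin{align}
\abs{(L^\ast)^M g}\lesssim \frac{1}{(\epsilon K)^M}\sum_{\abs{\alpha}\leq M}\epsilon^{\abs{\alpha}}\abs{D^\alpha g}\quad\text{on }\supp g,
\end{align}
after which integrating gives the $L^1$ statement. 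No boundary terms arise because $g\in C^M$ is (implicitly) compactly supported or decaying. In the applications, $g$ has compact support in $\xi$ from $\varphi_k$. Since $\abs{\nabla F}\geq 1$ only on $\supp g$, every step takes place on $\supp g$, and there the coefficient $\nabla F/\abs{\nabla F}^2$ is smooth.

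To prove the pointwise bound, I would set $a=\nabla F/\abs{\nabla F}^2$ and show by induction that derivatives of $a$ obey $\abs{D^\beta a}\lesssim \epsilon^{-\abs{\beta}}$ for $\abs{\beta}\leq M-1$. The base case is $\abs{a}\leq \abs{\nabla F}^{-1}\leq 1$. For the derivatives, I would write $a$ as a rational function of the components of $\nabla F$, with denominator a power of $\abs{\nabla F}^2$. Each derivative then falls either on a factor $\partial_i F$, producing $D^{\gamma}F$ with $\abs{\gamma}\geq 2$ and hence a factor $\epsilon^{1-\abs{\gamma}}$, or on the denominator, which produces similar terms together with an extra $\abs{\nabla F}^{-1}\leq 1$. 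The key point is homogeneity. Writing $a=\nabla F\,\abs{\nabla F}^{-2}$ and applying Fa\`a di Bruno, a typical term of $D^\beta a$ is a product of $D^{\gamma_i}F$ with $\sum(\abs{\gamma_i}-1)=\abs{\beta}$, times $\abs{\nabla F}^{-(\#\text{factors})-1}$. The hypothesis then bounds each such product by $\epsilon^{-\abs{\beta}}$, using $\abs{\nabla F}^{-1}\leq 1$ to absorb any unbalanced powers of $\abs{\nabla F}$ coming from first-derivative factors.

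Next, I would expand $(L^\ast)^M g$ by Leibniz. Up to the factor $(iK)^{-M}$, it is a sum of terms of the form
\[
D^{\beta_1}a\cdots D^{\beta_M}a\; D^{\alpha}g, \qquad \sum_i\abs{\beta_i}+\abs{\alpha}=M.
\]
By the previous step each such term is bounded by $\epsilon^{-(M-\abs{\alpha})}\abs{D^\alpha g}=\epsilon^{-M}\epsilon^{\abs{\alpha}}\abs{D^\alpha g}$. Summing these and multiplying by $K^{-M}$ yields the claim. The assumption $\epsilon K\geq 1$ is not needed for the inequality itself; it only ensures the bound is a gain.

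I expect the main obstacle to be the bookkeeping in the derivative bound for $a$. The worry is that negative powers of $\abs{\nabla F}$ could accumulate, and that first-order factors $\partial_iF$ might be large, since the hypothesis gives no upper bound on $\abs{\nabla F}$. Both are handled by the same observation: each first-order factor $\partial_i F$ in the numerator is paired with at least one factor $\abs{\nabla F}^{-1}$ from the denominator, so every ratio $\partial_iF/\abs{\nabla F}$ is bounded by $1$, and the leftover $\abs{\nabla F}^{-1}$ factors are also $\leq 1$ on $\supp g$. I would make this precise by writing $a=\omega/\abs{\nabla F}$ with $\omega=\nabla F/\abs{\nabla F}$ a unit vector, and checking inductively that each derivative of $\omega$ and of $\abs{\nabla F}^{-1}$ costs at most $\epsilon^{-1}$. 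Alternatively, I would simply cite \cite[Lemma 5.4]{IP14-KG-3D}.
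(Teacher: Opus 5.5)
Your strategy of integrating by parts $M$ times with $L=\frac{\nabla F\cdot\nabla}{iK\abs{\nabla F}^2}$ and bounding $(L^\ast)^M g$ pointwise on $\supp g$ is the standard argument behind \cite[Lemma 5.4]{IP14-KG-3D}, which the paper cites in place of a proof. However, your derivative count is off by one. In
\[
(L^\ast)^M g=(-iK)^{-M}\,\mathrm{div}\bigl(a\,\mathrm{div}\bigl(a\cdots\mathrm{div}(a\,g)\bigr)\bigr)
\]
the innermost coefficient $a$ sits under all $M$ divergences. Your Leibniz expansion therefore contains the term with $\abs{\beta_1}=M$, $\beta_2=\dots=\beta_M=0$, $\alpha=0$, that is, $a\cdots a\,(D^{M}a)\,g$. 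Since $D^\beta a$ involves $D^{\abs{\beta}+1}F$, your inductive bound $\abs{D^\beta a}\lesssim\epsilon^{-\abs{\beta}}$ is only available for $\abs{\beta}\le M-1$, and you correctly restrict it to that range. The step ``by the previous step each such term is bounded'' then fails exactly for this term. It needs control of $D^{M+1}F$, which is not assumed. Moreover, for $F\in C^M$ the coefficient $a$ is only $C^{M-1}$, so $(L^\ast)^M g$ need not even be defined.

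This cannot be repaired by more careful bookkeeping, because the statement as printed is false. Take $d=1$, $M=1$ (so there is no condition on $F''$), $\epsilon=\frac12$, and a fixed bump $g\ge 0$ with $\int g>0$. Let $F(x)=2x-K^{-1}\cos(Kx)$, so that $F'=2+\sin(Kx)\ge 1$. By the Jacobi--Anger expansion, $e^{iKF}=\sum_{n\in\Z}i^nJ_n(-1)e^{i(n+2)Kx}$ has zero mode $-J_2(1)\neq 0$. Hence $\int e^{iKF}g\to -J_2(1)\int g\neq 0$ as $K\to\infty$, while the right-hand side is $O(K^{-1})$.

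The failure persists for larger $M$. For instance, $F(x)=2x-\frac{1}{2K^2}\cos(2Kx)$ satisfies the $M=2$ hypotheses, since $\abs{F''}\le 2=\epsilon^{-1}$. Yet it gives an integral $\approx -\frac{i}{4K}\int g$, not $O(K^{-2})$.

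The correct statement, which is the form in \cite[Lemma 5.4]{IP14-KG-3D}, assumes $F,g\in C^{M+1}$ and $\abs{D^\alpha F}\lesssim\epsilon^{1-\abs{\alpha}}$ on $\supp g$ for $2\le\abs{\alpha}\le M+1$. This is all that the application in Proposition \ref{prop:finite-speed-of propagation} needs, since \eqref{lem:it-phase-bounds} controls every derivative of the phase.

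Under that hypothesis your argument goes through once the induction is run up to $\abs{\beta}\le M$. The cleanest route is Fa\`a di Bruno applied to the map $v\mapsto v/\abs{v}^2$, which is homogeneous of degree $-1$. It writes $D^\beta a$ as a sum of terms bounded by $\abs{\nabla F}^{-1-r}\prod_{i=1}^{r}\abs{D^{\delta_i}\nabla F}\lesssim\epsilon^{-\abs{\beta}}$, where $\sum_i\abs{\delta_i}=\abs{\beta}$. This also settles your worry about large first-order factors, since $\abs{\nabla F}\ge 1$ on $\supp g$.
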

We want to apply this general result on the usual bilinear terms. In particular, $g$ will be given in terms of our multipliers, profiles and localization functions and we need iterated bounds on these objects. We prove several results regarding such iterated bounds in the Lemma below and gather the final estimates in Lemma \ref{lem:ibp}.
\begin{lem}[Iterated bounds \uppercase\expandafter{\romannumeral 1}] \label{lem-iterated-bounds-phase-multiplier}
    Let $\m$ be one of the multipliers in \eqref{def:multipliers}, $\Phi$ one of the phases in \eqref{def:phases} and $\chi$ the localization as in \eqref{eqn: chi localizations}. Then the following iterative bounds hold:
    \begin{align}
        \abs{D^{\alpha}_\xi\m\chi}&\lesssim 2^{k_{\max}}2^{-\abs{\alpha}\min\{k,k_1\}},
         \label{lem:it-multiplier-bounds}\\
          \abs{D^{\alpha}_\xi\Phi\chi}&\lesssim \max\left\{2^{(1-|\alpha|)k^-}2^{-(|\alpha|+2)k^+},2^{(1-|\alpha|)k_1^-}2^{-(|\alpha|+2)k_1^+}\right\}.\label{lem:it-phase-bounds}
    \end{align}
\end{lem}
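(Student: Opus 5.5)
The plan is to prove both bounds by direct differentiation of the explicit formulas \eqref{def:multipliers} and \eqref{def:phases}. We use throughout that $\xi\mapsto|\xi|^{a}$, $\br{\xi}^{a}$, $\xi/|\xi|$ and $q(|\xi|)$ are symbols: each $\xi$-derivative costs a factor $|\xi|^{-1}$, or a better factor at high frequency. We also use that on the support of $\chi$ we have $|\xi|\sim2^k$ and $|\xi-\eta|\sim 2^{k_1}$, with $\eta$ held fixed, so $D_\xi$ acts only on the $\xi$ and $\xi-\eta$ dependence.

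\emph{Multiplier bound \eqref{lem:it-multiplier-bounds}.} Each multiplier is a finite sum of products of the form $a(\xi)\,b(\xi-\eta)\,c(\eta)$, where $c(\eta)$ plays no role under $D_\xi$. The factors are
$\xi/|\xi|$, $\br{\xi}$, $\eta/\br{\eta}$, $|\xi-\eta|$, $(\xi-\eta)/\br{\xi-\eta}$, plus the dot products, which are bilinear.

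By the Leibniz rule, $D^\alpha_\xi$ distributes derivatives between the $\xi$-factors, the $(\xi-\eta)$-factors and the localization $\varphi_k(\xi)\varphi_{k_1}(\xi-\eta)$. Each derivative on a $\xi$-factor (or on $\varphi_k$) gains at most $2^{-k}$, and each derivative on a $(\xi-\eta)$-factor (or on $\varphi_{k_1}$) gains at most $2^{-k_1}$. Hence every derivative costs at most $2^{-\min\{k,k_1\}}$.

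The undifferentiated size of each term is bounded by:
\begin{itemize}
\item $|\eta||\xi-\eta|\lesssim 2^{k_1+k_2}$ for the first group;
\item $\br{\xi}\,|\xi-\eta||\eta|/(\br{\xi-\eta}\br{\eta})\lesssim 2^{k^+}\min\{2^{k_1},1\}\min\{2^{k_2},1\}$ for the second group.
\end{itemize}
Both are $\lesssim 2^{k_{\max}}$ up to harmless factors; in fact $2^{k_1+k_2}$ is better when both are small, but $2^{k_{\max}}$ is what is claimed. One point needs care when a derivative lands on $\br{\xi}$ while $k>0$: there $|\partial^\alpha\br{\xi}|\lesssim 2^{(1-|\alpha|)k}$, which is still within the claimed bound. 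Some care is also needed to use the largest frequency to absorb the products, for instance via $|\xi-\eta|\le |\xi|+|\eta|$.

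\emph{Phase bound \eqref{lem:it-phase-bounds}.} Only $\pm\Lambda(\xi)+\mu\Lambda(\xi-\eta)$ depends on $\xi$, so $D^\alpha_\xi\Phi=\pm D^\alpha\Lambda(\xi)+\mu(D^\alpha\Lambda)(\xi-\eta)$, and it suffices to prove
\[
|D^\alpha\Lambda(z)|\lesssim |z|^{1-|\alpha|}\br{z}^{-|\alpha|-2}\quad\text{for } |\alpha|\ge1,
\]
evaluated at $|z|\sim2^k$ and at $|z|\sim2^{k_1}$. Note that $2^{(1-|\alpha|)k^-}2^{-(|\alpha|+2)k^+}$ matches this for $k\le0$, and for $k>0$ it is weaker than $2^{(1-|\alpha|)k}2^{-(|\alpha|+2)k}$.

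To prove the symbol bound, write $\Lambda(z)=q(|z|)$. At high frequency, expand
\[
q(r)=1-\tfrac{1}{2}r^{-2}+O(r^{-4}),
\]
which is a classical symbol of order $-2$ after removing the constant. Each derivative therefore costs $r^{-1}$ beyond an initial $r^{-3}$, giving $|D^\alpha\Lambda|\lesssim r^{-2-|\alpha|}$. At low frequency, $q(r)=r\,g(r^2)$ with $g$ smooth, so $D^\alpha\Lambda$ is homogeneous-like of degree $1-|\alpha|$. Combining the two regimes gives the stated formula. Derivatives of $\chi$ only introduce additional factors of the same type, so they are handled as in the multiplier bound.

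\emph{Main obstacle.} The only real difficulty is bookkeeping in the Leibniz expansion for the multiplier. In particular, one must check that when $k\ll k_1$ or $k_1\ll k$ the overall prefactor is still controlled by $2^{k_{\max}}$, including the case where $\br{\xi}$ grows. I would handle this by bounding each factor by its symbol-type estimate and taking the worst case over the two groups.
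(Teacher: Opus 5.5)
Your strategy matches the paper's: Leibniz on the explicit factors, with each $\xi$-derivative costing at most $2^{-\min\{k,k_1\}}$ for the multiplier, and reduction of the phase to $D^\alpha\Lambda$ at $\xi$ and at $\xi-\eta$. For the high-frequency phase bound your route is actually the more careful one. The paper bounds $D^\alpha_\xi(|\xi|\br{\xi}^{-1})$ by Leibniz, which termwise only gives $2^{-|\alpha|k^+}$. The extra $2^{-2k^+}$ comes exactly from the cancellation you point to: $\Lambda-1$ is a symbol of order $-2$, or equivalently one can differentiate $\nabla\Lambda(z)=\frac{z}{|z|}\br{z}^{-3}$.

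However, the bound you reduce to, $|D^\alpha\Lambda(z)|\lesssim|z|^{1-|\alpha|}\br{z}^{-|\alpha|-2}$, is false for $|\alpha|\ge 2$ and $|z|\ge 1$. In the radial direction $q''(r)=-3r\br{r}^{-5}\sim -3r^{-4}$, while your display predicts $r^{-5}$. So the lemma's factor $2^{-(|\alpha|+2)k^+}$ is sharp, not ``weaker'' than the truth, and your sentence that combining the two regimes gives the displayed formula is wrong. What your two regime computations do prove is $|D^\alpha\Lambda(z)|\lesssim |z|^{1-|\alpha|}\br{z}^{-3}$ for $|\alpha|\ge1$. At $|z|\sim 2^k$ this is exactly $2^{(1-|\alpha|)k^-}2^{-(|\alpha|+2)k^+}$, so that is the statement to aim for.

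Two further statements need correcting.

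\emph{Derivatives of $\chi$ in the phase bound.} These cannot be ``handled as in the multiplier bound.'' Every phase has size $\sim 1$ at high frequency (e.g.\ $\Phi^{++}_-=\Lambda(\xi)+\Lambda(\xi-\eta)+\Lambda(\eta)\approx 3$) and contains the $\xi$-independent term $\nu\Lambda(\eta)$. Hence $\Phi\, D^\alpha_\xi\chi$ can be of size $2^{-|\alpha|\min\{k,k_1\}}$, far above $2^{-(|\alpha|+2)\min\{k,k_1\}}$ when $k,k_1\gg 1$; for $\alpha=0$ the bound fails outright. The phase estimate must be read, as in the paper's proof and in its use through Lemma \ref{lemma-IBPgen} with $2\le|\alpha|\le M$, as a bound on $D^\alpha_\xi\Phi$ on $\supp\chi$ for $|\alpha|\ge 1$. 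Only for the multiplier is differentiating $\chi$ harmless, because there $|\m\chi|\lesssim 2^{k_{\max}}$ already matches the $\alpha=0$ case.

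\emph{Size of the first group in the multiplier.} The bound $|\eta||\xi-\eta|\lesssim 2^{k_1+k_2}$ is not $\lesssim 2^{k_{\max}}$ when $k_1,k_2\gg1$, so it is not a harmless factor. Keep the brackets. They give $2^{k_1+k_2^-}$ and $2^{k_1^-+k_2}$ for the two terms of that group, which together with your $2^{k^++k_1^-+k_2^-}$ for the last term are all $\le 2^{k_{\max}}$.

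With these fixes your argument proves the lemma.
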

\begin{proof}
 We prove the iterative bound on the multipliers. To begin with, record the useful formula:
\begin{align}
    D^\alpha_\xi \langle\xi\rangle^{-1}=\sum_{l=1}^{\abs{\alpha}}\langle\xi\rangle^{-(2l+1)} P_{l,\alpha}(\xi),
\end{align}
where $P_{l,\alpha}(\xi)$ is a polynomial in $\xi_{i}$, $i=1,2,3$, of degree $2l-\abs{\alpha}$. This is easily proven by induction. The first order derivatives clearly satisfy this since $\partial_{\xi_i}\langle\xi\rangle^{-1}={-\xi_i}\langle \xi \rangle^{-3}$.
We prove the induction step: assuming the formula above holds, we compute for the $l+1$ order derivative:
\begin{align}\label{eqn:inductive-formula-bracket-inverse}
    \partial_{\xi_i}\sum_{l=1}^{\abs{\alpha}}\langle\xi\rangle^{-(2l+1)} P_{l,\alpha}(\xi)&=\sum_{l=1}^{\abs{\alpha}}-(2l+1)\langle\xi\rangle^{-(2l+3)}\xi_iP_{l,\alpha}(\xi)+ \langle\xi\rangle^{-(2l+1)}\partial_iP_{l,\alpha}(\xi)\\
    &=\sum_{l=1}^{\abs{\alpha}+1}\langle\xi\rangle^{-(2l+1)} P_{l,\alpha+1}(\xi).
\end{align}
The last line follows since $\xi_iP_{l,\alpha}(\xi)$ is a polynomial of degree at most $2l-|\alpha|+1$ while in the second term, the polynomial is of degree $2l-|\alpha|-1$. Hence, when shifting the indices $l\rightarrow l+1$, we obtain $P_{l,|\alpha|+1}$ is of order $2l-(|\alpha|+1)$. In particular, this implies the following bound on the support of $\chi$:
\begin{align}\label{eqn:iterative-bound-D_eta-bracket}
    \abs{D_\xi^\alpha\langle \xi\rangle^{-1}}\lesssim_{\alpha} 2^{-(\abs{\alpha}+1)k^+}.
\end{align}
We consider each term in the multipliers \eqref{def:multipliers}. To begin with, we compute
\begin{align}
    \abs{D^{\alpha}_\xi\left( \frac{\xi\cdot\eta}{\abs{\xi}\langle\eta\rangle}\abs{\xi-\eta}\right)}=\frac{\abs{\eta}}{\br{\eta}}\abs{D^\alpha_\xi\left( \frac{\xi\cdot\eta}{\abs{\xi}\abs{\eta}}\abs{\xi-\eta}\right)}
    \lesssim \sum_{|\beta|\leq |\alpha|}\abs{D^\beta_\xi\left( \frac{\xi\cdot\eta}{\abs{\xi}\abs{\eta}}\right)}\abs{D^{\alpha-\beta}_\xi \abs{\xi-\eta}} \lesssim 2^{(1-\abs{\alpha})\min\{k,k_1\}}.
\end{align}
Moreover, using \eqref{eqn:iterative-bound-D_eta-bracket}, we have
\begin{align}
     \abs{D^{\alpha}_\xi\left(\frac{\xi\cdot(\xi-\eta)}{|\xi|\langle\xi-\eta\rangle}\abs{\eta} \right)}&\lesssim \abs{\eta}\sum_{\abs{\beta}\leq \abs{\alpha}}\abs{D^{\beta}\left(\frac{\xi\cdot(\xi-\eta)}{|\xi|\abs{\xi-\eta}}\right)}\sum_{\abs{\gamma}\leq \abs{\alpha-\beta}}\abs{D^\gamma \abs{\xi-\eta}}\abs{D^{\alpha-\beta-\gamma}\br{\xi-\eta}^{-1}}\\
     &\lesssim \abs{\eta}2^{-\abs{\alpha}\min\{k,k_1\}} \\
     &\lesssim 2^{k_{\max}}2^{-\abs{\alpha}\min\{k,k_1\}}.
\end{align}
Finally using \eqref{eqn:iterative-bound-D_eta-bracket}, we obtain
\begin{align}
    \abs{D_\xi^{\alpha}\left(\langle\xi\rangle \frac{(\xi-\eta)\cdot\eta}{\langle{\xi-\eta}\rangle\langle{\eta}\rangle}\right)}&\lesssim \sum_{\abs{\beta}\leq \abs{\alpha}} \abs{D^\beta\br{\xi}\sum_{\abs{\gamma}\leq \abs{\alpha-\beta}}D^{\gamma} \left(\frac{(\xi-\eta)\cdot\eta}{\abs{\xi-\eta}\abs{\eta}}\right)\sum_{\abs{\delta}\leq\abs{\alpha-\beta-\gamma}}\br{\xi-\eta}^{-1}} 
    \\
    & \lesssim 2^{(1-\abs{\alpha}) \min\{k,k_1\}}.
\end{align}
Altogether we get 
\begin{align}
   \abs{ D^\alpha_\xi \m\chi}\lesssim 2^{k_{\max}}2^{-\abs{\alpha}\min\{k_1,k_2\}}.
\end{align}
 \par We prove the iterated bound on the phase in a similar way. We compute 
    \begin{align}
        \nabla_\xi\Phi(\xi,\eta)=\pm \nabla_\xi\Lambda(\xi)\pm \nabla_\xi\Lambda(\xi-\eta)=\pm \frac{\xi}{\abs{\xi}\langle\xi\rangle^{3}}\pm \frac{\xi-\eta}{\abs{\xi-\eta}\langle\xi-\eta\rangle^3}.
    \end{align}
    Moreover, using \eqref{eqn:iterative-bound-D_eta-bracket} we have the bounds
    \begin{align}
        \abs{D^{\alpha}_\xi|\xi|\langle\xi\rangle^{-1}}\lesssim\abs{\sum_{\abs{\beta}\leq \abs{\alpha}}D^{\beta}_\xi\abs{\xi}D^{{\alpha-\beta}}_\xi\br{\xi}^{-1}}\lesssim \sum_{\abs{\beta}\leq \abs{\alpha}}\abs{\xi}^{-1-\abs{\beta}}2^{-(\abs{\alpha}-\abs{\beta}+1)k^+}\lesssim 2^{(1-|\alpha|)k^-}2^{-(|\alpha|+2)k^+}.
    \end{align}
    This, together with the corresponding bound on $D^{\alpha}_\xi\Lambda(\xi-\eta)$ yields the desired claim for the phase. 
\end{proof}

Finally, we apply these results to the relevant objects when integrating by parts using Lemma \ref{lemma-IBPgen}.
\begin{lem}[Iterated bounds  \uppercase\expandafter{\romannumeral 2}] \label{lem:ibp}
Let  $\alpha\in \N^3_0$  be a multi-index of order $\abs{\alpha}$ and $f_i=Q_{j_i,k_i}F_i$ space-and-frequency localized profiles. Let $g(\xi,\eta)=\m(\xi,\eta)\varphi_k(\xi)\widehat{f_1}(\xi-\eta).$ On the support of $\chi$, there holds:
    \begin{align}  \label{lem:ibp-g2}
        \norm{D_{\xi}^\alpha g}_{L^1}\lesssim 2^{k_{\max}^+}(2^{-\min\{k,k_1\}}+2^{j_1})^{\abs{\alpha}}\norm{\varphi_k}_{L^2}\norm{f_1}_{L^2}.
    \end{align}

\end{lem}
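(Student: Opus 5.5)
The plan is to expand $D^\alpha_\xi g$ with the Leibniz rule into products $D^{\beta}_\xi \m \, D^{\gamma}_\xi \varphi_k(\xi)\, D^{\alpha-\beta-\gamma}_\xi \widehat{f_1}(\xi-\eta)$ and bound each factor on the support of $\chi$. Each factor then costs either a loss $2^{-\min\{k,k_1\}}$ per derivative (from the multiplier and the cutoff) or a loss $2^{j_1}$ per derivative (from the profile), giving the claimed $(2^{-\min\{k,k_1\}}+2^{j_1})^{\abs{\alpha}}$.

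\textbf{Multiplier and cutoff.} For the multiplier I will use the iterated bound \eqref{lem:it-multiplier-bounds} of Lemma \ref{lem-iterated-bounds-phase-multiplier}, $\abs{D^\beta_\xi \m\chi}\lesssim 2^{k_{\max}}2^{-\abs{\beta}\min\{k,k_1\}}$. The prefactor $2^{k_{\max}}$ is at most $2^{k_{\max}^+}$, since $2^{k_{\max}}\le 2^{k_{\max}^+}$ trivially. For the cutoff, $\abs{D^\gamma\varphi_k}\lesssim 2^{-\abs{\gamma}k}\le 2^{-\abs{\gamma}\min\{k,k_1\}}$.

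\textbf{Profile.} For the profile, $\widehat{f_1}$ is the Fourier transform of $Q_{j_1k_1}F_1$, which is spatially localized at $\abs{x}\lesssim 2^{j_1}$. More precisely, $f_1=P_{[k_1-2,k_1+2]}(\varphi^{(k_1)}_{j_1}P_{k_1}F_1)$, so $\widehat{f_1}=\varphi_{[k_1-2,k_1+2]}\cdot\widehat{h}$ with $h=\varphi^{(k_1)}_{j_1}P_{k_1}F_1$ supported in $\abs{x}\lesssim 2^{j_1}$. Derivatives falling on $\widehat{h}$ give $D^\sigma\widehat{h}=\widehat{(-ix)^\sigma h}$, with $\norm{x^\sigma h}_{L^2}\lesssim 2^{\abs{\sigma}j_1}\norm{h}_{L^2}$. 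Derivatives on the frequency cutoff give $2^{-\abs{\sigma}k_1}\le 2^{\abs{\sigma}j_1}$, using the uncertainty constraint $j_1+k_1\ge0$. Hence, up to a finite sum of terms of the same shape,
\begin{align}
\norm{D^\sigma\widehat{f_1}}_{L^2}\lesssim 2^{\abs{\sigma}j_1}\norm{h}_{L^2}.
\end{align}
Identifying $\norm{h}_{L^2}$ with $\norm{f_1}_{L^2}$ is a slight abuse of notation. It is harmless because the localization estimate \eqref{eqn:tech-localization} gives the same $X$-norm control for $\norm{h}_{L^2}$ as for $\norm{Q_{j_1k_1}F_1}_{L^2}$, which is what is used in \eqref{fin-speed-ibp}.

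\textbf{Combining.} Taking $L^1_\xi$ at fixed $\eta$, I use Cauchy--Schwarz, placing $\varphi_k$ (more precisely a fattened cutoff $\tilde\varphi_k$ absorbing its derivatives) in $L^2$ and $D^\sigma\widehat{f_1}(\cdot-\eta)$ in $L^2$, with the multiplier factor in $L^\infty$. Summing over the Leibniz terms yields
\begin{align}
\norm{D^\alpha_\xi g}_{L^1}\lesssim \sum_{\abs{\beta}+\abs{\gamma}+\abs{\sigma}=\abs{\alpha}} 2^{k_{\max}^+}2^{-(\abs{\beta}+\abs{\gamma})\min\{k,k_1\}}2^{\abs{\sigma}j_1}\norm{\tilde\varphi_k}_{L^2}\norm{f_1}_{L^2},
\end{align}
and each term is bounded by $(2^{-\min\{k,k_1\}}+2^{j_1})^{\abs{\alpha}}$.

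\textbf{Main obstacle.} The main point needing care is the profile step: justifying that $\xi$-derivatives of $\widehat{f_1}$ cost only $2^{j_1}$ in $L^2$, given the outer projection $P_{[k_1-2,k_1+2]}$ and the special cutoffs $\varphi^{(k)}_{j}$ at $j=0$ or $j=-k$. Both are handled by the constraint $j_1+k_1\ge0$ and the compact spatial support of $\varphi^{(k_1)}_{j_1}$.
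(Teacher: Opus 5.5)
Your proposal is correct and follows the paper's proof essentially step for step: the Leibniz rule, the iterated multiplier bound \eqref{lem:it-multiplier-bounds}, the bound $\nabla_\xi\varphi_k\sim 2^{-k}\tilde\varphi_k$, Cauchy--Schwarz in $\xi$, and Plancherel to turn $\xi$-derivatives of $\widehat{f_1}$ into powers of $2^{j_1}$. Your profile step is more careful than the paper's, which simply asserts $\norm{\abs{x}f_1}_{L^2}\sim 2^{j_1}\norm{f_1}_{L^2}$ and does not account for the outer projection $P_{[k_1-2,k_1+2]}$. Your version, with $\norm{\varphi^{(k_1)}_{j_1}P_{k_1}F_1}_{L^2}$ on the right, is the form that is actually justified, and it suffices in \eqref{fin-speed-ibp}, where only a bound $\lesssim\eps$ is used.
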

\begin{proof}
    The proof follows by the product rule. We compute using Lemma \ref{lem:multiplier-bounds} and the fact that ${\nabla_\xi\varphi_k}\sim 2^{-k}{\Tilde{\varphi}_k}$, where $\Tilde{\varphi}$ is a a bump function with similar support properties as $\varphi$:
    \begin{align}
         \norm{D^\alpha_\xi g}_{L^1}&=\int\abs{D^{\alpha}_\xi\left(\m(\xi,\eta)\varphi_k(\xi)\widehat{f_1}(\xi-\eta)\right)}d\xi\\
         &\lesssim \sum_{\beta+\gamma=\alpha}\int \abs{D^{\beta}_\xi\m(\xi,\eta)D^\gamma_\xi\varphi_k(\xi)D^{\alpha-\beta-\gamma}_\xi\widehat{f_1}(\xi-\eta)}d\xi\\
         &\lesssim \sum_{\beta+\gamma=\alpha}\sup_{\xi,\eta\in\supp}\abs{D^\beta_\xi\m(\xi,\eta)\chi}2^{-\abs{\gamma} k}\int \abs{\tilde{\varphi}_k(\xi)D^{\alpha-\beta-\gamma}_\xi\widehat{f_1}(\xi-\eta)}d\xi\\
         &\lesssim 2^{k_{\max}}\sum_{\beta+\gamma=\alpha}2^{-\abs{\beta}\min\{k,k_1\}}2^{-\abs{\gamma} k}\norm{\tilde{\varphi}_k}_{L^2}\norm{D^{\alpha-\beta-\gamma}_\xi\widehat{f_1}}_{L^2}\\
         &\lesssim 2^{k_{\max}}\sum_{\beta+\gamma=\alpha}2^{-(\abs{\beta}+\abs{\gamma})\min\{k,k_1\}}2^{\abs{\alpha-\beta-\gamma}j_1}\norm{{\varphi}_k}_{L^2}\norm{f_1}_{L^2}\\
         &\lesssim 2^{k_{\max}}(2^{-\min\{k,k_1\}}+2^{j_1})^{\abs{\alpha}}\norm{{\varphi}_k}_{L^2}\norm{f_1}_{L^2}.
    \end{align}
    Note that we have used $\norm{\abs{D_\xi}\widehat{f_1}}_{L^2}\sim\norm{\abs{x}f_1}_{L^2}\sim2^{j_1}\norm{f_1}_{L^2}$ by Plancherel since the profiles are localized in space. This concludes the proof. 
\end{proof}

\appendix 
\section{Nonlinear Poisson Equation}\label{appendix}
In this appendix, we explain how to treat the full model for ions, that is without assuming a linearized law for the density of electrons. We restart from the perturbed system
\begin{equation}\label{eq:perturbedAppendix}
\left\{
      \begin{aligned}
      \partial_t \rho + \mathrm{div}(v)+\mathrm{div}(\rho v) &=0, \\
\partial_t v +(v \cdot \nabla )v &=-\nabla \Phi,  \\
e^{\Phi}-\Delta \Phi&= 1+\rho,
\end{aligned}
    \right. 
\end{equation} 
where we now take into account the full semilinear coupled elliptic equation on the potential $\Phi$.

 In treating the problem \eqref{eq:perturbedAppendix}, there are additional quadratic and cubic contributions that arise compared to the model \eqref{eq:EP-perturbed} considered throughout the paper. As explained is the introduction, the analysis of the quadratic interactions is at the core of the proof. The new quadratic contribution shares the same structural features and thus fits in the dispersive analysis framework of the paper, see \eqref{eq:Duhamel:APPENDIX}, \eqref{def:multipliers-n}. Our goal in this section is treating the cubic contributions and presenting how to adapt the main bootstrap argument from Proposition \ref{prop:bootstrap}.
\par One of the essential difference between \eqref{eq:perturbedAppendix} and the screened Laplace case is that the equation on the potential $\Phi$ in \eqref{eq:perturbedAppendix} is not explicitly solvable in terms of $\rho$, because we cannot write $\Phi=(1-\Delta)^{-1}\rho$. To come back to this case, and motivated by the fact that $e^{\Phi} \sim 1+ \Phi$ for small $\Phi$, we follow the strategy of Guo and Pausader in \cite{Guo-Pausader-ions3d} and write the equation on $\Phi$ as
\begin{align}\label{eq:1st-rewrite-expo}
    \Phi-\Delta\Phi=\rho-\frac{\Phi^2}{2}- E_3(\Phi), \ \ E_3(z)\vcentcolon= e^z-1-z- \frac{z^2}{2},
\end{align}
that is
\begin{align*}
    \Phi=(1-\Delta)^{-1} \rho- \frac{1}{2}(1-\Delta)^{-1} [\Phi^2]-(1-\Delta)^{-1} [E_3(\Phi)].
\end{align*}
Expanding to the next order in the second term, we introduce a remainder term $R(\rho)$ that is implicitly defined by the following identity:
\begin{align}\label{eq:expandPHIbyR}
    \Phi=(1-\Delta)^{-1} \rho-\frac{1}{2}(1-\Delta)^{-1} \Big[[(1-\Delta)^{-1} \rho]^2 \Big]+R(\rho).
\end{align}

We observe that this expansion carries out the linear contribution (similar as the one we had before in the screened Laplace), a new quadratic contribution, and hence a remainder that is expected to be of cubic nature. The main difficulty, similar to the one in \cite{Guo-Pausader-ions3d}, will be that the remainder $R(\rho)$ is only known \textit{via} the original equation \eqref{eq:1st-rewrite-expo}.
\par With the dispersive unknowns $Z_\pm$ as in \eqref{eq:def-Unknown}, we derive the new dispersive formulation in the context of \eqref{eq:perturbedAppendix}. Relying on \eqref{eq:expandPHIbyR}, and performing the same computations as in Proposition \ref{prop-Duhamel}, we obtain the new system.
\begin{prop}[Duhamel formulation]
    The system \eqref{eq:perturbedAppendix} is equivalent to
    \begin{equation}\label{eq:new-dispers-form-APPENDIX}
\begin{split}
       \partial_tZ_\pm+\frac{i}{4}\abs{\nabla}^{-1}\mathrm{div}\left(\abs{\nabla}(Z_++Z_-)\nabla\langle\nabla\rangle^{-1}(Z_+-Z_-)\right)&\\
       \pm \frac{i}{8}\langle \nabla\rangle\abs{\nabla\langle\nabla\rangle^{-1}(Z_+-Z_-)}^2 \pm \frac{i}{8}\langle \nabla \rangle^{-1} \Big[ \vert \nabla \vert \langle \nabla \rangle^{-2} (Z_+ + Z_-)\Big]^2&\\
       \mp i \langle \nabla \rangle R\left( \frac{1}{2}\vert \nabla \vert (Z_+ + Z_-)\right)
       &=\pm i\Lambda Z_\pm.
   \end{split} 
\end{equation}
Here $\Lambda$ is the dispersion relation \eqref{def:dispersionrel} and $R$ is defined implicitly by \eqref{eq:expandPHIbyR}. Moreover, for the profiles $\mathcal{Z}_\pm$, there holds
\begin{align}\label{eq:Duhamel:APPENDIX}
    \zz_{\pm}(t)=\zz_{\pm}(0)+\sum_{\mu\in\{+,-\}}\mathcal{B}_{\m_\pm^{\mu\mu}+\mathfrak{n}_\pm^{\mu \mu}}(\zz_\mu,\zz_\mu)(t)+\mathcal{B}_{\m_\pm^{+-}+\mathfrak{n}_\pm^{+-}}(\zz_+,\zz_-)(t)\pm \int_0^t\mathcal{C}^R_\pm(s) \, \mathrm{d}s,
\end{align}
where 
\begin{align}\label{def:multipliers-n}
    \mathfrak{n}_\pm^{\mu \mu}(\xi , \eta)\vcentcolon=\pm \frac{i}{8} \frac{1}{\br{\xi}} \frac{\abs{\xi-\eta}}{\br{\xi-\eta}^2}\frac{\abs{\eta}}{\br{\eta}^2}, \qquad
    \mathfrak{n}_\pm^{+-}(\xi, \eta)\vcentcolon=\pm \frac{i}{4} \frac{1}{\br{\xi}} \frac{\abs{\xi-\eta}}{\br{\xi-\eta}^2}\frac{\abs{\eta}}{\br{\eta}^2}.
\end{align}
and
\begin{align}\label{eq:remainder-DuhamelNr-APPENDIX}
    \mathcal{C}^R_\pm\vcentcolon= i e^{\mp it \Lambda} \langle \nabla \rangle R\left( \frac{1}{2}\vert \nabla \vert (Z_+ + Z_-)\right).
\end{align}
\end{prop}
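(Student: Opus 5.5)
The plan is to redo the computation in the proof of Proposition \ref{prop-Duhamel}, tracking only the extra terms that come from replacing $\Phi=(1-\Delta)^{-1}\rho$ by the expansion \eqref{eq:expandPHIbyR}. The continuity equation for $\rho$ is unchanged, so the equation for $\abs{\nabla}^{-1}\rho$ is exactly as before. All new contributions therefore enter through the potential equation for $\psi$. Using irrotationality, $v=\nabla\psi$, and the identity $(v\cdot\nabla)v=\frac12\nabla|\nabla\psi|^2$, this equation reads
\begin{equation*}
\partial_t\psi+\tfrac12|\nabla\psi|^2=-\Phi
=-\br{\nabla}^{-2}\rho+\tfrac12\br{\nabla}^{-2}\big[(\br{\nabla}^{-2}\rho)^2\big]-R(\rho).
\end{equation*}

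First I apply $i\br{\nabla}$ to this equation and insert $\rho=\frac12\abs{\nabla}(Z_++Z_-)$ from \eqref{eqn:rho,psi-in-Z}. The linear and $|\nabla\psi|^2$ terms reproduce \eqref{eq:Syst-dispersive}. The new quadratic term becomes
\begin{equation*}
\tfrac{i}{2}\br{\nabla}^{-1}\big[(\tfrac12\abs{\nabla}\br{\nabla}^{-2}(Z_++Z_-))^2\big]=\tfrac{i}{8}\br{\nabla}^{-1}\big[\abs{\nabla}\br{\nabla}^{-2}(Z_++Z_-)\big]^2,
\end{equation*}
and the remainder becomes $-i\br{\nabla}R(\frac12\abs{\nabla}(Z_++Z_-))$. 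Since $Z_\pm=\abs{\nabla}^{-1}\rho\mp i\br{\nabla}\psi$, these terms enter $\partial_tZ_\pm$ with sign $\mp$ relative to $\partial_t(i\br{\nabla}\psi)$. Moving them to the left-hand side gives exactly the signs $\pm\frac{i}{8}$ and $\mp i\br{\nabla}R$ in \eqref{eq:new-dispers-form-APPENDIX}. The one real bookkeeping point is getting these signs consistent with the $\pm\frac i8$ in front of $|\nabla\br{\nabla}^{-1}(Z_+-Z_-)|^2$ in \eqref{eq:Syst-dispersive}.

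Equivalence with \eqref{eq:perturbedAppendix} holds in both directions because $(\rho,\psi)$ are recovered from $Z_\pm$ by \eqref{eqn:rho,psi-in-Z}, and irrotationality is propagated. The well-posedness of $R$ as an implicit function needs some care. For small $\rho$, the map $\Phi\mapsto(1-\Delta)^{-1}(\rho-\Phi^2/2-E_3(\Phi))$ is a contraction in, say, $H^{N_0}$, so $\Phi$, and hence $R(\rho)$, is a well-defined function of $\rho$.

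Next I conjugate by $e^{\mp it\Lambda}$ to get the profile equation, as in \eqref{eqn:profiles-eqn-rough}. In Fourier variables the new quadratic term equals
\begin{equation*}
\pm\tfrac{i}{8}\br{\xi}^{-1}\int \frac{\abs{\xi-\eta}}{\br{\xi-\eta}^2}\frac{\abs{\eta}}{\br{\eta}^2}\,\big(\widehat{Z_+}+\widehat{Z_-}\big)(\xi-\eta)\big(\widehat{Z_+}+\widehat{Z_-}\big)(\eta)\,d\eta.
\end{equation*}
Writing $\widehat{Z_\mu}=e^{\mu it\Lambda}\widehat{\zz_\mu}$ produces the phases $\Phi^{\mu\nu}_\pm$ of \eqref{def:phases}. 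The symbol is symmetric under $\eta\leftrightarrow\xi-\eta$, so the $(+,-)$ and $(-,+)$ interactions combine by the same change of variables used in Proposition \ref{prop-Duhamel}, using $\Phi^{+-}_\pm(\xi,\eta)=\Phi^{-+}_\pm(\xi,\xi-\eta)$. This doubles the coefficient and gives $\mathfrak{n}^{+-}_\pm$ with the factor $\frac i4$, while the diagonal terms give $\mathfrak{n}^{\mu\mu}_\pm$ with the factor $\frac i8$. The overall sign of $\mathfrak{n}$ comes from moving the term to the right-hand side when writing $\partial_t\zz_\pm$, and I would fix it consistently with the convention of \eqref{def:multipliers}.

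Finally, I integrate in time from $0$ to $t$. This yields \eqref{eq:Duhamel:APPENDIX}, with the remainder collected as $\pm\int_0^t\mathcal{C}^R_\pm$ and $\mathcal{C}^R_\pm$ as in \eqref{eq:remainder-DuhamelNr-APPENDIX}. The main obstacle is not conceptual but the sign and normalization bookkeeping: the $\frac18$ versus $\frac14$ factors, the $\pm/\mp$ conventions, and the convention that the bilinear terms appear with a plus sign in \eqref{eqn:Duhamel-profiles}.
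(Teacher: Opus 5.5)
Your proposal is correct and is essentially the paper's argument, which consists only of redoing the computation of Proposition \ref{prop-Duhamel} with $\Phi$ replaced by the expansion \eqref{eq:expandPHIbyR}; your derivation of the new $\pm\frac{i}{8}\br{\nabla}^{-1}[\ldots]^2$ and $\mp i\br{\nabla}R$ terms, and of the $\frac18$ versus $\frac14$ factors, is right. The one step you left open should be carried out rather than matched to the statement: moving the new term to the right-hand side of the equation for $\partial_t\zz_\pm$ gives $\mathfrak{n}^{\mu\mu}_\pm=\mp\frac{i}{8}\frac{1}{\br{\xi}}\frac{\abs{\xi-\eta}}{\br{\xi-\eta}^2}\frac{\abs{\eta}}{\br{\eta}^2}$ and $\mathfrak{n}^{+-}_\pm=\mp\frac{i}{4}(\cdots)$ under the convention $\zz(t)=\zz(0)+\mathcal{B}$ of \eqref{def:bilinear-expressions}, i.e.\ the opposite sign to \eqref{def:multipliers-n}, which is a harmless typo in the statement since only $\norm{\mathfrak{n}\chi}_{S^\infty}$ is used later (Remark \ref{rem:quadratic-interactions}).
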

\begin{rem}[Quadratic interactions] \label{rem:quadratic-interactions}
   The additional multipliers $\mathfrak{n}_{\pm}$ satisfy the same bounds as $\m_\pm$ in $S^\infty$ (or $L^\infty$). Indeed, by definition \eqref{def:multipliers-n} there holds $\norm{\mathfrak{n}_\pm\chi}_{S^\infty}\lesssim 2^{k_1+k_2}$, c.f.\ Lemma \ref{lem:multiplier-bounds}. Thus,  the main dispersive analysis of Section \ref{sec:X-norm} follows exactly in the same way.
\end{rem}
\par In the rest of this section, we focus on the new cubic contribution arising from the remainder term $R(\rho)$. We aim at deriving a closed equation on $R(\rho)$ in terms of $\rho$ only, and that is amenable to derive suitable estimates. To this end, we follow the approach of Guo and Pausader in \cite{Guo-Pausader-ions3d}. For the sake of readability, let us denote the screened inverse Laplace operator as $$L\vcentcolon=(1-\Delta)^{-1}.$$From the original identity \eqref{eq:1st-rewrite-expo} 
and plugging the expression of $\Phi$ in terms of $\rho$ and $R(\rho)$ into it thanks to \eqref{eq:expandPHIbyR}, we get
\begin{align*}
    \rho&= (1-\Delta)\Big[ L \rho-\frac{1}{2}L \Big((L \rho)^2 \Big)+R(\rho)\Big]+ \frac{1}{2}\left[L\rho-\frac{1}{2}L \Big((L \rho)^2 \Big)+R(\rho) \right]^2  + E_3 \Big( L \rho-\frac{1}{2}L \Big((L \rho)^2 \Big)+R(\rho)\Big).
\end{align*}
 The equation on $R(\rho)$ can now be rewritten as the following closed semilinear elliptic equation with a source depending on $\rho$:
\begin{align}\label{eq:remainderR}
    (1-\Delta)R={-}F(\rho)R-\frac{1}{2} R^2-E_3\big(F(\rho)+R \big)+S(\rho),
\end{align}
where 
\begin{align*}
S(\rho)\vcentcolon= \frac{1}{2}(L\rho)L\Big((L\rho)^2 \Big)-\frac{1}{8}\left( L\Big((L \rho)^2 \Big)\right)^2, \qquad F(\rho) \vcentcolon= L\rho -\frac{1}{2}L\Big((L\rho)^2 \Big).
\end{align*}
In order to close the proof via a bootstrap as in Proposition \ref{prop:bootstrap} we need energy estimates for \eqref{eq:new-dispers-form-APPENDIX} and $X$-norm bounds for the last term in \eqref{eq:Duhamel:APPENDIX} and to that end we use \eqref{eq:remainderR}.
\begin{rem} In order to have a local-well-posedness theory for the unknowns $Z_\pm$, we infer it from the local-well-posedness of $(\rho,v)$ in conjunction with $\psi \in L^2$.  Similarly to Remark \ref{rem-LWP-disp}, in this setting if $\psi_0\in L^2$, then  $\Vert \psi \Vert_{L^\infty(0,T; L^2)}< \infty$ for $T$ such that $(\rho,v)$ exist.
   In order to see this from the equation on $\psi$, we obtain $L^2$ energy estimates 
\begin{align*}
    \frac{\mathrm{d}}{\mathrm{d}t} \norm{\psi}_{L^2} \lesssim \norm{v}_{H^{N_0}} \norm{\psi}_{L^2}+\norm{\Phi}_{L^2}.
\end{align*}
To bound the last term, we multiply the above elliptic equation by $\Phi$ and get
\begin{align*}
    \int_{\R^3} (e^{\Phi}-1) \Phi+\int_{\R^3} \vert \nabla \Phi \vert^2 \leq \norm{\rho}_{L^2}\norm{\Phi}_{L^2}.
    \end{align*}
    Now writing $e^{\Phi(x)}-1=a(x) \Phi(x)$ where $a(x)=\int_0^1 e^{\theta \Phi(x)} \, \mathrm{d}\theta$, we see that $a(x) \geq e^{-\Vert \Phi \Vert_{L^\infty}}$ and we end up after simplification with
    $\norm{\Phi}_{L^2} \leq e^{\Vert \Phi \Vert_{L^\infty}} \norm{\rho}_{L^2}$.
    From \cite{LannesLinaresSaut} 
    we have that $\Vert \Phi \Vert_{L^\infty} \leq  \log(1+\norm{\rho}_{L^\infty})$. This entails $\norm{\Phi}_{L^2} \leq (1+\norm{\rho}_{L^\infty}) \norm{\rho}_{L^2}$. The claim follows Grönwall's lemma from 
    \begin{align*}
        \frac{\mathrm{d}}{\mathrm{d}t} \norm{\psi}_{L^2} \lesssim \norm{v}_{H^{N_0}} \norm{\psi}_{L^2}+(1+\norm{\rho}_{H^{N_0-1}}) \norm{\rho}_{H^{N_0-1}}.
    \end{align*}
\end{rem}

\medskip

We prove the following bootstrap proposition. This together with the energy estimates above yields the proof of the main theorem.
\begin{prop}(Bootstrap) Under the assumptions of Proposition \ref{prop:bootstrap}
\begin{align}\label{eq:boostrapX-norm-Appendix}
    \norm{ \zz_+(t)}_X +\norm{ \zz_-(t)}_{X} &\leq 100\eps, 
\end{align}
the following improved bounds hold:
\begin{align}\label{eq:bootstrapR-improved}
\begin{split}
    \norm{\zz_+(t)}_{X} +\norm{\zz_-(t) }_{X} &\leq 10\eps.
\end{split}
\end{align}
\end{prop}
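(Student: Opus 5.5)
The improved bound \eqref{eq:bootstrapR-improved} will follow from the Duhamel formula \eqref{eq:Duhamel:APPENDIX}, estimating its terms one by one. The data term contributes at most $2\eps$ by \eqref{assumption-smallnessdata}. For the quadratic terms $\mathcal{B}_{\m_\pm^{\mu\nu}+\mathfrak{n}_\pm^{\mu\nu}}$ we use Remark \ref{rem:quadratic-interactions}. The new multipliers $\mathfrak{n}$ satisfy $\norm{\mathfrak{n}\chi}_{S^\infty}\lesssim 2^{k_1+k_2}$ and the same iterated bounds as in Lemma \ref{lem-iterated-bounds-phase-multiplier}. The phases are unchanged, so the lower bound of Lemma \ref{LM:bound-from-below-PHASE} still applies. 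Hence Propositions \ref{prop:finite-speed-of propagation} and \ref{prop:X-norm-j<m} go through verbatim and give $\lesssim 2^{-\delta m/2}\eps^2$ on each dyadic time piece.

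Two inputs of that argument must be rechecked because the equation now contains a cubic remainder. The first is the energy bound \eqref{eqn:boostrap-energy-bounded}. We will redo Proposition \ref{prop:Sobolev-estimate} for \eqref{eq:new-dispers-form-APPENDIX}. The new quadratic term and $\br{\nabla}R$ are semilinear: they carry no derivative loss at the level of $Z_\pm$, since $\br{\nabla}R$ costs one derivative against $(1-\Delta)^{-1}$ in \eqref{eq:remainderR}. We then obtain $\frac{d}{dt}E^2\lesssim (\mathrm{A}(t)+\norm{\rho}_{L^\infty}^2)E^2$. This is integrable by Corollary \ref{cor:blow-up-criterion-decay}. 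The second input is Lemma \ref{lem:time-derivative-L2}: $\partial_t\zz_\pm$ now also contains $\mathcal{C}^R_\pm$. We need $\norm{\br{\nabla}R}_{L^2}\lesssim \norm{L\rho}_{L^\infty}^2\norm{\rho}_{L^2}\lesssim 2^{-2(1+\beta)m}\eps^3$, which beats $2^{-\frac32 m}$.

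The heart of the matter is the cubic Duhamel term $\int_0^t\mathcal{C}^R_\pm ds$, which we must bound in $X$ by $\lesssim\eps^3$. From \eqref{eq:remainderR} we derive elliptic estimates by a fixed point, using the smallness of $F(\rho)$ and $R$ in $L^\infty$. These give
\begin{align*}
\norm{R}_{H^{N_0}}\lesssim \norm{L\rho}_{L^\infty}^2\norm{\rho}_{H^{N_0-2}}, \qquad \norm{R}_{L^1}\lesssim \norm{L\rho}_{L^2}^2\norm{L\rho}_{L^\infty}.
\end{align*}
Both are bounded by $\br{s}^{-2(1+\beta)}\eps^3$, using \eqref{eqn:bootstrap-decayLinfty} and the fact that $S(\rho)$ together with the other nonlinear terms in \eqref{eq:remainderR} is at least cubic. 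The $X$-norm requires a weight $2^{(1+\beta)j}$ in space, i.e.\ control of $|x|^{1+\beta}e^{\mp is\Lambda}\br{\nabla}R$. We will bound each $Q_{jk}$-piece by splitting at $j\sim m$. For $j\ge m+A$, we argue by finite speed of propagation as in Proposition \ref{prop:finite-speed-of propagation}: after frequency localization, $x$-weights on $e^{\mp is\Lambda}g$ become $\nabla_\xi$ derivatives producing $s|\nabla\Lambda|\lesssim s$ plus weights on $R$ itself. Weights on $R$ are propagated through \eqref{eq:remainderR}, since $L$ has an exponentially decaying kernel, yielding $\norm{|x|^{1+\beta}R}_{L^2}$ in terms of $\norm{|x|^{1+\beta}\rho}_{L^2}\norm{\rho}_{L^\infty}^2$. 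For $j<m+A$, the crude bound $2^{(1+\beta)(m+k)}\norm{P_k\br{\nabla}R}_{L^2}$, integrated over $s\sim 2^m$, is $\lesssim 2^{(2+\beta)m}2^{-2(1+\beta)m}\eps^3=2^{-\beta m}\eps^3$. This is summable in $m$ provided the low-frequency weight $2^{(1+\beta+\gamma)k^-}$ is handled. We handle it by Bernstein together with the $L^1$ bound, $\norm{P_kR}_{L^2}\lesssim 2^{\frac32k}\norm{R}_{L^1}$, and note that $\frac32+1+\beta+\gamma>0$.

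The main obstacle is this weighted (spatial) control of $R$, which is known only implicitly. The hardest point is turning the space weight on $\rho$ into the $X$-norm of $\zz_\pm$, because $\rho=|\nabla|\frac{Z_++Z_-}{2}$ involves $e^{\pm is\Lambda}$, which spreads support to $|x|\lesssim s$. We will first try to absorb this with the factor $2^{(1+\beta)m}$ lost from the spreading. This factor is compensated by the extra $\br{s}^{-(1+\beta)}$ coming from the third factor $\norm{L\rho}_{L^\infty}$, using \eqref{eqn:decay-Pk-profiles}. Summing the dyadic time pieces then gives $\norm{\int_0^t\mathcal{C}^R_\pm}_X\lesssim\eps^3$. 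With $\eps$ small, this closes \eqref{eq:bootstrapR-improved}.
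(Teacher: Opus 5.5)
Your plan follows the paper's architecture. You adapt the energy estimates with the extra $\norm{\rho}_{L^\infty}^2$ in the blow-up criterion, you treat the quadratic terms via Remark \ref{rem:quadratic-interactions}, and you handle the cubic Duhamel term by weighted elliptic estimates on \eqref{eq:remainderR}. There the space weight sits on one factor, costing $\br{s}^{1+\beta}$, and is compensated by the $L^\infty$ decay of the other two. Your recheck of Lemma \ref{lem:time-derivative-L2} with the new contribution $\mathcal{C}^R_\pm$ is correct, and the paper leaves it implicit. Your split at $j\sim m$ is an explicit version of Lemma \ref{lem:commutator-weight} combined with \eqref{eq:estimeX-to-Sob}.

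The step that fails is the weighted bound. You want $\norm{|x|^{1+\beta}R}_{L^2}\lesssim\norm{|x|^{1+\beta}\rho}_{L^2}\norm{\rho}_{L^\infty}^2$, and then $\norm{|x|^{1+\beta}\rho(s)}_{L^2}\lesssim\br{s}^{1+\beta}\norm{\zz_\pm}_X$. The $X$-norm is a supremum over dyadic shells, so it controls $\norm{\br{x}^{\alpha}f}_{L^2}$ only for $\alpha<1+\beta$; note the strict inequality in \eqref{eq:estimSob-to-X}. A profile whose pieces satisfy $\norm{Q_{j0}f}_{L^2}\sim 2^{-(1+\beta)j}$ for all $j\geq 0$ has finite $X$-norm but infinite $\norm{\br{x}^{1+\beta}f}_{L^2}$, because the sum over $j$ diverges. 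The obstruction is therefore this endpoint outside the light cone, not the spreading by $e^{\pm is\Lambda}$, which you correctly identify as costing only $\br{s}^{1+\beta}$ inside the cone.

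The paper resolves this with the parameter $\kappa$. The $L^2$ factor carries only the weight $1+\beta-\kappa$, which is bounded by $\br{t}^{1+\beta-\kappa}\eps$ via Lemmas \ref{lem:commutator-weight} and \ref{Lem:compX-weightSob} (see \eqref{eqn:solution-in-weighted-norm}). The missing $\br{x}^{\kappa}$ goes on the two $L^\infty$ factors through the weighted decay Lemma \ref{Lem-decay-deriv-Weighted}, at a cost of $t^{\kappa}\log t$. This produces the rate $(\log t)^2(1+t)^{-(1+\beta-\kappa)}$, which is time-integrable only for $\kappa<\beta$.

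A fix closer to your $j$-split is also available. Propagate $\sup_j 2^{(1+\beta)j}\norm{\varphi_j\,\cdot\,}_{L^2}$, with derivatives, through \eqref{eq:remainderR} instead of $\norm{\br{x}^{1+\beta}\,\cdot\,}_{L^2}$. This works because $L$ has an exponentially localized kernel and the other factors can go in unweighted $L^\infty$. For $\rho$, finite speed of propagation and \eqref{eqn:tech-localization} bound this quantity by $\br{s}^{1+\beta}\eps$. One of these two fixes has to be supplied for the cubic estimate to close.

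Separately, your bound $\norm{R}_{L^1}\lesssim\norm{L\rho}_{L^2}^2\norm{L\rho}_{L^\infty}$ decays only like $\br{s}^{-(1+\beta)}$, since the $L^2$ factors do not decay. The Bernstein route would then give $2^{m}\eps^3$. That route is unnecessary, however. Since $1+\beta+\gamma=\frac56-\frac23\beta>0$, the low-frequency weight at $j<m+A$ is at most $2^{(1+\beta)(m+A)}$. Your $H^{N_0}$ bound on $R$ alone then gives $2^{-\beta m}\eps^3$ in that region.
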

\begin{proof}
We adapt the energy estimates from Section \ref{sec:energy-estimates} to the unknowns $Z_\pm$ solving the new system \eqref{eq:Duhamel:APPENDIX}. Indeed, using the energies $E_s^2$, $s\geq 0$ defined in the proof of Proposition \ref{prop:Sobolev-estimate},  we see that the new quadratic contribution due to the second term on the right-hand side of \eqref{eq:expandPHIbyR} appears only in the equation on the stream function $\psi$ and obeys the same bounds leaving the blow up criterion intact. 
\par To estimate the contribution from the new cubic term $R(\rho)$ appearing in the equation on $\psi$, we proceed via a fixed-point argument as in \cite[\textcolor{MidnightBlue}{proof of Lemma 3.1}]{Guo-Pausader-ions3d} based on \eqref{eq:remainderR}, and under the bootstrap assumptions \eqref{eq:boostrapX-norm-Appendix} (in particular it is necessary that $\norm{\rho}_{L^\infty}\ll 1$), there holds that
\begin{align}\label{eq:Linfty-decay-R}
     \norm{R}_{H^{s+1}}\lesssim \norm{\rho}_{L^\infty}^2\norm{\rho}_{H^{s-1}}, \ \ \norm{R}_{L^\infty} \lesssim \Vert\rho \Vert_{L^\infty}^3.
\end{align}
This yields that for $s\geq 0$ and $\mathrm{A}$ as in \eqref{eqn:blow-up-crit}, the energies $E_s^2$ satisfy
\begin{align}
    \frac{d}{dt}E_s^2(t)\lesssim (\mathrm{A(t)}+\norm{\rho}_{L^\infty}^2)E_s^2(t).
\end{align} Hence, we obtain the corresponding standard energy estimates:
 \begin{align*}
\sum_{\mu\in \lbrace +,-\rbrace }\norm{Z_\mu(t)}_{H^s}^2\leq C_s \exp\left(c_s\int_0^t \widetilde{\mathrm{A}}(\tau) \, \mathrm{d}\tau\right)\sum_{\mu\in \lbrace +,-\rbrace }\norm{Z_\mu(0)}_{H^s}^2, 
\end{align*}
with 
\begin{align}\label{eq:newBlow-up-Rterm}
    \mathrm{\widetilde{A}}(\tau)\vcentcolon=\mathrm{A}(\tau)+\sum_{\mu\in\{+,-\}}\norm{|\nabla|Z_{\mu}}_{L^\infty}^2.
\end{align}
The bootstrap assumption \eqref{eq:boostrapX-norm-Appendix} then yields the energy bound:
$$\norm{{Z}_\pm(t)}_{H^{N_0}}\lesssim \eps.$$
By \eqref{eq:Duhamel:APPENDIX} we have to bound \eqref{bootstrap-z-in-proof} with an additional cubic term $\int_0^t \norm{\mathcal{C}^R_\pm(s)}_X \, \mathrm{d}s$ on the right-hand side. By Remark \ref{rem:quadratic-interactions}, the quadratic interactions are bounded similarly to the proof of Proposition \ref{prop:bootstrap}. In order to obtain  \eqref{eq:bootstrapR-improved}, it suffices to prove
\begin{align}
    \int_0^t \norm{\mathcal{C}^R_\pm(s)}_X \, \mathrm{d}s\lesssim \eps^3.
\end{align}
This follows by the arguments outlined below, in particular Proposition \ref{Prop-Xestimate-R}.
\end{proof} 
 To prove the main cubic estimate in Proposition \ref{Prop-Xestimate-R} below, we relate the $X$-norm to more suitable weighted Sobolev norms, that  behave well with the elliptic equation \eqref{eq:remainderR}. For any $ \alpha \in \R$ and $s \in \N$, let us define the following weighted Sobolev norms by
\begin{align*}
    \Vert f \Vert_{H^s_\alpha}\vcentcolon=\sum_{\vert \sigma \vert \leq s} \norm{\langle \cdot \rangle^\alpha \partial^\sigma f}_{L^2}, \ \ 
    \Vert f \Vert_{W^{s, \infty}_\alpha}\vcentcolon=\sum_{\vert \sigma \vert \leq s} \norm{\langle \cdot \rangle^\alpha \partial^\sigma f}_{L^\infty}.
\end{align*}
In what follows, we will rely on the following series of lemma concerning weighted Sobolev spaces.
\begin{lem}[$X$-norm and weighted Sobolev spaces]\label{Lem:compX-weightSob}
Let $\beta \in ]0,1/2[, \gamma <0$ and $\alpha \in \R$. For any $s,s' \in \N$ and any function $f=f(x)$, there holds
\begin{align}
    \label{eq:estimSob-to-X}\norm{f}_{H^{s'}_\alpha} &\lesssim \norm{f}_X , \ \ \alpha<1+\beta, \ \ \alpha+\gamma<s'<9+\beta, \\
        \label{eq:estimeX-to-Sob}\norm{f}_X &\lesssim \norm{f}_{H^s_{1+\beta}}  \ \ 9+\beta \leq s.
\end{align}
    Furthermore, for any integer $n \in \N$, one has 
\begin{align}\label{eq:estim-DerLinftyweight}
    \norm{\nabla^n f}_{L^\infty_\alpha} &\lesssim \norm{f}_X, \ \ \alpha \leq 1+\beta, \ \ 1+\beta+\gamma-\frac{3}{2}<n < 8+1+\beta -\frac{3}{2}.
\end{align}
\end{lem}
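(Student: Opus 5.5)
The plan is to go through the atomic decomposition $f=\sum_{(k,j)\in\mathcal{J}}Q_{jk}f$ and compare each piece with the weighted norms. On the support of $Q_{jk}f$ one has, up to Schwartz tails, $\langle x\rangle\sim 2^j$ and $|\xi|\sim 2^k$. So each atom should satisfy
\begin{align}
\norm{\langle\cdot\rangle^\alpha\partial^\sigma Q_{jk}f}_{L^2}\lesssim 2^{\alpha j}2^{|\sigma|k}\norm{Q_{jk}f}_{L^2}\lesssim 2^{\alpha j+|\sigma| k}2^{-8k^+-(1+\beta)(k+j)-\gamma k^-}\norm{f}_X.
\end{align}
To make this precise, I would commute the weight $\langle x\rangle^\alpha$ past $P_{[k-2,k+2]}$ using the kernel decay $2^{3k}(1+2^k|x|)^{-10}$ already used in the proof of Proposition \ref{prop:lin-decay}. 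I would also reuse the Schur-type estimate \eqref{eqn:tech-localization} to control the off-diagonal interactions.

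\textbf{Proof of \eqref{eq:estimSob-to-X}.} I would sum the atomic bound above over $(k,j)$. The sum over $j\ge\max\{0,-k\}$ converges because $\alpha<1+\beta$. It yields $2^{(\alpha-1-\beta)\max\{0,-k\}}$, which must then be multiplied by $2^{|\sigma|k-8k^+-(1+\beta)k-\gamma k^-}$.
\begin{itemize}
\item For $k\ge0$ the exponent is $(|\sigma|-9-\beta)k$, which is summable exactly when $s'<9+\beta$.
\item For $k<0$ the exponent is $(|\sigma|-\alpha-\gamma)k$, which is summable when $|\sigma|>\alpha+\gamma$.
\end{itemize}
The lower-order derivatives $|\sigma|<s'$ are the main subtlety, since for them the low-frequency condition could fail. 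I would handle them either by summing only the top-order term and interpolating, or by absorbing the $k<0$ region. I need to check that the stated hypothesis, read as applying to each $|\sigma|\le s'$ or with $\alpha+\gamma<0$, is what makes the low-frequency sum converge.

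\textbf{Proof of \eqref{eq:estimeX-to-Sob}.} Here I would show $\norm{Q_{jk}f}_{L^2}\lesssim 2^{-(1+\beta)j}\norm{\langle x\rangle^{1+\beta}P_k f}_{L^2}$ by again commuting the spatial cutoff with $P_k$. The high-frequency factor $2^{(9+\beta)k^+}$ is produced by writing $P_kf=2^{-sk}\widetilde{P}_k(|\nabla|^s f)$ for $k\ge0$ and commuting derivatives with the weight; this requires $s\ge 9+\beta$.

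The low frequencies are the main obstacle. For $k<0$ the $X$-norm carries the factor $2^{(1+\beta)(k+j)+\gamma k^-}$, and $\gamma<0$ makes this worse than $2^{(1+\beta)j}$ alone. The way through is the uncertainty principle $k+j\ge0$, so that for $j=-k$ the cutoff is $\varphi_{\le -k}$. I would then bound $\norm{\varphi^{(k)}_jP_kf}_{L^2}$ by the $L^1$ norm via Bernstein:
\begin{align}
\norm{P_kf}_{L^2}\lesssim 2^{\frac32 k}\norm{f}_{L^1}\lesssim 2^{\frac32 k}\norm{f}_{H^0_{1+\beta}},
\end{align}
where the last step uses $\langle x\rangle^{-(1+\beta)}\in L^2$ since $1+\beta>\frac32$ fails. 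Because of that failure, I would instead use the $L^{6/5}$ embedding coming from the weight $2^{(1+\beta)j}$. This gives a gain $2^{k\cdot(\text{positive})}$ that beats $2^{\gamma k}$, using $\gamma>-\frac12$.

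\textbf{Proof of \eqref{eq:estim-DerLinftyweight}.} I would argue identically to \eqref{eq:estimSob-to-X}, but apply Bernstein to each atom first:
\begin{align}
\norm{\langle\cdot\rangle^\alpha\nabla^nQ_{jk}f}_{L^\infty}\lesssim 2^{\alpha j}2^{(n+\frac32)k}\norm{Q_{jk}f}_{L^2}.
\end{align}
Summing as before, the high-frequency sum requires $n+\frac32<9+\beta$. The low-frequency sum requires $n+\frac32-(1+\beta)-\gamma>0$ after the $j$-sum gives $2^{(\alpha-1-\beta)(-k)}$ with $\alpha\le1+\beta$. This matches the stated range. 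The endpoint $\alpha=1+\beta$ would need a supremum instead of a sum in $j$, which is acceptable in $L^\infty$ because atoms with different $j$ are essentially disjointly supported.
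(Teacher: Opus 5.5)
Your skeleton is the paper's: expand in atoms $Q_{jk}f$, bound each by the $X$-norm, and sum. Your exponent bookkeeping is correct, and your treatment of \eqref{eq:estim-DerLinftyweight} matches the paper's; your remark that at $\alpha=1+\beta$ one takes a supremum in $j$ rather than a sum fills a point the paper leaves implicit.

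The genuine gap is the one you flagged and left open in \eqref{eq:estimSob-to-X}. After the $j$-sum, the $\abs{\sigma}$-th term carries the factor $2^{(\abs{\sigma}-\alpha-\gamma)k}$ for $k<0$. The norm $H^{s'}_\alpha$ contains every $\abs{\sigma}\le s'$, including $\sigma=0$. Neither interpolating from the top order nor ``absorbing the region $k<0$'' can help, because for $\sigma=0$ the bound is false once $\alpha+\gamma>0$. To see this, take $f=\sum_k 2^{-\gamma k}2^{\frac32 k}g(2^kx)$ with $\widehat g=\varphi_0$, summed over a sparse sequence $k\to-\infty$. Each piece saturates the $X$-norm at $j=-k$ and has Schwartz tails for $j>-k$, so $\norm{f}_X\lesssim 1$. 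On a suitable annulus $\abs{x}\sim 2^{-k}$ the $k$-th piece dominates, giving $\norm{\br{x}^\alpha f}_{L^2}\gtrsim 2^{-(\alpha+\gamma)k}\to\infty$. The summation argument therefore proves \eqref{eq:estimSob-to-X} exactly when $\abs{\sigma}>\alpha+\gamma$ for all $\abs{\sigma}\le s'$, i.e.\ when $\alpha+\gamma<0$; otherwise it only controls the part of the norm with $\abs{\sigma}>\alpha+\gamma$. With the paper's $\gamma>-\frac12$, this excludes the undifferentiated term for $\alpha$ near $1+\beta$. The paper's one-line proof just sums and has the same unaddressed issue. The right move is to state the corrected hypothesis, not to try to prove the lemma as written.

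For \eqref{eq:estimeX-to-Sob}, your low-frequency discussion misplaces the difficulty and does not close as written. For $k<0$ the $X$-weight is $2^{(1+\beta)j}\,2^{(1+\beta+\gamma)k}$ with $1+\beta+\gamma>0$, so it is better, not worse, than $2^{(1+\beta)j}$. All that is needed is $\norm{\br{x}^{1+\beta}P_kf}_{L^2}\lesssim\norm{\br{x}^{1+\beta}f}_{L^2}$ uniformly in $k$, which is the step behind the paper's one-line bound. It holds because $\abs{P_kf}\lesssim Mf$ (the Hardy--Littlewood maximal function) uniformly in $k$, and $\br{x}^{2(1+\beta)}\in A_2$ since $1+\beta<\frac32$; this is the Muckenhoupt fact used in Lemma \ref{Lem:ellipticReg-screened}.

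Your plan never says how to bound $\norm{\br{x}^{1+\beta}P_kf}_{L^2}$ for $k<0$. The kernel-decay commutation from your first paragraph loses $2^{-(1+\beta)k}$, which is a factor $2^{\gamma k}\gg 1$ too much. The $L^{6/5}$--Bernstein bound $\norm{P_kf}_{L^2}\lesssim 2^k\norm{f}_{L^{6/5}}\lesssim 2^k\norm{\br{x}^{1+\beta}f}_{L^2}$ only handles the atoms with $j=-k$. To complete your route, split $\br{x}^{1+\beta}\lesssim\br{y}^{1+\beta}+\abs{x-y}^{1+\beta}$ inside the convolution and send the second piece through Young's inequality with an $L^{3/2}$ kernel. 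This gives $\norm{\br{x}^{1+\beta}P_kf}_{L^2}\lesssim(1+2^{-\beta k})\norm{\br{x}^{1+\beta}f}_{L^2}$, which suffices since $\gamma\ge-1$. Also drop the displayed $L^1$ bound, which is false for $\beta<\frac12$.

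Finally, both your argument and the paper's use a lower bound on $\gamma$ that the statement omits. With only $\gamma<0$, \eqref{eq:estimeX-to-Sob} fails for $\gamma<-\frac32$, e.g.\ for any Schwartz $f$ with $\widehat f(0)\neq 0$.
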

\begin{proof}
     Let $|\sigma|\leq s'$, then \eqref{eq:estimSob-to-X} follows via the arguments used to prove \eqref{eqn:tech-localization} and summing in $j+k\geq 0$, since there holds
\begin{equation}
    \norm{\br{x}^\alpha\partial^\sigma f}_{L^2} \lesssim \sum_{(j,k)\in \mathcal{J}}\norm{\br{x}^\alpha\varphi_j^{(k)}P_k\partial^\sigma f}_{L^2}\lesssim \sum_{(j,k)\in \mathcal{J}} 2^{\alpha j}2^{\abs{\sigma}k}\norm{\varphi_j^{(k)}\widetilde{P}_kf}_{L^2}\lesssim \norm{f}_X,
\end{equation}
where $\widetilde{P}_{k}$ is the projection associated with a $\widetilde{\varphi}$ with similar support properties as $\varphi$. For the second estimate \eqref{eq:estimeX-to-Sob}, we have
\begin{align*}
    2^{8k^+}2^{(1+\beta)(k+j)}2^{\gamma k^-}\|Q_{jk}f\|_{L^2}
\lesssim 2^{8k^+}2^{(1+\beta)k}2^{\gamma k^-}\|\langle x\rangle^{1+\beta}P_k f\|_{L^2} \lesssim 2^{(9+\beta-s)k^+}\norm{f}_{H^s_{1+\beta}}\lesssim \norm{f}_{H^s_{1+\beta}}.
\end{align*} 
Taking the supremum over $(j,k) \in \mathcal{J}$ yields \eqref{eq:estimeX-to-Sob}. It remains to prove \eqref{eq:estim-DerLinftyweight}. We start from the estimate
\begin{align*}
    \norm{\langle x \rangle^\alpha \nabla^n f}_{L^\infty} &\lesssim \sum_{(j,k) \in \mathcal{J}}2^{\alpha j}2^{nk}\norm{\varphi_k^{(j)}\widetilde{P}_kf}_{L^\infty} 
    \lesssim \sum_{|k-k'|\leq 4}\sum_{(k',j') \in \mathcal{J}} 2^{\alpha j}2^{nk}\norm{\varphi^{(j)}_k\tilde{P}_kQ_{j'k'}f}_{L^\infty}.
\end{align*}
Relying on Young's inequality for integral operators, we obtain as in what precedes \eqref{eqn:tech-localization}
\begin{align}
    \sum_{|k-k'|\leq 4}\sum_{(k',j') \in \mathcal{J}}\norm{\varphi^{(j)}_k\tilde{P}_kQ_{j'k'}f}_{L^\infty}
    &\lesssim \sum_{|k-k'|\leq 4}\sum_{|j-j'|\leq 4}\norm{Q_{j'k'}f}_{L^\infty}+ 2^{-2j}\sum_{\abs{k-k'}\leq 4}\norm{P_{k'}f}_{L^\infty} \\
    & \lesssim   \sum_{|k-k'|\leq 4}\sum_{|j-j'|\leq 4} 2^{\frac{3}{2}k'}\norm{Q_{j'k'}f}_{L^2}+ 2^{-2j}\sum_{\abs{k-k'}\leq 4}2^{\frac{3}{2}k'} \norm{P_{k'}f}_{L^2}
\end{align}
by Bernstein's inequality. We obtain 
 \begin{align*}
        \norm{\varphi_j^{(k)} \tilde P_k f}_{L^\infty} \lesssim 2^{\frac{3}{2}k}2^{-8k^+}2^{-(1+\beta)(j+k)-\gamma k^-}\norm{f}_X.
    \end{align*}
from which \eqref{eq:estim-DerLinftyweight} follows.
\end{proof}
\begin{lem}[Elliptic Estimates]\label{Lem:ellipticReg-screened}
For any function $f=f(x)$ and $s \in \N$, there holds
\begin{align}\label{eq:elliptic-reg-1stgain}
    \norm{Lf}_{W^{s+1, \infty}_\alpha} &\lesssim  \norm{f}_{W^{s, \infty}_\alpha},  \ \  \alpha \in \R^+, \\
    \label{eq:elliptic-reg-full}
    \norm{Lf}_{H^{s+2}_\alpha} &\lesssim  \norm{f}_{H^{s}_\alpha}, \ \vert \alpha \vert < \frac{3}{2}.
\end{align}
\end{lem}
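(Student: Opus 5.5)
The plan is to represent $L=(1-\Delta)^{-1}$ as convolution with the Bessel (Yukawa) kernel
\[
G(x)=\frac{e^{-|x|}}{4\pi|x|}.
\]
This kernel is positive, lies in $L^1(\R^3)$, decays exponentially, and its gradient satisfies $|\nabla G(x)|\lesssim |x|^{-2}e^{-|x|}$, so $\nabla G\in L^1$ as well. Both estimates then reduce to weighted Young-type inequalities, using Peetre's inequality
\[
\br{x}^\alpha\lesssim \br{x-y}^{|\alpha|}\br{y}^\alpha .
\]
This is harmless because $\br{z}^{|\alpha|}G(z)$ and $\br{z}^{|\alpha|}|\nabla G(z)|$ remain integrable thanks to the exponential decay.

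For the $L^\infty$ estimate \eqref{eq:elliptic-reg-1stgain}, derivatives commute with $L$. For $|\sigma|\le s$ I will write $\partial^\sigma Lf=G*\partial^\sigma f$, and for the extra derivative $\partial_i\partial^\sigma Lf=(\partial_iG)*\partial^\sigma f$. Multiplying by $\br{x}^\alpha$ and applying Peetre gives
\[
\br{x}^\alpha|(K*g)(x)|\lesssim \int |K(x-y)|\br{x-y}^{\alpha}\,\br{y}^\alpha|g(y)|\,dy\le \norm{\br{\cdot}^\alpha K}_{L^1}\norm{\br{\cdot}^\alpha g}_{L^\infty}
\]
for $K\in\{G,\partial_iG\}$. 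This yields the bound with a gain of exactly one derivative. Only $L^1$-integrability of $\nabla G$ is needed, not of $\nabla^2G$, which fails, and that is why only one derivative is gained in $L^\infty$.

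For the $L^2$ estimate \eqref{eq:elliptic-reg-full}, I will work on the Fourier side for the derivatives, treating the weight by commutators. Set $u=Lf$, so that $(1-\Delta)u=f$. First I will take $0\le \alpha<3/2$ and let $w=\br{x}^\alpha$. The commutator identity reads
\[
(1-\Delta)(wu)=wf-2\nabla w\cdot\nabla u-(\Delta w)u .
\]
Here $|\nabla w|\lesssim \br{x}^{\alpha-1}$ and $|\nabla^2 w|\lesssim\br{x}^{\alpha-2}$, which are lower weights. Applying the unweighted bound $\norm{Lg}_{H^{2}}\lesssim\norm{g}_{L^2}$ then gives
\[
\norm{wu}_{H^2}\lesssim \norm{wf}_{L^2}+\norm{\br{x}^{\alpha-1}u}_{H^1}.
\]
Iterating this a finite number of times, or simply invoking the kernel argument above in $L^2$ via Young ($L^1*L^2\to L^2$ with $\br{\cdot}^{|\alpha|}G\in L^1$), controls $\norm{\br{x}^{\alpha-1}u}_{H^1}$ by $\norm{\br{x}^\alpha f}_{L^2}$. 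Higher derivatives are handled by applying the same argument to $\partial^\sigma f$ for $|\sigma|\le s$, because $\partial^\sigma$ commutes with $L$. Note that both $\br{x}^\alpha u$ and $\br{x}^\alpha\partial^2u$ appear in $H^{s+2}_\alpha$, whereas the identity above bounds $\nabla^2(wu)$. Expanding $\partial^2(wu)=w\partial^2u+2\nabla w\,\partial u+(\nabla^2w)u$ again produces only lower-weight terms, which are already controlled. For $\alpha<0$ the weight $w=\br{x}^{\alpha}$ still has $|\nabla w|\lesssim \br{x}^{\alpha-1}\le\br{x}^{\alpha}$, so the same commutator scheme applies, and Peetre's inequality in its negative-exponent form also works. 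The restriction $|\alpha|<3/2$ is the range where $\br{x}^{2\alpha}$ is an $A_2$ weight, so one could alternatively appeal to weighted Calderón–Zygmund theory for the zeroth-order operators $\partial_i\partial_jL$. I will use this route only if the direct commutator route runs into trouble.

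The main obstacle I anticipate is the top-order $L^2$ term $\br{x}^\alpha\partial^2\partial^\sigma Lf$. The kernel of $\partial_i\partial_jL$ is singular, like $|x|^{-3}$, and is not in $L^1$, so Young's inequality does not apply to it. This is exactly where the commutator identity, or weighted Calderón–Zygmund bounds with the $A_2$ restriction $|\alpha|<3/2$, must be used rather than pure kernel estimates. I plan to treat it by the commutator identity above, which reduces it to the unweighted Plancherel bound plus lower-order weighted terms controlled by Young.
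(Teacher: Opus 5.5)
Your proof is correct. For \eqref{eq:elliptic-reg-1stgain} it is the paper's argument: convolve with $G(x)=e^{-|x|}/(4\pi|x|)$, move the weight onto the kernel, and apply Young's inequality, which needs only $\br{\cdot}^{|\alpha|}G,\ \br{\cdot}^{|\alpha|}\nabla G\in L^1$. One harmless slip: $|\nabla G(x)|\sim(|x|^{-2}+|x|^{-1})e^{-|x|}$, so the valid bound is $|x|^{-2}e^{-|x|/2}$, not $|x|^{-2}e^{-|x|}$.

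For \eqref{eq:elliptic-reg-full} your route genuinely differs from the paper's. The paper never commutes the weight. It treats $\partial_i\partial_j L$ as a Calder\'on--Zygmund operator and cites its boundedness on $L^2(\br{x}^{2\alpha}dx)$ for Muckenhoupt $A_2$ weights, which is exactly where the hypothesis $|\alpha|<\frac32$ comes from. You instead use three ingredients:
\begin{itemize}
\item the identity $(1-\Delta)(wu)=wf-2\nabla w\cdot\nabla u-(\Delta w)u$;
\item the unweighted Plancherel bound, which is legitimate once $wu\in L^2$ (weighted Young already gives this, so no iteration is needed);
\item weighted Young for the lower-order terms $\br{x}^{\alpha-1}\nabla u$ and $\br{x}^{\alpha-2}u$, and for the terms produced when converting $\partial^2(wu)$ into $w\,\partial^2u$.
\end{itemize}

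What your route buys: it is elementary and self-contained, and it never uses $|\alpha|<\frac32$, so it proves \eqref{eq:elliptic-reg-full} for every $\alpha\in\R$. This is consistent with the fact that the $A_2$ range is sharp only for kernels with $|x|^{-3}$ tails, such as the Riesz transforms. Here the singularity sits at unit scale, where polynomial weights are essentially constant, and the tail decays exponentially.

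What the paper's route buys: the citation is shorter and would cover the unscreened operator $\partial_i\partial_j\Delta^{-1}$ as well. Its restriction costs nothing in the paper, since the lemma is only applied with $\alpha\le 1+\beta<\frac32$.
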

\begin{proof}
    Recall that the Green kernel associated with the screened Laplacian in dimension $3$ is $$G(x)=\frac{1}{4\pi}\frac{e^{-\vert x \vert}}{\vert x \vert}.$$
    So if $u$ is the solution to $(1-\Delta)u=f$, that is $u=Lu=G \star f$, we have by using  $\langle x \rangle^\alpha \leq \langle x-y \rangle^\alpha\langle y \rangle^\alpha$ that
\begin{align*}
    \vert \langle x \rangle^\alpha (1+\nabla)u(x)\vert  \lesssim \int_{\R^3} \langle x-y \rangle^\alpha \vert (1+\nabla)G(x-y) \vert \langle y \rangle^\alpha \vert f(y) \vert \, \mathrm{d}y.
\end{align*}
Since $\int_{\R^3} \langle z \rangle^\alpha \vert G(z) \vert \, \mathrm{d}z$ and $ \int_{\R^3} \langle z \rangle^\alpha \vert \nabla G (z)\vert \, \mathrm{d}z < \infty$ for any $\alpha \in \R^+$ therefore, (iterating by linearity of the equation) , we obtain the estimate \eqref{eq:elliptic-reg-1stgain} as a consequence of Young's inequality for convolution.

For the second estimate, we rely on the standard Calderon-Zygmund theory adapted to weighted spaces (indeed, $\nabla^2 G$ is only $L^1_{\mathrm{loc}}$ because it behaves like the second order derivates of the unscreened Coulomb kernel around $x=0$). We rely on the following result/framework (see e.g.\ \cite[Chapter V]{stein1993harmonic}): if $1 <p<\infty$, a function $w=w(x)>0$ is in the so-called Muckenhoupt class $A_p$ if and only if there exists $C>0$ such that for any ball $B$ there holds
\begin{align*}
    \left(\frac{1}{\vert B \vert}\int_B w \right) \left(\frac{1}{\vert B \vert}\int_B w^{-\frac{1}{p-1}} \right)^{p-1} \leq C.
\end{align*}
We observe that on $\R^3$, the weight $w(x)=\langle x \rangle^{2\alpha} \in A_2$ provided that $\vert \alpha \vert <3/2$.
In this framework, Calderon-Zygmund type operators are bounded on the weighted space $L^p(w(x)\mathrm{d}x)$ with $1<p<\infty$ if $w \in A_p$. This yields the desired estimate \eqref{eq:elliptic-reg-full} and concludes the proof.
\end{proof}
\begin{lem}[Product estimates]\label{Lem:product-weightedSob}
    Let $s \geq 1$ and $\alpha \in \R$. For any $\kappa>0$ and functions $f,g$ there holds
    \begin{align}\label{eq:tame-estimate}
        \norm{fg}_{H^s_\alpha} &\lesssim \norm{f}_{W^{\lfloor \frac{s}{2} \rfloor, \infty}_\kappa}\norm{g}_{H^s_{\alpha-\kappa}}+\norm{f}_{H^s_{\alpha-\kappa}}\norm{g}_{W^{\lfloor \frac{s}{2} \rfloor, \infty}_\kappa}.
        \end{align}
         In particular there holds for any integer $k \geq 1$
    \begin{align}\label{eq:tame-estimate-power}
        \begin{split}
    \norm{f^k}_{H^s_\alpha} &\lesssim \norm{f}_{W^{\lfloor \frac{s}{2} \rfloor, \infty}_\kappa}^{k-1} \norm{f}_{H^s_{\alpha-(k-1)\kappa}},
        \end{split}
    \end{align}
    and
\begin{align}\label{eq:compo-estimate}
     \norm{E_3(f)}_{H^s_\alpha} \lesssim  \norm{f}_{H^s_{\alpha-\kappa}}\norm{f}_{W^{\lfloor \frac{s}{2} \rfloor, \infty}_\kappa}^2 \exp\left(\norm{f}_{W^{\lfloor \frac{s}{2} \rfloor, \infty}_\kappa} \right), \ \ E_3(z)=e^z-1-z-\frac{z^2}{2}.
\end{align}
\end{lem}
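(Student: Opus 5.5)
The plan is to reduce the weighted estimates to unweighted ones by commuting the weight $\langle x\rangle^\alpha$ through derivatives, and then apply the Leibniz rule. For \eqref{eq:tame-estimate} the first step is to fix a multi-index $|\sigma|\le s$ and expand
\begin{align*}
\langle x\rangle^\alpha\partial^\sigma(fg)=\sum_{\sigma_1+\sigma_2=\sigma}c_{\sigma_1\sigma_2}\,\langle x\rangle^{\alpha}\partial^{\sigma_1}f\,\partial^{\sigma_2}g .
\end{align*}
In each term either $|\sigma_1|\le\lfloor s/2\rfloor$ or $|\sigma_2|\le\lfloor s/2\rfloor$. In the first case I would split the weight as $\langle x\rangle^\alpha=\langle x\rangle^{\kappa}\langle x\rangle^{\alpha-\kappa}$ and put $\langle x\rangle^\kappa\partial^{\sigma_1}f$ in $L^\infty$, which is bounded by $\norm{f}_{W^{\lfloor s/2\rfloor,\infty}_\kappa}$, and $\langle x\rangle^{\alpha-\kappa}\partial^{\sigma_2}g$ in $L^2$, which is bounded by $\norm{g}_{H^s_{\alpha-\kappa}}$. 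The second case is symmetric. Since the weight multiplies the product pointwise and is not differentiated, no commutator arises. For this reason I would not attempt a Gagliardo--Nirenberg interpolation: the crude split already gives the stated tame form, because the low-derivative factor always carries at most $\lfloor s/2\rfloor$ derivatives.

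For \eqref{eq:tame-estimate-power} I would argue by induction on $k$, writing $f^k=f\cdot f^{k-1}$ and applying \eqref{eq:tame-estimate} with weight exponent $\alpha$. This requires $L^\infty$ control of $f^{k-1}$ in $W^{\lfloor s/2\rfloor,\infty}_\kappa$. That control follows by Leibniz in $L^\infty$: one factor carries the weight $\langle x\rangle^\kappa$ and the others are bounded using $\langle x\rangle^0\le\langle x\rangle^\kappa$, since $\kappa>0$. This gives $\norm{f^{k-1}}_{W^{\lfloor s/2\rfloor,\infty}_\kappa}\lesssim_k\norm{f}^{k-1}_{W^{\lfloor s/2\rfloor,\infty}_\kappa}$. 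The $H^s$ factor $f^{k-1}$ is then treated by the induction hypothesis with weight $\alpha-\kappa$, so the weight loss accumulates to $(k-1)\kappa$.

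There is a subtlety. Used directly, the induction gives a sharper weight in the $H^s$ factor than the $L^\infty$ factor allows. I would handle the mismatch with the trivial monotonicity $\norm{\cdot}_{H^s_{\alpha-j\kappa}}\le\norm{\cdot}_{H^s_{\alpha-(k-1)\kappa}}$ for $j\le k-1$. This is the bookkeeping point I expect needs the most care, since each term must end up with exactly the weight $\alpha-(k-1)\kappa$ on a single $H^s$ factor. I would track the constants $C^k$ coming from the multinomial coefficients.

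For \eqref{eq:compo-estimate} I would expand $E_3(f)=\sum_{k\ge3}f^k/k!$ and apply \eqref{eq:tame-estimate-power} to each term. This gives
\begin{align*}
\norm{E_3(f)}_{H^s_\alpha}\le\sum_{k\ge3}\frac{C^k}{k!}\norm{f}^{k-1}_{W^{\lfloor s/2\rfloor,\infty}_\kappa}\norm{f}_{H^s_{\alpha-(k-1)\kappa}} .
\end{align*}
This is the main obstacle. The weight loss $(k-1)\kappa$ grows with $k$, whereas the claim only loses $\kappa$. To fix this I would not use the iterated bound. Instead I would apply \eqref{eq:tame-estimate} once, with $f$ and $f^{k-1}$ as the two factors, putting the weight $\kappa$ only on the $L^\infty$ factors and the remaining weight $\alpha-\kappa$ on the single $H^s$ copy of $f$, and using unweighted $L^\infty$ bounds ($W^{\lfloor s/2\rfloor,\infty}_0\le W^{\lfloor s/2\rfloor,\infty}_\kappa$) for all other copies. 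Concretely, in the Leibniz expansion of $\partial^\sigma f^k$, exactly one factor can carry more than $\lfloor s/2\rfloor$ derivatives. That factor gets $\langle x\rangle^{\alpha-\kappa}$ in $L^2$, one other factor gets $\langle x\rangle^\kappa$ in $L^\infty$, and the rest are unweighted in $L^\infty$. This yields the uniform weight loss $\kappa$ with constant $k^{s}C^{s}$. Summing against $1/k!$, with $k\ge3$ giving at least two $L^\infty$ factors, produces $\norm{f}^2_{W^{\lfloor s/2\rfloor,\infty}_\kappa}\exp(\norm{f}_{W^{\lfloor s/2\rfloor,\infty}_\kappa})$, up to harmless polynomial-in-$k$ constants absorbed into the exponential.
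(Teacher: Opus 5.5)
Your argument for \eqref{eq:tame-estimate} is the paper's: Leibniz rule, split $\langle x\rangle^{\alpha}=\langle x\rangle^{\kappa}\langle x\rangle^{\alpha-\kappa}$, and put the factor with at most $\lfloor s/2\rfloor$ derivatives in $L^\infty$.

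\textbf{The gap is in \eqref{eq:tame-estimate-power}.} The ``trivial monotonicity'' you invoke is reversed. Since $\langle x\rangle\geq 1$, the map $a\mapsto\norm{\cdot}_{H^s_a}$ is nondecreasing, so for $j\leq k-1$ one has $\norm{\cdot}_{H^s_{\alpha-(k-1)\kappa}}\leq\norm{\cdot}_{H^s_{\alpha-j\kappa}}$, not the other way round. Applying \eqref{eq:tame-estimate} with a fixed $\kappa$ to $f\cdot f^{k-1}$ produces the term $\norm{f}_{H^s_{\alpha-\kappa}}\norm{f^{k-1}}_{W^{\lfloor s/2\rfloor,\infty}_\kappa}$. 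For $k\geq 3$ this is not controlled by $\norm{f}_{H^s_{\alpha-(k-1)\kappa}}\norm{f}_{W^{\lfloor s/2\rfloor,\infty}_\kappa}^{k-1}$, which is the smaller quantity. Your induction therefore only gives the power bound with loss $\kappa$, not $(k-1)\kappa$.

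The missing step is to let the $L^\infty$ factors carry the extra weight. You can do this in either of two ways:
\begin{itemize}
\item In the Leibniz terms where $f^{k-1}$ has few derivatives, split $\langle x\rangle^{\alpha}=\langle x\rangle^{(k-1)\kappa}\langle x\rangle^{\alpha-(k-1)\kappa}$. Then use $\norm{f^{k-1}}_{W^{\lfloor s/2\rfloor,\infty}_{(k-1)\kappa}}\lesssim\norm{f}_{W^{\lfloor s/2\rfloor,\infty}_\kappa}^{k-1}$, obtained by giving each of the $k-1$ factors one copy of $\langle x\rangle^{\kappa}$.
\item Expand $\partial^\sigma(f^k)$ directly by the multinomial Leibniz rule. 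Put $\langle x\rangle^{\alpha-(k-1)\kappa}$ in $L^2$ on the (at most one) factor with more than $\lfloor s/2\rfloor$ derivatives, and $\langle x\rangle^{\kappa}$ in $L^\infty$ on each of the others.
\end{itemize}
The paper's one-line ``iterated application of \eqref{eq:tame-estimate}'' implicitly needs this re-splitting as well.

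\textbf{The obstacle you identify for \eqref{eq:compo-estimate} does not exist.} The same confusion about the direction of monotonicity causes it. A larger weight loss gives a smaller right-hand side, so the paper simply uses $\norm{f}_{H^s_{\alpha-(k-1)\kappa}}\leq\norm{f}_{H^s_{\alpha-\kappa}}$ for $k\geq 2$ and sums the exponential series.

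Your direct Leibniz argument for $f^k$ is still correct: weight $\langle x\rangle^{\alpha-\kappa}$ on the $L^2$ factor, $\langle x\rangle^{\kappa}$ on one $L^\infty$ factor, the others unweighted. It proves \eqref{eq:compo-estimate} without going through \eqref{eq:tame-estimate-power}; it is just not needed for the reason you give.

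In both your version and the paper's, the $k$-dependent combinatorial constants really replace $\exp(\norm{f}_{W^{\lfloor s/2\rfloor,\infty}_\kappa})$ by $\exp(C\norm{f}_{W^{\lfloor s/2\rfloor,\infty}_\kappa})$. This is harmless in the small-data regime where the lemma is applied.
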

\begin{proof}
    The first estimate \eqref{eq:tame-estimate} is consequence of Leibniz rule, splitting of the weight, and splitting of the sum with $L^2-L^\infty$ estimates depending on the number of derivatives. The estimates \eqref{eq:tame-estimate-power} is then obtained as an iterated application of \eqref{eq:tame-estimate}. To prove \eqref{eq:compo-estimate}, we use \eqref{eq:tame-estimate-power} and get \begin{align*}
    \norm{E_3(g)}_{H^s_\alpha} \leq \sum_{k=3}^{\infty} \frac{\norm{g^k}_{H^s_\alpha}}{k!} \lesssim \sum_{k=3}^{\infty} \norm{g}_{H^s_{\alpha-(k-1)\kappa}}\frac{\norm{g}_{W^{\lfloor \frac{s}{2} \rfloor, \infty}_{{\kappa}}}^{k-1}}{k!} \leq \norm{g}_{H^s_{\alpha-\kappa}}\norm{g}_{W^{\lfloor \frac{s}{2} \rfloor, \infty}_\kappa}^2\sum_{k=0}^{\infty} \frac{\norm{g}_{W^{\lfloor \frac{s}{2} \rfloor, \infty}_\kappa}^{k}}{k!},
\end{align*}
which finishes the proof.
\end{proof}
\begin{lem}[Decay Estimates]\label{Lem-decay-deriv-Weighted}
For $\beta \in ]0, 1/5[$, there holds for any $n \in \lbrace 1,2,3,4 \rbrace$
    \begin{align*}
        \norm{\nabla^n Z_\pm(t)}_{L^\infty_\kappa} &\lesssim \log(t)(1+t)^{-(1+\beta-\kappa)} \norm{\mathcal{Z}(t)}_X, \ \ 0 \leq \kappa<1+\beta.
    \end{align*}
    It holds as well for $\vert \nabla \vert$ instead of $\nabla$.
\end{lem}
\begin{proof}
    We start from
    \begin{align*}
        \norm{\nabla^n Z_\pm(t)}_{L^\infty_\kappa} 
    & \lesssim \sum_{(k,j) \in \mathcal{J}} \sum_{\substack{(k',j') \in \mathcal{J} \\ \vert k-k'\vert \leq 4 \\  j \leq j'+\log_2(t)}}  2^{nk} 2^{\kappa j}\norm{  \varphi_j^{(k)} \widetilde{P_k} e^{\pm it\Lambda} Q_{j'k'} \zz_\pm}_{L^\infty} \\
    & \qquad +  \sum_{(k,j) \in \mathcal{J}} \sum_{\substack{(k',j') \in \mathcal{J} \\ \vert k-k'\vert \leq 4 \\  j > j'+\log_2(t)}}  2^{nk} 2^{\kappa j}\norm{  \varphi_j^{(k)} \widetilde{P_k} e^{\pm it\Lambda} \varphi_{j'}^{(k')} P_{k'} \zz_\pm}_{L^\infty}.
    \end{align*}
    On the one hand, we observe that when $j \leq j'+\log_2(t)$ we have 
    \begin{align*} 2^{\kappa j}\norm{  \varphi_j^{(k)} \widetilde{P_k} e^{\pm it\Lambda}Q_{j'k'}  \zz_\pm}_{L^\infty} &\lesssim (t+2^{j'})^\kappa \norm{e^{\pm it\Lambda} Q_{j' k'} \zz_\pm}_{L^\infty}.
    \end{align*}
    On the other hand, let us denote the kernel of $\widetilde{P_k} e^{\pm it\Lambda}$ by $K_{t,k}$, so that $\widetilde{P_k} e^{\pm it\Lambda} [\varphi_{j'}^{(k')} P_{k'}]  \zz_+(x)=K_{t,k} \star (\varphi_{j'}^{(k')} P_{k'}  \zz_+)(x) $. Note that $\varphi_{j'}^{(k')} P_{k'}  \zz_+(y)$ has a support in $\vert y \vert \lesssim 2^{j'}$ therefore if $j>j'+C\log_2(t)$ for $C$ large enough then $\vert x \vert \sim 2^{j} > C(t+2^j{'})$ and $\vert x-y \vert > \vert x \vert/2 \sim 2^j$ so for such $x$ we get 
    $$ \vert \widetilde{P_k} e^{\pm it\Lambda} \varphi_{j'}^{(k')} P_{k'}\zz_\pm(x) \vert \leq \norm{K_{t,k} \textbf{1}_{\vert \cdot \vert>2^j}}_{L^2} \norm{\varphi_{j'}^{(k')} P_{k'} \zz_\pm}_{L^2}. $$
    From a non-stationary phase argument, using that $\vert \nabla \Lambda \vert \lesssim 1$, we have the fact that for $\vert z \vert >2t$ there holds $\vert K_{t,k}(z) \vert \lesssim 2^{3k} (2^k \vert z \vert)^{-N} $ for any $N>0$. Therefore, since $2^j>2t$, we get after an explicit computation that for all $N>0$ large enough $$ \norm{K_{t,k} \textbf{1}_{\vert \cdot \vert>2^j}}_{L^2} \lesssim 2^{\frac{3}{2}k} (2^k 2^j)^{\frac{3}{2}-N}. $$
    As a consequence, when $j>j'+\log_2(t)$, we get 
    \begin{align*}
        2^{nk }2^{\kappa j}\norm{ \widetilde{P_k} e^{\pm it\Lambda} \varphi_{j'}^{(k')} P_{k'}\mathcal{Z}_\pm}_{L^\infty(\vert x \vert \sim 2^j)} &\lesssim 2^{nk}2^{\kappa j} 2^{\frac{3}{2}k} (2^k 2^j)^{\frac{3}{2}-N} \norm{\varphi_{j'}^{(k')} P_{k'}  \zz_+}_{L^2} \lesssim 2^{-\tilde{N}(k+j)} \norm{\varphi_{j'}^{(k')} P_{k'}\mathcal{Z}}_{L^2}
    \end{align*}
    for any $\widetilde{N}>0$ large enough. All in all, we have obtained for all $N>0$ large enough
    \begin{multline}\label{eq:panda}
         \norm{\nabla^n Z_\pm(t)}_{L^\infty_\kappa} \\ \lesssim \sum_{(k,j) \in \mathcal{J}} \sum_{\substack{(k',j') \in \mathcal{J} \\ \vert k-k'\vert \leq 4 \\  j  \leq  j'+\log_2(t)}}  2^{nk} (t+2^{j'})^\kappa \norm{e^{\pm it\Lambda}Q_{j' k'} \zz_\pm}_{L^\infty} 
 + \sum_{(k,j) \in \mathcal{J}} \sum_{\substack{(k',j') \in \mathcal{J} \\ \vert k-k'\vert \leq 4 \\  j > j'+\log_2(t)}} 2^{-N(k+j)} \norm{\varphi_{j'}^{(k')} P_{k'}  \mathcal{Z}}_{L^2}.
    \end{multline}
    For the second term, we have any decay we want 
    and by relying on \eqref{eqn:tech-localization}, we obtain a contribution $t^{-N} \norm{\mathcal{Z}}_X$. 
    %
    Let us treat the first term in \eqref{eq:panda}. We should actually first sum over $j \leq j'+\log_2(t)$ with $j'$ fixed, which gives a finite number of terms. For $j' \leq  \log_2(t)$, we use the first entry in \eqref{eqn:linear-decay-j<m} to get
    \begin{align*}
        \sum_{j' \leq \log_2t, j \leq j'+\log_2(t)}(t+2^{j'})^\kappa \norm{ e^{it \Lambda} Q_{j'k'} \zz_\pm(t)}_{L^\infty} \lesssim \log_2(t)  t^{-(1+\beta-\kappa)}2^{-(3+\beta)k'^+}2^{(-\frac13+\frac{2}{3}\beta)k'^-}\norm{\zz(t)}_X.
    \end{align*}
    For $j'>\log_2t$, we do not use any decay from the semigroup and directly get by Bernstein's inequality that 
    \begin{align*}
        \sum_{j' > \log_2t, j \leq j'+\log_2(t)}(t+2^{j'})^\kappa \norm{ e^{it \Lambda} Q_{j'k'} \zz_\pm(t)}_{L^\infty}   \lesssim t^{-(1+\beta-\kappa)}2^{(\frac12-\beta)k'}2^{-8k'^+}2^{-\gamma k'^-}\norm{\zz(t)}_X.
    \end{align*}
   where we have used that $(t+2^{j'})^\kappa j' 2^{-(1+\beta)j'} \lesssim 2^{-(1+\beta-\kappa)j'} \lesssim t^{-(1+\beta-\kappa)}$.
   The two former estimates are then summable in $k \sim k'$ with a weight $2^{nk}$ for $n=1,2,3,4$, except for the first when $k \geq 0$ and $n=4$. If $n=4$, we therefore proceed differently when $j' \leq \log_2t$ and $k' \geq 0$.
   We perform an interpolation procedure based on the two available bounds
    \begin{align*}
        \norm{ e^{it \Lambda} Q_{j'k'} \zz_\pm(t)}_{L^\infty}\lesssim \min \left\lbrace t^{-3/2}2^{-(3+\beta)k'}2^{(\frac12-\beta)j'},  2^{-(\frac{15}{2}+\beta)k'} 2^{-(1+\beta)j'} \right\rbrace \norm{\mathcal Z(t)}_X
    \end{align*} which gives for any $\theta \in (0,1) $ that
    \begin{align*}
        (t+2^j)^\kappa  \norm{ e^{it \Lambda} Q_{j'k'} \zz_\pm(t)}_{L^\infty}\lesssim t^{\kappa-\frac{3}{2}\theta} 2^{(-\frac{15}{2}-\beta+\frac{9}{2}\theta)k'} 2^{(\frac{3}{2}\theta-(1+\beta))j'}\norm{\mathcal Z(t)}_X.
    \end{align*}
    Multiplying by $2^{4k}$, and then summing in $(k,j)$, we obtain, the claimed estimate modulo the  provided that 
    $$\frac{2}{3}(1+\beta) < \theta < \frac{7+2\beta}{9}<1,$$
that is satisfied since $\beta \leq \frac{1}{5}<\frac{1}{4}$. This gives the claimed estimate and conclude the proof.
\end{proof}

\begin{lem}[Commutator Estimates]\label{lem:commutator-weight}
For $\delta>0$ there exists $C_\delta>0$ such that for any $0<a<\frac{3}{2}-\delta$ and $f=f(x)$, we have for all $t \in \R$
    \begin{align*}
\norm{\br{x}^{a}e^{it\Lambda}f}_{L^2}\lesssim \br{t}^{a}\norm{f}_{L^2}+C_\delta\norm{\langle x \rangle^{a+\delta}f}_{L^2},
\end{align*}
\end{lem}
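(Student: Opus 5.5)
We propose to work on the Fourier side, where multiplication by $x$ becomes the derivative $i\nabla_\xi$, and to exploit that $\nabla_\xi$ acting on the phase $e^{it\Lambda(\xi)}$ produces a factor $t\nabla\Lambda(\xi)$ with $|\nabla\Lambda|\le 1$. We first treat integer powers and then interpolate.

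\textbf{Case $a=1$.} Write
\[
x\, e^{it\Lambda}f=\mathcal{F}^{-1}\bigl(i\nabla_\xi(e^{it\Lambda}\widehat f)\bigr)=e^{it\Lambda}(xf)-t\,e^{it\Lambda}\mathcal{F}^{-1}\bigl(\nabla\Lambda\,\widehat f\bigr).
\]
Since $\nabla\Lambda(\xi)=\xi|\xi|^{-1}\langle\xi\rangle^{-3}$ is a bounded symbol, this gives
\[
\norm{\langle x\rangle e^{it\Lambda}f}_{L^2}\lesssim \langle t\rangle\norm{f}_{L^2}+\norm{\langle x\rangle f}_{L^2},
\]
so no loss $\delta$ is needed here. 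For $a\in(0,1]$, complex (Stein) interpolation between the unweighted identity $a=0$ and $a=1$ should give $\norm{\langle x\rangle^a e^{it\Lambda}f}_{L^2}\lesssim\langle t\rangle^a\norm{f}_{L^2}+\norm{\langle x\rangle^a f}_{L^2}$. To make this clean, we will use the analytic family $\langle x\rangle^{z}e^{it\Lambda}\langle x\rangle^{-z}$ together with the equivalent form
\[
\norm{\langle x\rangle^a g}\lesssim \langle t\rangle^a\norm{g}+\norm{\langle x\rangle^a g}
\]
after splitting $|x|\lesssim\langle t\rangle$ from $|x|\gtrsim \langle t\rangle$. Alternatively, we can avoid interpolation by decomposing dyadically in space as in Section~\ref{sec:localizations}.

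\textbf{Case $1<a<\tfrac32$.} Here the difficulty is the singularity of $\nabla\Lambda$ at $\xi=0$. A second derivative $\nabla^2\Lambda\sim|\xi|^{-1}$ is not bounded, so the multiplier $\nabla\Lambda$ is not smooth, and multiplying by $x$ once more costs regularity in $\xi$. We therefore apply the $a=1$ identity and must control $\norm{\langle x\rangle^{a-1}\mathcal{F}^{-1}(\nabla\Lambda\widehat f)}_{L^2}$ with $a-1<\tfrac12$, which amounts to $\dot H^{a-1}_\xi$-regularity of $\nabla\Lambda\,\widehat f$.

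We split $\nabla\Lambda=\frac{\xi}{|\xi|}\bigl(\langle\xi\rangle^{-3}-1\bigr)+\frac{\xi}{|\xi|}$, whose first part is Lipschitz. The homogeneous degree-$0$ piece $\xi/|\xi|$ is a Riesz transform, which is bounded on $L^2(\langle x\rangle^{2(a-1)}dx)$ because $\langle x\rangle^{2(a-1)}\in A_2$ for $a-1<\tfrac32$, exactly as in the Muckenhoupt argument of Lemma~\ref{Lem:ellipticReg-screened}. The smoother piece is handled directly. This yields
\[
\norm{\langle x\rangle^{a-1}\mathcal{F}^{-1}(\nabla\Lambda\widehat f)}\lesssim\norm{\langle x\rangle^{a-1}f}.
\]
Combining with the prefactor $t$ gives terms $t\norm{\langle x\rangle^{a-1}f}$. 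We then split $f=f\mathbf 1_{|x|\le\langle t\rangle}+f\mathbf 1_{|x|>\langle t\rangle}$, which gives $t\langle t\rangle^{a-1}\norm{f}+\norm{\langle x\rangle^{a}f}$. The $e^{it\Lambda}(xf)$ term is treated by the case $a-1$ applied to $xf$.

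The main obstacle is this low-frequency singularity in the range $1<a<\tfrac32$. The upper limit $\tfrac32$ (and the loss $\delta$) presumably reflects that $|\xi|^{-1}$-type singularities in $\nabla^2\Lambda$ are only in $L^2_{\mathrm{loc}}$-weighted classes up to $\tfrac32$ in 3D. If the $A_2$ argument does not close sharply, we will accept an $\langle x\rangle^{a+\delta}$ weight: we use Hölder in the dyadic decomposition $\sum_j 2^{aj}\norm{\varphi_j f}$ and sum $2^{-\delta j}$, which produces $C_\delta$.
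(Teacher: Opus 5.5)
Your route is sound and genuinely different from the paper's. The paper stays in physical space with the $Q_{jk}$ decomposition. Output scales $2^j\lesssim\br{t}$ are bounded trivially by $\br{t}^a\norm{f}_{L^2}$. For $2^j\gg t$, near-diagonal input pieces ($\abs{j-j'}\le 4$) are bounded by $2^{-\delta j'}\norm{\br{x}^{a+\delta}P_{k'}f}_{L^2}$. Off-diagonal pieces ($\abs{j-j'}>4$) decay like $2^{-N\abs{j-j'}}$ by non-stationary phase, using $\abs{\nabla\Lambda}\lesssim 1$ (finite speed of propagation), together with Schur's test. The $\delta$ is spent on the dyadic summation.

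You instead use the exact identity $x\,e^{it\Lambda}f=e^{it\Lambda}(xf)-t\,e^{it\Lambda}\mathcal{F}^{-1}(\nabla\Lambda\,\widehat f)$, interpolation, and weighted Calder\'on--Zygmund theory. Here $\nabla\Lambda(\xi)=\frac{\xi}{\abs{\xi}}\br{\xi}^{-3}$ is itself a Mikhlin multiplier, so your splitting is optional, and $\br{x}^{2(a-1)}\in A_2$. This avoids all kernel bookkeeping and yields the bound with $\norm{\br{x}^af}_{L^2}$, i.e.\ without the $\delta$-loss, which trivially implies the lemma. The paper's argument, in turn, stays within the space-frequency localization framework of the $X$-norm and makes the finite-speed-of-propagation heuristic explicit.

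Two steps need fixing as written.

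\textbf{The interpolation step.} The analytic family $\br{x}^{z}e^{it\Lambda}\br{x}^{-z}$ only yields the multiplicative bound $\norm{\br{x}^ae^{it\Lambda}f}_{L^2}\lesssim\br{t}^a\norm{\br{x}^af}_{L^2}$. This does not imply the lemma: test it on $f$ supported where $\abs{x}\sim t^{1/2}$. Your displayed ``equivalent form'' is also a tautology. What the case $a=1$ really says is that $e^{it\Lambda}$ maps $L^2((\br{t}+\br{x})^2dx)$ to $L^2(\br{x}^2dx)$ uniformly in $t$. Interpolating this against $L^2\to L^2$ (Stein--Weiss, or the family $\br{x}^{z}e^{it\Lambda}(\br{t}+\br{x})^{-z}$) gives $\norm{\br{x}^ae^{it\Lambda}f}_{L^2}\lesssim\norm{(\br{t}+\br{x})^af}_{L^2}$ for $0\le a\le 1$. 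That is the additive form you want.

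\textbf{The case $1<a<\frac32$.} The term to control is $t\,\br{x}^{a-1}e^{it\Lambda}g$ with $g=\mathcal{F}^{-1}(\nabla\Lambda\,\widehat f)$, not $t\,\br{x}^{a-1}g$. The weight does not pass through $e^{it\Lambda}$. You must first apply the case $a-1\in(0,\frac12)$ to $g$, as you already do for $xf$, which gives $t\br{t}^{a-1}\norm{g}_{L^2}+t\norm{\br{x}^{a-1}g}_{L^2}$. The first term is $\lesssim\br{t}^a\norm{f}_{L^2}$. The second is then handled by your $A_2$ bound and the splitting at $\abs{x}\sim\br{t}$.

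With these corrections, and the reduction $\br{x}^a\le 1+\br{x}^{a-1}\abs{x}$ (valid for $a\le 2$), the argument closes.
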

\begin{proof}
We decompose
\begin{equation}
\norm{\br{x}^{a}e^{it\Lambda}f}_{L^2}^2 \lesssim \br{t}^{2a}\norm{f}_{L^2}^2+\sum_{\substack{(j,k) \in \mathcal{J} \\ j\geq \log(t)+4}}2^{2aj}\sum_{\substack{(j',k') \in \mathcal{J} \\
\vert k -k' \vert \leq 4}}\norm{\tilde\varphi_j^{(k)} \widetilde{P}_k e^{it\Lambda}Q_{j'k'}f }_{L^2}^2,
\end{equation}
 where we have used a rough bound yielding the first term whenever $j \leq \log(t)+4$. When $\abs{j-j'}\leq 4$, we simply use that $2^{aj'}\Vert \tilde\varphi_j^{(k)} \widetilde{P}_k e^{it\Lambda}Q_{j'k'}f \Vert_{L^2} \lesssim 2^{aj'}\Vert \varphi_{j'}^{(k')}P_{k'}f \Vert_{L^2}\lesssim 2^{-\delta j'}\norm{\br{x}^{a+\delta}P_{k'}f}_{L^2}$ for any $\delta>0$. When $\abs{j-j'}>4$ then $\max\{2^j,2^{j'}\}\gg t$ and therefore a non-stationary phase argument (using the fact that $\norm{\nabla \Lambda}_{L^\infty} \lesssim 1$) combined with Schur's test entails $\Vert \tilde\varphi_j^{(k)} \widetilde{P}_k e^{it\Lambda}Q_{j'k'}f \Vert_{L^2} \lesssim 2^{-N\vert j - j' \vert} \Vert \varphi_{j'}^{(k')}P_{k'}f \Vert_{L^2} $. By Minkowski and Young's inequalities, we obtain
 \begin{align*}
\norm{\br{x}^{a}e^{it\Lambda}f}_{L^2}\lesssim \br{t}^{a}\norm{f}_{L^2}+C_\delta \left(\sum_{(j', k') \in J} 2^{2(a+\delta)j} \norm{Q_{j'k'}f}_{L^2}^2 \right)^{1/2} \lesssim \br{t}^{a}\norm{f}_{L^2}+C_\delta\norm{\langle x \rangle^{a+\delta}f}_{L^2}.
 \end{align*}
 \end{proof}

The main estimate of the remainder $R(\rho)$ is the following proposition.
\begin{prop}\label{Prop-Xestimate-R}
 Under the bootstrap assumption of Proposition \ref{prop:bootstrap}, for any $\kappa>0$ and  there holds that
    \begin{align}\label{claim-for-commutators}
        \norm{R(t)}_{H^{s+1}_{1+\beta}} \lesssim \eps^3  (\log t)^2(1+t)^{-(1+\beta-\kappa)},\qquad \norm{R(t)}_{H^{s+1}} \lesssim \eps^3 (1+t)^{-2(1+\beta)}. 
    \end{align}
    In particular, there holds the cubic estimate:
    \begin{align}\label{X-norm-cubic-proof}
        \int_0^t\norm{\mathcal{C}_\pm^R(\tau)}_Xd\tau\lesssim \eps^3.
    \end{align}
    
\end{prop}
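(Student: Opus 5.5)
The plan is to treat the remainder equation \eqref{eq:remainderR} as a fixed-point problem for $R$ in the weighted spaces $H^{s+1}_\alpha$, with $\alpha\in\{0,1+\beta\}$. Writing $R=L\big[-F(\rho)R-\tfrac12R^2-E_3(F(\rho)+R)+S(\rho)\big]$, Lemma \ref{Lem:ellipticReg-screened} \eqref{eq:elliptic-reg-full} gives two derivatives of gain. This requires $|\alpha|<\tfrac32$, which holds for $\alpha=1+\beta<\tfrac65$. The estimates then follow if the right-hand side is controlled in $H^{s-1}_\alpha$ by cubic expressions in $\rho$, plus small multiples of $\norm{R}_{H^{s+1}_\alpha}$ that can be absorbed.

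The first step is to bound the source terms with Lemma \ref{Lem:product-weightedSob}. One splits off the weight $\kappa$ (resp.\ $(k-1)\kappa$) onto $L^\infty$ factors of $L\rho$, which by \eqref{eq:elliptic-reg-1stgain} are controlled by $\norm{\rho}_{W^{n,\infty}_\kappa}$. Since $\rho=\tfrac12|\nabla|(Z_++Z_-)$, Lemma \ref{Lem-decay-deriv-Weighted} gives $\norm{\rho}_{W^{n,\infty}_\kappa}\lesssim \eps\log t\,(1+t)^{-(1+\beta-\kappa)}$. This uses the low-regularity version of the product estimates with $\lfloor s/2\rfloor\le 3$ derivatives in $L^\infty$, which fixes the admissible $s$. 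Then:
\begin{itemize}
\item $S(\rho)$ is cubic, so $\norm{S}_{H^{s-1}_{\alpha}}\lesssim \norm{\rho}_{W^{n,\infty}_\kappa}^2\norm{\rho}_{H^{s-1}_{\alpha-2\kappa}}$.
\item $E_3(F+R)$ is handled by \eqref{eq:compo-estimate}.
\item $F(\rho)R$ and $R^2$ are bounded by $\norm{\rho}_{W^{n,\infty}}\norm{R}_{H^{s+1}_\alpha}$ and $\norm{R}_{W^{n,\infty}}\norm{R}_{H^{s+1}_\alpha}$, and are absorbed by smallness (the $L^\infty$-smallness of $R$ comes from \eqref{eq:Linfty-decay-R}).
\end{itemize}
For $\alpha=0$, one puts all the weight-free $L^\infty$ decay $\eps^2 t^{-2(1+\beta)}$ on two factors and uses the energy bound $\norm{\rho}_{H^{s-1}}\lesssim\eps$. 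This gives the unweighted bound in \eqref{claim-for-commutators}.

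For $\alpha=1+\beta$ the remaining $L^2$ factor $\norm{\rho}_{H^{s-1}_{1+\beta-2\kappa}}=\norm{|\nabla|Z}_{H^{s-1}_{1+\beta-2\kappa}}$ has to be estimated. Here one uses Lemma \ref{lem:commutator-weight} with $a=1+\beta-2\kappa$ and $\delta=\kappa$ to move the weight onto the profile: $\norm{\langle x\rangle^{a}e^{it\Lambda}\mathcal Z}_{L^2}\lesssim \langle t\rangle^{a}\eps+\norm{\mathcal Z}_{H_{1+\beta-\kappa}}$. The last term is bounded by $\norm{\mathcal Z}_X\lesssim\eps$ via \eqref{eq:estimSob-to-X}. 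The possible growth $t^{1+\beta-2\kappa}$ is compensated by the two decaying $L^\infty$ factors, each $\lesssim\eps\log t\,t^{-(1+\beta-\kappa)}$. The net result is $\eps^3(\log t)^2 t^{-(1+\beta-\kappa)}$ after re-labelling $\kappa$.

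For the cubic estimate \eqref{X-norm-cubic-proof}, write $\mathcal C^R_\pm=ie^{\mp it\Lambda}\langle\nabla\rangle R$. By \eqref{eq:estimeX-to-Sob}, $\norm{\mathcal C^R}_X\lesssim\norm{e^{\mp it\Lambda}\langle\nabla\rangle R}_{H^{s}_{1+\beta}}$ with $s\ge 10$. Commuting the weight through the semigroup again with Lemma \ref{lem:commutator-weight}, now with $e^{-it\Lambda}$ and $a=1+\beta$, gives
\[
\norm{\mathcal C^R}_X\lesssim \langle t\rangle^{1+\beta}\norm{R}_{H^{s+1}}+\norm{R}_{H^{s+1}_{1+\beta+\kappa}}.
\]
The first term is $\lesssim\eps^3 t^{-(1+\beta)}$, which is integrable. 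The second must also be integrable, so the weighted bound above is not sufficient as stated, since $t^{-(1+\beta-\kappa)}$ is borderline. This is where the main obstacle lies. I would refine the weighted estimate by keeping the full $t^{-(1+\beta-\kappa)}$ on \emph{both} $L^\infty$ factors and only $\langle t\rangle^{1+\beta+\kappa-2\kappa'}$ growth on the weighted $L^2$ factor. That gives decay $t^{-(1+\beta)+O(\kappa)}(\log t)^2$, integrable for $\kappa$ small since $\beta>0$. Alternatively, one can trade a fraction of the weight via interpolation between the weighted and unweighted bounds. Choosing $\kappa\ll\beta$ then closes $\int_0^t\norm{\mathcal C^R}_X\lesssim\eps^3$, with the short-time range $t\le1$ handled by the same estimates without decay.
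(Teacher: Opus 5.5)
Your route is the paper's. It uses the elliptic gain from \eqref{eq:elliptic-reg-full}, weighted product and composition estimates, Lemma \ref{Lem-decay-deriv-Weighted} on the $L^\infty$ factors, Lemma \ref{lem:commutator-weight} with \eqref{eq:estimSob-to-X} on the weighted $L^2$ factor, absorption of the $R$-dependent terms, and \eqref{eq:estimeX-to-Sob} plus the commutator lemma for \eqref{X-norm-cubic-proof}.

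In that last step you are more careful than the paper. With $a=1+\beta$ the commutator lemma produces $\norm{R}_{H^{s+1}_{1+\beta+\delta}}$, which the paper silently writes as $\norm{R}_{H^{s+1}_{1+\beta}}$. Your remedy works: redo the weighted estimate at weight $1+\beta+\kappa$, put weight $\kappa'$ on each decaying $L^\infty$ factor, and take $2\kappa'>\kappa+\delta$ so that the profile weight after commuting stays below $1+\beta$. This gives $\eps^3(\log t)^2t^{-(1+\beta)+\kappa}$.

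Your stated reason, however, is wrong. The rate $t^{-(1+\beta-\kappa)}$ is not borderline; it is integrable for $\kappa<\beta$, which is exactly how the paper concludes. The only defect is the weight mismatch. Drop the interpolation alternative: interpolating between the unweighted bound and the weight-$(1+\beta)$ bound only yields weights below $1+\beta$, so it cannot replace the refinement.

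\textbf{A step that fails as written.} In your bound $\norm{S}_{H^{s-1}_\alpha}\lesssim\norm{\rho}^2_{W^{n,\infty}_\kappa}\norm{\rho}_{H^{s-1}_{\alpha-2\kappa}}$ you discard the elliptic gain on the $L^2$ factor.
\begin{itemize}
\item Since $\rho=\frac12\abs{\nabla}(Z_++Z_-)$ and you need $s\geq 10$ for \eqref{eq:estimeX-to-Sob}, this puts at least $10$ derivatives on $\mathcal{Z}$ at weight close to $1+\beta$.
\item At high frequency the $X$-norm only carries $2^{(9+\beta)k}$, and \eqref{eq:estimSob-to-X} needs fewer than $9+\beta$ derivatives.
\item So your bound $\norm{\mathcal Z}_{H_{1+\beta-\kappa}}\lesssim\norm{\mathcal Z}_X$ cannot be obtained from the $X$-norm at this regularity.
\end{itemize}
The fix is what the paper does. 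Every $\rho$ in $S(\rho)$ and $F(\rho)$ enters through $L=(1-\Delta)^{-1}$, so apply \eqref{eq:elliptic-reg-full} to the $L^2$ factor as well, e.g.\ $\norm{L\rho}_{H^{s-1}_a}\lesssim\norm{\rho}_{H^{s-3}_a}$. This leaves about $s-2=8$ derivatives on $\mathcal{Z}$, within the range of \eqref{eq:estimSob-to-X}.

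Your handling of $F(\rho)R$ and $R^2$ is fine: you put all of the weight on $R$ and use unweighted smallness of the other factor via Sobolev embedding and the energy bound. It is slightly simpler than the paper's weight splitting, which requires weighted $L^\infty$ decay of $R$.
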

\begin{proof}[Proof of Proposition \ref{Prop-Xestimate-R}]
 The main estimate \eqref{X-norm-cubic-proof} follows from \eqref{claim-for-commutators} using Lemma \ref{lem:commutator-weight}. Indeed, using  Lemma \ref{Lem:compX-weightSob}, the commutator estimate Lemma \ref{lem:commutator-weight} and \eqref{claim-for-commutators} there holds with $s\geq 9+\beta$:
\begin{equation}\label{eqn-cubic-bootstrap}
\begin{aligned}
    \int_0^t \norm{\mathcal{C}^R_\pm(\tau)}_X  \mathrm{d}\tau &\lesssim \int_0^t \norm{e^{\mp i\tau \Lambda} \langle \nabla \rangle R(\tau)}_{H^s_{1+\beta}} \mathrm{d}\tau\\
    &\lesssim \int_0^t \langle \tau \rangle^{1+\beta} \norm{ R(\tau)}_{H^{s+1}}\mathrm{d}\tau + \int_0^t  \norm{R(\tau)}_{H^{s+1}_{1+\beta}}\mathrm{d}\tau\\
    &\lesssim \int_0^t \br{\tau}^{1+\beta}\eps^3(1+\tau)^{-2(1+\beta)}d\tau +\int_0^t \eps^3(\log\tau)^2(1+\tau)^{-(1+\beta-\kappa)}d\tau\\
    &\lesssim \eps^3.
\end{aligned}
\end{equation}

We now prove \eqref{claim-for-commutators}  by performing suitable energy estimates on the elliptic equation \eqref{eq:remainderR} for the remainder $R$.  First note that the $H^{s+1}$ bound follows by the fixed point argument \eqref{eq:Linfty-decay-R} and the decay in time of $\rho$ in $L^\infty$.
We prove the weighted estimate on  $\norm{R(t)}_{H^{s+1}_{1+\beta}}$ in \eqref{claim-for-commutators}.
By the elliptic estimate from Lemma \ref{Lem:ellipticReg-screened} it suffices to bound each term on the right-hand side of \eqref{eq:remainderR} in $H^{s-1}_{1+\beta}$.
\par \textbf{Term $S(\rho)$}.
We write $S(\rho)=S_1(\rho)+S_2(\rho)$ with $S_1(\rho)=\frac{1}{2}(L\rho)L((L\rho)^2)$ and $S_2(\rho)=-\frac{1}{8}\left( L(L \rho)^2\right)^2$. We may use now the weighted product estimates Lemma \ref{Lem:product-weightedSob} and elliptic estimates Lemma \ref{Lem:ellipticReg-screened}, and obtain for all $\kappa>0$ 
\begin{align*}
    \norm{S_1(\rho)}_{H^{s-1}_{1+\beta}} 
    & \lesssim \norm{\rho}_{W^{\lfloor \frac{s-1}{2} \rfloor-1, \infty}_\kappa} \norm{(L\rho)^2}_{H^{s-3}_{1+\beta-\kappa}}+\norm{\rho}_{H^{s-3}_{1+\beta-\kappa}} \norm{(L\rho)^2}_{W^{\lfloor \frac{s-1}{2} \rfloor-1, \infty}_\kappa} \\
    & \lesssim \norm{\rho}_{W^{\lfloor \frac{s-1}{2} \rfloor-1, \infty}_\kappa} \norm{\rho}_{W^{\lfloor \frac{s-3}{2} \rfloor-1, \infty}_\kappa}\norm{\rho}_{H^{s-5}_{1+\beta-2\kappa}} 
    +\norm{\rho}_{H^{s-3}_{1+\beta-\kappa}} \norm{\rho}_{W^{\lfloor \frac{s-1}{2} \rfloor-2, \infty}_{\kappa /2}}^2.
\end{align*}
All in all, we get the cubic type estimate
\begin{align*}
    \norm{S_1(\rho)}_{H^{s-1}_{1+\beta}} &\lesssim \norm{\rho}_{W^{\lfloor \frac{s-1}{2} \rfloor-1, \infty}_{\kappa/2}} \norm{\rho}_{W^{\lfloor \frac{s-1}{2} \rfloor-2, \infty}_{\kappa/2}}\norm{\rho}_{H^{s-3}_{1+\beta-\kappa}} \\
    &\lesssim 
    \norm{\vert \nabla \vert Z_\pm}_{W^{\lfloor \frac{s-1}{2} \rfloor-1, \infty}_{\kappa/2}} 
    \norm{\vert \nabla \vert Z_\pm}_{W^{\lfloor \frac{s-1}{2} \rfloor-2, \infty}_{\kappa/2}}
    \norm{ Z_\pm}_{H^{s-4}_{1+\beta-\kappa}}.
\end{align*}

For the two  first factors, we rely on Lemma \ref{Lem-decay-deriv-Weighted}, provided that the constraint $1 \leq \lfloor\frac{s-1}{2} \rfloor, \lfloor \frac{s-1}{2} \rfloor-1 \leq 4 $ is satisfied together with $9+\beta \leq s$, and we therefore choose $s=10$ to obtain decay in time from these two terms. For the last factor, we use the commutator estimate Lemma \ref{lem:commutator-weight} (choosing $\delta=\kappa/10$) and Lemma \ref{Lem:compX-weightSob} with $Z_\pm=e^{\mp it\Lambda}\mathcal{Z}_\pm$:
\begin{align}\label{eqn:solution-in-weighted-norm}
    \norm{Z_\pm}_{H^{s-4}_{1+\beta-\kappa}}\lesssim \br{t}^{1+\beta-\kappa}\norm{\mathcal{Z}_\pm}_{H^{s-4}}+\norm{\mathcal{Z}_\pm}_{H^{s-4}_{1+\beta-\kappa+\kappa/10}}\lesssim \br{t}^{1+\beta-\kappa}\eps+\norm{\mathcal{Z}_\pm}_{X}\lesssim \br{t}^{1+\beta-\kappa}\eps.
\end{align} Thus, we have the desired bound:
\begin{align*}
    \norm{S_1(\rho)}_{H^{s-1}_{1+\beta}} \lesssim \eps^3 (\log t)^2(1+t)^{-(1+\beta-\kappa)}.
\end{align*}
The term $S_2(\rho)$ is quartic, so it satisfies better bounds. Arguing as above we have using Lemma \ref{Lem-decay-deriv-Weighted}, the commutator estimate Lemma \ref{lem:commutator-weight} and \eqref{eqn:solution-in-weighted-norm} that
\begin{align*}
    \norm{S_2(\rho)}_{H^{s-1}_{1+\beta}} 
    & \lesssim \norm{\rho}_{W^{\lfloor \frac{s-1}{2} \rfloor-2, \infty}_\kappa}^2 \norm{\rho}_{W^{\lfloor \frac{s-3}{2} \rfloor-1, \infty}_\kappa}\norm{\rho}_{H^{s-5}_{1+\beta-\kappa}}\lesssim \eps^4 {(\log t)^3(1+t)^{-2(1+\beta-\kappa)}}.
\end{align*}

\textbf{Term $F(\rho)R$}. 
Proceeding as before using the product and elliptic estimates together with the weighted decay Lemma \ref{Lem-decay-deriv-Weighted} and \eqref{eqn:solution-in-weighted-norm}, we have
\begin{align*}
    \norm{F(\rho)R}_{H^{s-1}_{1+\beta}} 
    & \lesssim ( \norm{\rho}_{W^{\lfloor \frac{s-1}{2} \rfloor-1, \infty}_\kappa} +\norm{\rho}_{W^{\lfloor \frac{s-1}{2}\rfloor-2, \infty}_\kappa}^2 ) \norm{R}_{H^{s-1}_{1+\beta-\kappa}} \\
    & \qquad +(\norm{\rho}_{H^{s-3}_{1+\beta-\kappa}} +\norm{\rho}_{W^{\lfloor \frac{s-3}{2} \rfloor-1, \infty}_\kappa}\norm{\rho}_{H^{s-5}_{1+\beta-\kappa}} )\norm{R}_{W^{\lfloor \frac{s-1}{2} \rfloor, \infty}_\kappa}\\
    &\lesssim ( \norm{\vert \nabla \vert Z_\pm}_{W^{\lfloor \frac{s-1}{2} \rfloor-1, \infty}_\kappa} +\norm{\vert \nabla \vert Z_\pm}_{W^{\lfloor \frac{s-1}{2}\rfloor-2, \infty}_\kappa}^2 ) \norm{R}_{H^{s-1}_{1+\beta-\kappa}} \\
    & \qquad +(\norm{Z_\pm}_{H^{s-2}_{1+\beta-\kappa}} +\norm{\vert \nabla \vert Z_\pm}_{W^{\lfloor \frac{s-3}{2} \rfloor-1, \infty}_\kappa}\norm{Z_\pm}_{H^{s-4}_{1+\beta-\kappa}} )\norm{R}_{W^{\lfloor \frac{s-1}{2} \rfloor, \infty}_\kappa}\\
    &\lesssim \eps \log t(1+t)^{-(1+\beta-\kappa)}\norm{R}_{H^{s-1}_{1+\beta}}+\br{t}^{1+\beta-\kappa}\eps {(\log t)^3(1+t)^{-3(1+\beta-\kappa)}\eps^3}\\
    &\lesssim \eps \log t(1+t)^{-(1+\beta-\kappa)}\norm{R}_{H^{s-1}_{1+\beta}}+(\log_2t)^3(1+t)^{-2(1+\beta-\kappa)}\eps^4.
\end{align*}The first term will be absorbed on the left-hand side of the final estimate.
Note that we have also used the cubic decay of $R$ in weighted $L^\infty$ spaces which follows by a fixed point argument as above \eqref{eq:Linfty-decay-R}.

\textbf{Term $R^2/2$}. By \eqref{eq:estim-DerLinftyweight} and the cubic decay, we have: 
\begin{align}
    \norm{R^2}_{H^{s-1}_{1+\beta}} &\lesssim  \norm{R}_{W^{\lfloor \frac{s-1}{2} \rfloor, \infty}_\kappa}  \norm{R}_{H^{s-1}_{1+\beta-\kappa}} \lesssim (1+t)^{-3(1+\beta-\kappa)}\eps^3  \norm{R}_{H^{s-1}_{1+\beta-\kappa}}\lesssim \eps  \norm{R}_{H^{s-1}_{1+\beta-\kappa}}.
\end{align}

\textbf{Term $E_3(F(\rho)+R)$}. For the remaining term, we use the composition estimate \eqref{eq:compo-estimate} and obtain
\begin{align*}
    \norm{E_3(F(\rho)+R)}_{H^{s-1}_{1+\beta}} \leq  \norm{F(\rho)+R}_{H^{s-1}_{1+\beta-\kappa}}\norm{F(\rho)+R}_{W^{\lfloor \frac{s-1}{2} \rfloor, \infty}_\kappa}^2 \exp\left(\norm{F(\rho)+R}_{W^{\lfloor \frac{s-1}{2} \rfloor, \infty}_\kappa} \right).
\end{align*}
As shown before, we have using Lemma \ref{Lem-decay-deriv-Weighted} and \eqref{eqn:solution-in-weighted-norm} that
\begin{align*}
    \norm{F(\rho)}_{H^{s-1}_{1+\beta-\kappa}} &\lesssim \norm{Z_\pm}_{H^{s-2}_{1+\beta-\kappa}} +\norm{\vert \nabla \vert Z_\pm}_{W^{\lfloor \frac{s-3}{2} \rfloor-1, \infty}_\kappa}\norm{Z_\pm}_{H^{s-4}_{1+\beta-\kappa}} 
    {\lesssim \br{t}^{1+\beta-\kappa}\eps};
    \\
    \norm{F(\rho)}_{W^{\lfloor \frac{s-1}{2} \rfloor, \infty}_\kappa} & \lesssim \norm{\vert \nabla \vert Z_\pm}_{W^{\lfloor \frac{s-1}{2} \rfloor-1, \infty}_\kappa} +\norm{\vert \nabla \vert Z_\pm}_{W^{\lfloor \frac{s-1}{2}\rfloor-2, \infty}_\kappa}^2 \lesssim{\log t} (1+t)^{-(1+\beta-\kappa)}\eps.
\end{align*}

From this it follows:
\begin{align}
    \norm{E_3(F(\rho)+R)}_{H^{s-1}_{1+\beta}}& \lesssim ( \br{t}^{1+\beta-\kappa}\eps+\norm{R}_{H^{s-1}_{1+\beta}}) \Bigg((\log t)^2(1+t)^{-2(1+\beta-\kappa)}\eps^2+\norm{R}_{W^{\lfloor \frac{s-1}{2} \rfloor, \infty}_\kappa}^2 \Bigg)\exp(\eps)
    \\
    &\lesssim (\log t)^2(1+t)^{-(1+\beta-\kappa)}\eps^3+(\log t)^2(1+t)^{-2(1+\beta-\kappa)}\eps^2 \norm{R}_{H^{s-1}_{1+\beta}}.
\end{align}
Note that in the last line we have used the weighted cubic decay of $R$ in terms of the solution $Z_\pm$
and Lemma \ref{Lem-decay-deriv-Weighted}. Finally, we can combine all the estimates and obtain
\begin{align}
    \norm{R(t)}_{H^{s-1}_{1+\beta}}\lesssim  \eps^3 {(\log_2t)^2(1+t)^{-(1+\beta-\kappa)}}+\eps \log_2t(1+t)^{-(1+\beta-\kappa)}\norm{R}_{H^{s-1}_{1+\beta}}+ \eps \norm{R}_{H^{s-1}_{1+\beta}},
    \end{align}
    from which it follows
    \begin{align}
    \norm{R(t)}_{H^{s-1}_{1+\beta}}\lesssim (\log_2t)^2(1+t)^{-(1+\beta-\kappa)}\eps^3.
\end{align}
This concludes the proof.

\end{proof}

\section*{Acknowledgements}
L.E.\ would like to acknowledge Aymeric Baradat and Daniel Han-Kwan for numerous fruitful discussions about pressureless Euler systems, and for our collaboration on related problems over the past years. C.J.\ would like to thank Benoît Pausader for inspiring discussions on this and related problems. Finally, C.J.\ and K.W.\ gratefully acknowledge support of the SNSF through grant PCEFP2\_203059.

\bibliographystyle{alpha}

\bibliography{biblio}

\end{document}